\documentclass[onefignum,onetabnum]{siamonline220329}

\ifpdf
\hypersetup{
  pdftitle={Error Analysis of Triangular Optimal Transport Maps for Filtering},
  pdfauthor={Doe}
}
\fi

\newcommand{\editstart}{\color{black}}
\newcommand{\editend}{\normalcolor}

\newcommand{\alphamin}{\alpha_{\text{min}}}
\newcommand{\betamax}{\beta_{\text{max}}}
\newcommand{\U}{\mathcal{U}}
\newcommand{\F}{\mathcal{F}}
\newcommand{\Y}{\mathcal{Y}}
\newcommand{\PP}{\mathcal{P}}
\newcommand{\T}{\mathcal{T}}
\renewcommand{\Re}{\mathbb R}
\newcommand{\cS}{\mathcal{S}}

\renewcommand{\d}{{\,\rm{d}}}

\newcommand{\Expect}{\mathbb{E}}
\newcommand{\CPI}{C_{\rm PI}}
\newcommand{\CVX}{{\rm CVX}}

\newcommand{\V}{\mathscr{V}}
\newcommand{\W}{\mathscr{W}}

\newcommand{\Cfs}{C_{\rm stab.}}
\newcommand{\Cf}{C_{\rm filter}}

\newcommand{\bias}{{\text{bias}}}
\newcommand{\wh}[1]{\widehat{#1}}

\newcommand{\distance}{\mathrm{AM\text{-}W}_2}

\usepackage{subcaption}
\usepackage{lipsum}
\usepackage{amsfonts}
\usepackage{graphicx}
\usepackage{epstopdf}
\usepackage{algorithmic}
\usepackage{pict2e}
\usepackage{amsmath}
\usepackage{amssymb}
\usepackage{mathrsfs}
\usepackage{mathtools}
\usepackage{overpic}
\usepackage{algorithm}
\usepackage[scr=boondox]{mathalfa}
\usepackage{xr}
\usepackage[normalem]{ulem}
\usepackage{changepage} %
\usepackage{bbm} %
\usepackage{tikz}
\usetikzlibrary{arrows.meta, calc, decorations.pathreplacing, fit}

\ifpdf
  \DeclareGraphicsExtensions{.eps,.pdf,.png,.jpg}
\else
  \DeclareGraphicsExtensions{.eps}
\fi
\DeclareRobustCommand{\lowerrighttriangle}{%
  \begingroup
  \setlength{\unitlength}{1ex}%
  \begin{picture}(1,1)
  \polyline(1,0)(0,1)(0,0)(1,0)(.5,0)
  \end{picture}%
  \endgroup
}

\DeclareMathOperator*{\argmin}{arg\,min}

\usepackage{enumitem}
\setlist[enumerate]{leftmargin=.5in}
\setlist[itemize]{leftmargin=.5in}

\newsiamremark{remark}{Remark}
\newsiamremark{hypothesis}{Hypothesis}
\newsiamremark{assumption}{Assumption}
\crefname{assumption}{Assumption}{Assumptions}
\newsiamthm{problem}{Problem}
\newsiamremark{sproof}{Proof Sketch}
\crefname{hypothesis}{Hypothesis}{Hypotheses}
\crefname{theorem}{Theorem}{Theorems}
\newsiamthm{claim}{Claim}
\headers{Error Analysis of Triangular Optimal Transport Maps for Filtering}{M. Al-Jarrah, B. Hosseini, N. Jin,  M. Martino, A. Taghvaei}

\title{Error Analysis of Triangular Optimal Transport Maps for Filtering
\thanks{Submitted to the editors DATE.
\funding{This work was supported by the National Science Foundation (NSF) awards EPCN-2318977, EPCN-2347358, DMS-2208535, and DMS-2337678.}}}

\author{Mohammad Al-Jarrah\thanks{Department of Aeronautics \& Astronautics, University of Washington, Seattle; (\email{mohd9485@uw.edu}, \email{amirtag@uw.edu}).}
\and Bamdad Hosseini\thanks{Department of Applied Mathematics,  University of Washington, Seattle; (\email{bamdadh@uw.edu}, \email{njin2@uw.edu}, \email{michem29@uw.edu}).}
\and Niyizhen Jin\footnotemark[3] \and Michele Martino\footnotemark[3] \and Amirhossein Taghvaei\footnotemark[2]}

\usepackage{amsopn}

\crefname{appendix}{SM}{SM}

\newcommand*\Laplace{\mathop{}\!\mathbin\bigtriangleup}

\sloppy

\begin{document}

\maketitle
\begin{abstract}
We present a systematic analysis of estimation errors for a class of optimal transport based algorithms for filtering and data assimilation. Along the way, we extend previous error analyses of Brenier maps to the case of conditional Brenier maps that arise in the context of simulation based inference. We then apply these results in a filtering scenario to analyze the optimal transport filtering algorithm of \cite{al-jarrah2024nonlinear}. An extension of that algorithm along with numerical benchmarks on various non-Gaussian and high-dimensional examples are provided to demonstrate its effectiveness and practical potential. 
\end{abstract}

\begin{keywords}
Nonlinear filtering, 
Optimal transport, 
Data assimilation, 
Bayesian inference, 
Simulation based inference.
\end{keywords}

\begin{MSCcodes}
60G35, %
49Q22, %
65M32, %
62F15. %
\end{MSCcodes}

\section{Introduction}\label{sec:intro}
This paper outlines a systematic analysis of a class of optimal transport (OT)
algorithms for filtering of nonlinear systems. Our analysis 
provides quantitative estimation rates for such algorithms combining 
statistical errors due to finite samples as well as approximation errors 
due to parameterization of transport maps. 
Numerical experiments further highlight the applicability of our OT methods 
and investigate the manifestation of our theory in practical benchmarks.

Nonlinear filtering is the problem of inferring the state of a 
dynamical system from partial and noisy observations. This task is 
achieved via a probabilistic formulation leading to 
a sequence of conditional distributions for the state of the dynamical 
system obtained by successive applications of Bayes' rule.  
Early engineering applications of filtering include target tracking~\cite{bar2004estimation}, satellite orbit determination \cite{manarvi2023application}; navigation \cite{levy1997kalman}; and spaceflight, namely the Apollo missions~\cite{hoag1969apollo}. Soon after, filtering 
found broad applications in other areas such as 
economic forecasting~\cite{brigo1998some,javaheri2003filtering};
geoscience~\cite{van2010geosciences,reich2015probabilistic};
neuroscience~\cite{truccolo2005point};
psychology~\cite{kording2007dynamics,drugowitsch2014optimal};
and  machine learning~\cite[Ch. 13]{bishop2006pattern};
see~\cite{kutschireiter2020hitchhiker} for more applications of filtering 
and historical remarks.

In this article, we focus on the discrete-time 
filtering problem where a hidden Markov process $\{U_{t}\}_{t=0}^{\infty}$ represents the state of a dynamical system, and a random process $\{Y_{t}\}_{t=1}^{\infty}$ represents the observations. We assume that the state and observation processes follow 
\begin{equation}\label{eq:generic-filtering-model}
\left\{
\begin{aligned}
    &U_{t} \sim a(\cdot \mid U_{t-1}), \quad U_{0} \sim \pi^{0},
    \\
    &Y_{t} \sim h(\cdot \mid U_{t}),
\end{aligned}
\right.
\end{equation}
where $\pi_{0}$ is the initial distribution of the state, 
$a(u' \mid u)$ is the transition probability kernel from state $u$ to the state $u'$, and $h(y \mid u)$ is the transition kernel or the likelihood distribution of the observation $y$ given the state $u$ \footnote{Here we assumed that $a$ and $h$ are time-invariant for simplicity 
but our algorithms and results directly apply to the time-varying case where 
these kernels depend on $t$.}. 
The goal of filtering is to infer the conditional distribution $\pi^t$ of the hidden state $U_{t}$ from a sequence of observations 
$\mathscr{Y}_{t} = \{Y_{1},Y_{2},...,Y_{t}\}$, i.e., the approximation of the sequence 
of distributions $\pi^{t}(\cdot) \coloneqq \mathbb{P}(U_{t} \in \cdot \mid \mathscr{Y}_{t})$, for $t=1,2,...$, also known as \textit{the posterior distribution}. The sequence $\pi^t$ admits a recursive relationship thanks to the model \eqref{eq:generic-filtering-model}: Define the operators,
\begin{equation}\label{filtering-recursive-update-rule-0}
\begin{aligned}
    &(\textnormal{Forecast step}) \quad \eta_\U^{t} = \mathcal{A}[\pi^{t-1}] \coloneqq \int_{\U} a(\cdot \mid u) \, \d \pi^{t-1}(u),
    \\
    &(\textnormal{Analysis step}\footnotemark) \quad  \pi^{t} = \mathcal{B}_{y} [\eta_\U^{t}] \coloneqq \frac{h(y \mid \cdot)\eta_\U^{t}(\cdot)}{\int_{\U}h(y \mid u)\d \eta_\U^{t}(u)}.
\end{aligned}
\end{equation}
\footnotetext{
\editstart 
The formula for $\mathcal{B}_y$ is well-defined for
$\nu^t_\mathcal{Y}$-a.e. $y\in\mathcal{Y}$, i.e. whenever
$\int_\mathcal{U} h(y|u)\,d\eta^t_\mathcal{U}(u)>0$.
Otherwise we make sense of the Bayesian update via 
the disintegration of measures.
\editend
}
Then  the posteriors follow the sequential update law~\cite{cappe2009inference}
\begin{equation}\label{filtering-recursive-update-rule}
    \pi^t = (\mathcal{B}_{Y_t} \circ \mathcal{A})[ \pi^{t-1}]. 
\end{equation}
The operator $\mathcal{A}$ in the forecast step is known as the {\it propagation operator}
while the operator $\mathcal{B}_y$ in the analysis step is known as the {\it conditioning or 
Bayesian update operator}. Given the previous posterior 
$\pi^{t-1}$, the propagation operator $\mathcal{A}$ updates the state according to the dynamics of the model resulting in
the intermediate distribution $\eta_\U^{t} = \mathbb{P}(U_{t} \in \cdot \mid \mathscr{Y}_{t-1})$. Then the conditioning operation $\mathcal{B}_{Y_t}$ applies Bayes' rule with $\eta_\U^t$
as the prior distribution to obtain the next 
posterior distribution $\pi^t$ in the sequence.

While analytically simple, the numerical evaluation of 
\eqref{filtering-recursive-update-rule}, in particular the  
conditioning operator $\mathcal{B}_y$, is challenging~\cite{kantas2009overview,doucet09,sarkka2023bayesian,chen2003bayesian}. 
\editstart
While the forecast step can be carried out  at the level of the particles representing $\pi^{t-1}$, simply by simulating the dynamics forward, the implementation of the analysis step, for a general nonlinear/non-Gaussian model, admits no such straightforward particle-level implementation. 
\editend
To address this shortcoming, following the recent works,  
\cite{al-jarrah2023optimal,al-jarrah2024nonlinear,taghvaei2022variationalformulation,spantini2022coupling}, we consider approximating $\mathcal{B}_y$ using a transport map. 
The key idea is to assume the model
\begin{equation}\label{basic-filter}
        T_t(y,\cdot) \# \eta_\U^{t} = \pi^{t} = \mathcal{B}_y [\eta_\U^{t}],
\end{equation}
where $\#$ denotes the push-forward operator \footnote{$T\# \mu (A) = \mu( T^{-1}(A))$ for a Borel measure $\mu$, and Borel map $T$, 
and for any Borel set $A$.}. Then $T_t(y, \cdot)$ denotes a family of transport maps, parameterized 
 by time $t$ and the observations $y$, 
 that push $\eta_\U^{t}$ to $\pi^{t}$, thereby coinciding with 
 the Bayesian update operator $\mathcal{B}_y$. The algorithms 
 that we study directly approximate $T_t$ with a sequence of  maps $\wh{T}_t$
 that can be computed in practice from an ensemble of particles 
 and parameterized using polynomials, neural nets, kernel methods, or other 
 practical models. To this end, we consider the approximate sequence of posteriors
 \begin{equation}\label{intro:empirical-transport-bayesian-update}
   \wh \pi^t := \wh{T}_t(y, \cdot) \# \wh{\eta}_\U^t 
\quad \text{where} \quad \wh{\eta}_\U^t := \mathcal{A} [\wh{\pi}^{t-1}], \quad \text{and} \quad \wh{\pi}^0 = \pi^0.   
 \end{equation}
We are particularly interested in the setting where $\wh{T}$ are 
conditional OT maps as reviewed in \Cref{sec:conditional-OT-review};
 see also 
\cite{hosseini2025conditional, chemseddine2025conditional}.
Then our goal  is to understand the error of $\wh \pi^t$, compared with $\pi^t$, thereby putting the 
algorithms of \cite{al-jarrah2023optimal,al-jarrah2024nonlinear,taghvaei2022variationalformulation} on  firm theoretical ground.

\subsection{Summary of contributions} 
The article makes three key contributions towards characterizing 
and understanding the error of \textit{quadratic} conditional OT methods 
for filtering:
\begin{enumerate}[label=(\roman*)]
    \item \emph{\textbf{Error analysis for conditional Brenier maps}}. 
    Our first main contribution is the quantification 
    of statistical estimation errors 
    for conditional OT maps as outlined in \Cref{Sec:Error analysis for conditional OT}. Given a reference measure $\eta_\U \in \PP(\Re^n)$ and 
    a joint target measure $\nu \in \PP( \Re^m \times \Re^n)$, we consider the 
    family of conditional Brenier maps that satisfy 
    $\nabla_u \phi^\dagger( y, \cdot) \# \eta_\U = \nu( \cdot \mid y)$  for $y \in \Re^m$. The functions $\phi^\dagger(y, \cdot)$ are the Brenier potentials, parameterized by $y$, associated with the quadratic optimal transportation 
    of $\eta_\U$ to $\nu(\cdot \mid y)$ as measures on $\Re^n$. We then consider an empirical estimation 
    $\wh\phi$ of $\phi^\dagger$ by solving the dual conditional OT problem 
    using finitely many samples $\{ (y_i, u_i) \}_{i=1}^N \overset{\text{i.i.d.}}{\sim}\nu$
    and by restricting $\widehat\phi$ to a restricted approximation class. 
    With this setup, we establish error bounds of the following form:
    \begin{align*}
    \mathbb{E}^{\text{train}} \int_{\Y} & \|\nabla_u \wh{\phi}(y,\cdot) - \nabla_u \phi^\dagger(y,\cdot)\|_{L_{\eta_\U}^2}^2\d \nu_\Y(y) \\
    & \lesssim \begin{cases}
    N^{-1/2} + \text{approx. bias}\, \quad &\textnormal{(Slow rate, \Cref{thm: slow rate})}\, ,\\
    \frac{\log(N)}{N} {\editstart + \text{ approx. bias}}\, \quad &\textnormal{(Fast rate, \Cref{thm: fast rate})}\, ,
    \end{cases}
    \end{align*}
    where the outer expectation is with respect to the empirical samples used to train/estimate $\wh\phi$ and $\lesssim$ hides independent constants. The slow rate (\Cref{thm: slow rate}) holds under very general assumptions 
    \editstart
    while the faster rate (\Cref{thm: fast rate}) requires stronger regularity 
    assumptions.
    \editend
    We note that the above error bound is of independent 
    interest regarding conditional OT and more broadly, 
    simulation based inference \cite{cranmer2020frontier}.  
    
    \item \emph{\textbf{Error analysis for the OT filter (OTF) algorithm}}. 
    Building upon contribution (i), we analyze the estimation 
    error of an idealized OTF algorithm in \Cref{Sec:Error analysis for OTF}. We consider a sequence of empirical Brenier potentials 
    satisfying $\nabla_u \wh \phi_t(y, \cdot) \# \wh \eta^t_\U 
    = \wh \pi^t$ in the same notation as \eqref{basic-filter}.
    Then we quantify the error of $\wh \pi^t$ by considering 
     appropriate divergences $D: \PP(\Re^n) \times \PP(\Re^n) \to \Re_{\ge 0}$ (e.g. the Wasserstein-1 metric), and giving a bound of the form 
    \begin{align}\label{eq: filt. error}
\mathbb{E}^{\textnormal{train}}_{1:t} \mathbb{E}_{\mathscr{Y}_t} D( \wh \pi^t, 
    \pi^t) \lesssim 
    \begin{cases}
    N^{-1/4} + \text{approx. bias}\, \quad &\textnormal{(Slow rate)}\, ,\\
    \left(\frac{\log(N)}{N} \right)^{1/2}{\editstart + \text{ approx. bias}}\, \quad &\textnormal{(Fast rate)}\,.
    \end{cases}
    \end{align}
    Here $N$ denotes a set of i.i.d. empirical samples that 
    are used to train the subsequent conditional OT maps 
    at every time step that are then used to approximate 
    the Bayesian update. 
    This result is stated in \Cref{thm: multiplicative+additive bound exact filtering error} and is a consequence of a preliminary result 
    in \Cref{thm: approx filter error bound} that 
    proves \eqref{eq: filt. error} with a modified 
    distribution for the data $\mathscr{Y}_t$.
    As before,  the slow rate holds under very general assumptions while the fast rate requires stringent conditions 
    on the filtering distributions at 
    every time step.
    
    \item \emph{\textbf{A practical algorithm and numerical experiments}}. Finally, we present an overview and a new variant of the OTF algorithm of \cite{al-jarrah2023optimal, al-jarrah2024nonlinear} in \Cref{Sec:Numerical experiments}.
    We apply our algorithms on a number of benchmarks 
    including: the Lorenz 63 model, as a highly chaotic and 
    multi-modal problem, and Lorenz 96, as a high-dimensional example, 
    among other benchmarks. Our experiments demonstrate the ability 
    of OTF to capture highly non-Gaussian posteriors surpassing 
    the performance of common algorithms such as the ensemble Kalman filter (EnKF)~\cite{evensen1994sequential}. 
    
\end{enumerate}

\subsection{Literature survey}\label{Literature_survey} 

 Classical nonlinear filtering algorithms, such as the Kalman filter  and its nonlinear extensions~\cite{kalman1960new,kalman-bucy,bar2004estimation}, and sequential importance resampling (SIR) particle filters (PF)~\cite{gordon1993novel,arulampalam2002tutorial,doucet09}
 are widely used in practical problems and commercial applications. However, they 
 are subject to fundamental limitations that prohibit their application to modern high-dimensional problems with strong nonlinear effects: Kalman filters are sensitive to initial conditions and fail to represent multi-modal distributions~\cite{ristic2003beyond,budhiraja2007survey} while SIR 
 suffers from the  particle degeneracy phenomenon that becomes severe in high-dimensional problems, an issue known as the curse of dimensionality~\cite{bickel2008sharp,rebeschini2015can,beskos2014error,bengtsson08}.  
 
 These limitations motivated the development of alternative coupling or transport-based methods in recent years with the aim of overcoming the curse of dimensionality~\cite{daum10,crisan10,reich11,yang2011mean,el2012bayesian,reich2013nonparametric,de2015stochastic,yang2016,marzouk2016introduction,mesa2019distributed,reich2019data,pathiraja2021mckean,taghvaei2021optimal,calvello2022ensemble} see also~\cite{spantini2022coupling} and \cite{taghvaei2023survey} for a recent survey of these topics. 
 The coupling method proposed in~\cite{spantini2022coupling} is the closest method to the OT method
 studied in this paper as well as the previous works~\cite{al-jarrah2023optimal,al-jarrah2024nonlinear,taghvaei2022variationalformulation,grange2023computational}.
  Both approaches are similar since they are considered likelihood-free and amenable to neural network parameterizations.
 A key distinction lies in the structure of the transport map: while our method seeks to compute the OT
 map from the prior to the posterior by solving a min-max problem, \cite{spantini2022coupling}  
 constructs the Knothe–Rosenblatt rearrangement using a maximum likelihood estimator.
 
We would also note that an active area of research around conditional simulation and its 
application for the solution of inverse problems has been developed recently; see~\cite{wang2023efficient,ray2022efficacy,ray2023solution,padmanabha2021solving,dasgupta2025conditional}. By observing the connection between Bayesian inference and the Bayesian update step in 
filtering we realize the opportunity for extending the aforementioned works to the 
filtering problem, revealing a diverse set of potential algorithms.

The error analysis of coupling/OT-based nonlinear filtering algorithms is challenging, as the algorithm involves an interacting particle system. Results are available for EnKF  in the linear Gaussian setting~\cite{gland2009,mandel2011convergence,tong2016nonlinear,stuart2014stability,mandel2015,delmoral2016stability}, with extensions to limited nonlinear setups~\cite{delmoral2017stability,bishop2018stability,jana2016stability}. 
In this paper, we avoid the complications due to the interacting particle  system \editstart by generating independent particles at each time step using 
empirical OT maps \editend 
(see~\Cref{subsec: OTF particle algorithms} for details). 

Our analysis of conditional OT map estimation is based on recent works on statistical
estimation errors of OT maps~\cite{hutter2019minimax, divol2025optimal, chewi2024statistical}. These works rely on the analysis of the semidual formulation of the OT problem via the combination of a stability result for OT maps~\cite[Prop. 10]{hutter2019minimax} and methods from empirical processes and statistical learning theory such as symmetrization for generalization  bounds~\cite{koltchinskii2000rademacher, wainwright2019high,vershynin2018high}, chaining~\cite{dudley1969speed,dudley2010universal, vanderVaartWellner2023}, and one-shot localization techniques~\cite{van1987new,van2002m}. We adapt and extend these ideas to the conditional OT problem~\cite{carlier2016vector, hosseini2025conditional}. We note that the statistical analysis 
of transport problems, beyond OT maps, has been a popular topic in recent 
years \cite{irons2022triangular,marzouk2024distribution,wang2022minimax}. However,
to our knowledge our analysis is the first of such results for conditional 
OT maps and the first extension to the filtering problem.

\subsection{Notation} 
For any set $\mathcal{X} \subset \Re^d$ we write 
$\mathscr{B}(\mathcal{X})$ to denote the $\sigma$-algebra of 
the Borel subsets of $\mathcal{X}$ and in turn $\mathcal{P}(\mathcal{X})$
to denote the space of Borel probability measures supported on $\mathcal{X}$. For $\mu\in \mathcal{P}(\mathcal{X})$
\editstart
, $\mathcal{Z}\subset \mathbb{R}^d$ 
\editend
and $1 \leq k < \infty$, define the $\mu$-weighted Lebesgue space to be \(
L_\mu^k(\mathcal{X}; \mathcal{Z}) := \Bigl\{ f : \mathcal{X} \to \mathcal{Z} \;\Big|\; 
\|f\|_{L_\mu^k(\mathcal{X};\mathcal{Z})} < \infty \Bigr\}
\), where
\(
\|f\|_{L_\mu^k(\mathcal{X};\mathcal{Z})} := 
\left( \int_{\mathcal{X}} \|f(x)\|_{\mathcal{Z}}^k \, d\mu(x) \right)^{1/k}
\). 
For $k = \infty$, 
\(
\|f\|_{L_\mu^\infty(\mathcal{X},\mathcal{Z})} := \operatorname*{ess\,sup}_{x \in \mathcal{X}} \|f(x)\|_{\mathcal{Z}}
\) 
\editstart
 and for simplicity we directly set $\|f\|_{L^\infty}\coloneqq \sup_{x\in \mathcal{X}}\|f(x)\|_{\mathcal{Z}}$. 
\editend 
\editstart
Moreover, for a prescribed weight function $w:\mathcal{X}\to \mathbb{R}$, we also define the $(\mu,w)$-weighted Lebesgue space  \(
L_\mu^k(\mathcal{X}; \mathcal{Z},w) := \Bigl\{ f : \mathcal{X} \to \mathcal{Z} \;\Big|\; 
\|f\|_{L_\mu^k(\mathcal{X};\mathcal{Z},w)} < \infty \Bigr\}
\), where
\(
\|f\|_{L_\mu^k(\mathcal{X};\mathcal{Z},w)} := 
\left( \int_{\mathcal{X}} \|f(x)\|_{\mathcal{Z}}^k \, w(x)d\mu(x) \right)^{1/k}
\). In this paper, we primarily use weights that grow polynomially and hence define the compact notation $ \langle x\rangle \coloneqq 1+\|x\|_\mathcal{X}$. 
\editend
Next, we define $C^k(\mathcal{X})$ to be the space of $k$-differentiable real-valued functions defined on $\mathcal{X}$. 
Similarly, we introduce the weighted Sobolev space
\(
H_\mu^k(\mathcal{X}) := \Bigl\{ f \in L^2_\mu(\mathcal{X}) \;\Big|\; 
\|f\|_{H_{\mu}^k(\mathcal{X})} < \infty \Bigr\}
\),
where
\(
\|f\|_{H_\mu^k(\mathcal{X})}^2 := \sum_{|\alpha|\leq k} 
\bigl\| \partial^\alpha f \bigr\|_{L_\mu^2(\mathcal{X})}^2
\) and $\alpha$ is a multi-index. 
For $f\in L_\mu^1(\mathcal{X})$, when convenient, we use equivalent notation $\mu(f)\coloneqq \int_{\mathcal{X}}f(x)\d \mu(x)  = \mathbb{E}_{x \sim \mu} f(x) = \mathbb{E}_\mu f$ for the 
expectation of $f$ with respect  to $\mu$ and 
similarly  $\textnormal{Var}_\mu(f)\coloneqq \left\|f-\mu(f)\right\|_{L_\mu^2(\mathcal{X})}^2$ for its variance.
\editstart
Finally, when the context is clear, 
we omit the domain and codomain spaces $\mathcal{X}$ and $\mathcal{Z}$ and simply denote $L_\mu^k$ the $\mu$-weighted Lebesgue space, $L_\mu^k(w)$ the $(\mu,w)$-weighted Lebesgue space, and $H_\mu^k$ the subsequent Sobolev spaces.
\editend

\editstart
We say that $\mu$ satisfies the $\langle\cdot\rangle^{-q}$-strengthened Poincar\'e inequality if there exists a constant $C_{\textnormal{PI}}^{\mu,q} \in [0,+\infty)$ such that $\textnormal{Var}_{\mu}(f) \leq 
    C_{\textnormal{PI}}^{\mu,q} \|\nabla f\|_{L_\mu^2(\mathcal{X};\Re^d, \langle\cdot\rangle^{-q})}^2$ for all $f \in H^1_{\mu}(\mathcal{X})$. 
    For, $q=0$, we simply say that $\mu$ satisfies the Poincar\'e inequality with constant $\CPI^\mu$.
\editend
If a measure $\mu$ is absolutely continuous with respect to the Lebesgue measure, we abuse the notation by adopting the same letter $\mu$ to denote 
its associated Lebesgue density. 

Given two spaces $\Y, \U$ we let 
their product space be $\Y \times \U$ and 
for any $\mu \in \PP(\Y \times \U)$, we define the $\Y$ marginal of $\mu$ as
$\mu_\Y(A) := \mu(A \times \U)$ for any Borel set $A\subset\Y$. We denote similarly the $\U$ marginal as $\mu_\U$. Moreover, we write $\mu( \cdot \mid y)$ to 
denote the regular conditional measures~\cite[Ch.~10]{bogachev2007measure} of $\mu$
conditioned on $y$, and similarly $\mu(\cdot \mid u)$ 
for the $u$ conditionals. Later in the paper, we use the notation $\phi[\mu]$ to denote  an operator $\phi$ that maps a measure $\mu$ to a certain target space.
\editstart
Finally, given numbers $a, b \in \Re$ we write $a \vee b$ to denote their maximum 
and $a \wedge b$ to denote the minimum.
\editend

\subsection{Outline of the article}
The rest of the article is organized as follows:
\Cref{Sec:Error analysis for conditional OT} contains 
our analysis towards quantifying the estimation 
error of conditional OT maps using finite training data 
and parameterizations, containing the full statement of our main 
theoretical results constituting contribution (i). This section 
contains a description of our proof techniques and postpones the 
detailed proofs to the Supplementary Materials (SM). 
\Cref{Sec:Error analysis for OTF} extends our 
analysis of conditional OT maps to the case of an 
idealized OTF algorithm that constitutes our contribution (ii). Once 
again this section is focused on the statement and explanation of 
our main theorems and postpones the detailed proofs to the SM. 
\Cref{Sec:Numerical experiments} contains our implementation 
details and practical aspects of the OTF algorithm, along with 
numerical results, while \Cref{Sec:Conclusions}
contains our concluding remarks.

\section{Error analysis for conditional OT}\label{Sec:Error analysis for conditional OT}

In this section, we present quantitative error bounds for approximating conditional OT maps using quadratic costs. While these results form the foundation of our error analysis of OTF, they are of great independent interest in the context of conditional OT, inverse problems, and simulation based inference. 
\Cref{sec:conditional-OT-review} reviews 
relevant preliminary results followed by problem statements and setup in \Cref{sec:conditionalOT:problemstatement}.
\Cref{sec:conditionalOT-theory-main-results} outlines our main results including the slow and 
fast rates with technical proofs deferred to \Cref{app:proofs:conditionalOT}. 
Finally, \Cref{sec:conditionalOT:applications} gives an example 
application.

\subsection{Review of conditional OT}
\label{sec:conditional-OT-review}

The material in this section summarizes the results of 
\cite{hosseini2025conditional, chemseddine2025conditional, baptista2024conditional}.
Henceforth,
we consider open sets
$\Y \subseteq \Re^m$, $\U \subseteq \Re^n$
and their tensor product
$\Y \times \U \subseteq \Re^d$ where $d = m +n$.
We further consider a target probability 
measure $\nu \in \PP( \Y \times \U)$
and a reference probability measure 
$\eta \in \PP(\Y \times \U)$  which is assumed to have 
the form $\eta = \nu_\Y \otimes \eta_\U$ where $\eta_\U \in \PP(\U)$ is 
arbitrary\footnote{We recall that the independence coupling structure is just to facilitate our exposition and that this assumption can be dropped in the derivation of our results.}. Then the goal of conditional transport is to find 
a map $T: \Y \times \U \to \U$ so that $T(y, \cdot) \# \eta_\U 
= \nu(\cdot \mid y)$ for all $y \in \Y$.

In general, one can construct the map $T: \Y \times \U \to \U$ by taking any 
family of maps $T_y: \U \to \U$, viewed as a family parameterized
by $y$, that satisfy 
$T_y \# \eta_\U = \nu( \cdot \mid y)$, and simply ``glue" the maps 
together to obtain the desired conditioning map $T$. However, such a construction 
is not practical in situations where the conditionals $\nu(\cdot \mid y)$ 
are unknown or we do not have access to empirical samples from  them. 
A solution to this problem can be obtained by working with the class of triangular transport maps.
\begin{definition}[Triangular transport maps]
\label{def: triangular map}
    A map $T_{\lowerrighttriangle}:\Y \times \U \to \Y \times \U$ is called a (block) triangular transport map if there exists a map $T: \Y \times \U \to \U$ such that $T_{\lowerrighttriangle}(y, u) = (y, T(y, u))$ for all $(y,u)\in \mathcal Y \times \mathcal U$
    \footnote{Throughout the paper we mostly work with the ``bottom components" $T$ of a triangular map and we will use the $T_{\lowerrighttriangle}$ notation to denote the fully triangular map obtained from  $T$ by mapping the $y$ component using the identity map.}.
\end{definition}
Triangular maps have a natural connection to conditioning as stated in 
the following lemma. Note that the lemma holds even when the $u$ and $y$ coordinates of $\eta$ are not independent, although we primarily 
consider that setting in the rest of the paper.
\begin{lemma}[{\cite[Thm.~2.4]{baptista2024conditional}}]
\label{lem: triangular-maps-can-condition}
    Let $T_{\lowerrighttriangle}$ be a triangular map of the form \Cref{def: triangular map}
    and suppose $\eta$ is any measure such that $\eta_\Y = \nu_\Y$. 
    If $T_{\lowerrighttriangle} \# \eta = \nu$ then  $T(y, \cdot) \# \eta_\U(\cdot\mid y) = \nu(\cdot \mid y)$ for $\nu_\Y$ a.e. $y \in \Y$.
\end{lemma}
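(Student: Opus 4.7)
The plan is to establish the conclusion by testing the push-forward identity $T_{\lowerrighttriangle}\#\eta=\nu$ against tensor-product test functions and then invoking uniqueness of disintegration. Concretely, fix bounded measurable $f:\Y\to\Re$ and $g:\U\to\Re$, and apply the change-of-variables formula for push-forwards together with the triangular structure $T_{\lowerrighttriangle}(y,u)=(y,T(y,u))$ to obtain
\begin{equation*}
\int_{\Y\times\U} f(y)\,g(u)\,\mathrm d\nu(y,u) \;=\; \int_{\Y\times\U} f(y)\,g\bigl(T(y,u)\bigr)\,\mathrm d\eta(y,u).
\end{equation*}

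Next I would disintegrate both sides with respect to the $\Y$-marginal. On the left this gives $\int_\Y f(y)\bigl(\int_\U g(u)\,\mathrm d\nu(\cdot\mid y)\bigr)\mathrm d\nu_\Y(y)$, and on the right, using $\eta_\Y=\nu_\Y$ by hypothesis, it gives $\int_\Y f(y)\bigl(\int_\U g(T(y,u))\,\mathrm d\eta(\cdot\mid y)\bigr)\mathrm d\nu_\Y(y)$. Since $f$ is arbitrary in a separating class for $\nu_\Y$, I conclude that for $\nu_\Y$-a.e.\ $y$,
\begin{equation*}
\int_\U g(u)\,\mathrm d\nu(\cdot\mid y) \;=\; \int_\U g\bigl(T(y,u)\bigr)\,\mathrm d\eta(\cdot\mid y).
\end{equation*}
Choosing a countable separating family $\{g_k\}$ of bounded continuous functions (available because $\U\subseteq\Re^n$ is Polish) and taking the intersection of the corresponding null sets yields a single $\nu_\Y$-null set off of which the identity above holds simultaneously for all $g_k$, hence for all bounded continuous $g$, and therefore $T(y,\cdot)\#\eta(\cdot\mid y)=\nu(\cdot\mid y)$ as claimed.

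The only subtlety, and the main place to be careful, is the exchange of the ``a.e.'' quantifier with the choice of test function: one cannot naively read off a single almost-sure statement from uncountably many test functions. The standard cure, which I would invoke without grinding through details, is to reduce to a countable separating family using separability of $C_b(\U)$ in an appropriate topology, or equivalently to use that regular conditional distributions are uniquely determined by their integrals against a countable convergence-determining class. A smaller point is justifying the existence of regular conditional probabilities $\nu(\cdot\mid y)$ and $\eta(\cdot\mid y)$, which follows from the Polish structure of $\U$. No additional assumption on $\eta$ beyond $\eta_\Y=\nu_\Y$ is needed, so the independence structure $\eta=\nu_\Y\otimes\eta_\U$ used elsewhere in the paper is not required here.
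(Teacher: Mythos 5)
Your proof is correct. Note that the paper itself does not prove this lemma---it is quoted with a citation to \cite[Thm.~2.4]{baptista2024conditional}---and your argument (testing $T_{\lowerrighttriangle}\#\eta=\nu$ against product test functions, disintegrating over the common $\Y$-marginal, and resolving the a.e.\ quantifier with a countable separating class of bounded continuous functions on the Polish space $\U$) is precisely the standard route taken in that reference, including your observation that only $\eta_\Y=\nu_\Y$, and not the independence structure $\eta=\nu_\Y\otimes\eta_\U$, is needed.
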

Triangular maps are not unique  and various constructions are possible. Here, we consider their OT characterization  based on a quadratic conditional Monge problem  \cite{hosseini2025conditional, chemseddine2025conditional, baptista2025conditional}:
    \begin{equation}\label{prob:Conditional-Monge}
        \begin{aligned}
            &\inf_{T}  \int_{\Y \times \U} \frac{1}{2} \| u - T(y, u) \|_\U^2  \: \d\eta(y,u) \quad  \textnormal{subject to (s.t.)} \quad  T_{\lowerrighttriangle} \#\eta = \nu.
        \end{aligned}
    \end{equation}
 \editstart   
 Here we identify the minimizer $T$ of the problem above with the bottom component of its associated block-triangular map $T_{\lowerrighttriangle}$ of \Cref{def: triangular map}, so that the optimization problem~\eqref{prob:Conditional-Monge} is implicitly understood as ranging over the space of block-triangular transport maps.
 \editend
 \Cref{lem: triangular-maps-can-condition} together with the constraint 
imply $T(y, \cdot) \# \eta_\U = \nu(\cdot \mid y)$ for the solution of this problem as desired. 
The conditional Monge problem admits a Kantorovich relaxation:  
    \begin{equation}
    \label{prob: conditional kantorovich}
    \left\{
        \begin{aligned}
        &\inf_{\pi \in \Pi(\eta, \nu)} K(\pi), \quad K(\pi) \coloneqq 
        \int_{(\Y \times \U) \times (\Y \times \U)} c_{\chi}(z,v,y,u) \d \pi(z,v,y,u),\\
        & \textnormal{where} \: \: c_{\chi}(z,v,y, u) = \frac{1}{2}\| v- u\|_\U^2 + \chi(z, y), \:\:
        \chi(z,y) \coloneqq
        \begin{cases}
            0 \;\;\; y=z,\\
            +\infty \;\; \textnormal{otherwise}.
        \end{cases}
        \end{aligned}
    \right.
    \end{equation}
where $\Pi(\eta, \nu)$ denotes the space of couplings between $\eta, \nu$
\cite{villani2009optimal}. Mirroring the classic OT theory, the conditional Kantorovich problem
admits strong duality, which 
forms the foundation of our exposition.
Define the functionals
\begin{align}
\label{eq: marginal dual}
    &\cS(\phi \mid y) = \int \phi(y , v) \d \eta_\U(v) 
    + \int \phi^{*}(y, u) \d \nu( u \mid y),
    \\
\label{eq: dual}
   &\cS(\phi) = \int \mathcal{S}(\phi \mid y) \d\nu_{\Y}(y),
\end{align}
where 
$\phi^{*}(y,u) = \sup_{v}\{\langle v,u \rangle_\U - \phi(y,v)\}$ and $\textnormal{CVX}_\U$ denotes the set of functions $\phi : \Y \times \U \to \Re$ 
that are convex in the $\U$ coordinate.
Then the following strong duality holds:
\begin{proposition}[\cite{hosseini2025conditional} Prop. 3.6]\label{prop: duality}
Assume $C_{\eta,\nu} := \frac{1}{2} \left(\int \|v\|_\U^2\d\eta(y,v) + \int \|u\|_\U^2 \d\nu(y,u)\right)<\infty.$ 
Then 
        $\inf_{\pi\in\Pi(\eta,\nu)}K(\pi) = C_{\eta,\nu} - \inf_{\phi \in \textnormal{CVX}_\U} \mathcal{S}(\phi).$
\end{proposition}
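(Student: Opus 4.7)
The plan is to reduce \Cref{prop: duality} to the classical Brenier--Kantorovich semi-dual applied slicewise in $y$, and then glue the per-slice dualities via disintegration and a measurable selection. First I would observe that, since $\chi(z,y) = +\infty$ off the diagonal and $\eta_\Y = \nu_\Y$ (which holds because $\eta = \nu_\Y \otimes \eta_\U$), any $\pi \in \Pi(\eta,\nu)$ with $K(\pi) < \infty$ must be supported on $\{z = y\}$. Disintegrating such a $\pi$ against the common marginal $\nu_\Y$ yields a measurable family of conditional couplings $\pi_y \in \Pi(\eta_\U, \nu(\cdot\mid y))$ with
\begin{equation*}
K(\pi) \;=\; \int_\Y \left[\int \tfrac{1}{2}\|v-u\|_\U^2\,d\pi_y(v,u)\right] d\nu_\Y(y),
\end{equation*}
so minimizing slicewise gives $\inf_\pi K(\pi) = \tfrac{1}{2}\int_\Y W_2^2(\eta_\U, \nu(\cdot\mid y))\,d\nu_\Y(y)$.

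Next, for $\nu_\Y$-a.e.~$y$ I would apply the classical Brenier semi-dual between $\eta_\U$ and $\nu(\cdot\mid y)$. Expanding $\tfrac{1}{2}\|v-u\|^2 = \tfrac{1}{2}\|v\|^2 + \tfrac{1}{2}\|u\|^2 - \langle v,u\rangle$ and substituting $\tilde\phi_y(v) = \tfrac{1}{2}\|v\|^2 - \phi_y(v)$ and $\tilde\psi_y(u) = \tfrac{1}{2}\|u\|^2 - \psi_y(u)$ converts the Kantorovich admissibility constraint $\phi_y(v)+\psi_y(u) \le \tfrac{1}{2}\|v-u\|^2$ into $\tilde\phi_y(v) + \tilde\psi_y(u) \ge \langle v,u\rangle$. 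Pointwise minimization in $\tilde\psi_y$ forces $\tilde\psi_y = \tilde\phi_y^*$, and replacing $\tilde\phi_y$ by its convex envelope in $v$ restricts the infimum to $\textnormal{CVX}$ without changing its value. Integrating over $y \sim \nu_\Y$, the second-moment contributions collect to exactly $C_{\eta,\nu}$ by the finite-moment hypothesis, giving
\begin{equation*}
\inf_\pi K(\pi) \;=\; C_{\eta,\nu} - \int_\Y \inf_{\phi_y \in \textnormal{CVX}} \mathcal{S}(\phi_y \mid y)\,d\nu_\Y(y).
\end{equation*}

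The final step is the interchange $\int_\Y \inf_{\phi_y} \mathcal{S}(\phi_y \mid y)\,d\nu_\Y(y) = \inf_{\phi \in \textnormal{CVX}_\U} \mathcal{S}(\phi)$. The direction $\le$ is immediate from $\mathcal{S}(\phi) = \int \mathcal{S}(\phi\mid y)\,d\nu_\Y(y)$ by restricting any global $\phi$ to its slices. For $\ge$, I would invoke a measurable selection (e.g., the Kuratowski--Ryll-Nardzewski theorem) to produce, for every $\varepsilon>0$, a jointly measurable $\phi \in \textnormal{CVX}_\U$ whose slices are $\varepsilon$-optimal for $\mathcal{S}(\cdot\mid y)$. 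I expect this to be the main technical obstacle: one must place $\textnormal{CVX}_\U$ inside a Polish space for which $\mathcal{S}(\cdot\mid y)$ is lower semicontinuous and jointly measurable in $(y,\phi)$, and for which the $\varepsilon$-argmin correspondences have nonempty closed values with measurable graph. Alternatively one can bypass measurable selection by applying a global Fenchel--Rockafellar duality to $K$ as a linear functional on couplings and $\mathcal{S}$ on $\textnormal{CVX}_\U$, at the cost of separately verifying continuity and properness hypotheses that accommodate the singular cost on $\{z\ne y\}$. Either route completes the proof.
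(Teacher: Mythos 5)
Note first that the paper does not prove this statement itself: it is stated as a citation to \cite{hosseini2025conditional}, Prop.~3.6 (with the underlying variational formulation also appearing in \cite{carlier2016vector}), so there is no internal argument in this paper against which to compare your proposal. On its merits, your strategy is the natural one and almost certainly the same in spirit as the cited reference: the constraint $\chi(z,y)=+\infty$ off $\{z=y\}$ together with $\eta_\Y=\nu_\Y$ forces every finite-cost coupling onto the diagonal; disintegrating against $\nu_\Y$ yields conditional couplings $\pi_y\in\Pi(\eta_\U,\nu(\cdot\mid y))$; the per-slice quadratic cost then reduces, via the substitution $\tilde\phi_y(v)=\tfrac12\|v\|^2-\phi_y(v)$ (and its analogue in $u$), to the classical Brenier semi-dual in $v\mapsto\tilde\phi_y(v)$; and the second-moment terms aggregate to $C_{\eta,\nu}$. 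All of that is correct, including the observation that passing to the biconjugate of $\tilde\phi_y$ cannot increase the semi-dual, which is what justifies restricting to $\CVX_\U$. The one place your proposal is a sketch rather than a proof is exactly where you flag it: the interchange $\int_\Y \inf_{\phi_y}\mathcal{S}(\phi_y\mid y)\,\d\nu_\Y(y)=\inf_{\phi\in\CVX_\U}\mathcal{S}(\phi)$ has an easy ``$\le$'' direction, but the ``$\ge$'' direction requires a measurable selection of (near-)optimal slicewise Brenier potentials, and you only name the tool (Kuratowski--Ryll-Nardzewski, or alternatively a global Fenchel--Rockafellar duality on the singular-cost Kantorovich problem) without carrying out the verification of the required measurability/lower-semicontinuity hypotheses on $\CVX_\U$. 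The same gluing/measurability issue is also implicitly present in your earlier step $\inf_\pi K(\pi)=\tfrac12\int_\Y W_2^2(\eta_\U,\nu(\cdot\mid y))\,\d\nu_\Y(y)$, where the ``$\ge$'' direction is trivial from disintegration but ``$\le$'' needs a measurable family of optimal $\pi_y$ assembled into a coupling. So the outline is right and correctly locates the technical content, but a complete proof would still have to execute the selection argument you describe.
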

\editstart
\begin{proof}[Proof sketch]
We briefly outline the proof sketch since it is a straightforward 
consequence of \cite[Prop.~3.6]{hosseini2025conditional}: 
First, expand $\frac{1}{2}\|v-u\|_\U^2 = \frac{1}{2}\|v\|_\U^2 
+ \frac{1}{2}\|u\|_\U^2 - \langle v,u\rangle_\U$ 
and plug into $K(\pi)$ and separate $C_{\eta,\nu}$ from 
the terms that depend on $\pi$. Since $\chi(z,y) = +\infty$ 
for $y \neq z$, the dual constraint reduces to $\phi(y,u) - \psi(y,v) \leq \frac{1}{2}\|v-u\|_\U^2$, so the optimal $\psi$ is given by $\psi(y,v) = \sup_u\{\phi(y,u) - \frac{1}{2}\|v-u\|_\U^2\} = \phi^*(y,v)$. 
Restricting without loss of generality to $\phi\in\mathrm{CVX}_{\U}$ and substituting into the dual objective of \cite[Prop.~3.6]{hosseini2025conditional} then yields the result.
\end{proof}
\editend
Finally, we turn our attention to the existence and uniqueness of conditional Brenier maps. We will assume the following throughout the remainder of the paper. 
\begin{assumption}
\label{assump: lebesgue measure + finite moment} 
It holds that:
\begin{enumerate}
    \item     $\eta_{\U}$ admits a Lebesgue density  and has  convex support.\label{assump: reference lebesgue measure}
    \item  For each $y \in \mathcal{Y},$ 
    the conditionals $\nu(\cdot \mid y)$ admit Lebesgue densities.\label{assump: target lebesgue measure}
    \item  $\eta,$ $\nu$ have finite second $\U$-moments, i.e., 
    $\int \|v\|_\U^2 \d\eta(y,v) < \infty, \; \int \|u\|_\U^2 \d\nu(y,u) < \infty.$    
\end{enumerate}
\end{assumption}

\begin{proposition}
[{\cite[Prop.~3.8]{hosseini2025conditional}, \cite[Thm. 2.3]{carlier2016vector}}]
\label{prop: solvability of monge + conditional brenier}
Suppose \Cref{assump: lebesgue measure + finite moment}
holds. Then \eqref{prob:Conditional-Monge}
has a unique solution
$ T^\dagger(y, u)= \nabla_u \phi^\dagger(y, u)$\footnote{Here $\nabla_u$ indicates the gradient with respect to the $u$-coordinate.}
for almost every $u \in \U$ and $y \in \Y$, and
the potential $\phi^\dagger$ is the unique solution (up to  constant shifts) to the dual 
problem:
\begin{equation}\label{eq:phi-dagger-def}
    \phi^\dagger = \argmin_{\phi \in \CVX_\U} \; \cS(\phi).
\end{equation}
\end{proposition}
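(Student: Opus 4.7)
The plan is to reduce the conditional Monge problem to a $\nu_\Y$-indexed family of classical Monge problems on $\U$, apply Brenier's theorem fiberwise, and then glue the resulting potentials into a single $\phi^\dagger \in \CVX_\U$. First, I would fix $y \in \Y$ and consider the pair $(\eta_\U, \nu(\cdot \mid y))$ on $\U \subseteq \Re^n$. By \Cref{assump: lebesgue measure + finite moment}(1,2) both measures are absolutely continuous with respect to Lebesgue measure, $\eta_\U$ has convex support, and by Fubini applied to \Cref{assump: lebesgue measure + finite moment}(3), $\nu(\cdot \mid y)$ has finite second $\U$-moment for $\nu_\Y$-a.e.\ $y$. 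The classical Brenier theorem then yields a $\nu_\Y$-a.e.\ unique (up to additive constants) convex function $\phi^\dagger_y : \U \to \Re$ such that $\nabla \phi^\dagger_y \# \eta_\U = \nu(\cdot \mid y)$, and this $\phi^\dagger_y$ is the unique minimizer of the standard semidual functional, which coincides exactly with $\phi \mapsto \cS(\phi \mid y)$ in \eqref{eq: marginal dual} when restricted to functions depending only on $u$.

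Next I would assemble $\phi^\dagger(y, u) := \phi^\dagger_y(u)$ by pinning down the additive constant through a measurable selection, for instance by enforcing $\phi^\dagger(y, u_0) = 0$ at a fixed interior point $u_0$ of the support of $\eta_\U$. Joint measurability in $(y, u)$ of this choice follows from stability of the Brenier potential under weak convergence of the target conditionals together with standard measurable-selection arguments (this is essentially the content of \cite[Thm.~2.3]{carlier2016vector}, which may be invoked directly). With the selection made, $\phi^\dagger$ belongs to $\CVX_\U$ because convexity in $u$ is a pointwise property in $y$, and $T^\dagger(y, u) := \nabla_u \phi^\dagger(y, u)$ satisfies $T^\dagger(y, \cdot) \# \eta_\U = \nu(\cdot \mid y)$ for $\nu_\Y$-a.e.\ $y$. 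Integrating against $\nu_\Y$, the triangular extension $T^\dagger_{\lowerrighttriangle}$ pushes forward $\eta = \nu_\Y \otimes \eta_\U$ to $\nu$, so $T^\dagger$ is admissible in \eqref{prob:Conditional-Monge}.

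To verify optimality and uniqueness, I would exploit the disintegration $\cS(\phi) = \int \cS(\phi \mid y) \d \nu_\Y(y)$ from \eqref{eq: dual}. For any competitor $\phi \in \CVX_\U$, the unconditional Brenier optimality gives $\cS(\phi \mid y) \ge \cS(\phi^\dagger \mid y)$ for $\nu_\Y$-a.e.\ $y$, which integrates to $\cS(\phi) \ge \cS(\phi^\dagger)$; moreover equality forces $\phi(y, \cdot) = \phi^\dagger(y, \cdot) + c(y)$ almost surely, establishing uniqueness up to a $y$-measurable additive constant. Combining this with \Cref{prop: duality} and the fact that $\pi^\dagger := (\mathrm{id}, T^\dagger_{\lowerrighttriangle})_{\#} \eta$ is a feasible coupling attaining $K(\pi^\dagger) = C_{\eta,\nu} - \cS(\phi^\dagger)$ (a direct computation using convex duality $\phi^\dagger(y,v) + \phi^{\dagger *}(y, \nabla_v \phi^\dagger(y,v)) = \langle v, \nabla_v \phi^\dagger(y,v) \rangle_\U$) shows that $T^\dagger$ solves the Monge problem and the infimum is attained. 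The main technical obstacle is the joint measurability of the selection $y \mapsto \phi^\dagger_y$; once this is settled, everything else reduces to fiberwise Brenier plus Fubini.
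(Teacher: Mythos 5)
The paper does not prove this proposition itself; it is stated with citations to \cite[Prop.~3.8]{hosseini2025conditional} and \cite[Thm.~2.3]{carlier2016vector}, so there is no internal proof to compare against. Your sketch correctly reconstructs the standard argument behind those references: fiberwise Brenier applied to $(\eta_\U, \nu(\cdot\mid y))$ for $\nu_\Y$-a.e.\ $y$ (with finite conditional second moments extracted from \Cref{assump: lebesgue measure + finite moment}(3) via disintegration/Fubini), a measurable selection to glue the fiberwise potentials into a single $\phi^\dagger \in \CVX_\U$, disintegration of $\cS$ via \eqref{eq: marginal dual}--\eqref{eq: dual} to get optimality and uniqueness up to $y$-measurable shifts, and the Fenchel--Young computation verifying $K(\pi^\dagger) = C_{\eta,\nu} - \cS(\phi^\dagger)$ so that strong duality closes the loop. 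The one step you defer to the cited result — joint measurability of $y \mapsto \phi^\dagger(y,\cdot)$ after pinning the additive constant — is precisely the technical core of \cite[Thm.~2.3]{carlier2016vector}, so deferring there is consistent with what the paper itself does.
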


\subsection{Problem setup and motivation}
\label{sec:conditionalOT:problemstatement}
\Cref{prop: solvability of monge + conditional brenier} 
provides a natural avenue for estimating the conditional map $T^\dagger$ by 
first computing the dual potential $\phi^\dagger$, following approaches similar to those developed for standard OT problems \cite{baptista2025conditional,peyre2019computational}.
Here we consider the setting where  $\phi^\dagger$ is estimated from a set of empirical 
samples  $(y_i,u_i)_{i=1}^N \sim \nu$. 
Naturally, obtaining a ``good" approximation to 
$\phi^\dagger$ allows us to compute an 
approximation of $T^\dagger$, which can be used in downstream conditioning 
tasks such as the filtering problem outlined in \Cref{sec:intro}.
We further assume that the reference $\eta_\U$ is known and can be 
simulated easily, enabling access to a set of reference 
samples $(y_i, v_i)_{i=1}^N \sim  \eta$. To this end, we define 
the empirical estimator 
\begin{equation}\label{eq:empirical-min}
  \wh{\phi} := \argmin_{\phi \in \F} \; \wh{\cS}(\phi), 
\qquad     \wh{\cS}(\phi) := \frac{1}{N} \sum_{i=1}^{N} \phi(y_{i}, v_{i}) + \phi^{*}(y_{i}, u_{i}),
\end{equation}
where $\F \subset \CVX_\U$ denotes a model class (e.g., polynomials or
neural nets) over which the empirical optimization problem is solved. 
We note that the assumption that elements of $\F$ are convex in the 
$\U$ coordinate can be restrictive in practical implementations
(although it can be done with input convex neural nets for example  \cite{pmlr-v70-amos17b}). For the sake of our analysis, however, we assume this convexity constraint holds.

Our goal in the rest of this section is to obtain quantitative 
upper bounds on  $\| \nabla_u \wh \phi - \nabla_u \phi^\dagger \|^2_{L^2_\eta}$. This measure of the error is natural since 
it directly controls the quality of 
the approximate conditional measures 
$\wh \nu(\cdot \mid y) := \nabla_u \wh \phi(y, \cdot) \# \eta_\U$. 
To see this, consider a 
divergence $D: \PP(\U) \times \PP(\U) \to \Re_{\ge 0}$. Following 
\cite{baptista2023approximation}, we say $D$ is $\eta_\U$-stable if 
\begin{equation}\label{eq: metric D stability}
    D( T_1 \# \eta_\U, T_2 \# \eta_\U) \le C \| T_1 - T_2 \|_{L^2_{\eta_\U}},
    \quad \forall T_1, T_2 \in L^2_{\eta_\U},
\end{equation}
where $C(\eta_\U, D) > 0$ is a constant; a list of some common  stable divergences appears in~\cref{sec:divergences}.
It is verified in \cite{baptista2023approximation} that Wasserstein 
distances as well as maximum mean discrepancies are stable
under very general conditions. Then, for any $\eta_\U$-stable 
divergence, a straightforward application of the Jensen's inequality gives 
\begin{equation}\label{eq: conditional metric}
    D(\widehat{\nu}\|\nu):=\int_\Y D( \wh \nu(\cdot \mid y), 
    \nu(\cdot \mid y) ) \d \nu_\Y(y)
    \le C \| \nabla_u \wh \phi - \nabla_u\phi^\dagger \|_{L^2_\eta}.
\end{equation}
Here, $D(\widehat{\nu}\|\nu)$ quantifies the \emph{mean conditional estimation error} of $\widehat{\nu}$ versus $\nu$. 

\subsection{Statement of main results 
and overview of proofs}
\label{sec:conditionalOT-theory-main-results}

We present error bounds with two convergence rates: one with a slower rate of order $O(N^{-1/2})$ and another that can be as fast as $O(N^{-1})$. The slow rate proof is simpler and has fewer assumptions while the fast rate 
is technical and has stringent assumptions. 
\editstart
Our analysis largely relies on the techniques developed in~\cite{chewi2024statistical} for the statistical estimation of standard 
OT maps following the seminal paper~\cite{hutter2019minimax}
and its generalization in~\cite{divol2025optimal}.
We borrow two main simplifying assumptions from ~\cite{chewi2024statistical}
that allow us to present simpler and more interpretable proofs but 
we expect that our main results can be extended beyond these assumptions:
\begin{enumerate}
    \item We mainly consider a hypothesis class $\F$ that does not scale with the ambient dimension and for which the ``curse of dimensionality" can be avoided (e.g. parametric classes and sufficiently smooth constrained classes). Throughout the analysis, we give several comments on how 
    our analysis can be extended beyond this assumption using more 
     refined chaining bounds adopted in~\cite{divol2025optimal}. 

     \item We assume that $\F$ is a convex set of candidate potentials $\phi$ 
     such that $\phi(y,\cdot)$ are themselves strongly convex for any 
     value of $y$. However, a more sophisticated analysis akin to ~\cite{divol2025optimal} can allow us to drop the 
     assumption that $\F$ is convex. We comment more on this in the following sections.
\end{enumerate}
\editend

\subsubsection{Slow rate}\label{sec: slow rate} 
We begin by outlining our technical assumptions on the class $\F$ 
\editstart 
which are shared by both our slow and fast rate results.
\editend
\begin{assumption}\label{assump: slow rate} 
 It holds that:
\begin{enumerate} 
    \item \label{Uniform boundedness} 
    \editstart
    There exist $q_\Y,q_\U>0$ such that $\sup_{\phi\in\F}\|\phi\|_{L^\infty(w)}\leq R<\infty$ for the  weight function $w(y,u)\coloneq \langle y\rangle^{-q_\Y} \langle u\rangle^{-q_\U}$
    and $R\geq 1$
    \footnote{Assume $R \ge 1$ is not crucial but 
    it simplifies our statements later. Same applies to $K\geq \alphamin/4$.}.

    \item \label{Lipshitzness in y for class} There exists $\widetilde{q}>0$ such that $\sup_{\phi\in\F}\|\nabla_{yu}\phi\|\leq L_\F\langle y\rangle^{\widetilde{q}}$ on $\Y\times\U$ and $\nu_\Y(\langle\cdot \rangle^{\widetilde{q}+1})<\infty$.

    \item \label{moment condition for unbounded chaining} The mixed moment conditions $\eta(w^{-2})<\infty$ and $\nu(w_*^{-2})<\infty$ 
    hold where we defined the conjugate weight function $w_*(y,u) := \langle y\rangle^{-(q_\Y + (1+\widetilde{q})q_\U)}\langle u\rangle^{-q_\U}$.
    \editend
    
    \item \label{regularity}For all functions  $\phi \in \mathcal{F}$, $\phi$ is lower-semicontinuous, and $\phi(y,\cdot)$ is $\alpha(y)$-strongly convex and $\beta(y)$-smooth for $\nu_\Y$ a.e. $y \in \Y$ \footnote{Recall that a function $\phi: \U \to \Re$
    is $\alpha$-strongly convex if $\phi( t u + (1 - t) u') < 
    t \phi(u) + (1 - t) \phi(u') - \frac{\alpha}{2} \| u - u' \|^2_\U$ for all $u, u' \in \U$
    and $\beta$-smooth if $\phi(u') \le \phi(u) + \nabla \phi(u)(u' - u) 
    + \frac{\beta}{2} \| u - u' \|_\U^2$.}. 
    Moreover, there exist constants  $0 < \alpha_{min}, \beta_{max} < + \infty$ such that   $\sup_{y\in \Y}\beta(y)\leq \betamax$ and 
    $\inf_{y\in\Y}\alpha(y)\geq \alphamin$ for all $\phi \in \F$.

    \item \label{Entropy condition} The complexity of $\mathcal{F}$ is controlled by the log-covering number \cite[Def.~5.1]{wainwright2019high} bound
    \[\log \mathcal{N}\left(\delta,\mathcal{F}, \|\cdot\|_{L^\infty{\color{black}(w)}}\right)\leq C_{\F}\delta^{-\gamma}\log(1+\delta^{-1}),\]
    for all sufficiently small $\delta > 0$ and with  fixed constants $\gamma\in[0,1)$ and $C_\F \ge 1$. 
    \item \label{anchor point condition} 
    \editstart
    There exists a $K\geq \alphamin/4$ such that $\sup_{\phi\in\F}\|\nabla_{u}\phi(\cdot,0)\|_{L^1_{\nu_\Y}}\leq K<\infty$.
    \editend
\end{enumerate}
\end{assumption}
We note that most of these assumptions are common in the literature, however, some of them can be relaxed to further generalize our results. 
Assumptions 
\editstart
1, 3, and 5
\editend
are standard in the empirical process theory literature \cite[Ch.~8]{vershynin2018high}, 
\editstart
where they are used to control the Dudley integrals arising from the chaining technique~\cite{talagrand1996majorizing,dudley1969speed,dudley2010universal}, a central tool for deriving rates in empirical process theory. More specifically, throughout the proofs presented, such integrals have the form
\begin{align}
&\frac{1}{\sqrt{N}}\int_{0}^{R} \sqrt{\log \mathcal{N}(\delta, \mathcal{F}, \|\cdot\|_{L^\infty(w)})} \d\delta\, \ \textnormal{(in slow rate)}\, ,\label{eq: Dudley 1 for slow rate}\\
&\frac{1}{\sqrt{N}}\int_{0}^{r} \sqrt{\log \mathcal{N}(\delta, \mathcal{F}, \|\cdot\|_{L_\eta^2})} \d\delta + \frac{1}{N}\int_{0}^{R} \log \mathcal{N}(\delta, \mathcal{F}, \|\cdot\|_{L^\infty(w)}) \d\delta\, \ \textnormal{(in fast rate)}\, ,\label{eq: Dudley 2 for fast rate}
\end{align}
with $\sup_{\mathcal{F}}\|\phi\|_{L^\infty(w)}\leq R$ and $\sup_{\mathcal{F}}\|\phi\|_{L_\eta^2}\leq r$.
\editend
Assumption 5 can be verified for many common approximation classes \cite{park2023towards,yang2020functionapproximationreinforcementlearning}, 
\editstart 
including parametric classes ($\gamma = 0$) and sufficiently $s$-smooth RKHS and Sobolev balls ($\gamma = d/s$). Inspecting the upper bound on the integrals just presented provided by condition 3,~\eqref{eq: Dudley 2 for fast rate} converges precisely for $\gamma\in [0,1)$ while~\eqref{eq: Dudley 1 for slow rate} (and just the first integral of~\eqref{eq: Dudley 2 for fast rate}) stays finite for $\gamma\in [0,2)$. To handle cases where $\gamma$ is too big to keep the Dudley integrals finite, the standard procedure is to adopt more refined bounds truncating their lower ends away from $0$ by a properly calibrated $\delta(\gamma,N, C_\F)>0$.
\editend

\editstart
Assumption 4 allows us to state our bounds in a convenient form, but it can be replaced with uniform integrability conditions on $\beta(y)$ and $\alpha(y)$. Moreover, the notions of smoothness and strong convexity could be generalized allowing polynomial (rather than constant) envelopes bounding the potentials' Hessians without altering the core of the provided proofs. Condition 2 ensures sufficient control over the variability of the hypothesis class in the conditioning variable $y$. Finally,
with an adaptation to the conditional case of~\cite[Prop.8, Lem.21]{divol2025optimal} detailed in~\Cref{sec: verifying anchor point cond wolog}, condition 6 (which we state to simplify our proofs) can be dropped when assuming that $\eta_\U$ and $\nu$ are sub-exponential (a condition that automatically holds by~\Cref{assump: fast rate} for
example). 
We also highlight that, for uniformly bounded potentials defined on compact domains (i.e., $q_\Y = q_\U = 0$), conditions 2,3, and 6 are not needed and the final rate does not inherit any dependence on $L_\F,K$, or moments, see~\Cref{proof: thm: slow rate}.
\editend

With the above assumptions in place, we can state our first main theorem, which establishes a ``slow" 
convergence rate in terms of the sample size $N$. We refer to this as a ``slow" rate
since it states that the $L^2_\eta$-error between $\nabla_u \wh{\phi}$ and $\nabla_u \phi^\dagger$
is $\mathcal{O}(N^{-1/4})$, which is slower than the expected Monte-Carlo rate $\mathcal{O}(N^{-1/2})$.

\begin{theorem}
\label{thm: slow rate}
    Under~\cref{assump: slow rate}, the empirical estimator $\wh{\phi}$ satisfies
    \begin{equation}\label{disp-slow-rate}
    \mathbb{E}^{\textnormal{train}} \|\nabla_u \wh{\phi} - \nabla_u \phi^\dagger\|_{L_\eta^2}^2 \lesssim 
    \editstart
    \frac{\betamax}{\alphamin}\left[ 
    \inf_{\phi \in \mathcal{F}} \|\nabla_{u}\phi - \nabla_{u}\phi^{\dagger}\|_{L_{\eta}^2}^2 + 
    \sqrt{ \frac{\kappa \widetilde{K}^2 R^2 C_{\F}}{N}} \right],
    \editend
    \end{equation}
    where  
    \editstart
    $\kappa = \eta(w^{-2})+\nu(w_*^{-2})$, $\widetilde{K} = \left(K+ \left(1+ \nu_\Y(\langle \cdot\rangle^{\widetilde{q}+1})\right)L_\F\right)^{q_\U}$,
    \editend
    and the hidden constant is  independent of the other problem parameters. The expectation on the left hand side is taken with respect 
    to the empirical data.
\end{theorem}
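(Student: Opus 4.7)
The plan is to adapt the classical ``stability + empirical process'' strategy for OT potential estimation (as in \cite{hutter2019minimax}) to the conditional setting. The key observation is that the strong convexity and smoothness of elements of $\F$ let us sandwich the squared $L^2_\eta$ gradient error by the excess dual value $\cS(\phi)-\cS(\phi^\dagger)$; once this stability is in place, bounding the error of the empirical minimizer reduces to an M-estimator bias/variance decomposition combined with an empirical process bound on $\sup_{\phi \in \F}|\cS(\phi)-\wh\cS(\phi)|$.

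The first step is to establish the two-sided stability
\[
\tfrac{\alphamin}{2}\|\nabla_u\phi - \nabla_u\phi^\dagger\|_{L^2_\eta}^2 \le \cS(\phi) - \cS(\phi^\dagger) \le \tfrac{\betamax}{2}\|\nabla_u\phi - \nabla_u\phi^\dagger\|_{L^2_\eta}^2
\]
for every $\phi\in\CVX_\U$ satisfying the regularity part of \cref{assump: slow rate}. The argument is fibrewise: for $\nu_\Y$-a.e.\ fixed $y$, consider the interpolation $\phi_t(y,\cdot)=(1-t)\phi^\dagger(y,\cdot)+t\phi(y,\cdot)$, differentiate $t\mapsto \cS(\phi_t\mid y)$ twice, and use the $\alpha(y)$ and $\beta(y)$ bounds on the Hessian of the loss; integrating against $\nu_\Y$ and using the uniform constants $\alphamin, \betamax$ produces the display. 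This is the conditional analogue of \cite[Prop.~10]{hutter2019minimax}. Next, the M-estimator inequality: since $\wh\phi$ minimizes $\wh\cS$ over $\F$, for any $\phi_\star\in\F$,
\[
\cS(\wh\phi)-\cS(\phi^\dagger) \le [\cS(\phi_\star)-\cS(\phi^\dagger)] + 2\sup_{\phi\in\F}|\cS(\phi)-\wh\cS(\phi)|.
\]
Applying the upper stability bound to the first bracket and optimizing over $\phi_\star\in\F$ recovers the $\betamax\inf_\F\|\nabla_u\phi-\nabla_u\phi^\dagger\|_{L^2_\eta}^2$ approximation term of \eqref{disp-slow-rate}.

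The main technical step is bounding $\Expect^{\text{train}}\sup_{\phi\in\F}|\cS(\phi)-\wh\cS(\phi)|$ by $\sqrt{R^2\betamax^2 C_\F/N}$. Writing $f_\phi(y,v,u)=\phi(y,v)+\phi^*(y,u)$ and $Z_i=(y_i,v_i,u_i)$, standard symmetrization bounds the supremum by twice the Rademacher complexity $\Expect\sup_{\phi\in\F}|\tfrac{1}{N}\sum_i \varepsilon_i f_\phi(Z_i)|$. The elementary estimate $\|\phi_1^*-\phi_2^*\|_{L^\infty}\le \|\phi_1-\phi_2\|_{L^\infty}$ (valid on a compact effective domain determined by $\alphamin, R$ where the suprema defining the Legendre transform are attained) transfers the covering number bound from $\F$ to $\{f_\phi\}$ up to universal constants. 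Dudley's chaining inequality then bounds the Rademacher complexity by $N^{-1/2}\int_0^D \sqrt{\log\mathcal{N}(\delta,\F,\|\cdot\|_\infty)}\,\d\delta$, with envelope/diameter $D\lesssim R\betamax$ arising from the bound $|\phi^*(y,u)|\lesssim R+\|u\|^2/(2\alphamin)$ on the support of $\nu$. The hypothesis $\gamma\in[0,1)$ makes this integral convergent and of order $\sqrt{C_\F}$, giving the claimed rate.

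Combining the lower stability bound from Step~1 with the expectation of the M-estimator inequality and the empirical process bound yields \eqref{disp-slow-rate}. The main obstacle is the empirical process step: transferring the entropy hypothesis from $\F$ to the loss class $\{f_\phi\}$ requires controlling the Legendre transform $\phi^*$ uniformly in $y$ on a possibly unbounded $\U$, and the envelope/diameter appearing in Dudley's integral must be governed by the uniform constants $R,\alphamin,\betamax$ from \cref{assump: slow rate}. A secondary subtlety is that the stability inequalities of Step~1 are classical for ordinary OT but must be carefully lifted to the conditional dual $\cS(\cdot\mid y)$ and then integrated against $\nu_\Y$, which is precisely why \cref{assump: slow rate} imposes uniform constants instead of pointwise ones.
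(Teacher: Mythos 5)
Your overall architecture — a two-sided stability inequality linking $\|\nabla_u\phi-\nabla_u\phi^\dagger\|_{L^2_\eta}^2$ to the excess risk $\cS(\phi)-\cS(\phi^\dagger)$, followed by the M-estimator bias/variance split and a chaining bound on $\sup_{\phi\in\F}|\cS(\phi)-\wh\cS(\phi)|$ — is exactly the paper's strategy (\cref{cor: map stability}, then \cref{thm: generalization for slow rate} and \cref{lem: Conjugate class complexity}). However, your stability sandwich has the constants in the wrong places, and this is not a typo: it changes the final bound and signals a misunderstanding of where the strong convexity/smoothness enter.

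The correct inequality, proved in \cref{cor: map stability}, is
\[
\frac{1}{2\betamax}\|\nabla_u\phi-\nabla_u\phi^\dagger\|_{L^2_\eta}^2
\;\le\; \cS(\phi)-\cS(\phi^\dagger)
\;\le\; \frac{1}{2\alphamin}\|\nabla_u\phi-\nabla_u\phi^\dagger\|_{L^2_\eta}^2,
\]
with \emph{reciprocals} of $\alphamin,\betamax$, not the constants themselves. The reason is that the semidual $\cS$ contains $\phi^*$, and Legendre conjugation inverts regularity: $\alpha(y)$-strong convexity of $\phi(y,\cdot)$ becomes $\tfrac{1}{\alpha(y)}$-smoothness of $\phi^*(y,\cdot)$, and it is the smoothness of $\phi^*$ at the point $\nabla_u\phi^\dagger$ (pushed forward from $\eta_\U$) that produces the quadratic error term. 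Tracing through your version would give a variance coefficient $\propto 1/\alphamin$ in \eqref{disp-slow-rate} rather than the $\betamax$ that appears in the theorem; with the correct version, multiplying the chain of inequalities by $2\betamax$ yields exactly the claimed $\tfrac{\betamax}{\alphamin}$ bias coefficient and the $\betamax$ variance coefficient.

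Your proposed derivation of the stability inequality — differentiating $t\mapsto\cS(\phi_t\mid y)$ along the affine segment $\phi_t=(1-t)\phi^\dagger+t\phi$ and bounding the second variation — has a further gap: bounding $\tfrac{\d^2}{\d t^2}\cS(\phi_t\mid y)$ requires $\nabla_u^2\phi_t(y,\cdot)$ to stay between $\alphamin I$ and $\betamax I$ along the whole segment, which forces strong convexity and smoothness of $\phi^\dagger$ (or convexity of $\F$, \cref{assump: fast rate}(3)). Neither is part of \cref{assump: slow rate}. The paper's proof of \cref{cor: map stability} is deliberately fibrewise and one-sided — it applies $\tfrac{1}{\alpha(y)}$-smoothness of $\phi^*$ directly, never differentiating in $t$ and never touching $\phi^\dagger$'s regularity — precisely so that the slow rate needs no assumptions on $\phi^\dagger$ (see \cref{rem: comment on assumption 1}). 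And even if carried out with the extra hypotheses, the interpolation calculation would produce the $\tfrac{1}{\alpha},\tfrac{1}{\beta}$ constants, again confirming your sandwich is inverted. The remainder of your argument — symmetrization, the $1$-Lipschitz transfer $\|\phi_1^*-\phi_2^*\|_\infty\le\|\phi_1-\phi_2\|_\infty$ giving $\mathcal{N}(\delta,\F^*,\|\cdot\|_\infty)\le\mathcal{N}(\delta,\F,\|\cdot\|_\infty)$, and Dudley's integral with $\gamma\in[0,1)$ — does track the paper's steps.
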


The proof of this theorem relies on the following stability result: for any $\phi \in \mathcal{F}$, 
\begin{align}\label{eq: stab res}
 \frac{1}{2\betamax}\|\nabla_u\phi^{\dagger}-\nabla_u\phi\|_{L_{\eta}^2}^2\leq \cS(\phi) - \cS(\phi^\dagger) \leq
      \frac{1}{2\alphamin}\|\nabla_{u} \phi^{\dagger} - \nabla_{u} \phi\|_{L_{\eta}^2}^2,
\end{align}
which is stated and proved in~\cref{cor: map stability}.
We then proceed by obtaining a bound for 
$\mathbb{E} \cS (\widehat{\phi}) - \cS(\phi^\dagger)$, which decomposes into a 
bias term, concerning the approximation error $\inf_{\phi \in \F} 
\| \nabla_u \phi - \nabla_u \phi^\dagger \|_{L^2_\eta}$ which also appears in \eqref{disp-slow-rate},
as well as a variance term,  concerning the empirical process $\sup_{\phi \in \F} | \wh{\cS}(\phi) - \cS(\phi)|$.
The variance term is bounded using classical chaining results from empirical process 
theory. 
\editstart
In doing so, as briefly mentioned before, Item 5 of~\Cref{assump: slow rate} ensures that the obtained rate is $\gamma$ independent and therefore any dimension dependence carried through it is suppressed. The same argument goes through even in the case where $\gamma\in [1,2),$ at the expense of carrying an additional constant proportional to $1/(2-\gamma)$ in front of the rate (see the calculation of~\Cref{lem: prop const slow rate}). The case $\gamma>2$ can be handled by truncating the Dudley integral of~\eqref{eq: Dudley 1 for slow rate} yielding an even slower rate of $(\frac{C_\F}{N})^{1/\gamma}$ which suffers the curse of dimensionality for $\gamma$ scaling with respect to the ambient dimension (e.g. $\gamma = d/2$ for smooth classes). Furthermore, the constant $C_\F$ might depend on $d$ (or even on $N$ as discussed in the later sections) and its dependence enters the rate multiplicatively. 
For further discussion on the dimension dependencies appearing in our rates see ~\Cref{tab:cond-ot-rates} and the peripheral 
discussions.
Finally, we note that the proof strategy of~\Cref{thm: slow rate}  closely follows the approach of \cite[Thm. 3.7]{chewi2024statistical} 
\editend
for standard OT maps, with the key distinction that in our setting convex conjugacy applies only to the $u$-coordinate. The complete details are given in \Cref{proof: thm: slow rate}.

\begin{remark}\label{rem: comment on assumption 1}
    We highlight that the assumptions of \Cref{thm: slow rate} primarily concern the 
    approximation class $\F$ and do not explicitly involve the 
    \editstart
    potential $\phi^\dagger$ or the
    reference and target measures 
    $\eta, \nu$ besides mild moment conditions.
    \editend
    This flexibility enables the application of \Cref{thm: slow rate} 
    to many practical problems, including our filtering problems  in \Cref{sec:example-filtering}.
    \editstart
    Moreover, we highlight that 
    the strong convexity condition ~\Cref{assump: slow rate}(4) 
    is not needed for bounding the variance term in \Cref{thm: slow rate} and is only needed for bounding the objective bias 
    $\inf_{\phi\in\F}\cS(\phi) - \cS(\phi^\dagger)$ with the 
    map bias $\frac{1}{\alphamin} 
    \inf_{\phi \in \mathcal{F}} \|\nabla_{u}\phi - \nabla_{u}\phi^{\dagger}\|_{L_{\eta}^2}^2$. 
    \editend

\end{remark}

\subsubsection{Fast rate}\label{sec: fast rate} 
In this section, we  improve the statistical estimation rate in \eqref{disp-slow-rate}
from $\mathcal{O}(N^{-1/2})$ to $\mathcal{O}(N^{- \frac{2}{2 + \gamma}})$ by extending the proof technique of \cite[Thm. 3.15]{chewi2024statistical} to the conditional setting. 
To do so, we impose more stringent assumptions, in particular requiring 
$\eta$ and $\nu$ to satisfy certain Poincar{\'e} inequalities and $\F$ to satisfy stronger regularity conditions. 

\editstart
\begin{assumption}
\label{assump: fast rate}
It holds that:
\begin{enumerate}

    \item 
    Condition 2 in~\Cref{assump: slow rate} is extended to $\phi^\dag$, i.e., $   \|\nabla_{y u} \phi^\dag\| \leq L_\F\langle y\rangle^{\widetilde{q}}$ everywhere in $\Y\times\U$.

   \item 
   $\eta_\U$, 
   and $\nu(\cdot \mid y)$, for $\nu_\Y$ a.e. $y\in \Y$, satisfy the Poincar\'e inequality with constants $\CPI^{\eta_\U}$, 
   and $\sup_{y\in\Y}C_{\textnormal{PI}}^{\nu(\cdot \mid y)}\leq C_{\textnormal{PI}}^{\nu(\cdot \mid \Y)}$, respectively. 
   Moreover, the marginal $\nu_\Y$ satisfies the $\langle\cdot\rangle^{-\widetilde{q}}$-strengthened Poincaré inequality with constant $\CPI^{\nu_\Y,\widetilde{q}}$.

    \item \label{convexity of class}  $\mathcal{F}$ is convex, that is, if $\phi,\phi'\in\mathcal{F}$, then $\lambda\phi+(1-\lambda)\phi'\in\mathcal{F}$ for any $\lambda\in[0,1]$.
   
\end{enumerate}
\end{assumption}
\editend
We view conditions 1 and 2 as additional regularity conditions 
on the reference and target measures $\eta, \nu$ as well as $\phi^\dag$. 
\editstart
Condition 2 automatically strengthens condition 3 in~\Cref{assump: slow rate} since any distribution satisfying the Poincaré inequality is subexponential \cite{bobkov1997poincare} and therefore has finite moments of any order. In contrast to the unconditional setting, which requires the Poincar\'e inequality only for the source and target measures, condition 2 requires the Poincar\'e inequality on the source $\eta_\mathcal{U}$ and the target slices $\nu(\cdot \mid y)$ together with strengthened Poincaré inequality on the marginal $\nu_\Y$.  We remark that analogously to the unconditional setting, in the case where $\phi^{\dagger}(y, \cdot)$ is $\betamax$-smooth for $\nu_\Y$ a.e. $y\in \Y$, the Poincar\'e inequality condition on $\eta_\U$ directly implies the one on the target conditionals $\nu(\cdot\mid y)$ via $C_{\textnormal{PI}}^{\nu(\cdot\mid y)}\leq \betamax^2 C_{\textnormal{PI}}^{\eta_\U}$, see \Cref{lem: PI for conditionals}. 
Condition 3 simplifies the proofs and can be relaxed; see our 
comments below after \Cref{thm: fast rate}.
\editend
We can now state our next main result:

\begin{theorem}
\label{thm: fast rate}
\editstart
Suppose \Cref{assump: slow rate,assump: fast rate} hold. Then 
\begin{align*}
\mathbb{E}^{\textnormal{train}}\|\nabla_u \wh\phi - \nabla_u \phi^\dagger\|_{L_{\eta}^{2}}^2
& \lesssim  \frac{\betamax}{\alphamin} \inf_{\phi \in \F} \| \nabla_u \phi - \nabla_u \phi^\dagger \|_{L^2_\eta}^2+ C_{\textnormal{est.}}\left(\left(\frac{C_\F\log(N)}{N}\right)^{\frac{2}{2+\gamma}} \vee \frac{C_\F}{N}\right)\, ,
\end{align*}
\editend
where $C_\textnormal{est.} = C_\textnormal{est.}(C_{\textnormal{PI}}^{\eta_\U},
\editstart
C_{\textnormal{PI}}^{\nu_\Y,\widetilde{q}}
\editend
, C_{\textnormal{PI}}^{\nu(\cdot \mid \Y)},\alphamin, \betamax,L_\F, \editstart
R, \widetilde{K}
\editend
,\gamma) >0\,$ 
depends on the 
problem parameters \footnote{See \Cref{rem:C_sigma_claculation}  for details regarding 
the form of this constant and how it can be characterized.}\editstart,
$\widetilde{K} = \left(K+ \left(1+\nu_\Y(\langle \cdot\rangle^{\widetilde{q}+1})\right)L_\F\right)^{q_\U}$.
\editend
\end{theorem}

\editstart
See \Cref{proof: thm: fast rate} for the detailed proof of this theorem.
We note that, in the underparameterized regime $C_\F\ll N$, the variance term of the bound is simply dominated by $\left(\frac{C_\F\log(N)}{N}\right)^{\frac{2}{2+\gamma}}$ for the considered setting $\gamma\in[0,1)$.
Similar to the unconditional analyses \cite{chewi2024statistical, divol2025optimal},
\editend
the proof of the theorem uses localization techniques from the empirical process theory, in particular the one-shot method attributed to van de Geer \cite{van1987new,van2002m}. Using classical arguments, we obtain both expectation and 
high probability bounds on the uniform excess risk's empirical gap $\sup_{\phi \in \mathcal{F}_{\epsilon}} |(\cS (\phi) - {\cS}(\phi^{\dagger}))-(\wh\cS (\phi) - \wh{\cS}(\phi^{\dagger}))|$,
where $\F_\epsilon$ is a localized subset of $\F$ centered at the 
true solution $\phi^\dagger$. 
\editstart
The main theoretical departure from the unconditional case is encapsulated by ~\Cref{thm: PI extension consequence for COT,thm: centered semidual empirical estimation rate}, which provide the necessary conditions to apply the ``generic chaining"~\cite{talagrand1996majorizing} estimates at the core of the localization argument.
Doing so allows us to control
the above empirical
error by a term of order $\psi(\epsilon)N^{-1/2}$ together with another
term of order $N^{-1}$ arising from \eqref{eq: Dudley 2 for fast rate}.
\editend
Here, $\psi(\epsilon)$ is an increasing function of $\epsilon$ and
the fast rate can be obtained 
by carefully controlling $\psi(\epsilon)$ to obtain the fastest possible rate, 
which happens to be $\left(\frac{\log (N)}{N}\right)^{\frac{2}{2+\gamma}}$. 
\editstart
For $\gamma\in [0,1)$, we attain rates, modulo logarithmic terms, in the regime $[N^{-1}, N^{-2/3})$ that do not suffer the curse of dimensionality in the exponent of $N$. However, any potential dimension dependence in $C_\F$ and the 
Poincar\'e constants $\sqrt{C_{\textnormal{PI}}^{\eta_\U}},\sqrt{C_{\textnormal{PI}}^{\nu_\Y,\widetilde{q}}}, \sqrt{C_{\textnormal{PI}}^{\nu(\cdot \mid \Y)}}$ (absorbed in $C_\textnormal{est.}$) enters multiplicatively and scales at most linearly with respect to these constants; see \Cref{rem:C_sigma_claculation}.

Moreover, in the case of large parametric classes ($\gamma = 0$), such as neural networks and polynomial classes like the one considered in~\Cref{sec:conditionalOT:applications}, $C_\F$ generally carries also an $N$-dependence in order for the class $\F$ to achieve a certain approximation error to balance with the variance term of the derived rate and ultimately yield global rates. In this case, under additional assumptions beyond the simplified setting we consider (akin to ~\cite[condition C2 and Prop.3]{divol2025optimal}) $C_\F$ can be sharpened to be $C_\F^{1-2/d}$ resulting in the derivation of optimal global rates; see~\Cref{rem: where to adapt proof,rem:C_sigma_claculation}.

In the more general case of $\gamma\geq1$, the Dudley integrals of $\eqref{eq: Dudley 2 for fast rate}$ have to be handled via truncation  and the above localization argument  has to be adapted accordingly. This is indeed the method used in~\cite[Prop. A.2]{divol2025optimal} in the standard OT case. We underline that the results at the core of the conditional extension which we present when $\gamma\in[0,1)$ are disjoint from the chaining arguments and therefore can be 
easily adapted to the more technical chaining calculations of~\cite{divol2025optimal}. For this reason, we expect that the conditional rates can also be attained for $\gamma\geq1$ by incorporating the calculations of~\cite{divol2025optimal}; see~\Cref{rem: where to adapt proof,rem: conditional extension core}. Then we expect the rate to be $(\frac{C_\F}{N})^{2/(2+\gamma)} \vee (\frac{C_\F}{N})^{1/\gamma}$ which, in the underparameterized regime $C_\F\ll N$ for $\gamma>2$, matches the 
slow rate; see also ~\Cref{tab:cond-ot-rates}. 
\editend

Furthermore, we highlight that the convexity requirement on $\F$ in~\Cref{assump: fast rate}(3) is a convenient simplification. However, as done in~\cite{divol2025optimal}, this condition can be completely dropped with a few modifications to the proof by restricting the localized subset $\F_\epsilon$. 
The strong convexity of the candidate potentials in~\Cref{assump: slow rate}(4) can be relaxed too. Indeed, the successful control of the excess risk fluctuations over $\F_\epsilon$ can be achieved without strong convexity when $\eta$ and $\nu$ are compactly supported with the same limitation on the bias representation pointed out in~\Cref{rem: comment on assumption 1}. For measures supported on unbounded domains, only the strong convexity of a subset of $\F$ that approximates $\phi^\dag$ is needed.

The remaining conditions 
in 
\editstart
\Cref{assump: fast rate}
\editend
mainly appear through technical arguments in the proof. In 
particular, Poincar\'e inequalities are used widely to derive bounds on $\phi - \phi^\dagger$ from bounds on $\nabla_u \phi - \nabla_u \phi^\dagger$.
\editstart
Understanding whether the Poincar\'e conditions are necessary 
 is  an open question
in both the conditional and standard OT settings.
\editend

\subsection{An example application}\label{sec:conditionalOT:applications}
We now present an example application of our theorems
concerning the transport of 
a log-concave reference $\eta_\U$ to a target with log-concave conditionals, using maps parameterized in the Legendre polynomial basis.

Let $\Y = [-1, 1]^m$ and $\U = [-1, 1]^n$ so that
$\Y \times \U = [-1, 1]^d$ along with the measures  $\eta, \nu \in \PP(\Y\times \U)$ 
so that  $\nu = \exp( - \W( y, u) ) \d y \d u$, and 
$\eta = \nu_\Y \otimes \eta_\U$ with $\eta_\U = \exp( - \V(u)) \d u$ 
    with the  potentials $\V, \W$ satisfying:
\begin{enumerate}[label=(\roman*)]
    \item $\| \V \|_{C^2(\U)} < + \infty,$ and 
    $\| \W \|_{C^2(\Y \times \U)} < + \infty$.
    \item  $ \vartheta_1  I \preceq \nabla^2 \V( u ) \preceq \vartheta_2 I$ for all $u\in\U$ with constants $\vartheta_1,
    \vartheta_2>0$. 
    \item  $\theta_1 I\preceq \nabla^2 \W( y, u) \preceq \theta_2 I $.
    \footnote{Here $\nabla^2\W = \begin{bmatrix}
\nabla_y^2\W & \nabla_{yu}^2\W \\
\nabla_{uy}^2\W & \nabla_u^2\W
\end{bmatrix}$. $\nabla_u^2$ denotes the Hessian in the $u$
variable for a fixed $y$. We define similarly $\nabla_y^2$. Finally, $\nabla^2_{uy}=(\nabla^2_{yu})^\top$ corresponds to the mixed derivatives.}
    for all $(y,u) \in \Y\times \U$ with constants $\theta_1,\theta_2> 0$.
\end{enumerate}
Now choose
$\alphamin \le \sqrt{\frac{\vartheta_1}{\theta_2}}$ 
and $\betamax \ge \sqrt{\frac{\vartheta_2}{\theta_1}}$ and consider 
the approximation class 
\begin{equation*}
\begin{aligned}
    \F = \F(M)  = 
    \Bigg\{ & \phi: \Y \times \U \to \Re \;  \Bigg| \; 
    \phi(y, u) = \sum_{ 1 \le |\pmb \alpha|_\infty \le M } c_{\pmb\alpha} 
    p_{\pmb \alpha} (y, u),  \\
    & \quad | \phi(y, u) | \le R \quad  \text{and} \quad  \alphamin I \preceq \nabla_u^2 \phi(y, u) \preceq \betamax I, \quad \forall (y,u) \in \Y \times \U 
    \Bigg\},
\end{aligned}
\end{equation*}
where $\pmb \alpha = (\alpha_1, \dots, \alpha_d)$ is a multi-index 
of non-negative integers with $| \pmb \alpha |_\infty := \max_j |\alpha_j|$
and $p_{\pmb\alpha}$
are the $d$-dimensional Legendre polynomials of degree 
$|\pmb\alpha|$, normalized in $L^2(\Y \times \U)$; see \cite[Ch.~1]{adcock2022sparse}. Note that 
condition $| \pmb \alpha | \ge 1$ indicates that we are discarding the zero order 
Legendre polynomial, i.e., constant shifts. 

First, we check that \Cref{thm: slow rate} is applicable in this example. 
Observe that 
\editstart
the uniformly bounded class $\F$ 
defined on a compact domain
readily satisfies \Cref{assump: slow rate}(1--4,6) with $q_\Y = q_\U = \widetilde{q} = 0$; in fact, in this case \Cref{assump: slow rate}(2,3,6) are not really needed towards applying~\Cref{thm: slow rate}.
Since $\F$ is finite dimensional and we assumed that $\phi \in \F$ are uniformly bounded, by \cite[5.6]{wainwright2019high} we also obtain \Cref{assump: slow rate}(5) with $\gamma = 0$ and $C_\F \propto (M +1)^d$. Hence \Cref{thm: slow rate} is 
applicable. 
\editend

Let us now control the approximation bias 
$\inf_{\phi\in\F} \| \nabla_u \phi - \nabla_u \phi^\dagger \|_{L^2_\eta}$. To do so 
we need to show certain regularity properties of $\phi^\dagger$. Since Brenier potentials 
are unique up to constant shifts, we simply assume $\int_{\Y \times \U} \phi^\dagger(y, u) dy du = 0$.
Further applying \Cref{thm: caffarelli contraction thrm plus convexity} for the transport 
from $\eta_\U$ to each conditional $\nu(\cdot \mid y)$ (thanks to (ii) and (iii) and the observation that  any principal sub-matrix of a positive semi-definite matrix is positive semi-definite) 
yields
\begin{equation}\label{phi-dagger-upper-lower-bounds}
    \sqrt{\frac{\vartheta_1}{\theta_2}}  I
    \preceq \nabla^2_u \phi^\dagger(y, \cdot) \preceq
    \sqrt{\frac{\vartheta_2}{\theta_1}} I, \qquad \forall y \in \Y.
\end{equation}
Using the Poincar\'e inequality we infer that $ \sup_y \| \phi^\dagger(y, \cdot) \|_{H^2(\U)} < + \infty$. 
Observe that this is not yet enough for us to obtain a rate for the 
approximation error, since $\phi^\dagger$ may have jump discontinuities in $y$. In this regard, we recall~\cite[Cor.~1.2]{gonzalez2024linearization} where the regularity of the mapping $y\mapsto\nabla_{u}\phi^{\dagger}(y,u)$ was investigated through the linearized Monge-Amp\'ere equation. By the application of this result on a compact domain, condition (i) is sufficient\footnote{Indeed, it was shown that if $\mathscr{V}\in C^{1,\gamma_\mathscr{V}}$ (denoting 
the class of $\gamma_\mathscr{V}$-Holder differentiable functions), $\mathscr{W}(y,\cdot)\in C
^{1,\gamma_\mathscr{W}}$, for $\gamma_\mathscr{V}<\gamma_\mathscr{W}$, and the mapping $y\mapsto \mathscr{W}(y,\cdot)$ is $C^1$ for $\nu_\Y$-a.e. $y\in\Y$, it follows that $\nabla_{u}\phi^\dagger(\cdot,u)\in C^1$ and consequently $\nabla_{yu}\phi^{\dagger}(\cdot,\cdot)\in C(\Y\times\U)$. In this way,~\cite{gonzalez2024linearization} improves the regularity result of~\cite{hosseini2025conditional} which yielded at most $1/2$-H\"older regularity in $y$ on compact domains under a weaker continuity assumption on the $\nu$ conditionals.} to ensure
\Cref{assump: fast rate}(1) for the mixed derivatives of $\phi^{\dagger}$ and the  Lipschitz condition
\begin{align}\label{eq: Lipshitzness in y for compact domain}
\| \nabla_u \phi^\dagger(y, \cdot) - \nabla_u \phi^\dagger(y', \cdot) \|_{L^2_{\eta_\U}}\le L_\Y \| y - y' \|_\Y\, .
\end{align}
for some $L_\Y>0$. In other words, $\phi^\dagger$ belongs to the mixed regularity space $C^{1}( \Y ; H^1(\U))$,
i.e., Lipschitz functions taking values in $H^1(\U)$.

Since the projection of $\phi^\dagger$ onto Legendre polynomials of degree $M$ converges in $C^2$ thanks to Caffarelli's regularity theorem \cite{caffarelli1992regularity} (see also \cref{prop:BT-regularity}), 
we infer that for sufficiently large $M$, it is guaranteed that the projection is strongly convex and smooth, and thus belongs to $\mathcal{F}$. Hence, the standard approximation rates for 
Legendre polynomials can be applied in that regime. 
To this end, combining the approximation rates of Legendre polynomials 
for $H^1$-functions \cite{canuto1982approximation} as well as Lipschitz continuous 
functions \cite{devore1993constructive}, yields the bound,  
\begin{equation}\label{eq:legendre-F-rate}
    \inf_{\phi \in \F} \| \nabla_u \phi - \nabla_u \phi^\dagger \|_{L^2_\eta} 
    \le C M^{-1} \| \phi^\dagger \|_{C^{1}(\Y; H^1(\U))},
\end{equation}
provided $M$ is large enough.
Let us now verify the conditions of \Cref{thm: fast rate}. 
Thanks to conditions (ii) and (iii)  we readily have the Poincar\'e inequalities of~\Cref{assump: fast rate}(2) after a direct application of the Brascamp-Lieb inequality; see \cref{thm: BL inequality} and also~\cite{brascamp1976extensions}. Since $\nu_\Y=\int_\U \nu(\cdot, \d u)$, by a corollary of the Brascamp-Lieb inequality, \eqref{eq: BL hessian bound}, we get
\begin{align*}
\W_\Y\coloneqq -\log(\nu_\Y)\, ,\qquad \frac{\int_\U \left(\nabla_y^2\W-\nabla_{yu}^2\W(\nabla_u^2\W)^{-1}\nabla_{uy}^2\W\right)(y,u) \d \nu(y,u)}{\int_{\U} \d \nu(y,u)}\preceq \nabla^2 \W_\Y(y)\, .
\end{align*}
By the Schur decomposition of $\nabla^2\W$~\cite[App. 5.5]{boyd2004convex},
this then amounts to the log-strong concavity of $\nu_\Y$ and the Poincar\'e condition of~\Cref{assump: fast rate}
\editstart
(2) (with $\widetilde q = 0$)
for $\nu_\Y$. Condition (3) is 
\editend
satisfied automatically 
for our polynomial model class. 
\editstart
 Thus, applying~\Cref{thm: fast rate}, we have that, ignoring logarithmic factors and $M$-independent constants,
\begin{align}\label{eq: global rate polynomials}
\mathbb{E}^{\textnormal{train}}\|\nabla_u \wh\phi - \nabla_u \phi^\dagger\|_{L_{\eta}^{2}}\lesssim M^{-1} + C_\F N^{-\frac{1}{2}}\approx M^{-1}+M^{d}N^{-\frac{1}{2}}\, .
\end{align}
Optimizing over $M$ as a function of $N$ yields the global rate $\mathbb{E}^{\textnormal{train}}\|\nabla_u \wh\phi - \nabla_u \phi^\dagger\|_{L_{\eta}^{2}}\lesssim N^{-\frac{1}{2(d+1)}}$. This rate is suboptimal and can be sharpened to $\mathbb{E}^{\textnormal{train}}\|\nabla_u \wh\phi - \nabla_u \phi^\dagger\|_{L_{\eta}^{2}}\lesssim N^{-\frac{1}{2(d-1)}}$ under additional assumptions, by tightening $C_\mathcal{F}$ to $C_\mathcal{F}^{1-\frac{2}{d}}$ in~\eqref{eq: global rate polynomials}, as explained in~\Cref{rem: where to adapt proof,rem:C_sigma_claculation}. Furthermore, when $\phi^\dag\in C^s(\mathcal{Y}; H^s(\mathcal{U}))$, the curse of dimensionality in the above rates can be mitigated: optimizing over $M$ under this additional regularity yields the improved global rate $\mathbb{E}^{\textnormal{train}}\|\nabla_u \wh\phi - \nabla_u \phi^\dagger\|_{L_{\eta}^{2}}\lesssim N^{-\frac{s}{2(d+s-2)}}$. 
Thus if $s$ is large the rate approaches $N^{-1/2}$.
Verifying this additional 
regularity can be challenging for the OT maps, however, 
it can be verified for Knothe-Rosenblatt rearrangements 
under further assumptions; see \cite[Rem.~2.19]{santambrogio2015optimal}.
\editend

\begin{remark}\label{rem: missing condition to prove}
\editstart
 The Poincar\'e condition of~\Cref{assump: fast rate}(2) with 
 a dimension-independent constant can be restrictive in practice, 
 as it effectively limits us to log-concave measures. However,  allowing the Poincar\'e constant to depend on the dimension
 enables access to broad classes of measures. Indeed, this class includes measures supported on bounded and connected Lipschitz domains~\cite[Sec. 5.8.1]{evans1997partial} and further cases on unbounded domains; see for example~\cite[Sec. 6.7.5]{maz2013sobolev}. For these measures, the conclusion of~\Cref{thm: fast rate} still holds with a proportionality constant scaling with dimension which may be 
 tolerable. 
\editend
\end{remark}

\editstart
\begin{remark}\label{main-rem: SIR-dim-dependence}
    It is enlightening to consider how the curse of dimensionality 
    manifests for OT maps compared to more classic methods such as 
    importance sampling, which underlies the SIR particle filter. 
    Here the posterior $\nu(\cdot \mid y)$ is approximated 
    with an empirical measure $\widehat \nu(\cdot \mid y) 
    = \sum_{i=1}^N \upsilon_i \delta_{v_i} $ where the weights 
    $\upsilon_i$ are estimated using a likelihood-ratio. In the 
    limit $N \to \infty$, due to law of large numbers, $\widehat \nu(\cdot \mid y)$ converges to $\nu(\cdot \mid y)$ and it is 
    possible to obtain an asymptotic convergence rate ~\cite[Thm. 9.1.8]{cappe2009inference}. Notably, assuming $\nu(\cdot \mid y)$ 
    has an independent product structure, one can show \cite{rebeschini2015can}:  
    \begin{equation}\label{eq:SIR-curse-display}
    \sup_{\psi \in \Psi}\,
    \sqrt{\,\mathbb{E}\!\left[\,\bigl|\nu(\cdot|y)(\psi) - \widehat{\nu}(\cdot|y)(\psi)\bigr|^2\,\right]}\approx O(\frac{C^d}{\sqrt{N}}),
    \end{equation}
where $\Psi$ is the set of bounded test functions $\psi: \U \to \Re$.
Hence, the number of particles needs to scale exponentially with 
$d$ to obtain a small error.
Our OT bounds deviate from this case in two important 
directions: 
\begin{enumerate}
    \item The curse of dimensionality in importance sampling is 
    due to the estimation of the weights $\upsilon_i$ which becomes exponentially more difficult as the state and observation dimension increase. The curse of dimensionality that the OT approach exhibits is due to the trade-off between the bias and variance terms in the 
    error bounds that are effectively tied to the regularity of 
    the ground truth maps and the approximation classes, e.g., there 
    is no curse if the maps are infinitely smooth. 
    
    \item Additionally, our OT bounds are stronger than \eqref{eq:SIR-curse-display} as they can be used to bound various stable 
    divergences between $\nu(\cdot \mid y)$ and $\widehat \nu(\cdot \mid y)$ such as the Wasserstein distance. In this case, \eqref{eq:SIR-curse-display} can be viewed as a bound for a weaker norm as the 
    supremum appears outside of the expectation compared to, 
    for example, Wasserstein-1 distances.
    
\end{enumerate}
\end{remark}
\editend

\section{Error analysis for OTF}\label{Sec:Error analysis for OTF}
In this section, we extend the error analysis of \Cref{Sec:Error analysis for conditional OT} to the filtering problem outlined in \Cref{sec:intro}, focusing on the setting where the maps 
$\wh T_t$ in \cref{intro:empirical-transport-bayesian-update} are empirical conditional Brenier maps. Our goal is to control the error between the predicted distribution $\widehat\pi^t$ and the true posterior $\pi^t$. 
We outline our setup in \Cref{sec:OTF-setup} followed by a summary 
of our main results in \Cref{sec:OTF-theory-main-results}. 
An example application is given in \Cref{sec:example-filtering} 
followed by a discussion of idealized algorithms in \Cref{subsec: OTF particle algorithms}.

\subsection{Problem setup}\label{sec:OTF-setup}
We continue with the notation and terminology introduced in \Cref{sec:intro},
where the sequence of posteriors $\{ \pi^t \}_{t \ge 0}$ follows a recursive update defined by $\pi^t = (\mathcal{B}_{Y_t} \circ \mathcal{A}) [\pi^{t-1}]$,
with $\mathcal{A}$ representing the state/dynamic update and 
$\mathcal{B}_{Y_t}$ the Bayesian update for the data $Y_t$. 
In this light, we introduce the notation
\begin{equation}\label{eq:filter-operator}
\T_{y}:= \mathcal{B}_{y} \circ \mathcal{A}, \qquad 
\T_{Y_{t,\tau}} = \T_{Y_t} \circ \dots \circ \T_{Y_{\tau +1}}, \qquad t > \tau,
\end{equation}
with the convention that $\T_{Y_{t,t}}$ is the identity map. 
With this notation, we have $\pi^t = \T_{Y_t} [\pi^{t-1}]$, and, by extension, $\pi^t = \T_{Y_{t, \tau}} [\pi^\tau]$. We emphasize that the maps $\T_{Y_t}$  and $\T_{Y_{t, \tau}}$
are defined in terms of the observed process $\mathscr{Y}_t$ and are hence random objects. 

A numerical filtering algorithm can then be viewed as a method to approximate $\T_y$ with the maps $\widehat\T_y$. Similarly, $\widehat\T_{Y_{t, \tau}}$ defines the approximate sequence of posteriors $\widehat\pi^t = \widehat\T_{Y_t}[ \widehat\pi^{t-1}]$. To analyze the error of such approximations, it is necessary for the true sequence of posteriors to satisfy stability properties 
that prevent errors from accumulating over time. To this end, 
we introduce the following {\it filter stability} assumption.
In what follows, $D: \mathbb{P}(\U) \times \mathbb{P}(\U) \to \Re_\ge 0$ 
denotes a statistical divergence in the parlance of \cref{eq: metric D stability}.

\begin{assumption}[Filter stability]
    \label{def: uniformly geometrically stable filter}
   The true filter $\T_{Y_{t, \tau}}$ is uniformly geometrically stable with respect to the 
    divergence $D$ in the sense that there exist constants $\lambda \in (0,1)$ and $\Cfs>0$ such that, for any pair of measures $\pi_1$, $\pi_2$ and $t>\tau\geq 0$, it holds that     \begin{equation}\label{eq:filter-stability} 
    D(\T_{Y_{t,\tau}}[\pi_1], \T_{Y_{t,\tau}}[\pi_2]) \leq \Cfs
     (1-\lambda)^{t-\tau+1}D(\pi_1,\pi_2), \quad \text{uniformly with respect to } 
     Y_{t,\tau}. \footnote{We note that some of our 
     main theoretical results later can also be shown with 
     {
     \cref{eq:filter-stability} holding in expectation} rather than uniformly.}
    \end{equation}
\end{assumption}

 This stability assumption is familiar in the error analysis of SIR filters~\cite{del2001stability}. It is valid if the
 transition kernel $a$ is bounded below and above by  positive constants, i.e., there exists a constant $c$ such that the density 
 of $a$  satisfies
\editstart
 \begin{align}\label{eq: minorization condition}
 \sqrt{c} \leq a(u'\mid u) \leq 1/\sqrt{c}, \qquad \forall u,u'\in \U,
 \end{align}
 then~\cref{eq:filter-stability} 
 holds with $\Cfs = c^{-1}$~\cite[Thm.~3.7]{rebeschini2014nonlinear}, where the stability is established with respect to the total variation
distance.
\editend
 This condition leads to the so-called  minorization condition that ensures geometric 
 ergodicity of Markov processes~\cite{meyn2012markov}, which is shown to be inherited by
 the filter. We note that this condition is  relatively strong and can only be verified for a restricted class of systems, e.g.,
 when $\U$ is compact. A complete characterization of  systems with uniformly geometrically stable filters is a 
 challenging  open problem in the field.
 More insight is available for the weaker notion of asymptotic stability of the filter, which holds when the system is  
 ``detectable'' in a sense that is suitable for nonlinear stochastic dynamical systems~\cite{van2010nonlinear,chigansky2009intrinsic,van2009observability,kim2022duality,kim2024stability}; see also~\cite{crisan2011oxford,kim2024stability} for a survey of filter stability results. 

\subsection{Statement of main results and overview of proofs}\label{sec:OTF-theory-main-results}

The OT representation of conditional distributions, as established in~\cref{prop: solvability of monge + conditional brenier}, allows us to replace the Bayesian update with the push-forward of conditional Brenier maps. In particular, the filtering  operator
$\T_y$, for any probability measure $\pi$, can be written as
\begin{align}\label{eq: OTF exact operator}
\text{(exact filter operator)} \qquad \T_y[\pi] \coloneqq \nabla_{u} \phi^{\dagger}[\pi](y,\cdot) \# \eta_{\U}[\pi] ,  \quad \textnormal{where} \quad \eta_{\U}[\pi] \coloneqq \mathcal{A}\pi\, .
\end{align}
Here, $\phi^{\dagger}[\pi]$ denotes the operator acting on $\pi$ that yields the minimizer of the dual problem~\cref{eq:phi-dagger-def}, where the marginal distributions $\eta$ and $\nu$ are defined in terms of $\pi$ according to 
\begin{align}\label{eq: COT measures notation} 
\eta[\pi]\coloneqq \nu_\Y[\pi]\otimes \eta_{\U}[\pi]\, ,\qquad \nu[\pi](y, u) \coloneqq h( y \mid u)\eta_\U[\pi](u).
\end{align}
As the notation implies, the probability measures and the conditional Brenier potential depend 
\editstart
implicitly 
\editend
on the input measure $\pi$. This formulation 
motivates the following approximation:
\begin{align}\label{eq: OTF approximation operator}
\text{(approximate filter operator)}\qquad \widehat \T_y[\pi]\coloneqq  \nabla_{u} \widehat \phi[\pi](y,\cdot) \# \eta_{\U}[\pi]. 
\end{align}
The difference is that, instead of $\phi^\dagger[\pi]$, we use $\widehat \phi[\pi]$, which is the minimizer of the empirical dual problem~\cref{eq:empirical-min} constructed from independent samples of the marginal distributions
defined in \cref{eq: COT measures notation}. With ~\cref{eq: OTF exact operator} and \cref{eq: OTF approximation operator} at hand, the exact and approximate posteriors evolve as
\begin{align*}
\pi^t &= \T_{Y_t} [\pi^{t-1}] = \nabla_u \phi^\dagger[\pi^{t-1}]\#\eta_\U[\pi^{t-1}], \quad
\widehat \pi^t &= \widehat \T_{Y_t} [\widehat \pi^{t-1}] = \nabla_u \wh{\phi}[\wh{\pi}^{t-1}]\#\eta_\U[\wh{\pi}^{t-1}]\, ,\quad \widehat \pi^0 = \pi^0\,.
\end{align*}
To simplify notation, let us write
\begin{align}\label{eq: different phis}
{\phi_t^\ddagger}\coloneqq \phi^\dagger[\pi^{t-1}]\, ,\qquad
\phi_t^\dagger \coloneqq \phi^\dagger[\widehat \pi^{t-1}]\, ,\qquad 
\widehat \phi_t \coloneqq \widehat \phi[\widehat \pi^{t-1}]\,,
\end{align}
as well as 
\begin{align*}
\eta^t\coloneqq \eta[ \pi^{t-1}]\, ,\qquad \nu^t \coloneqq \nu[ \pi^{t-1}]\, 
 ,\qquad \widehat \eta^t\coloneqq \eta[ \widehat \pi^{t-1}]\, ,\qquad \widehat \nu^t \coloneqq \nu[ \widehat \pi^{t-1}]\,.
\end{align*}
We highlight that ${\phi_t^\ddagger}$ corresponds to the exact filter  applied to the exact posterior $\pi^{t-1}$, while $\phi_t^\dagger$ corresponds to the exact filter  applied to the approximated posterior $\wh{\pi}^{t-1}$. \Cref{fig:diag1} shows the various distributions 
and transport maps involved in our analysis.

\editstart
\begin{figure}[t]
\footnotesize
\centering
\begin{tikzpicture}[
  arr/.style={->, >=stealth, thick},
]
  \node[anchor=west] (A12) at (5, 2)   {$\widehat \pi^t = \widehat {\T}_{Y_t}[\widehat\pi^{t-1}]$};
  \node (A21) at (0, 1)   {$\widehat \pi^{t-1}$};
  \node[anchor=west] (A22) at (5, 1)   {$\T_{Y_t}[\widehat\pi^{t-1}]$};
  \node (A31) at (0, 0)   {$\pi^{t-1}$};
  \node[anchor=west] (A32) at (5, 0)   {$\pi^t = \T_{Y_t}[\pi^{t-1}]$};

  \draw[arr] (0.5,0) -- (5,0) node[midway, above] {$\nabla_u \phi^{\ddagger}_t$};
  \draw[arr] (0.5,1) -- (5,1) node[midway, above] {$\nabla_u \phi^{\dagger}_t$};
  \draw[arr] (0,1.3) |- (5,2)
    node[pos=0.78, above] {$\nabla_u \widehat \phi_t$};
    
  \draw[decorate, decoration={brace, amplitude=6pt}]
    (A31.west) -- (A21.west)
    node[midway, left=3pt] {$D(\widehat{\pi}^{t-1}, \pi^{t-1})$};
  \draw[decorate, decoration={brace, mirror, amplitude=6pt}]
    (A32.east) -- (A12.east)
    node[midway, right=3pt] {$D(\widehat{\pi}^t, \pi^t)$};

  \node[draw, fit=(A12)(A22), inner xsep=-2pt, inner ysep=-2pt] {};
\end{tikzpicture}
\caption{Diagram illustrating the different potentials in~\eqref{eq: different phis} and the quantities we  bound. Arrows depict 
transformations between pertinent measures via OT maps.
The conditional OT  error analysis  from~\Cref{sec:conditionalOT-theory-main-results} is used to bound the distance between the 
objects within the box.}
\label{fig:diag1}
\end{figure}
\editend

With this setup, our goal is to bound the approximation error of the posteriors:
\begin{align}\label{eq: exact filter error}
\text{{(exact mean filtering error)}}\qquad  \mathbb{E}^{\textnormal{train}}_{1:t} \mathbb{E}_{\mathscr{Y}_t} D( \wh \pi^t, 
    \pi^t)\, ,
\end{align}
where the outer expectation is with respect to the empirical samples used to train/compute the potentials $\{\widehat\phi_\tau\}_{\tau=1}^t$, and the inner expectation is with respect to the observed data
\editstart
$\mathscr{Y}_t = \{Y_1,Y_2,\dots,Y_t\}$, with $Y_{\tau}\sim\nu_{Y}^{\tau}$, the observation marginal of the true joint $\nu^{\tau}(y,u)=h(y\mid u)\mathcal{A}\pi^{\tau-1}$. 
\editend
However, as will be clear in \Cref{lem: div between true and estimated posterior}, controlling this error is challenging. We therefore consider the approximate error:
\begin{align}\label{eq: approximate filter error}
  (\text{approximate mean filtering error})\quad  
 \Expect_{1:t}^{\textnormal{train}} \mathbb{E}_{ \widehat{\mathscr{Y}}_{t}}  D( \wh \pi^t, \pi^t)\, ,
 \end{align}
where the inner expectation is now taken with respect to the observations $\widehat{\mathscr{Y}}_{t} = \{\widehat Y_{1},\widehat Y_{2},...,\widehat Y_{t}\}$, where $\widehat Y_\tau\sim \widehat \nu_\Y^\tau$ for all $\tau\leq t$, and $\widehat \nu^\tau = h( y \mid u) \mathcal{A} \widehat\pi^{\tau-1}$.
We will bound both of these quantities 
\editstart
which effectively allows us to control the notion of mean conditional estimation error introduced in~\eqref{eq: conditional metric} extended to the filtering setup. We stress that $\wh \pi^t$ is not an empirical measure and corresponds to the underlying distribution from which the particles of the implemented  OTF algorithm are sampled. At the same time, $\wh \pi^t$ is not the approximation obtained in the mean field limit (analogous to~\eqref{def:tilde-phi}) and depends on the sample size $N$ of the particle ensemble propagated in the algorithm in order 
to set up problem~\eqref{eq:empirical-min} at each timestep. 
\editend

Our first goal is to relate the  exact and approximate mean filtering errors to the difference between the empirical estimate of the potential function $\wh{\phi}_t$ and the exact potential $\phi^\dagger_t$. For this, we make the following assumption on the divergence $D$, extending the notation of  \cref{eq: metric D stability}.

\begin{assumption}\label{assump: divergence}
    The divergence $D$ is uniformly stable over $\mathbb{P}^2(\U)$, the space of probability measures with bounded second moments, in the sense that
    \begin{equation*}
        D( T_1 \# \mu, T_2 \# \mu) \le C_D \| T_1 - T_2 \|_{L^2_\mu}, \qquad 
        \forall \; T_1, T_2 \in L^2_\mu 
         \text{ and }
        \mu \in \mathbb{P}^2(\U),
    \end{equation*}
    for a uniform constant $C_D >0$.
    Moreover, $D$ satisfies the triangle inequality:
\begin{align}\label{eq: triangle ineq.}
D(\mu_1,\mu_3)\leq D(\mu_1,\mu_2)+D(\mu_2,\mu_3)\ ,\qquad \forall \mu_1,\mu_2,\mu_3\in\mathbb{P}(\U)\,.
\end{align} 
\end{assumption}
In \Cref{sec:divergences}, we present 
several commonly used divergences that satisfy \cref{assump: divergence}, including the Wasserstein-2 metric, as well as the maximum mean discrepancies with appropriate
kernels. We are now ready to present a lemma that allows us to control 
the mean filtering errors in terms of the estimation error of the Brenier potentials.

\begin{lemma}
\label{lem: div between true and estimated posterior}
 Suppose  \Cref{def: uniformly geometrically stable filter,assump: divergence} holds. Then, the exact and approximate mean filtering errors satisfy the bounds
\begin{align}\label{eq:first-bound-exact-filter-error}
     \Expect_{1:t}^{\textnormal{train}} \mathbb{E}_{ {\mathscr{Y}}_{t}}D( \wh \pi^t, 
    \pi^t)\, &\leq C_D \Cfs\sum_{\tau=1}^{t} (1-\lambda)^{t-\tau} \Expect_{1:\tau}^{\textnormal{train}} \mathbb{E}_{{\mathscr{Y}}_{\tau-1}} \|\nabla_u \widehat \phi_\tau - \nabla_u \phi^{\dagger}_{\tau}\|_{L_{\widetilde \eta^\tau}^2},\\\label{eq:first-bound-approx-filter-error}
        \Expect_{1:t}^{\textnormal{train}} \mathbb{E}_{ \wh{\mathscr{Y}}_{t}}D( \wh \pi^t, 
    \pi^t)\, &\leq C_D \Cfs\sum_{\tau=1}^{t} (1-\lambda)^{t-\tau} \Expect_{1:\tau}^\textnormal{train} \mathbb{E}_{\wh{\mathscr{Y}}_{\tau-1}} \|\nabla_u \widehat \phi_\tau - \nabla_u \phi^{\dagger}_{\tau}\|_{L_{\wh{\eta}^\tau}^2}, 
\end{align}
where $\widetilde \eta^\tau \coloneqq \wh{\eta}^\tau_\U \otimes \nu^\tau_\Y$, and we 
recall $\widehat \eta^\tau = \widehat \eta^\tau_\U \otimes \widehat \nu^\tau_\Y$. 
\end{lemma}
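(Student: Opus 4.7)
The plan is to control the total filtering error by a telescoping decomposition that isolates, at each time step $\tau$, the ``one-step error'' caused by replacing the true conditional Brenier potential $\phi^\dagger_\tau$ with its empirical surrogate $\wh{\phi}_\tau$, and then to use \Cref{def: uniformly geometrically stable filter} to damp the propagation of this error over the remaining $t-\tau$ steps. The one-step error will in turn be reduced to an $L^2$ gradient difference via the divergence stability in \Cref{assump: divergence}. The only difference between the exact and approximate bounds is which measure on $\Y$ weights the outer integral in the resulting $L^2$ norm.

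For the telescoping, I would define $\mu^t_\tau \coloneqq \T_{Y_{t,\tau}}[\wh{\pi}^\tau]$ for $\tau = 0,1,\dots,t$, noting that $\mu^t_0 = \T_{Y_{t,0}}[\pi^0] = \pi^t$ (since $\wh{\pi}^0 = \pi^0$) and $\mu^t_t = \wh{\pi}^t$. Using $\T_{Y_{t,\tau-1}} = \T_{Y_{t,\tau}} \circ \T_{Y_\tau}$, the triangle inequality \eqref{eq: triangle ineq.} and the filter stability assumption yield
\begin{equation*}
D(\wh{\pi}^t, \pi^t) \;\le\; \sum_{\tau=1}^{t} D(\mu^t_\tau, \mu^t_{\tau-1})
\;\le\; \Cfs \sum_{\tau=1}^{t} (1-\lambda)^{t-\tau}\, D\!\left(\wh{\pi}^\tau,\, \T_{Y_\tau}[\wh{\pi}^{\tau-1}]\right),
\end{equation*}
after absorbing the extra factor of $(1-\lambda)$ into the constant $\Cfs$ (the $\tau=t$ term corresponds to the identity map and contributes $1 \le \Cfs$). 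This reduces the problem to bounding the one-step errors $D(\wh{\pi}^\tau, \T_{Y_\tau}[\wh{\pi}^{\tau-1}])$.

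Both measures inside this one-step divergence are pushforwards of $\wh{\eta}^\tau_\U$ under the maps $\nabla_u \wh{\phi}_\tau(Y_\tau,\cdot)$ and $\nabla_u \phi^\dagger_\tau(Y_\tau,\cdot)$, respectively, so \Cref{assump: divergence} gives
\begin{equation*}
D\!\left(\wh{\pi}^\tau,\, \T_{Y_\tau}[\wh{\pi}^{\tau-1}]\right) \;\le\; C_D \bigl\|\nabla_u \wh{\phi}_\tau(Y_\tau,\cdot) - \nabla_u \phi^\dagger_\tau(Y_\tau,\cdot)\bigr\|_{L^2_{\wh{\eta}^\tau_\U}}.
\end{equation*}
Taking the conditional expectation over $Y_\tau$ given $\mathscr{Y}_{\tau-1}$ and using Jensen's inequality (moving the expectation inside the square root) converts the $L^2_{\wh{\eta}^\tau_\U}$ norm into an $L^2$ norm with respect to $\wh{\eta}^\tau_\U \otimes \mu_\Y$, where $\mu_\Y$ is the conditional law of $Y_\tau$. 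In the exact filter setting $Y_\tau \mid \mathscr{Y}_{\tau-1} \sim \nu^\tau_\Y$, producing the mixed product $\widetilde{\eta}^\tau = \wh{\eta}^\tau_\U \otimes \nu^\tau_\Y$; in the approximate filter setting $\wh{Y}_\tau \mid \wh{\mathscr{Y}}_{\tau-1} \sim \wh{\nu}^\tau_\Y$, giving the fully empirical $\wh{\eta}^\tau = \wh{\eta}^\tau_\U \otimes \wh{\nu}^\tau_\Y$. Finally, taking the training expectation and observing that $\wh{\phi}_\tau$ (and $\phi^\dagger_\tau$) depends only on training samples $1\!:\!\tau$ and observations up to time $\tau-1$ yields the two claimed bounds.

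The main obstacle, and the point where I would be most careful, is the bookkeeping of which measure governs each expectation: the factorization $\wh{\eta}^\tau_\U \otimes \nu^\tau_\Y$ versus $\wh{\eta}^\tau_\U \otimes \wh{\nu}^\tau_\Y$ is precisely the reason why the exact and approximate filter errors have different bounds, and it stems from the fact that $\wh{\eta}^\tau_\U = \mathcal{A}\wh{\pi}^{\tau-1}$ is built from the approximate posterior (hence the $\wh{\eta}^\tau_\U$ factor in both cases), whereas the $y$-marginal depends on whether the observations are generated from the true or the approximate data-generating process. A secondary technical point is the application of Jensen's inequality with the correct conditioning filtration, which requires the standard independence/measurability facts that $\wh{\phi}_\tau$ is independent of $Y_\tau$ given $\mathscr{Y}_{\tau-1}$ (and analogously in the approximate case); this is automatic under the idealized sampling scheme of \Cref{subsec: OTF particle algorithms}, where fresh training samples are drawn at every time step.
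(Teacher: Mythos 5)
Your proposal is correct and follows essentially the same route as the paper: the identical telescoping decomposition $D(\T_{Y_{t,\tau}}[\wh{\pi}^\tau],\T_{Y_{t,\tau-1}}[\wh{\pi}^{\tau-1}])$ combined with filter stability, then divergence stability applied to the two pushforwards of $\wh{\eta}^\tau_\U$, and finally Jensen's inequality over $Y_\tau$ (resp. $\wh{Y}_\tau$) conditioned on the past to produce the product measures $\widetilde\eta^\tau$ and $\wh{\eta}^\tau$. Your bookkeeping of which $\Y$-marginal appears in each bound matches the paper's argument exactly.
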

The proof of~\cref{lem: div between true and estimated posterior} follows the approach of~\cite[Prop. 2]{al-jarrah2023optimal}. 
\editstart
As illustrated by~\Cref{fig:diag2}, 
\editend
we decompose the filtering errors via the repeated use of the triangle inequality coupled with the uniform geometric stability of the filter in~\cref{eq:filter-stability} and the stability of $D$ in~\eqref{eq: metric D stability}. The details  are supplied in~\Cref{subsec: proof triangle inequality filtering decomposition}.

\editstart
\begin{figure}[h]
\footnotesize
\centering
\begin{tikzpicture}[
  arr/.style={->, >=stealth, thick},
]

  \node (N11) at (0, 0)  {$\wh\pi^0 =\pi^0$};
  \node (N31) at (2, 1)  {$\wh\pi^1$};
  \node (N41) at (4.5, 2)  {$\wh\pi^2$};
  \node (N51) at (5.7, 3.3)  {$\hdots$};
  \node (N52) at (7.7, 3.3)  {$\wh\pi^{t-1}$};
  \node[anchor=west] (N61) at (9.3, 4.3) {$\wh\pi^t = \wh\T_{Y_t}[\wh\pi^{t-1}]$};

  \node (N12) at (2, 0)  {$\pi^1$};
  \node[anchor=west] (N13) at (9.3, 0) {$\pi^t = \T_{Y_{1,t}}[\pi^1]$};
  \node (N32) at (4.5, 1)  {$\T_{Y_{2}}[\wh \pi^1]$};
  \node[anchor=west] (N33) at (9.3, 1.45) {$\T_{Y_{2,t}}[\wh \pi^2]$};
  \node[anchor=west] (N21) at (9.3, 0.45) {$\T_{Y_{1,t}}[\wh \pi^1]$};
  \node[anchor=west] (N53) at (9.3, 3.3) {$\T_{Y_{t}}[\wh \pi^{t-1}]$};
  
  \draw[arr] (0,0.2) |- (1.6,1) node[pos=0.75, above]  {$\wh\T_{Y_1}$};
  \draw[arr] (2,1.5) |- (3.7,2) node[pos=0.80, above]  {$\wh\T_{Y_2}$};
  \draw[arr] (4.5,2.5) |- (5.4,3.3) node[pos=0.75, above]  {$\wh\T_{Y_3}$};
  \draw[arr] (6.1,3.3) -- (7.2,3.3) node[midway, above]   {$\wh\T_{Y_{t-1}}$};
  \draw[arr] (7.7,3.6) |- (9.3,4.3) node[pos=0.80, above] {$\wh\T_{Y_t}$};
  \draw[arr] (0.7,0) -- (1.6,0) node[midway, above] {$\T_{Y_1}$};
  \draw[arr] (2.4,1) -- (3.7,1) node[midway, above] {$\T_{Y_2}$};
  \draw[arr] (8.2,3.3) -- (9.3,3.3) node[midway, above] {$\T_{Y_t}$};
  \draw[arr] (2.4,0) -- (9.3,0) node[pos=0.35, sloped, above, inner sep=1pt] {$\T_{Y_{1,t}}$};
  \draw[arr] (5.3,1) -- (9.3,1) node[midway, sloped, above, inner sep=1pt] {$\T_{Y_{2,t}}$};
  \draw[arr] (2.5,0.9) to[out=-13, in=180] node[pos=0.6, above, inner sep=1pt] {$\T_{Y_{1,t}}$} (9.3,0.45);
  \draw[arr] (5.3,2) to[out=-15, in=180] node[pos=0.6, above] {$\T_{Y_{2,t}}$} (9.3,1.45);

\node at (10, 2.5) {$\vdots$};
\node[anchor=west] (N99) at (9.3, 1) {$\T_{Y_2,t}[\T_{Y_1,}[\wh \pi^1]]$};

\node[draw, fit=(N12)(N31), inner xsep=-2pt, inner ysep=-1pt] {};
\node[draw, fit=(N32)(N41), inner xsep=-2pt, inner ysep=-1pt] {};
\node[draw, fit=(N53)(N61), inner xsep=-2pt, inner ysep=-1pt] {};

\draw[decorate, decoration={brace, mirror, amplitude=6pt}]
    (N32.east) -- ($(N32.east)+(0,1)$)
    node[midway, right=3pt] {$D_2$};
\draw[decorate, decoration={brace, mirror, amplitude=6pt}]
    (N12.east) -- ($(N31.east)$)
    node[midway, right=3pt] {$D_1$};
\draw[decorate, decoration={brace, mirror, amplitude=6pt}]
    (N13.east) -- ($(N13.east)+(0,0.45)$)
    node[midway, right=3pt] {$\frac{\Cfs}
     {(1-\lambda)^{-t}}D_1$};
\draw[decorate, decoration={brace, mirror, amplitude=6pt}]
($(N99.east)$) -- ($(N99.east)+( 0,0.45)$)
node[midway, right=3pt] {$\frac{\Cfs}
     {(1-\lambda)^{-(t-1)}}D_2$};
\draw[decorate, decoration={brace, mirror, amplitude=6pt}]
($(N61.east)-(0,1)$) -- ($(N61.east)$)
node[midway, right=3pt] {$
     D_t$};

\end{tikzpicture}
\caption{\color{black}Diagram illustrating the error decomposition of~\Cref{lem: div between true and estimated posterior}. 
Quantities enclosed in boxes, denoted as $D_\tau$, are controlled by the stability inequality~\eqref{eq: metric D stability} and then 
bounded by the conditional OT  error analysis  of~\Cref{sec:conditionalOT-theory-main-results}. The magnitude of these errors decays exponentially in time by the uniform geometric filter stability of~\Cref{def: uniformly geometrically stable filter}.}
\label{fig:diag2}
\end{figure}
\editend

\Cref{lem: div between true and estimated posterior} enables us to apply \Cref{thm: slow rate,thm: fast rate} to bound the  mean filtering errors. However, this is not directly possible for the exact error due to the discrepancy in the probability distribution that appears in the $L^2$-norms on the 
right hand side. In particular, the application of \Cref{thm: slow rate,thm: fast rate}  concludes a bound for $\|\nabla_u \widehat \phi_\tau - \nabla_u \phi^{\dagger}_{\tau}\|_{L_{\wh{ \eta}^\tau}^2}$ while the bound \cref{eq:first-bound-exact-filter-error} is in terms of $\|\nabla_u \widehat \phi_\tau - \nabla_u \phi^{\dagger}_{\tau}\|_{L_{\widetilde \eta^\tau}^2}$, i.e., we have a mismatch in the $\Y$-marginals. On the other hand, 
the correct $L^2$-norm appears on the right hand side of \cref{eq:first-bound-approx-filter-error}, motivating our choice to work with the approximate mean filtering error.

\subsubsection{Approximate mean filtering error}
We now present our first main theorem that provides quantitative 
control of the approximate mean filtering error.

\begin{theorem}
\label{thm: approx filter error bound}
 Suppose \Cref{def: uniformly geometrically stable filter,assump: divergence} hold. Then, 
 for any $t > 0$ it holds that
   \begin{align}\label{eq: approx filter error bound}
\mathbb{E}&_{1:t}^{\textnormal{train}}\mathbb{E}_{\wh{\mathscr{Y}}_{t}}D(\pi^t,\wh{\pi}^t)\leq 
\Cf  e_t,
    \end{align}
    where 
    $\Cf = \frac{C_D \Cfs}{\lambda}$ and  $e_t$ takes different forms 
    depending on further assumptions:
    \begin{enumerate}
        \item (Slow rate) If \Cref{assump: slow rate} holds, then
    \editstart
        \begin{align*}
            e_t \lesssim\frac{\betamax}{\alphamin}\left[ 
    \max_{1\leq\tau\leq t}\inf_{\phi \in \mathcal{F}} \|\nabla_{u}\phi - \nabla_{u}\phi^{\dagger}_\tau\|_{L_{\wh \eta^\tau}^2}^2 + 
    \sqrt{ \frac{\kappa\widetilde{K}^2 R^2 C_{\F}}{N}} \right].
        \end{align*}
    \editend
    \item (Fast rate) If \Cref{assump: slow rate,assump: fast rate} hold for $\widehat\eta^\tau$ and $\widehat \nu^\tau$ for all  $\tau\leq t$, then
        \editstart
            \begin{align*}
            e_t \lesssim  \max_{1 \le \tau \le t} 
            \left[ \frac{\betamax}{\alphamin}  \inf_{\phi \in \F} \| \nabla_u \phi - \nabla_u \phi^\dagger_\tau \|_{L^2_{\wh \eta^\tau}}^2+ C_{\textnormal{est.}}(\tau) \left(\left(\frac{C_\F\log(N)}{N}\right)^{\frac{2}{2+\gamma}} \vee \frac{C_\F}{N}\right)\, \right] ,
            \end{align*}
        \editend
        where $C_{\textnormal{est.}}(\tau) >0$ is the constant appearing 
        in \cref{thm: fast rate} that may change in time due to variations in the Poincar\'e constants of $\widehat\eta_\U^\tau$, $\wh \nu^\tau(\cdot\mid y)$, and $\widehat\nu_\Y^\tau$.

    \end{enumerate}
\end{theorem}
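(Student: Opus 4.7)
The plan is to combine the per-step filtering error decomposition in Lemma~\ref{lem: div between true and estimated posterior} (specifically \eqref{eq:first-bound-approx-filter-error}) with the conditional OT estimation bounds of Theorems~\ref{thm: slow rate} and~\ref{thm: fast rate}, applied at each time step $\tau$. The observation that makes this composition clean is that, conditional on the $\sigma$-algebra $\mathcal{G}_{\tau-1}$ generated by the past observations $\widehat{\mathscr{Y}}_{\tau-1}$ together with the training data used to construct $\widehat\phi_1,\dots,\widehat\phi_{\tau-1}$, the approximate posterior $\widehat\pi^{\tau-1}$ is deterministic, and hence so are the conditional OT marginals $\widehat\eta^\tau,\widehat\nu^\tau$ and the exact conditional Brenier potential $\phi^\dagger_\tau=\phi^\dagger[\widehat\pi^{\tau-1}]$. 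Under this conditioning, the estimator $\widehat\phi_\tau$ is precisely the empirical minimizer \eqref{eq:empirical-min} for the conditional OT problem between the now-fixed measures $\widehat\eta^\tau$ and $\widehat\nu^\tau$, built from a fresh batch of $N$ samples independent of $\mathcal{G}_{\tau-1}$.

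With this setup, Theorem~\ref{thm: slow rate} applies directly to the conditional problem, yielding an upper bound on the conditional second moment $\Expect^{\textnormal{train}}_{\tau}[\|\nabla_u\widehat\phi_\tau-\nabla_u\phi^\dagger_\tau\|_{L^2_{\widehat\eta^\tau}}^2\mid \mathcal{G}_{\tau-1}]$ composed of an approximation bias over $\F$ and a statistical variance term. Jensen's inequality bounds the conditional first moment by the square root of the second, and the elementary inequality $\sqrt{a+b}\le\sqrt{a}+\sqrt{b}$ cleanly splits the bias and variance contributions, producing the two summands visible in $e_t$ after taking square roots. Taking outer expectations and controlling the (still random) bias uniformly by $\max_{1\le\sigma\le t}\inf_{\phi\in\F}\|\nabla_u\phi-\nabla_u\phi^\dagger_\sigma\|_{L^2_{\widehat\eta^\sigma}}$ yields the slow rate summand appearing in the theorem. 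For the fast rate case, Theorem~\ref{thm: fast rate} is invoked in place of Theorem~\ref{thm: slow rate} under the hypothesis that Assumption~\ref{assump: fast rate} holds for $(\widehat\eta^\tau,\widehat\nu^\tau)$ almost surely for every $\tau\le t$; the constant $C_{\textnormal{est.}}(\tau)$ carries the dependence on the per-step Poincar\'e constants and the bias parameter $C_{\textnormal{bias}}$, and is absorbed into a maximum over $\tau$.

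The final step is to substitute these per-step estimates into the weighted sum in \eqref{eq:first-bound-approx-filter-error} and collapse the geometric factor via $\sum_{\tau=1}^{t}(1-\lambda)^{t-\tau}\le 1/\lambda$, which produces the overall prefactor $\Cf = C_D\Cfs/\lambda$ stated in the theorem. The main obstacle I anticipate is the fast rate case: although the proof skeleton is identical, its hypothesis requires Assumption~\ref{assump: fast rate} to hold for the random, filter-dependent measures $\widehat\eta^\tau$ and $\widehat\nu^\tau$ at every time step, which is a delicate propagation-of-regularity condition along the filter recursion. I therefore encode this as a standing assumption in the statement of the fast rate case, deferring the verification of such uniform conditions under concrete filtering models to the example application of \Cref{sec:example-filtering}. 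A secondary, but minor, technical point is the measurability argument needed to apply Fubini in order to swap outer expectations with conditional expectations given $\mathcal{G}_{\tau-1}$; this follows from the i.i.d.\ construction of the training samples at each step and the fact that $\widehat\phi_\tau$ is a measurable function of $(\widehat\pi^{\tau-1}, \text{training data}_\tau)$.
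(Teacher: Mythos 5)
Your proposal is correct and follows essentially the same route as the paper's proof: apply the decomposition \eqref{eq:first-bound-approx-filter-error}, bound each term $\Expect_{1:\tau}^{\textnormal{train}}\|\nabla_u\widehat\phi_\tau-\nabla_u\phi^\dagger_\tau\|_{L^2_{\widehat\eta^\tau}}$ via Jensen and \Cref{thm: slow rate} or \Cref{thm: fast rate} applied to the (conditionally fixed) measures $\widehat\eta^\tau,\widehat\nu^\tau$, take the maximum over $\tau$, and collapse the geometric sum by $\sum_{\tau=1}^t(1-\lambda)^{t-\tau}\le 1/\lambda$ to obtain $\Cf=C_D\Cfs/\lambda$. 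Your explicit conditioning on $\mathcal{G}_{\tau-1}$ and the $\sqrt{a+b}\le\sqrt{a}+\sqrt{b}$ split merely spell out steps the paper leaves implicit, so there is nothing substantively different to compare.
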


The proof of the theorem follows by the application of \Cref{thm: slow rate,thm: fast rate} to the right hand side of \cref{eq:first-bound-approx-filter-error}; see~\Cref{subsec: proof of approximate error bound} for full details. Note that both fast and slow rate results require time-uniform control over the approximation error $\inf_{\phi \in \F}\|\nabla_{u}\phi - \nabla_{u}\phi^{\dagger}_{\tau}\|_{L_{\widehat\eta^\tau}^2}$ 
while the fast rate requires uniform control over the time-varying constants $C_\textnormal{est.}(\tau)$ as well.
As pointed out in~\Cref{rem: comment on assumption 1}, the slow rate is derived under assumptions pertaining to the hypothesis class with minimal requirements on the underlying filtering system. In contrast, the fast rate requires us to verify additional assumptions 
with constants appearing in the fast-rate estimate that may 
implicitly depend on the state dimension through the Poincar\'e 
constants in~\Cref{assump: fast rate} which in turn may vary in time
which can add significant challenges in practice.
We investigate these questions further for an example application 
in~\Cref{sec:example-filtering}.

\editstart
\begin{remark}
The bias term in \Cref{thm: approx filter error bound} is measured relative to
$\phi_\tau^\dagger$, which is the exact conditioning potential for the
approximate distribution generated by the algorithm at time $\tau$. The learned map $\widehat\phi_\tau$ is trained using
samples from the approximate distribution $\widehat\eta^\tau$, not  the truth. Thus, the bias term measures how well the class $\mathcal F$ can approximate the exact map that the algorithm would apply if the conditional OT problem were solved exactly at the current approximate filtering state. However, it does not measure the approximation error relative to the exact conditioning map of the true filter.
\end{remark}
\editend

\subsubsection{Exact mean filtering error}\label{subsec:exact-mean-filtering-error}
\editstart
We now explore two possible approaches towards bounding the true mean filtering error. Both approaches relate the exact filtering error bound in~\eqref{eq:first-bound-exact-filter-error} and the approximate filtering error bound in~\eqref{eq:first-bound-approx-filter-error}, whose control has already been established by~\Cref{thm: approx filter error bound}. 
\begin{enumerate}
\item The first route relies on relating the summands in~\eqref{eq:first-bound-exact-filter-error} and~\eqref{eq:first-bound-approx-filter-error} via a multiplicative bound when the uniform control of the density ratio between $\wh{\nu}^\tau_\Y$ and ${\nu}^\tau_\Y$ is achievable. This is indeed the case in the setting where the minorization condition of~\eqref{eq: minorization condition} holds. This 
often restricts us to bounded domains but leads to more convenient bounds with otherwise relaxed conditions. 

\item Whenever the minorization condition is not valid, the second approach considers an additive decomposition of the summands in~\eqref{eq:first-bound-exact-filter-error}. This is done via an application of Gronwall's lemma under the stability conditions of~\Cref{assump: exact filter error extra assump} enabling the control of the discrepancy between the two processes $\mathscr{Y}_{t}$ and $\wh{\mathscr{Y}}_{t}$.
This result requires restrictive regularity conditions that are often difficult to verify but 
remains applicable on unbounded domains.

\end{enumerate}
\begin{assumption} 
\label{assump: exact filter error extra assump}
It holds that:
    \begin{enumerate}
        \item \label{y-Lipshitzness} 
                There exists $L_\Y >0$ such that for all $u \in \U$,
                \begin{align}\label{eq:Lipshitzness in y}
                \begin{split}
                \| \nabla_u \phi^{\dagger}_{\tau}(y,u) - \nabla_u \phi^\dagger_\tau(y',u)\|_\U &\le L_\Y \| y- y'\|_\Y, \\
                \| \nabla_u \widehat\phi_{\tau}(y,u) - \nabla_u \widehat\phi_\tau(y',u) \|_\U            &\le L_\Y \| y- y'\|_\Y.
                \end{split}
                \end{align} 
        \item \label{weird stability condition for exact bound}
        For any pair of probability measures $\mu_1, \mu_2 \in \mathbb{P}(\U)$ 
        define the measures 
        \begin{equation*}
            \rho_1(y) = \int h( y \mid u) \mathcal{A}\mu_1( \d u), \qquad 
            \rho_2(y) = \int h( y \mid u) \mathcal{A}\mu_2( \d u). 
        \end{equation*}
        Then there exists a coupling $\Gamma \in \Pi(\rho_1, \rho_2)$  and a 
        constant $C_D'(h, \mathcal{A})>0$ so that  
        \begin{equation} \label{eq:coupling-assump}  
        \mathbb{E}_{(Y_1,Y_2)\sim \Gamma} \|Y_1 - Y_2 \|_{\Y}
        \leq C_D' D(\mu_1,\mu_2).
        \end{equation}
\end{enumerate}

\end{assumption}
The first Lipschitz condition of~\Cref{assump: exact filter error extra assump} is similar to~\Cref{assump: slow rate}(2) and~\Cref{assump: fast rate}(1). The second condition 
is valid under regularity assumptions on the dynamic operator $\mathcal{A}$ and 
the observation kernel $h$ and for appropriate choices of $D$.
In \Cref{subsec: calculation for Y stability} we verify this condition
for the Wasserstein-1 metric and a practical choice of $\mathcal{A}$ and  $h$. 

\Cref{thm: multiplicative+additive bound exact filtering error} below provides the first statistically complete error analysis of OTF, improving upon~\cite{al-jarrah2023optimal} which provided a
preliminary error decomposition of \eqref{eq:first-bound-exact-filter-error} and a bound in terms of an 
optimization gap but left the statistical error analysis open. The proof of 
this result along with additional discussions are summarized in~\Cref{subsec: proof of exact error bound 2} and~\Cref{subsec: proof of exact error bound}.

\begin{theorem}\label{thm: multiplicative+additive bound exact filtering error}
Suppose \Cref{def: uniformly geometrically stable filter,assump: divergence} hold
and let $e_t$ be the same quantity in \Cref{thm: approx filter error bound}.
Then:
\begin{enumerate}
\item \label{multiplicative} (With minorization) If the minorization condition~\eqref{eq: minorization condition} holds, for any $t > 0$,
   \begin{align}\label{eq: multiplicative exact filter error bound}
\mathbb{E}&_{1:t}^{\textnormal{train}}\mathbb{E}_{\wh{\mathscr{Y}}_{t}}D(\pi^t,\wh{\pi}^t)\leq 
\Cf  e_t,
    \end{align}
where 
    $\Cf = \frac{C_D \Cfs ^2}{\lambda}$.

\item \label{additive} (With Lipschitz regularity) 
If \Cref{assump: exact filter error extra assump} holds, for any $t>0$,
         \begin{align}\label{eq: main-filter-error-bound}
\mathbb{E}&_{1:t}^{\textnormal{train}}\mathbb{E}_{\mathscr{Y}_{t}}
D(\pi^t,\wh{\pi}^t)\leq 
\, \Cf' \max(1,\varrho^t) e_t,
    \end{align}
    where 
    $\Cf'\coloneqq \frac{C_D \Cfs}{\lambda}\left(1+ \frac{2L_{\Y}C_D'C_D \Cfs}{|\lambda-2L_{\Y}C_D'C_D \Cfs|}\right)$ and $\varrho \coloneqq 2L_{\Y}C_D'C_D \Cfs +1-\lambda.$
\end{enumerate}
\end{theorem}

\begin{remark}\label{rem: general remark on mult+add bounds}
We notice that \Cref{assump: exact filter error extra assump}(1), necessary for statement 2, may be difficult to verify on non-compact domains. One may also wonder if the Lipschitz condition \eqref{eq:Lipshitzness in y} can be relaxed to a H\"older condition
for an exponent $\zeta\in[0,1)$, since 
this condition may be verified easily \cite{hosseini2025conditional}. However, as shown in \Cref{lem: exact filtering error} and discussed in~\Cref{rem: zeta=1 regime} our asymptotic bounds remain controlled 
only for $\zeta=1$ and diverge for $\zeta<1$. Moreover, To ensure that~\cref{eq: main-filter-error-bound} remains uniform in time, it is necessary to assume $\varrho \leq 1$ implying
\begin{align}\label{eq: summability in time condition}
2 L_\Y C_D' C_D \Cfs \leq \lambda.
\end{align}
This is a restrictive condition that may fail for practical systems and we conjecture that it is an artifact of our proof technique. On the other hand, statement 1, while 
only applicable under the minorization condition of~\eqref{eq: minorization condition}, offers a viable alterantive approach that bypasses the additional regularity conditions of~\Cref{assump: exact filter error extra assump} and ~\Cref{eq: summability in time condition} but it is often verifiable in the compact support case; see also ~\Cref{rem: comments on mult bound extension} for further discussions.  
\end{remark}

\editend

\subsection{An example application}
\label{sec:example-filtering}
We now verify some of our assumptions in the context of a practical filtering model. We start by considering a class of filtering systems for which we can guarantee that, at each time step, conditions (i) and (ii) in the static example of ~\Cref{sec:conditionalOT:applications} can be preserved as time evolves. This ensures that the distributions $\widehat \eta_\U^t$ and $\widehat \nu^t$ remain strongly log-concave and log-smooth.
To this end, we characterize the filtering system~\cref{eq:generic-filtering-model} by update equations
\begin{align}
\label{eq: Filtering eqs}
U_t \sim \exp(-\mathscr{a}(\cdot|U_{t-1})), \quad U_0\sim\pi_0, \quad  Y_t\sim \exp(-\mathscr{h}(\cdot|U_t))\, ,
\end{align}
under the following set of assumptions:
 \begin{assumption}
 \label{assump:filtering system}
\begin{enumerate}[label=(\roman*)]
\item  
\(\pi_0(u)=\exp(-\mathscr{W}_{\pi_0}(u))\) and there are
positive and bounded constants $\sigma_{\min}(\mathscr{W}_{\pi_0})$,
$\sigma_{\max}(\mathscr{W}_{\pi_0})$ so that
$\sigma_{\min}(\mathscr{W}_{\pi_0}) I \preceq \nabla^2 \mathscr{W}_{\pi_0}(u) \preceq \sigma_{\max}(\mathscr{W}_{\pi_0}) I,$
\item For positive and bounded constants $\theta_{\min}$ and $\theta_{\max}$, it holds that
$\theta_{\min} I \preceq \nabla^2 \mathscr{h}(y\mid u) \preceq \theta_{\max} I.$
\item There are positive and bounded constants $\sigma_{\min}(\cdot)$,
$\sigma_{\max}(\cdot)$ so that for all $u,u' \in \U$,
\begin{align*}
&\sigma_{\min}(a_{u}) I \preceq \nabla_u^2 \mathscr{a}(u\mid u')\preceq \sigma_{\max}(a_{u}) I\, ,
\\
&\sigma_{\min}(a_{u'}) I \preceq \nabla_{u'}^2 \mathscr{a}(u\mid u')\preceq \sigma_{\max}(a_{u'}) I\, ,
\\
&\sigma_{\min}(a_{uu'}) I \preceq \nabla_{uu'}^2 \mathscr{a}(u\mid u')\preceq \sigma_{\max}(a_{uu'}) I\,.
\end{align*}
\end{enumerate}
 \end{assumption}
Moreover, we restrict our hypothesis class to be quadratic in the $u$-coordinate, namely
\begin{align}\label{eq: quadratic forms class}
\begin{split}
\F_{\text{quad}}:=\bigr\{(y,u)\mapsto \frac{1}{2}u^\top Q(y)u+u^{\top}b(y)&\big| b(y)\in\mathbb{R}^d\, , Q(y)\in\mathbb{R}^{d\times d}\, ,\\
&0\prec \sigma_{\min}(Q) I\prec Q(y)\prec \sigma_{\max}(Q) I\, ,\forall y\in\Y\bigr\}\, .
\end{split}
\end{align}
To simplify the notation, we also introduce the functions
\begin{align}\label{eq: update functions}
\begin{split}
\mathfrak{m}(x) \coloneqq \left(\sigma_{\min}(a_{u})-\frac{\sigma_{\max}^2( a_{uu'})}{\sigma_{\min}( a_{u'})+x}\right)\, , \quad 
\mathfrak{M}(x) \coloneqq \left(\sigma_{\max}(a_{u})-\frac{\sigma_{\min}^2( a_{uu'})}{\sigma_{\max}( a_{u'})+x}\right)\, .
\end{split}
\end{align}
We  then obtain the  following proposition which states that the distributions 
$\widehat\eta_\U^t$ and $\widehat\nu^t$ are log-concave and smooth.
\begin{proposition}\label{prop: log concavity smoothness result}
Consider the filtering system of~\eqref{eq: Filtering eqs}, take $\F=\F_\textnormal{quad}$ as in~\eqref{eq: quadratic forms class},
and suppose \Cref{assump:filtering system} holds.
Then, denoting
\begin{align}\label{eq: Hessian defined for measures}
\widehat \eta_\U^t(u)\propto\exp(- \mathscr{W}_{\widehat \eta_\U^t}(u))\, ,\qquad \widehat \nu^t(y,u)\propto\exp(- \mathscr{W}_{\widehat \nu^t}(y,u))\, ,
\end{align}
it holds that
\begin{align}\label{eq: Hessian bounds for measures of interest}
\begin{split}
\gamma_t I &\preceq \nabla_u^2 \mathscr{W}_{\widehat \eta_\U^t}(u)\preceq \Gamma_t I\, ,\\
\left(\theta_{\min}+\gamma_t\right) I &\preceq \nabla^2 \mathscr{W}_{\widehat \nu^t}(y,u)\preceq \left(\theta_{\max}+\Gamma_t\right) I \, ,
\end{split}
\end{align}
where, starting from $\gamma_0 \coloneqq \sigma_{\min}(\mathscr{W}_{\pi_0})$ and $ \Gamma_0 \coloneqq \sigma_{\max}(\mathscr{W}_{\pi_0})$, we define recursively
\begin{align}\label{eq: recursive relations gamma}
    \gamma_t\coloneqq \mathfrak{m}\left(\gamma_{t-1}/\sigma_{\max}^2(Q)\right)\, ,\qquad 
    \Gamma_t\coloneqq \mathfrak{M}\left(\Gamma_{t-1}/\sigma_{\min}^2(Q)\right)\, .
\end{align}
Consequently, if we further impose that
\begin{align}\label{eq: conditions for fixed point}
\begin{split}
&\sigma_{\max}^2(a_{uu'})< \sigma_{\min}(a_{u})\sigma_{\min}(a_{u'})\,,
\\
&\sigma_{\max}^2(a_{uu'})< \sigma_{\min}^2(a_{u'})\sigma_{\max}^2(Q)\, ,\\
&\sigma_{\min}^2(a_{uu'})< \sigma_{\max}^2(a_{u'})\sigma_{\min}^2(Q)\, ,
\end{split}
\end{align}
the update functions $\mathfrak{m}(\cdot/\sigma_{max}^2(Q))$ and $\mathfrak{M}(\cdot/\sigma_{min}^2(Q))$ have positive fixed points and the Hessians of the potentials $F_{\widehat \eta_\U^t},F_{\widehat \nu^t}$  remain uniformly  bounded in time 
\footnote{see \eqref{eq: bounds uniform in time} for the explicit bound.}.
\end{proposition}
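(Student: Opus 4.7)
The overall strategy is a direct induction on $t$, propagating the Hessian bounds on $\nabla^2 \mathscr{W}_{\widehat \pi^{t-1}}$, $\nabla_u^2 \mathscr{W}_{\widehat \eta_\U^t}$, and $\nabla^2_u \mathscr{W}_{\widehat\nu^t}$ through the three operations that together realize one filter step: the affine pushforward $\nabla_u \widehat \phi_{t-1}(Y_{t-1},\cdot) = Q(Y_{t-1})u+b(Y_{t-1})$ (enabled by the choice $\F=\F_{\text{quad}}$), the dynamic marginalization $\widehat \eta_\U^t = \mathcal{A}[\widehat \pi^{t-1}]$, and the likelihood multiplication $\widehat \nu^t \propto e^{-\mathscr{h}} \widehat \eta_\U^t$. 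The base case at $t=0$ is supplied by \Cref{assump:filtering system}(i). A fixed-point analysis of $\tilde m(\gamma):= m(\gamma/\sigma_{\max}^2(Q))$ and $\tilde M(\Gamma):= M(\Gamma/\sigma_{\min}^2(Q))$ then upgrades the time-dependent bounds to uniform-in-time ones under the additional conditions in~\eqref{eq: conditions for fixed point}.

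The pushforward step is a change of variables: since $T_{t-1}(u)=Q(Y_{t-1})u+b(Y_{t-1})$ is affine, one gets $\nabla_v^2\mathscr{W}_{\widehat\pi^{t-1}}(v) = Q(Y_{t-1})^{-\top}\nabla_u^2\mathscr{W}_{\widehat\eta_\U^{t-1}}(T_{t-1}^{-1}(v))\,Q(Y_{t-1})^{-1}$, so the bounds on $Q$ readily give $\gamma_{t-1}/\sigma_{\max}^2(Q)\, I \preceq \nabla^2\mathscr{W}_{\widehat\pi^{t-1}} \preceq \Gamma_{t-1}/\sigma_{\min}^2(Q)\, I$. The dynamic step is the main analytical content: writing $F_t(u,u')\coloneqq \mathscr{a}(u\mid u')+\mathscr{W}_{\widehat \pi^{t-1}}(u')$ and $\mu_u(u')\propto e^{-F_t(u,u')}$, differentiation under the integral sign yields the key identity
\[
\nabla_u^2\mathscr{W}_{\widehat\eta_\U^t}(u) = \mathbb{E}_{\mu_u}[\nabla_u^2 F_t] - \operatorname{Cov}_{\mu_u}(\nabla_u F_t).
\]
The lower bound $m(\gamma_{t-1}/\sigma_{\max}^2(Q))\, I$ follows from the vector-valued Brascamp–Lieb inequality (\Cref{thm: BL inequality}), which upper-bounds the covariance by $\mathbb{E}_{\mu_u}[(\nabla_{u'u}^2 F_t)^\top (\nabla_{u'}^2 F_t)^{-1} \nabla_{u'u}^2 F_t]$, combined with the pointwise Hessian bounds on $\mathscr{a}$ and $\mathscr{W}_{\widehat\pi^{t-1}}$ via a standard scalar argument. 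The matching upper bound $M(\Gamma_{t-1}/\sigma_{\min}^2(Q))\, I$ is obtained through a generalized Cramer–Rao inequality: integration by parts gives $\mathbb{E}_{\mu_u}[\nabla_{u'} F_t]=0$ and $\operatorname{Cov}_{\mu_u}(\nabla_{u'} F_t)=\mathbb{E}_{\mu_u}[\nabla_{u'}^2 F_t]$, whence a matrix Cauchy–Schwarz argument applied to $(\nabla_u F_t,\nabla_{u'} F_t)$ under $\mu_u$ produces the \emph{lower} bound
\[
\operatorname{Cov}_{\mu_u}(\nabla_u F_t) \succeq \mathbb{E}_{\mu_u}[\nabla_{u'u}^2 F_t]^\top \bigl(\mathbb{E}_{\mu_u}[\nabla_{u'}^2 F_t]\bigr)^{-1} \mathbb{E}_{\mu_u}[\nabla_{u'u}^2 F_t].
\]
Bounding $\mathbb{E}_{\mu_u}[\nabla_u^2 F_t]\preceq \sigma_{\max}(a_u)\, I$ and reducing the Schur-type term to scalars using the assumed Hessian bounds closes this direction.

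The likelihood step is immediate: because $\mathscr{W}_{\widehat \nu^t}(y,u)=\mathscr{h}(y\mid u)+\mathscr{W}_{\widehat\eta_\U^t}(u)+\mathrm{const}$, the $u$-Hessian is the sum of $\nabla_u^2\mathscr{h}$ and $\nabla^2\mathscr{W}_{\widehat\eta_\U^t}$, so~\Cref{assump:filtering system}(ii) and the bound just established give the second line of~\eqref{eq: Hessian bounds for measures of interest}. For the final uniform-in-time claim, note that $\tilde m$ and $\tilde M$ are increasing and bounded above by $\sigma_{\min}(a_u)$ and $\sigma_{\max}(a_u)$ respectively, with $\tilde m'$ and $\tilde M'$ decaying in their arguments. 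The first condition in~\eqref{eq: conditions for fixed point} forces $\tilde m(0)>0$, which keeps the $\gamma$-iteration strictly positive. The second and third conditions are precisely $\tilde m'(0)<1$ and $\tilde M'(0)<1$, which guarantee unique positive fixed points and monotone convergence of $\gamma_t$ and $\Gamma_t$ from any admissible initialization, yielding the uniform bounds.

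The principal technical obstacle is the upper bound on $\nabla_u^2\mathscr{W}_{\widehat\eta_\U^t}$. While Brascamp–Lieb furnishes the sharp lower bound via the Schur complement of $\nabla^2 F_t$, the symmetric upper bound requires a lower bound on the covariance $\operatorname{Cov}_{\mu_u}(\nabla_u F_t)$, which is not a standard off-the-shelf inequality in the log-concave literature. The generalized Cramer–Rao bound outlined above appears to be the correct instrument; verifying its hypotheses (in particular the vanishing of boundary terms in the integration by parts and the regularity needed to differentiate under the integral) against the smoothness assumptions on $\mathscr{a}, \mathscr{h}, \mathscr{W}_{\pi_0}$ in \Cref{assump:filtering system} is where most of the care in the proof will concentrate.
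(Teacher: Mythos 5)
Your proposal is correct and follows essentially the same route as the paper: preservation of strong log-concavity/smoothness under the affine (quadratic-class) pushforward, the marginalization identity $\nabla_u^2\mathscr{W}_{\widehat\eta_\U^t}=\mathbb{E}_{\mu_u}[\nabla_u^2 F_t]-\operatorname{Cov}_{\mu_u}(\nabla_u F_t)$ with Brascamp--Lieb bounding the covariance from above and a Cram\'er--Rao bound from below, an additive treatment of the observation update, and a contraction/fixed-point argument in which your conditions $\tilde m(0)>0$, $\tilde m'(0)<1$, $\tilde M'(0)<1$ are exactly \eqref{eq: conditions for fixed point}. The ``generalized Cram\'er--Rao'' covariance lower bound you flag as the main obstacle is precisely the off-the-shelf inequality the paper invokes (\cref{thm: CR inequality}, from the log-concave literature), so no extra derivation is needed; the only other difference is that the paper obtains the bounds for $\widehat\nu^t$ by re-applying its marginal Hessian-bound theorem on the joint block $((y,u),u')$ rather than by adding potentials, which for the $u$-Hessian is equivalent to your simpler argument.
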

The key  idea of the proof is the fact that affine-in-$u$ maps $ \widehat T(Y_t,\cdot)=\nabla_u\widehat\phi(Y_t,\cdot)$ preserve strong log-convexity and smoothness under pushforwards. Proceeding akin to~\cite[Lem. 5.1]{pathiraja2021mckean}, Brascamp–Lieb and Cram\'er–Rao inequalities then guarantee that the filtering updates  preserve lower and upper Hessian bounds. The recursive relations then follow; see~\Cref{proof: thm: general conditions on FEs}. 

The result of~\cref{prop: log concavity smoothness result}, analogously to what was shown in~\Cref{sec:conditionalOT:applications}, implies two main consequences: First, by the Brascamp-Lieb inequality (\cref{thm: BL inequality}), the measures $\wh \eta_\U^t$, $\wh \nu^t(\cdot\mid y)$ (uniformly over $y$), and $\nu_{\Y}^{t}$ satisfy the Poincar\'e inequality with constants
$\CPI^{\wh \eta_\U^t}\leq \frac{1}{\gamma_t}\,$, $\CPI^{\wh \nu^t(\cdot\mid y)}\leq \frac{1}{\gamma_t+\theta_{\min}}\,$ and $\CPI^{\wh\nu_{\Y}^{t}}\leq  \frac{1}{\gamma_t+\theta_{\min}}.$
Therefore satisfying~\Cref{assump: fast rate}
\editstart
(2) (with $\widetilde{q} = 0$)
\editend
at any time $t$. We stress that only the strong log-concavity parts of~\eqref{eq: Hessian bounds for measures of interest} are needed to satisfy this assumption towards the application of fast rates. Second, the maps $\phi^{\dagger}_t(y,\cdot)$ are both smooth and strongly convex uniformly over $y$ with
$\sqrt{\frac{\gamma_t}{\Gamma_t+\theta_{\max}}} I\preceq\nabla_u^2\phi^{\dagger}_t(y,\cdot)\preceq\sqrt{\frac{\Gamma_t}{\gamma_t+\theta_{\min}}} I\, ,$
as a consequence of the Caffarelli contraction theorem (\cref{thm: caffarelli contraction thrm} and \cref{thm: caffarelli contraction thrm plus convexity}). This regularity can then be leveraged in order to derive approximation rates 
with an argument analogous to the one mentioned in~\Cref{sec:conditionalOT:applications} for the Legendre polynomials.

\begin{remark}
As mentioned above,~\cref{prop: log concavity smoothness result} heavily relies on fixing $\F=\mathcal{F}_{\textnormal{quad}}$ as in~\eqref{eq: quadratic forms class}. This becomes significantly more difficult to show when considering more complex hypothesis classes. Extending our results in this 
direction is definitely an appealing area of future work.
\editstart
We also emphasize that the slow-rate estimate in~\Cref{thm: approx filter error bound} should primarily be viewed as a general consistency result requiring minimal assumptions on the filtering system and hypothesis class. In many practical settings, such as compact domains with smooth potentials, the assumptions for the fast rate are expected to hold. However, in filtering problems the associated constants may deteriorate depending on the dimension and the dynamics of the system, making uniform-in-time verification challenging. Another concrete
application are discretized ODE/PDE problems where $\nabla^2 \mathscr{a}(u\mid u')$ is a perturbation of the identity 
matrix. We anticipate that under mild regularity assumptions and with small time steps, 
such dynamics would satisfy \Cref{prop: log concavity smoothness result}.
\editend
\end{remark}

Our second result establishes explicit bounds on the Poincar\'e constants of $\widehat \eta_\U^t$ and $\widehat \nu^t$, therefore satisfying two of the three inequalities in~\Cref{assump: fast rate}
\editstart
(2) (with $\widetilde{q} = 0$)
\editend
without any further assumption on the hypothesis class besides $\betamax$-smoothness
\editstart
by~\Cref{assump: slow rate}(4). 
\editend
This can be achieved by characterizing the filtering system in~\cref{eq:generic-filtering-model} by the update equations  
\begin{align}\label{eq: simple filtering system}
\begin{split}
U_t = a(U_{t-1})+V_t\, ,\qquad U_0\sim\pi_0\, ,
\quad \text{and} \quad 
Y_t = h(U_t)+W_t\, ,
\end{split}
\end{align}
which, via convolution, injects independent noise that satisfies a  Poincar\'e inequality to Lipschitz deterministic updates by $a$ and $h$, denoted with a slight abuse of notation. Moreover, assuming the existence of a Lipschitz inverse $h^{-1}$, one can establish the Poincar\'e inequality also for the conditionals $\widehat \nu(\cdot\mid y)$ uniformly in $y$, 
\editstart
ultimately satisfying~\Cref{assump: fast rate}(2).
\editend
\begin{proposition}\label{prop: Poincare filtering}
Consider the filtering system characterized by~\eqref{eq: simple filtering system}
where $V_t,W_t$ are i.i.d. noise  variables satisfying the Poincar\'e inequality with constants $C_{\textnormal{PI}}^{V}$, $C_{\textnormal{PI}}^{W}$, and $a$ and $h$ are deterministic maps. Then, the next two results follow.
\begin{enumerate}[label=(\roman*)]
\item If the hypothesis class $\mathcal F$ is $\betamax$-smooth
\editstart
(as in~\Cref{assump: slow rate}(4)),
\editend
$a$ and $h$ are $L_{a}$ and $L_{h}$ -Lipschitz respectively, and $\pi_0$ satisfies the Poincar\'e inequality with constant $C_{\textnormal{PI}}^{\pi_0}$, then $\widehat \eta_\U^t$ and $\widehat \nu_\Y^t$ satisfy the Poincar\'e inequality with constants $C_{\textnormal{PI}}^{\widehat \nu_\Y^t}\leq L_{h}C_{\textnormal{PI}}^{\widehat \eta_\U^t}+C_{\textnormal{PI}}^{\pi_W}$ and $C_{\textnormal{PI}}^{\widehat \eta_\U^t}\leq L_{a}^t\betamax^{2(t-1)}C_{\textnormal{PI}}^{\pi_0}+C_{\textnormal{PI}}^{\pi_V}\sum_{k=0}^{t-1}(L_{a}\betamax^2)^k.$
\item Moreover, if $h$ has an $L_{h}'$-Lipschitz inverse $h^{-1}$, then the conditionals $\widehat \nu^t(\cdot\mid y)$ satisfy the Poincar\'e inequality uniformly over $y$ with constants
$C_{\textnormal{PI}}^{\widehat \nu^t(\cdot\mid y)}\leq (L_{h}')^2C_{\textnormal{PI}}^{\pi_W}$.
\end{enumerate}
\end{proposition}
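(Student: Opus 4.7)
\emph{Proof plan.} My plan rests on two basic stability properties of the Poincar\'e inequality that I iterate along the filtering recursion. The first is that if $\mu$ satisfies PI with constant $C_{\textnormal{PI}}^{\mu}$ and $T$ is $L$-Lipschitz, then $T\#\mu$ satisfies PI with constant at most $L^{2}C_{\textnormal{PI}}^{\mu}$, obtained by substituting $\nabla(f\circ T)=(\nabla T)^\top (\nabla f)\circ T$ into the PI for $\mu$. The second is that if $X\sim\mu$ and $Y\sim\rho$ are independent with PI constants $C_{\textnormal{PI}}^{\mu},C_{\textnormal{PI}}^{\rho}$, then $X+Y$ satisfies PI with constant at most $C_{\textnormal{PI}}^{\mu}+C_{\textnormal{PI}}^{\rho}$, which follows from the variance decomposition $\textnormal{Var}(f(X+Y))=\mathbb{E}[\textnormal{Var}(f(X+Y)\mid X)]+\textnormal{Var}(\mathbb{E}[f(X+Y)\mid X])$ by applying PI for $\rho$ to the inner conditional variance and PI for $\mu$ to $x\mapsto\mathbb{E}_Y f(x+Y)$, then using Jensen's inequality to bound the gradient of the latter by $\mathbb{E}\|\nabla f(X+Y)\|^{2}$.

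For part (i), I would unroll the recursion. The $\betamax$-smoothness assumption on $\mathcal{F}$ from \Cref{assump: slow rate}(2) makes $u\mapsto\nabla_u\widehat{\phi}_{t-1}(Y_{t-1},u)$ a $\betamax$-Lipschitz map, so the push-forward fact gives $C_{\textnormal{PI}}^{\widehat{\pi}^{t-1}}\le \betamax^{2}C_{\textnormal{PI}}^{\widehat{\eta}_\U^{t-1}}$. Since $\widehat{\eta}_\U^{t}=\mathcal{A}[\widehat{\pi}^{t-1}]$ is the law of $a(U_{t-1})+V_t$ with $U_{t-1}\sim\widehat{\pi}^{t-1}$ and $V_t\sim\pi_V$ independent, combining the push-forward fact applied to the $L_a$-Lipschitz map $a$ with the convolution fact produces the one-step recursion
\begin{equation*}
C_{\textnormal{PI}}^{\widehat{\eta}_\U^{t}}\le L_a^{2}\betamax^{2}\,C_{\textnormal{PI}}^{\widehat{\eta}_\U^{t-1}}+C_{\textnormal{PI}}^{\pi_V}.
\end{equation*}
Starting from $C_{\textnormal{PI}}^{\widehat{\eta}_\U^{0}}\le C_{\textnormal{PI}}^{\pi_0}$ and summing the resulting geometric series in $L_a^{2}\betamax^{2}$ yields the stated bound, up to the convention used in the statement for writing the exponents of $L_a$. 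The bound on $C_{\textnormal{PI}}^{\widehat{\nu}_\Y^{t}}$ is analogous: $\widehat{\nu}_\Y^{t}$ is the law of $h(U_t)+W_t$ with $U_t\sim\widehat{\eta}_\U^{t}$ and $W_t\sim\pi_W$ independent, giving $C_{\textnormal{PI}}^{\widehat{\nu}_\Y^{t}}\le L_h^{2}C_{\textnormal{PI}}^{\widehat{\eta}_\U^{t}}+C_{\textnormal{PI}}^{\pi_W}$.

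For part (ii), the plan is to identify $\widehat{\nu}^{t}(\cdot\mid y)$ as a push-forward of $\pi_W$ through the map $T_y(w):=h^{-1}(y-w)$, which is $L_h'$-Lipschitz by assumption. Once this identification is established, the push-forward fact immediately yields $C_{\textnormal{PI}}^{\widehat{\nu}^{t}(\cdot\mid y)}\le (L_h')^{2}C_{\textnormal{PI}}^{\pi_W}$ uniformly in $y$. Concretely, I would perform the change of variables $w=y-h(u)$ in the conditional density $\widehat{\nu}^{t}(u\mid y)\propto\pi_W(y-h(u))\,\widehat{\eta}_\U^{t}(u)$ and, after absorbing the Jacobian $|\det\nabla h^{-1}(y-w)|$, reinterpret the resulting $w$-density as $\pi_W$ or a further Lipschitz push-forward thereof.

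The main technical obstacle lies exactly in the last step of part (ii): the conditional $\widehat{\nu}^{t}(\cdot\mid y)$ carries the prior factor $\widehat{\eta}_\U^{t}(u)$, so producing a PI constant that depends solely on $\pi_W$ and $L_h'$ requires a careful argument showing that the prior contribution is absorbed or cancels under the change of variables. Part (i) is routine by contrast, with the only delicate point being to match the precise exponents of $L_a$ and $\betamax$ displayed in the statement; the structure of the recursion itself is an immediate consequence of the two stability properties above.
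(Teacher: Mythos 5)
Your plan matches the paper's own proof sketch: both rest on the sub-additivity of Poincar\'e constants under convolution of independent variables (the paper cites Courtade; your conditional-variance-plus-Jensen argument is the standard derivation of the sub-additive version) and on the degradation under Lipschitz push-forward given in \Cref{lem: PI for conditionals}. Your one-step recursion in part (i),
\[
C_{\textnormal{PI}}^{\widehat\eta_\U^t}\le L_a^2\betamax^2\, C_{\textnormal{PI}}^{\widehat\eta_\U^{t-1}}+C_{\textnormal{PI}}^{\pi_V},
\]
is the correct consequence of those two facts, and you are right to flag the exponent: applying \Cref{lem: PI for conditionals} to the $L_a$-Lipschitz map $a$ produces a factor $L_a^{2}$, not $L_a$, so the exponents in the proposition's display (which has $L_a^{t}$, $(L_a\betamax^2)^k$, and $L_h$ where the lemma would give $L_a^{2t}$, $(L_a^2\betamax^2)^k$, and $L_h^2$) do not follow from the lemma as stated; your version is the one that does.

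The obstacle you identify in part (ii) is genuine, and it is not discharged by the paper's sketch either. Conditioning on $Y_t=y$ in $Y_t=h(U_t)+W_t$ gives $\widehat\nu^t(u\mid y)\propto\pi_W(y-h(u))\,\widehat\eta_\U^t(u)$, so after the substitution $w=y-h(u)$ the resulting $w$-law is $\pi_W$ tilted by the prior factor $\widehat\eta_\U^t(h^{-1}(y-w))$ together with a Jacobian, not $\pi_W$ itself. Hence $\widehat\nu^t(\cdot\mid y)$ is the push-forward under the $L_h'$-Lipschitz map $T_y(w)=h^{-1}(y-w)$ of a \emph{reweighted} version of $\pi_W$, and the Lipschitz push-forward lemma applied to that tilted measure does not give a constant depending only on $C_{\textnormal{PI}}^{\pi_W}$ and $L_h'$. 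Closing this requires a further mechanism controlling the tilt—for instance, strong log-concavity of $\pi_W$ together with log-concavity of the tilt would preserve the strong-log-concavity modulus and hence the Poincar\'e constant—but these are not among the proposition's stated hypotheses. The paper's sketch says only that the two lemmas are applied repeatedly, so you have not overlooked an argument that appears there; the step you flag is precisely the one that is not supplied.
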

    The proof is based on the repeated application of the following two basic results about the Poincar\'e inequality:
    (1) By~\cite[Thm. 3]{courtade2020bounds}, given two probability measures $\mu$ and $\mu'$ satisfying the Poincar\'e inequality with constants $C_{\textnormal{PI}}^\mu$ and $C_{\textnormal{PI}}^{\mu'}$, their convolution $\mu*\mu'$ also satisfies the Poincar\'e inequality with sub-additive constant $C_{\textnormal{PI}}^{\mu*\mu'}\leq C_{\textnormal{PI}}^\mu+C_{\textnormal{PI}}^{\mu'}$\footnote{\cite{courtade2020bounds} actually shows a sharper upper bound with an additional term subtracted on the left-hand side of the inequality but we will work with 
    the simpler result.};
    (2) By~\Cref{lem: PI for conditionals}, if $\mu$ satisfies the Poincar\'e inequality with constant $C_{\textnormal{PI}}^{\mu}$, and $T$ is a $\beta$-Lipschitz map, then $T\# \mu$ satisfies the Poincar\'e inequality with  $C_{\textnormal{PI}}^{T\#\mu}\leq \beta^2C_{\textnormal{PI}}^{\mu}$.

\begin{remark}
\label{rem: poincare of conditionals w/o smoothness}
In regard of~\Cref{prop: Poincare filtering}, we underline that while result (i) follows under achievable conditions, result (ii) relies on the strong assumption of the invertibility of the likelihood kernel $h$. This is not satisfied in several applications, most notably when the observations $Y_t$ provide 
partial information about  $U_t$.
\end{remark}

\subsection{Towards algorithms}\label{subsec: OTF particle algorithms}
We briefly discuss how our earlier analysis can be connected to 
an idealized  algorithm.
In order to produce the samples that are required to learn $\wh{\phi}_t$ in an algorithm, we assume the transition kernels in \eqref{eq:generic-filtering-model} are known and can be 
simulated. 
\begin{align}\label{eq: simulater model}
U_t=a(U_{t-1}, V_t)\,, \qquad Y_t=h(U_{t}, W_t)\, ,
\end{align}
where $\{V_t\}$ and $\{W_t\}$ are i.i.d sequences with known and easy to sample from distributions, such as Gaussians. Then, starting from two sets of independent samples  $\{x_i\}_{i=1}^N$ and $\{\widetilde x_i\}_{i=1}^N$ from the initial distribution $\pi_0$, the training samples are generated according to   
\begin{align}\label{eq: simulation based algorithm}
\begin{split}
v_i^t&=\left[ a(\cdot,V_t^i)\circ \widehat T_{t-1}(Y_{t-1},\cdot)\circ...\circ \wh{T}_1(Y_{1},\cdot)\circ  a(\cdot, V_1^i)\right](x_i)\, ,\\ 
u_i^t&=\left[ a(\cdot,\widetilde V_t^i)\circ \widehat T_{t-1}(Y_{t-1},\cdot)\circ...\circ \wh{T}_1(Y_{1},\cdot)\circ  a(\cdot, \widetilde V_1^i)\right](\widetilde x_i)\, ,\\ 
y_i^t&= h(u_i^t,W_t^i)\, .\end{split}
\end{align}
with $\wh{T}_\tau = \nabla_u \wh{\phi}_\tau$, and $\{V_\tau^i\}_{i,\tau=1}^{N,t}$, $\{\widetilde V_\tau^i\}_{i,\tau=1}^{N,t}$ and $\{W_t^i\}_{i=1}^N$ being independent copies of $V_t$ and $W_t$. Note that, in order to ensure independence across all times, the samples that are used to train $\wh{\phi}_\tau$, for $\tau <t$, should be independent of the samples used to train $\wh{\phi}_t$. Therefore, at each time $t$, the samples 
 $\{x_i\}_{i=1}^N$, $\{\widetilde x_i\}_{i=1}^N$,   $\{V_\tau^i\}_{i,\tau=1}^{N,t}$, $\{\widetilde V_\tau^i\}_{i,\tau=1}^{N,t}$ and $\{W_t^i\}_{i=1}^N$ that are used in~\eqref{eq: simulation based algorithm} should be regenerated. Moreover, this ideal procedure requires the  storage of all of the previous  maps $\{\widehat T_\tau\}_{\tau= 1}^t$.

The ideal procedure is useful in simplifying the error analysis. However, computationally, it becomes intensive as the time $t$ grows. A more practical approach is to simulate an interacting particle system according to 
\begin{align}\label{eq: interacting particles algorithm}
\begin{split}
v_i^t& =  a(\cdot,V_t^i)\circ \widehat T_{t-1}(Y_{t-1},v_i^{t-1}),\quad 
u_i^t=  a(\cdot,\widetilde V_t^i)\circ \widehat T_{t-1}(Y_{t-1},u_i^{t-1}), \quad 
y_i^t= h(u_i^t,W_t)\\ 
v^0_i & = x_i, \quad  u^0_i = x_i
\end{split}
\end{align}
Unlike~\eqref{eq: simulation based algorithm}, the  particle system does not require storage of the maps and resampling. 
The efficiency improvement comes at the expense of correlated
particles, making our  analysis not applicable. The error analysis for the interacting particle 
system is more challenging and may be approached using a propagation of chaos analysis \cite{sznitman2006topics}.

\section{Numerical experiments}\label{Sec:Numerical experiments}
In this section, we perform several experiments and benchmarks to compare the performance of the OTF algorithm alongside other filtering algorithms like EnKF~\cite{evensen2006} and SIR~\cite{doucet09}. 
The Python code for reproducing the numerical results is available online\footnote{\url{https://github.com/Mohd9485/OTF_SIAM}}.
We begin with a brief summarize  the OTF approach.

    The optimization problem \eqref{eq:phi-dagger-def} involves the convex conjugate $\phi^*$, which is numerically challenging to approximate for a general class of functions. This issue is resolved  using the identity $\phi(y,v) = \max_{\varphi  \in \textit{CVX}_\mathcal{U}}v^\top \nabla_u \varphi(y,v) - \phi^*(y,\nabla_u \varphi(y,v) 
    )$, derived from Fenchel-Young inequality \cite{Fenchel1949OnCC}, and representing the convex conjugate $\phi^*$ with a  function $\psi \in \textit{CVX}_\mathcal{U}$, leading to a min-max problem of the form
    \begin{equation}\label{eq:min-max_convex}
    \min_{\psi \in \textit{CVX}_\mathcal{U}}\, \max_{\varphi \in \textit{CVX}_\mathcal{U}} \int \big[v^\top\nabla_u \varphi(y,v)-\psi(y,\nabla_u \varphi(y,v))\big] \d\eta(y,v) + \int \psi(y,u) \d\nu(y,u).
    \end{equation} 
    The map $\nabla_u \varphi(y,v)$, resulting from the solution to this min-max problem, serves as the desired conditional OT map $T^\dagger(y,v)$~\cite{MakTagOhLee19,taghvaei2022variationalformulation}.

    As proposed in~\cite[sec.(V)]{al-jarrah2023optimal}, it is numerically useful to relax 
    the constraint that $T(y, \cdot)$ is of the gradient form and replace  $\nabla_u \varphi(y,u)$ with the maximization over unconstrained maps $T(y,u)$.  The resulting optimization problem becomes 
    \begin{equation}\label{eq:min-max_T}
    \min_{\psi \in \textit{CVX}_\mathcal{U}}\, \max_{T \in \mathcal{M}(\eta)} \int \big[v^\top T(y,v)-\psi(y,T(y,v))\big] \d\eta(y,v) + \int \psi(y,u) \d\nu(y,u)\, ,
    \end{equation}
    where $\mathcal{M}(\eta)$ are the set of $\eta$-measurable maps.

The formulation of~\eqref{eq:min-max_T} requires parameterizing $\psi$ as a convex function. To enforce this, previous approaches~\cite{MakTagOhLee19, taghvaei2022variationalformulation, al-jarrah2023optimal} have resorted to the use of Input Convex Neural Networks (ICNNs)~\cite{pmlr-v70-amos17b} that are challenging to train~\cite{bunne2022supervised, korotin2021neural}. Alternatively,~\cite[App. A.3]{al-jarrah2024nonlinear} derives a formulation which, replacing $\psi$ with the representation $\frac{1}{2}\|u\|_\U^2-\psi(y,u)$, turns the min-max problem to the max-min problem:
\begin{equation}\label{eq:ccv-objective}
    \max_{\psi \in \textit{c-CCV}_\U}\, \min_{T \in \mathcal M(\eta)}
 \int \big[c(T(y,v), v)-\psi(y,T(y,v))\big] \d\eta(y,v) + \int \psi(y,u) \d\nu(y,u),
 \end{equation}
 with the cost
 $c(u,u')\coloneqq\frac{1}{2}\|u-u'\|_\U^2$, and the class of $c$-concave maps in the $u$-coordinate $\textit{c-CCV}_\U$.
 We recall that the $c$-concavity of $\psi(y, \cdot)$ is equivalent to the convexity of $\frac{1}{2}\|\cdot\|_\U^2-\psi(y,\cdot)$, a condition that is 
 easier to enforce in practice.

In the filtering setup of~\eqref{eq: interacting particles algorithm}, the objective of \eqref{eq:ccv-objective} can then be approximated empirically using samples as
\begin{equation}\label{eq:empirical_loss_time} 
J_{(N)}^{t}(\psi,T) \coloneqq \frac{1}{N} \sum_{i=1}^N 
\Big[\tfrac{1}{2}\|T(y_i^t,v_i^t)- v_i^t\|^2 - \psi(y_i^t,T(y_i^t,v_i^t)) + \psi(y_i^t,u_i^t)\Big].
\end{equation}

\editstart
Throughout all numerical experiments, we draw \((y_i^t,u_i^t)\) according to~\eqref{eq: interacting particles algorithm} and form \(v_i^t\) by shuffling the \(\{u_i^t\}\), rather than generating \(v_i^t\).
\editend
We denote by $\mathcal F$ and $\mathcal T$ the parametric classes representing the potential $\psi$ and the map $T$, respectively. Then we solve
\begin{equation}\label{eq:empirical-optimization} 
\max_{\psi\in \mathcal F}\,\min_{T \in \mathcal T}\, J_{(N)}^t(\psi,T)
\editstart
+\lambda_T R_T^t+\lambda_\psi R_\psi^t.
\editend
\end{equation}
We enforce the $c$-concavity of the potentials $\psi(y,\cdot)$ and the monotonicity of the transport maps $T(y,\cdot)$ (recall Brenier's theorem) through the addition of two regularization terms:
\begin{align}\label{eq: regularization terms}
    \editstart
    R_T^t
    \editend
    &= \frac{1}{N(N-1)}\sum_{i=1}^N\sum_{j=1}^N g_T\left(\langle T(y_{i}^t,v_{i}^t) - T(y_{i}^t,v_{j}^t),v_{j}^t-v_{i}^t  \rangle\right)\\
    \editstart 
    R^t_\psi
    \editend
    &= \frac{1}{N}\sum_{i=1}^N g_\psi\left( |\Laplace_u \psi(y_{i}^t,u_{i}^t) |^2 \right )
\end{align}
where $g_T,g_\psi$ denote smooth monotone functions and $\Laplace_u$ is the Laplace operator.

In our numerical experiments, both $\psi$ and $T$ are parameterized as ResNet-type neural networks and 
the parameters are updated via a gradient ascent--descent procedure using Adam. To reduce the computational overhead, the number of iterations per time step is gradually decreased, since 
consecutive OT maps often vary slightly between successive time steps.
\editstart
For all of our experiments, except for those in \Cref{sec: static bimodal example}, we used 
regularization parameters $\lambda_T = \lambda_\Psi = 0.1$ and tuned other hyperparameters 
using the \texttt{SMAC3} package \cite{lindauer2022smac3}.
\editend

\subsection{Utilizing an EnKF-based reference measure}\label{sec: EnKF integration}  
\editstart
Below we propose a further extenstion of OTF that uses an additional EnKF layer to improve the approximation 
power of OT maps in nearly Gaussian settings. Consider the $t$-th step of the filter, where we aim to find the conditional OT map $T_t$ from  $\eta^t(y,v)= \nu^t_\Y(y)\eta^t_\U(v)$ to  $\nu^t(y,u)=h(y\mid u)\eta^t_\U(u)$. We change the source distribution $\eta^t$ with the distribution resulting from an EnKF approximation  defined according to
$\eta^t_\text{EnKF}(y,w) = \nu^t_\Y(y) \eta^t_{\text{EnKF}}(w\mid y)$.
Here $\eta^t_{\text{EnKF}}(w\mid y)$ is the conditional  kernel associated with the EnKF stochastic map
$w= \overline v + K^t(y - \overline y)$ where we assume 
$(\overline y,\overline v) \sim h(\overline y\mid \overline v)\eta^t_\U(\overline v)\,,$ and $K^t := \text{Cov}(\overline v,\overline y)\text{Cov}(\overline y)^{-1}\,.$  

Solving the optimization problem~\eqref{eq:phi-dagger-def} with $\eta^t_\text{EnKF}$ as the source and $\nu^t$ as the target distribution, concludes the map $T_t$ such that $T_t(y,\cdot)\# \eta^t_\text{EnKF}(\cdot \mid y) = \nu^t(\cdot\mid y)$ a.e. $y\in \Y$. Our experiments below show that, as expected, this modified OTF algorithm performs well whenever 
$\nu^t$ is nearly Gaussian since we expect $T_t$ to be close to the identity as the EnKF update would have 
accounted for most of the transport. 
We equip our OTF algorithm with the EnKF-based reference measure through minor adjustments to the optimization problem formulation~\eqref{eq:empirical-optimization}.
In particular, we replace samples $(y^t_i,v^t_i)\sim \eta^t$ in the objective function~\eqref{eq:empirical_loss_time}, with samples $(y^t_i,w^t_i) \sim\eta^t_\text{EnKF}$. These new samples are obtained according to $w^t_i = v^t_i + \wh{K}^t(y^t_i - \overline y^t_i)$, where $(y^t_i,v^t_i) \sim \eta^t$, and $\overline y^t_i \sim h(\cdot \mid v^t_i)$. The matrix $\widehat{K}^t:=\widehat{\text{Cov}}(\overline v,\overline y)\widehat{\text{Cov}}(\overline y)^{-1}$ is the 
empirical approximation to $K^t$. Finally, we replace the objective~\eqref{eq:empirical_loss_time} with 
\begin{equation}\label{eq:OT_EnKF_layer_loss}
    J_{(N)}^t\bigl(\psi,\widetilde{T}\bigr)
    \;:=\; \frac{1}{N} \sum_{i=1}^N  \Bigl[\,\psi\bigl(y_i^t,u_i^t\bigr)\;+\;\tfrac12\bigl\|\widetilde{T}\bigl(y_i^t,w_i^t\bigr)\bigr\|^2 
    \;-\; \psi\Bigl(y_i^t,\;\widetilde{T}\bigl(y_i^t,w_i^t\bigr)\;+\;w_i^t\Bigr)\Bigr],
\end{equation}
since the transport map is now parameterized as a perturbation of the identity $T = \text{Id} + \widetilde{T}$.
We highlight that if the user decides to include the regularization $R^t_T$, then this quantitiy should be 
applied to the whole map $T$ and not just $\widetilde{T}$. Finally, since $\widetilde{T}$ is a perturbation of 
the identity we often initialize our training/gradient descent algorithms at $\widetilde{T} = 0$. 
The corresponding implementation details are summarized in \Cref{alg:otf}. 
\editend

\subsection{A Bimodal static example}\label{sec: static bimodal example}
In this experiment, we investigate the impact of the regularization terms introduced in~\eqref{eq: regularization terms}. 
\editstart
We consider a static example without time stepping to understand the efficacy of OTF for one-step conditioning.
We begin with a one-dimensional example where we compare the resulting OT potentials $\overline{\psi},\overline{T}$ under the model
\begin{align} \label{example:squared}
    Y=\frac{1}{2}U\odot U + \sigma W, \qquad W \sim N(0,I_n),
\end{align}
 with the observation $Y=1$.
Here $\odot$ denotes the element-wise product and $\sigma^2=10^{-2}$.
\editend

The OT map $T$ satisfying $T(y,\cdot)\#\eta_\U=\nu(\cdot\mid y)$ admits a closed-form solution in the one-dimensional setting via the composition of cumulative distribution functions (CDFs) \cite[Ch.~2]{santambrogio2015optimal} which 
gives the closed form expression $\psi(y, u)= \int_{-\infty}^u (u'-T^{-1}(y,u'))du'$ with $T(y,u) = F^{-1}_{\nu(\cdot\mid y)} \circ F_{\eta_\U}(u)$ and $F_{\eta_\U}, F_{\nu(\cdot\mid Y =1)}$ denoting the CDFs of $\eta_\U$
and $\nu(\cdot \mid Y =1)$ respectively.

The numerical results for this experiment are presented in~\Cref{fig:bimodal}, employing the exponential linear unit (ELU) activation function for both  $g_T$ and $g_\psi$ with a hyperparameter  $\alpha=0.01$, and using a fixed number of particles  $N=1000$. 
The figure depicts results for both the unregularized and regularized objectives, where $\lambda_T=\lambda_\psi =\lambda=0,10^{-2},10^{-1}, 1$.  The left panel illustrates the kernel density estimator of the transported particles in comparison with the exact posterior distribution $\nu(\cdot\mid Y=1)$. The middle and right panels display the exact and approximate values of $\psi$ and $T$, respectively. The results indicate that as the regularization parameter $\lambda$ increases, the function $\frac{1}{2}\|\cdot\|_\U^2-\psi(Y=1,\cdot)$ becomes more convex, and the transport map $T$ 
approaches monotonicity 
at the cost of less accurate conditioning.

\editstart
Next we consider \eqref{example:squared} in higher dimensions where we expect the posterior to have 
exponentially many modes. Note that in this case we no longer have access to analytic forms of the 
true map but we can obtain an accurate posterior using the SIR algorithm with a very large number of particles. 
\editend
A quantitative inspection of the accuracy of various filters is presented in~\Cref{fig:bimodal_vs_dim_time_particles}, where results are shown as a function of the dimension $n$ and the number of particles $N$ over $10$ independent simulations. The left panel presents the 
\editstart
Average Marginal Wasserstein-2 distance, denoted $\distance$,  defined as the average of the one-dimensional $W_2$ distances 
between the empirical particle distributions for each algorithm and the corresponding true posterior marginal as a function of $n$ and for a fixed number of particles $N=5000$. We chose $\distance$ since computing the standard $W_2$ distance suffers 
from numerical errors in high dimensions that pollute our results.
The second panel 
reports the corresponding computational time as a function of dimension. These results demonstrate that both regularized and unregularized OTF approaches yield a superior approximation of the true multi-modal posterior, albeit at an increased computational cost; see~\cite{al-jarrah2024data,al-jarrah2025fast} for recent ideas on how to reduce the 
computational burden of OTF.
Finally, the right two panels illustrate the $\distance$ distance as a function of the number of particles $N$ for fixed dimensions $n=2,10,$ respectively. The figure  suggests that while OTF and EnKF scale better with dimension compared to SIR method, OTF is able to achieve  more accurate approximations. Although it still deteriorates in 
higher dimensions. This failure mode is depicted in
\Cref{sm:fig:marginal_densities} where we show the marginal densities produced by OTF method at $\dim = 20$ and $\dim = 30$, illustrating how the algorithm begins to miss 
certain posterior modes in higher dimensions.
\editend

\begin{figure}[htp]
    \centering
    \begin{overpic}[width=0.62\linewidth,trim={35 0 0 0},clip]{figs/bimodal_static_example.pdf}
    \put(-1.5,17.5){\makebox(0,0)[c]{\rotatebox{90}{\scriptsize Density}}}
    \put(14.6,-0.5){\makebox(0,0)[c]{\scriptsize U}}

    \put(31.5,17.5){\makebox(0,0)[c]{\rotatebox{90}{\scriptsize $\frac{1}{2}\|U\|^2-\psi(Y=1,U)$}}}
    \put(49.6,-0.5){\makebox(0,0)[c]{\scriptsize U}}

    \put(66.5,17.5){\makebox(0,0)[c]{\rotatebox{90}{\scriptsize $T(Y=1,U)$}}}
    \put(84.55,-0.5){\makebox(0,0)[c]{\scriptsize U}}
  \end{overpic}
    \caption{The left figure shows the kernel density estimate function of the transported particles in comparison with the exact $\nu(\cdot\mid Y=1)$. The middle and right figures show the exact and approximate values of $\psi$ and $T$, respectively.}
    \label{fig:bimodal}
\end{figure}

 \begin{figure}[htp]
      \begin{subfigure}[b]{0.23\textwidth}
         \centering
         \begin{overpic}[width=.9\linewidth]{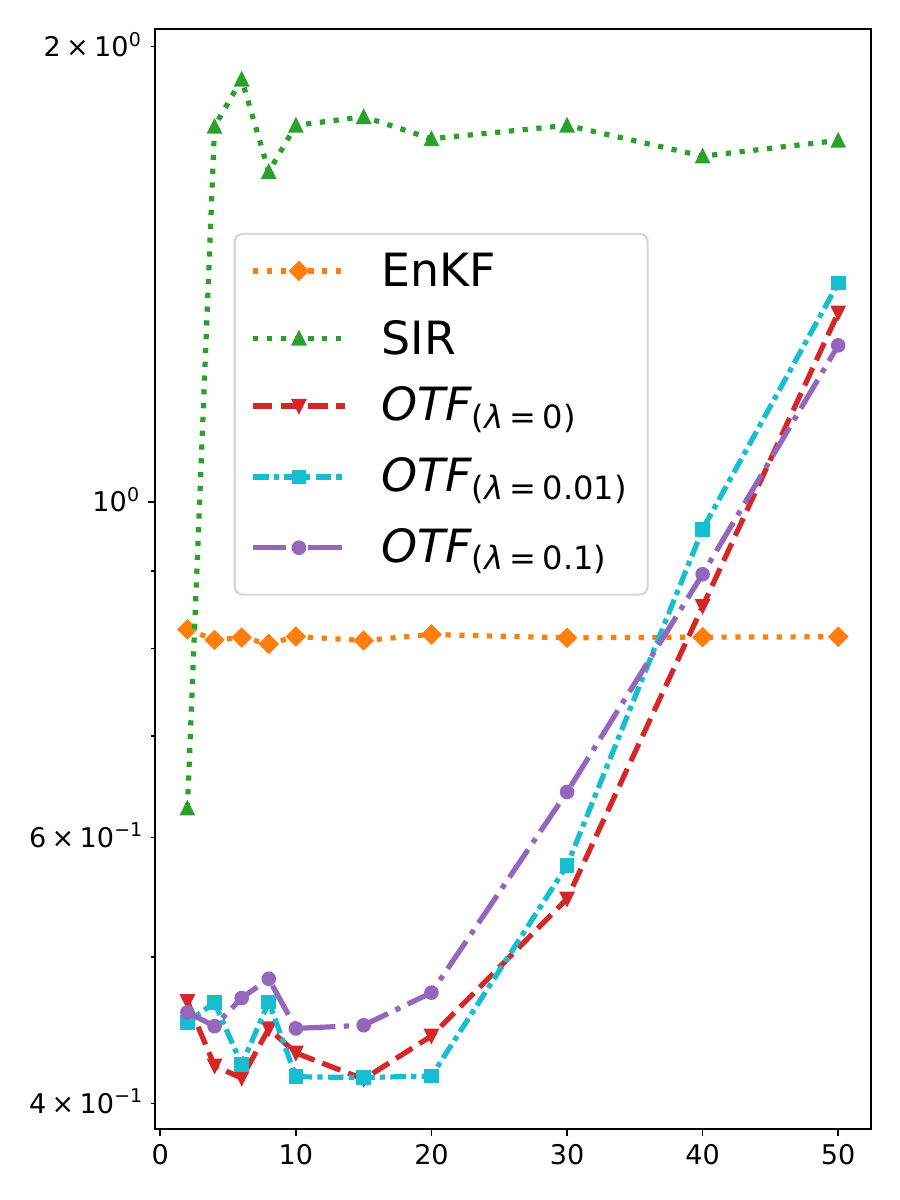}
            \put(43,-1){\makebox(0,0)[c]{\footnotesize dim}}
            \put(-2,50){\makebox(0,0)[c]{\rotatebox{90}{\footnotesize $\distance$}}}
          \end{overpic}
     \end{subfigure}
     \hfill
     \begin{subfigure}[b]{0.23\textwidth}
         \centering
         \begin{overpic}[width=.9\linewidth]{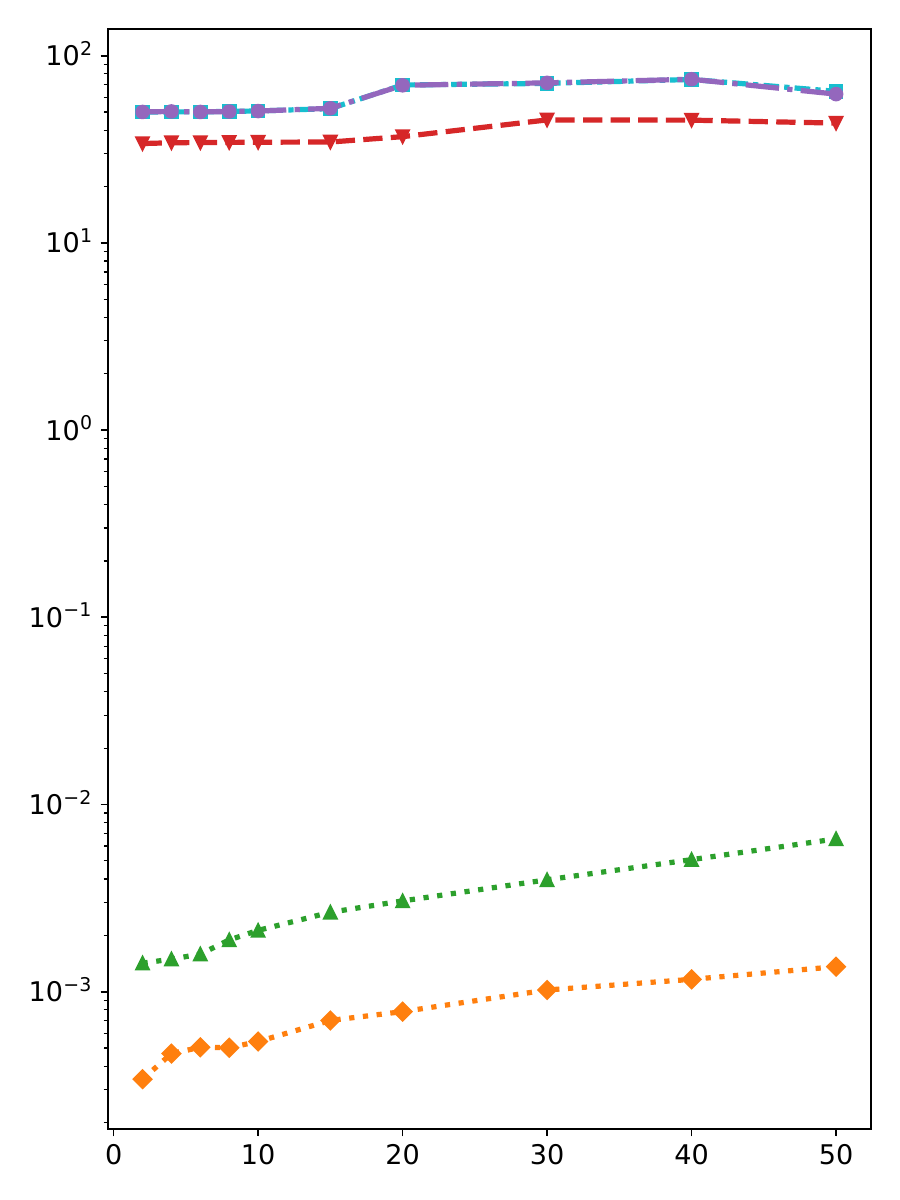}
            \put(43,-1){\makebox(0,0)[c]{\footnotesize dim}}
            \put(-2,50){\makebox(0,0)[c]{\rotatebox{90}{\footnotesize computational time}}}
          \end{overpic}
     \end{subfigure}
     \hfill
     \begin{subfigure}[b]{0.23\textwidth}
         \centering
         \begin{overpic}[width=.9\linewidth]{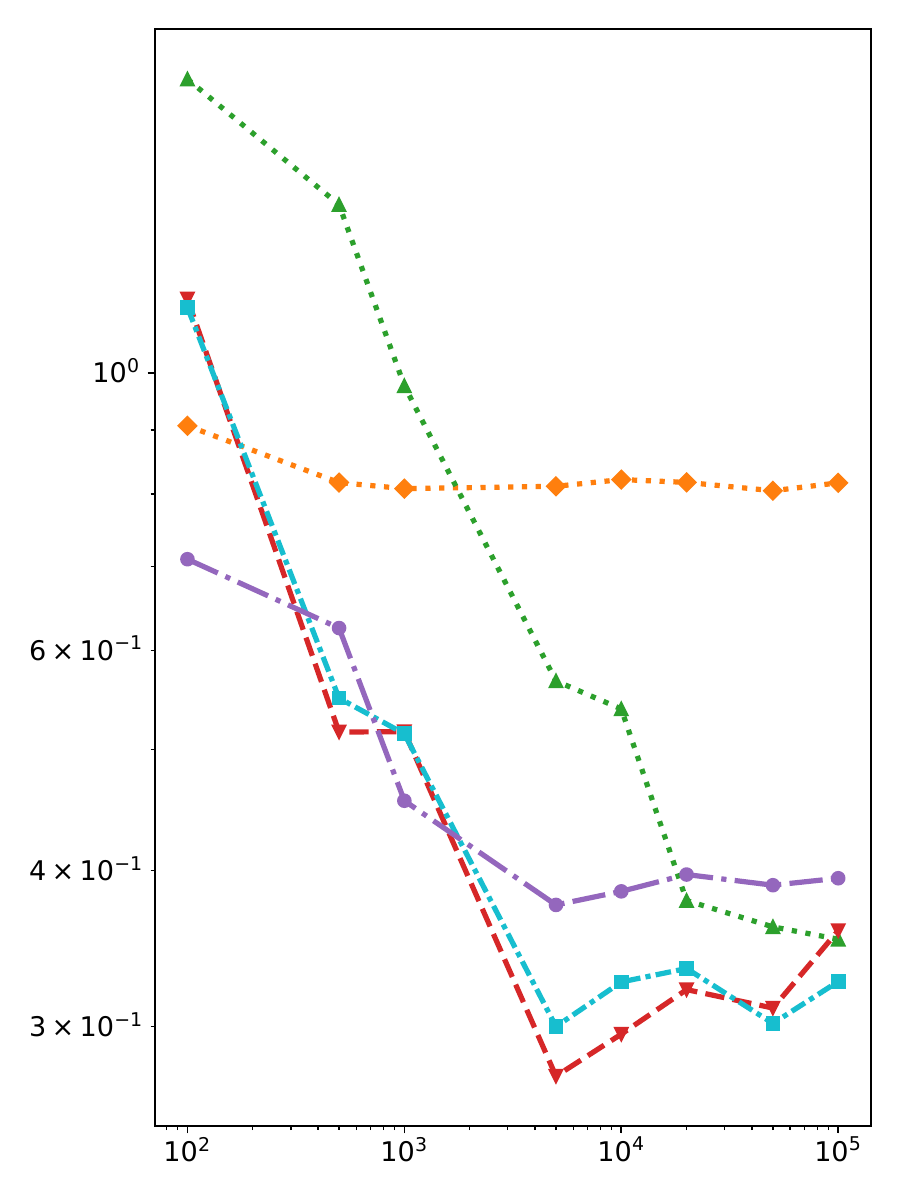}
           \put(43,-1){\makebox(0,0)[c]{\footnotesize \# of particles}}
           \put(-2,50){\makebox(0,0)[c]{\rotatebox{90}{\footnotesize $\distance$}}}
           \put(45,80){\makebox(0,0)[c]{\footnotesize dim = 2}}
          \end{overpic}
     \end{subfigure}
     \hfill
     \begin{subfigure}[b]{0.23\textwidth}
         \centering
         \begin{overpic}[width=.9\linewidth]{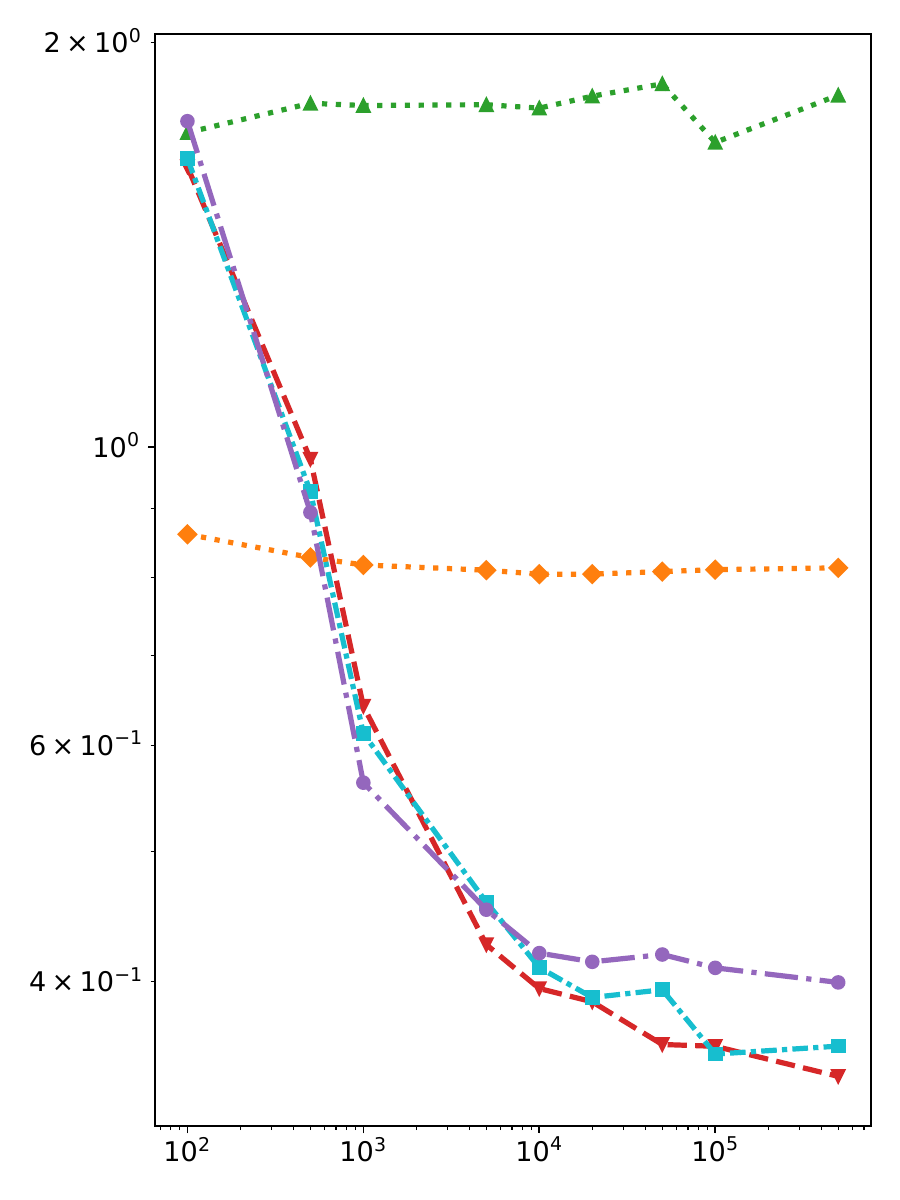}
            \put(43,-1){\makebox(0,0)[c]{\footnotesize \# of particles}}
            \put(-2,50){\makebox(0,0)[c]{\rotatebox{90}{\footnotesize $\distance$}}}
            \put(45,80){\makebox(0,0)[c]{\footnotesize dim = 10}}
          \end{overpic}
     \end{subfigure}

    \caption{The left figure shows the $\distance$ distance as a function of dimension $n$ for a fixed number of particles $N=5000$. The second figure shows the corresponding computational time as a function of dimension. The right two figures show the $\distance$ distance as a function of the number of particles $N$ for a fixed dimension $n=2,10$, respectively.  }
    \label{fig:bimodal_vs_dim_time_particles}
\end{figure}

\subsection{Lorenz 63}\label{sec: L63}

In this numerical experiment, we compare the performance of the same filters as in \Cref{sec: static bimodal example} on the three-dimensional Lorenz 63 model. The state $U_t \in \mathbb{R}^3$ and the observation $Y_t \in \mathbb{R}$ are defined such that $Y_t$ provides noisy measurements of the third component of $U_t$. This leads to a bimodal distribution in the first two state components due to symmetries in the Lorenz model. 

To generate the true state trajectories we used forward Euler discretizations of the Lorenz 63 system 
(see \Cref{sm:Lorenz-models} for details and parameter values)
with a time step of $\Delta t = 0.01$, starting from the initial condition 
$U_0 \sim \mathcal{N}\bigl(5 \cdot \mathbf{1}_3, \mathbf{I}_3\bigr),$
and evolve it without any process noise. 
\editstart
The observation model is  $Y_t = U_t(3) + W_t,$ 
with $W_t \sim \mathcal{N}\bigl(0, 10\bigr)\footnotemark$.
\editend
\footnotetext{Here $U_t(k)$ denotes the $k$-th state/coordinate of the vector $U_t$.}
In all experiments, we used $N = 1000$ particles. The particle forecast update incorporates the process noise of variance $\sigma^2=0.1$, i.e., 
\[
    U_{t+1} = a_{\textnormal{L63}}(U_t) +\sigma V_t, 
    \quad V_t \sim \mathcal{N}\bigl(0, \mathbf{I}_3\bigr), \quad U_0 \sim \mathcal{N}\bigl(0, 10^2 \cdot \mathbf{I}_3\bigr)\, ,
\]
where $a_{\textnormal{L63}}(\cdot)$ denotes the discrete dynamics of the Lorenz 63 equations.

\editstart
\Cref{fig:L63_w2_states} depicts the resulting particle trajectory distributions for various filters comparing them 
to a "true distribution" computed using SIR with $10^5$ particles.
Most notably, the OTF methods successfully capture both modes in the first two unobserved states, while EnKF and SIR fail.
For a more quantitative assessment, we computed the $\distance$ distance between the true distribution and 
the filtered results averaged over 10 independent simulations. The right panel of~\Cref{fig:L63_w2_states} presents these $\distance$ distances, indicating that the OTF methods yield a more accurate approximation. Moreover, among the various OTF methods, the unregularized version shows the largest improvement in capturing the underlying bimodality compared to alternative strategies. 

Recall that our fast rate analyses from \Cref{Sec:Error analysis for OTF} largely relied on controlled Poincar\'e
constants of the filtering distributions which we showed can be controlled under stringent conditions such as 
uniform strong log-concavity in time and for quadratic maps; recall \Cref{sec:example-filtering}. Those conditions 
are clearly violated in this experiment and so it is interesting to empirically verify whether the 
Poincar\'e constants remain controlled for our Lorenz 63 experiment. Towards this goal 
we report two diagnostic quantities for the Poincar\'e constants $C_{\textnormal{PI}}^\mu$ of a measure $\mu$
(we will take $\mu$ to be the prior and posterior at each time step) computed from $n$ i.i.d.\ samples: (i) a lower bound 
$\textnormal{LB}(\mu) \coloneqq \lambda_{\max}(\widehat\Sigma)$, where $\widehat\Sigma$ is the empirical covariance matrix of the samples; and (ii) a kernel-based estimator $\textnormal{RKHS}(\mu)$ that maximizes the same Rayleigh quotient over a regularized RKHS ball and is a consistent estimator of $C_{\textnormal{PI}}^\mu$ itself as $n\to\infty$ \cite{pillaud2020poincare}. We applied the RKHS estimator with a standard RBF kernel at every time step, averaged over the same $10$ independent simulations as \Cref{fig:L63_w2_states}.

The results are shown in \Cref{fig:poincare_l63} where we observed that both the $\textnormal{LB}$ and $\textnormal{RKHS}$
estimates remained bounded in time but exhibit oscillations of order 10 with large constants when the 
distributions become clearly bimodal and smaller values when the modes converge. We consider these results as 
suggestive of a time-uniform Poincar\'e constant for the Lorenz 63 system which implies that our analysis may be 
applicable here. We leave a rigorous, non-asymptotic justification of this empirical observation to future work.

\editend

\begin{figure}[htp]
    \centering
     \begin{subfigure}[b]{0.23\textwidth}
         \centering
         \begin{overpic}[width=.9\linewidth]{figs/L63_x1.pdf}
            \put(-1,15){\makebox(0,0)[c]{\rotatebox{90}{\footnotesize OTF}}}
            \put(-1,33){\makebox(0,0)[c]{\rotatebox{90}{\footnotesize OTF}}}
            \put(-1,53){\makebox(0,0)[c]{\rotatebox{90}{\footnotesize SIR}}}
            \put(-1,72){\makebox(0,0)[c]{\rotatebox{90}{\footnotesize EnKF}}}
            \put(-1,90){\makebox(0,0)[c]{\rotatebox{90}{\footnotesize True}}}
            \put(35,-1){\makebox(0,0)[c]{\footnotesize time}}

            \put(38,21){\makebox(0,0)[c]{\footnotesize$\lambda=0.1$}}
            \put(38,39){\makebox(0,0)[c]{\footnotesize$\lambda=0$}}
          \end{overpic}
         \caption{$U_t(1)$}
         \label{fig:L63_x1}
     \end{subfigure}
     \hfill
     \begin{subfigure}[b]{0.23\textwidth}
         \centering
         \begin{overpic}[width=.9\linewidth]{figs/L63_x2.pdf}
            \put(35,-1){\makebox(0,0)[c]{\footnotesize time}}
          \end{overpic}
         \caption{$U_t(2)$}
         \label{fig:L63_x2}
     \end{subfigure}
     \hfill
     \begin{subfigure}[b]{0.23\textwidth}
         \centering
         \begin{overpic}[width=.9\linewidth]{figs/L63_x3.pdf}
            \put(35,-1){\makebox(0,0)[c]{\footnotesize time}}
          \end{overpic}
         \caption{$U_t(3)$}
         \label{fig:L63_x3}
     \end{subfigure}
     \hfill
     \begin{subfigure}[b]{0.23\textwidth}
         \centering
         \begin{overpic}[width=.9\linewidth]{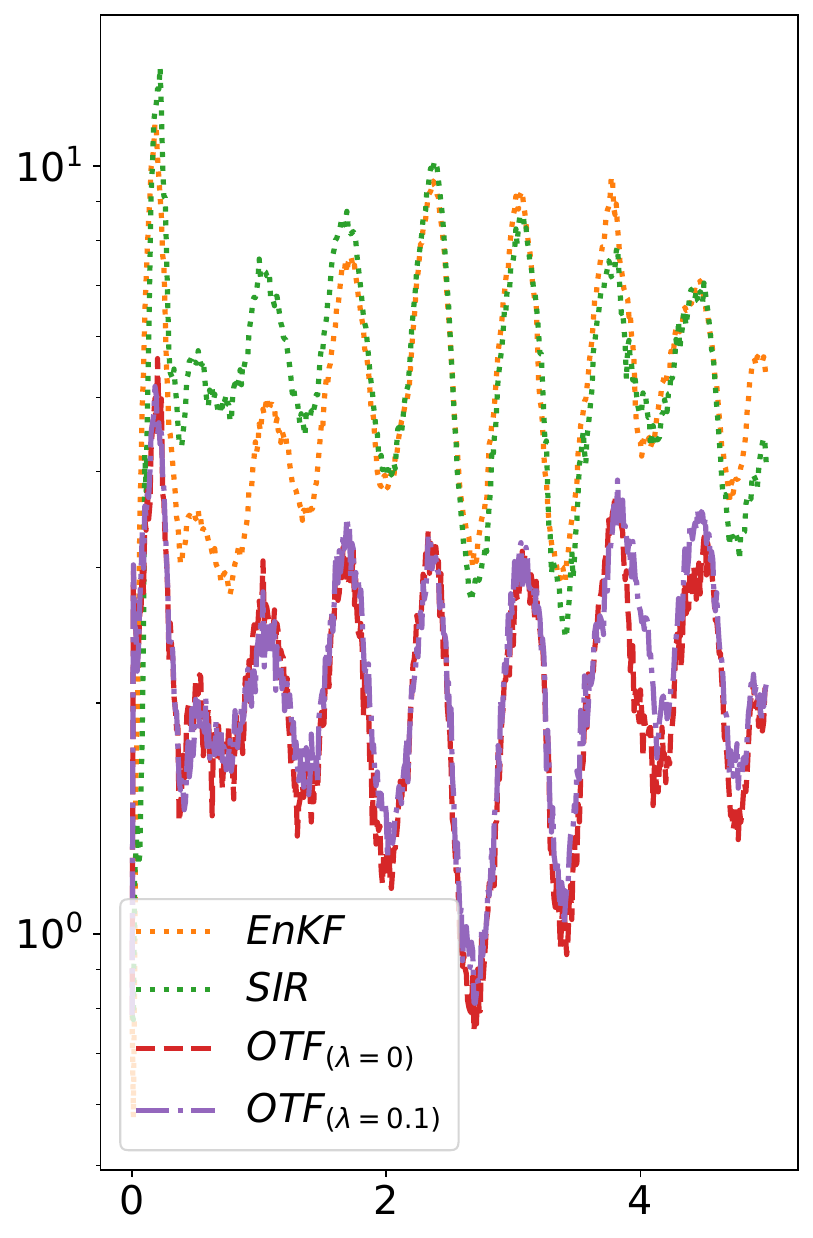}
    \put(-2,50){\makebox(0,0)[c]{\rotatebox{90}{\footnotesize $\distance$}}}
    \put(35,-1){\makebox(0,0)[c]{\footnotesize time}}
  \end{overpic}
         \caption{$\distance$ distance.}
         \label{fig:W_2}
     \end{subfigure}
    \caption{ Numerical results for the Lorenz 63 example. The left three panels illustrate the true particle trajectory distribution and the corresponding distributions produced by each method. The right panel presents $\distance$ distances between each method and the true distribution over $10$ independent simulations.}
    \label{fig:L63_w2_states}
\end{figure}

\begin{figure}[htp]
    \centering
    \begin{overpic}[width=0.35\linewidth,trim=0 0 0 0,clip]{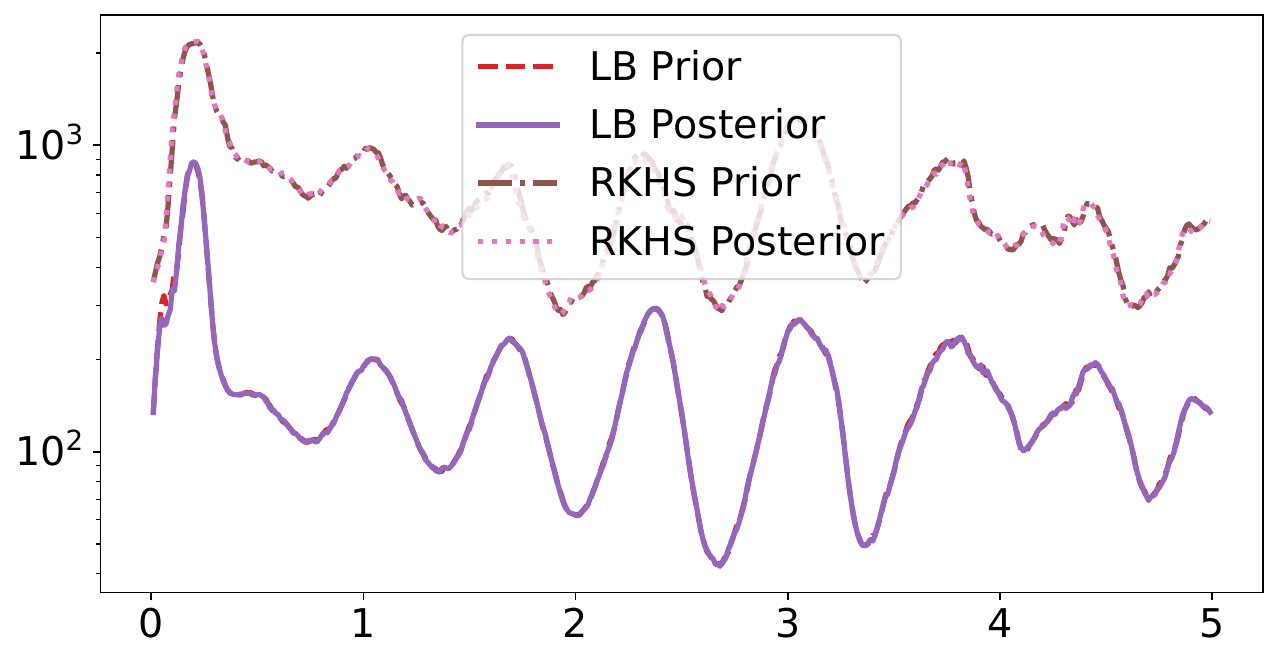}
    \put(-5,27){\makebox(0,0)[c]{\rotatebox{90}{\scriptsize Emp. PI const.}}}
    \put(50,-2){\makebox(0,0)[c]{\scriptsize time}}
    \end{overpic}
    \caption{Empirical Poincar\'e diagnostics for the prior and posteriors  for the Lorenz 63 benchmark averaged over $10$ independent simulations. Solid lines show the lower bound $\textnormal{LB}$ while dashed lines 
    show the RKHS estimator.}
    \label{fig:poincare_l63}
\end{figure}

\subsection{Lorenz 96}\label{sec:L96}
\editstart
For our final set of experiments we considered the $9$-dimensional Lorenz 96 model; see \Cref{sm:Lorenz-models}.
Here the state $U_t \in \Re^{10}$ evolves according the dynamics 

\[
    U_{t+1} = a_{\text{L96}}(U_t) + \sigma V_t, \quad U_0 \;\sim\; \mathcal{N}\bigl(10 \cdot \mathbf{1}_9,\; 10 \cdot \mathbf{I}_9\bigr),
\]
where $a_{\text{L96}}$ is the  Runge-Kutta (RK4) discretization of the Lorenz 96 equations with the time step of $0.01$ 
and $V_t \sim \mathcal{N}\bigl(\mathbf{0}, 0.01 \cdot \mathbf{I}_9\bigr)$ is a normal process noise. The observation $Y_t \in \mathbb{R}^{3}$ consists of noisy measurements of a subset of the state coordinates, i.e., $Y_t 
= (U_t(1), U_t(4), U_t(7)) + \mathcal{N}(\mathbf{0}, 0.1 \cdot \mathbf{I}_3)$. We used $N= 1000$ particles 
for all filtering algorithms and compared our results to SIR with a large number of particles as the ground truth.

\Cref{fig:L96_mse_states} summarizes our findings. 
The left three panels depict the empirical trajectoris of one observed component $(U_t(1))$ and two unobserved components $(U_t(2), U_t(3))$. Since the filtering distributions are 
unimodal the EnKF algorithm demonstrates good performance as expected. At the same time the EnKF-augmented-OTF 
method appears to track the performance of EnKF while the standard OTF without the EnKF layer has worse performance, 
indicating that the additional EnKF layer has meaningful contributions.
This is also evident in the right panel of~\Cref{fig:L96_mse_states} where we report mean-squared error (MSE) of state estimation, averaged over $10$ independent simulations. 

\editend

\begin{figure}[htp]
    \centering
     \begin{subfigure}[b]{0.23\textwidth}
         \centering
         \begin{overpic}[width=.9\linewidth]
         {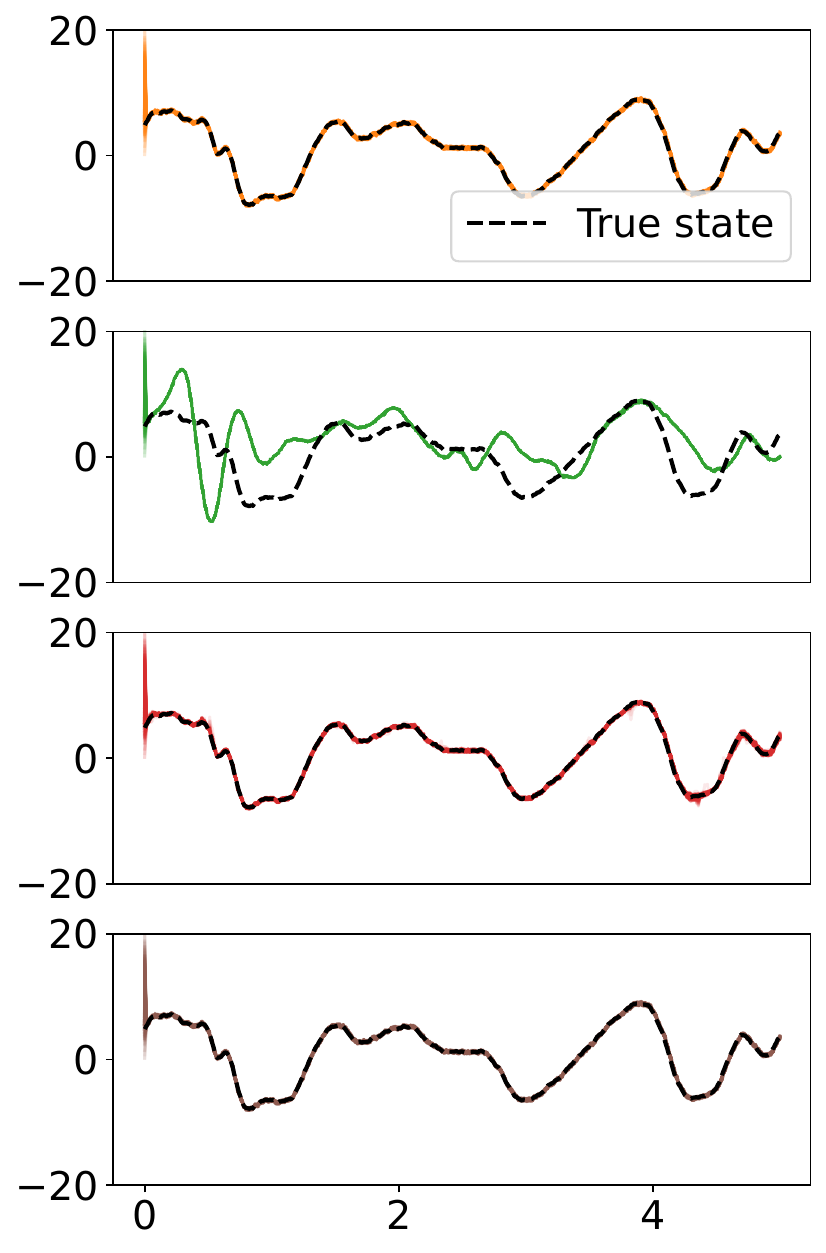}
        \put(-1,15){\makebox(0,0)[c]{\rotatebox{90}{\footnotesize OTF}}}
        
        \put(-1,40){\makebox(0,0)[c]{\rotatebox{90}{\footnotesize OTF}}}
        \put(-1,65){\makebox(0,0)[c]{\rotatebox{90}{\footnotesize SIR}}}
        \put(-1,90){\makebox(0,0)[c]{\rotatebox{90}{\footnotesize EnKF}}}
        \put(40,-1){\makebox(0,0)[c]{\footnotesize time}}

            \put(35,22){\makebox(0,0)[c]{\tiny with EnKF layer}}
            \put(35,46){\makebox(0,0)[c]{\tiny without EnKF layer}}
          \end{overpic}
         \caption{$U_t(1)$}
         \label{fig:L96_x1}
     \end{subfigure}
     \hfill
     \begin{subfigure}[b]{0.23\textwidth}
         \centering
         \begin{overpic}[width=.9\linewidth]
         {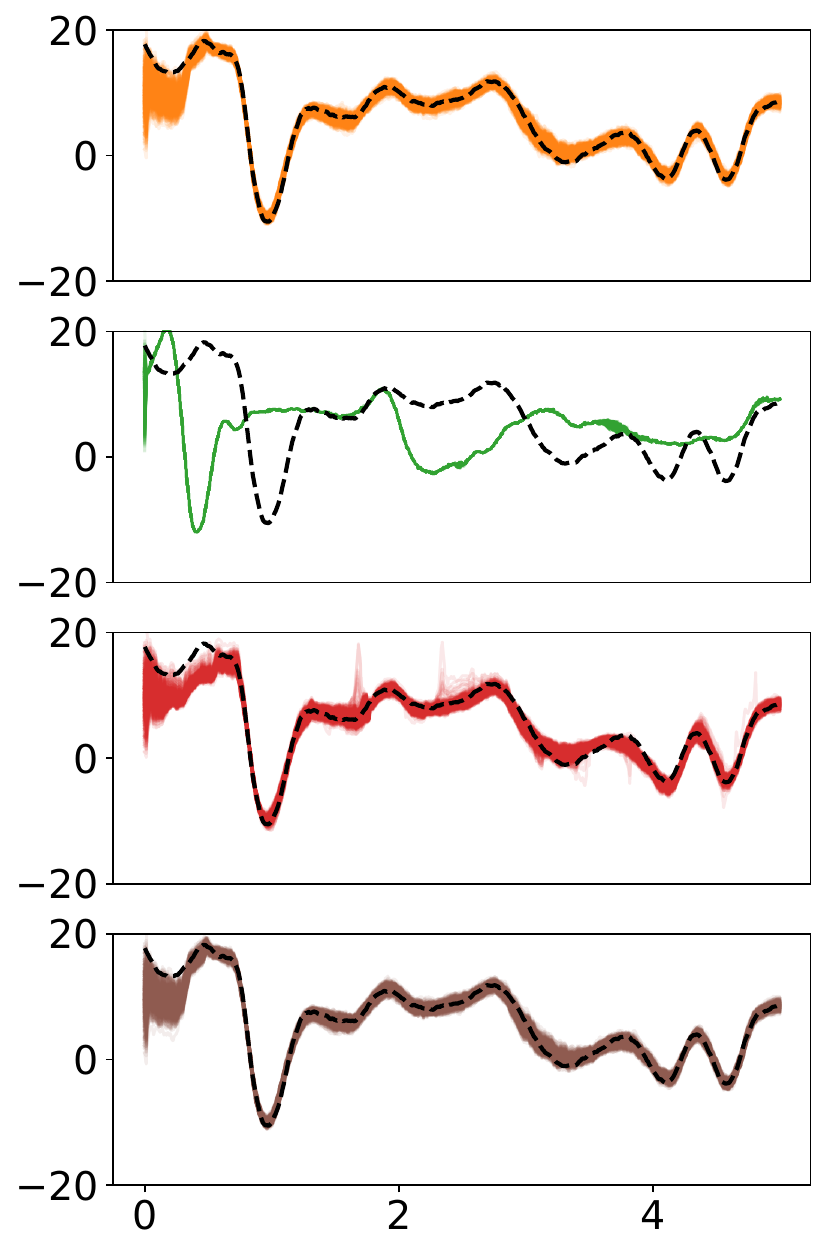}
            \put(40,-1){\makebox(0,0)[c]{\footnotesize time}}
          \end{overpic}
         \caption{$U_t(2)$}
         \label{fig:L96_x2}
     \end{subfigure}
     \hfill
     \begin{subfigure}[b]{0.23\textwidth}
         \centering
         \begin{overpic}[width=.9\linewidth]
         {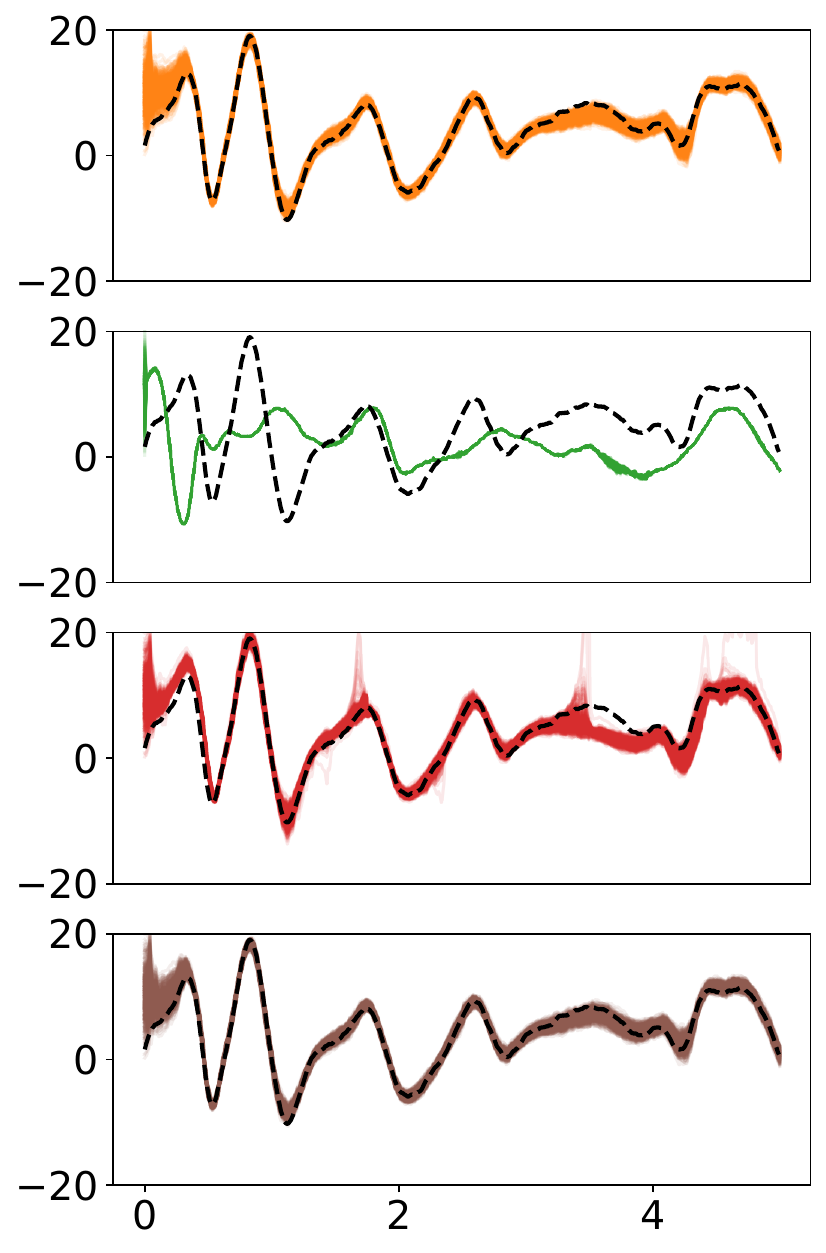}
            \put(40,-1){\makebox(0,0)[c]{\footnotesize time}}
          \end{overpic}
         \caption{$U_t(3)$}
         \label{fig:L96_x3}
     \end{subfigure}
     \hfill
     \begin{subfigure}[b]{0.23\textwidth}
         \centering
         \begin{overpic}[width=.9\linewidth]{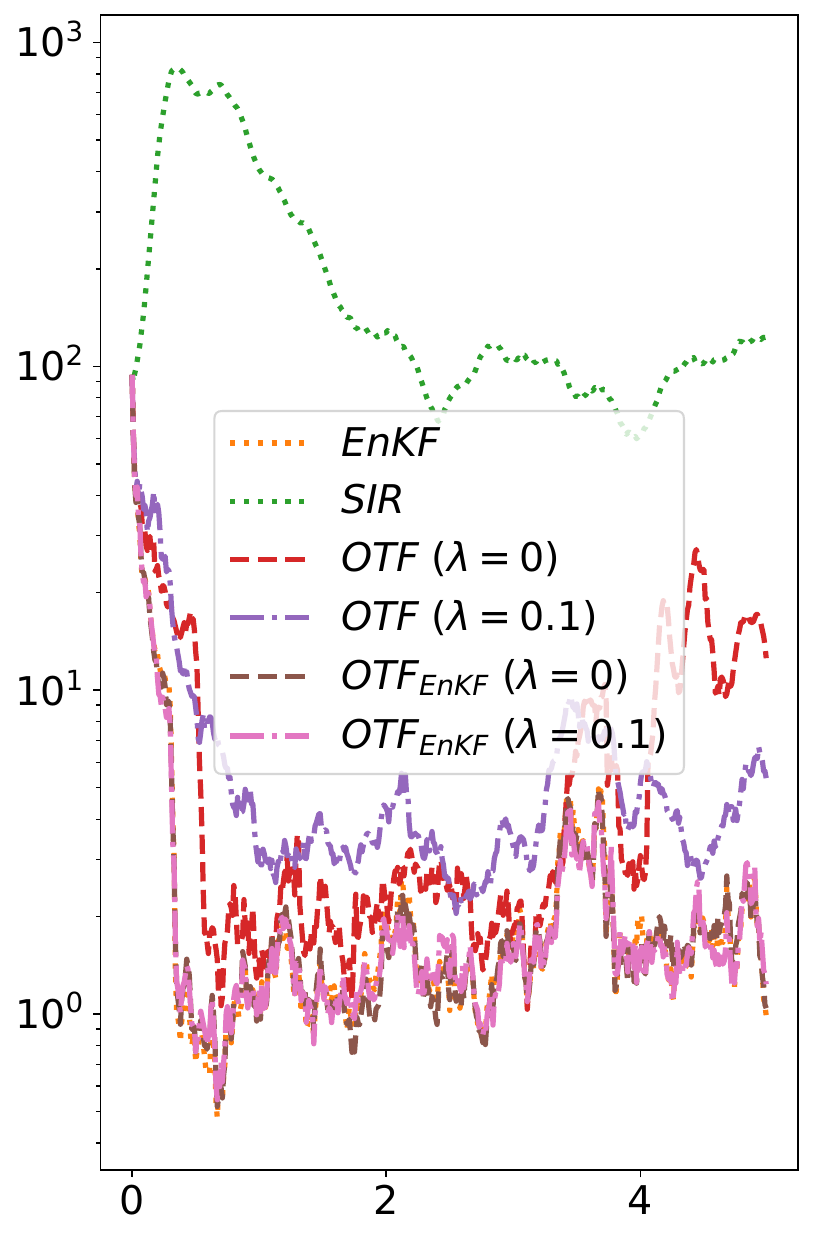}
    \put(-2,50){\makebox(0,0)[c]{\rotatebox{90}{\footnotesize MSE}}}
    \put(40,-1){\makebox(0,0)[c]{\footnotesize time}}
  \end{overpic}
         \caption{MSE}
         \label{fig:L96_mse}
     \end{subfigure}
     
    \caption{Numerical results for the Lorenz 96 example. The left three panels depict the trajectory of one observed component $(U_t(1))$ and two unobserved components $(U_t(2), U_t(3))$ of the true state, along with the corresponding particle trajectories for each filtering method. The right panel displays the MSE of state estimation, averaged over 10 independent simulations. }
    \label{fig:L96_mse_states}
\end{figure}

\section{Conclusion}\label{Sec:Conclusions}
We outlined a quantitative analysis of conditional OT maps for conditional simulation, as well as their extension to filtering and data assimilation via the OTF algorithm. In our theoretical analysis of conditional OT we outlined a ``slow rate" result with a simple proof technique and minimal assumptions on the reference and target measures, followed by a stronger ``fast rate" result that uses stronger assumptions regarding smoothness and Poincar\'e inequalities of the reference and target conditionals. We then extended these results to OTF algorithms that compute conditional OT maps from particles to approximately perform the Bayesian updates. We showed that under appropriate assumptions, our bounds for conditional OT maps can be extended to a bound on the filtering errors of OTF. Finally, we presented some extensions to the OTF methodology, including an additional EnKF layer that helps with nearly Gaussian problems. 

While our theoretical analysis yields a first quantitative bound for conditional OT problems and the OTF algorithm, it leads us to a number of interesting open questions and future directions of research. Most notably, verifying some of our assumptions is challenging in practice and in particular, the Lipschitzness of the Brenier maps $\nabla_u \phi(y, \cdot)$ with respect to $y$ is challenging to establish and is currently open when $\Y$ is not compact. It is also interesting to try to relax some of our assumptions regarding Poincar\'e constants and smoothness of Brenier potentials in order to extend our fast rates to more practical problems. Another interesting direction of research is to incorporate uncertainties regarding the dynamic update $\mathcal{A}$. While we always assumed that this operator is exact, in most practical problems, we are unsure of the state dynamics and may even want to estimate it from empirical data.

\section*{Acknowledgments}
The authors are grateful to anonymous reviewers whose detailed comments and suggestions enabled significant
improvement of an earlier version of this article.

\bibliographystyle{siamplain}
\bibliography{references}

\appendix
\section{Examples of stable divergences}~\label{sec:divergences}

Below we list a few commonly used divergences $D$ which are stable in the sense of 
\eqref{eq: metric D stability} and for which~\Cref{lem: div between true and estimated posterior} is applicable. 
In the following $\U$ denotes a generic metric space.
\begin{itemize}
    \item Maximum mean discrepancy defined by a Lipschitz kernel $K:\U\times\U\mapsto\mathbb{R}$ with corresponding reproducing kernel Hilbert space (RKHS) $\mathcal{K}$ defined as
    \[\textnormal{MMD}_{K}(\mu,\mu'):=\left\|\int_\U K (u,\cdot)\d\mu(u)- \int_\U K (u,\cdot)\d\mu'(u)\right\|_{\mathcal{K}}\, .\]

    \item Wasserstein distance $\textnormal{W}_{p}$, for $p \in [1,2]$, defined as
    \[\textnormal{W}_{p}(\mu,\mu'):=\inf_{\pi\in\Pi(\mu,\mu')}\int_{\U\times\U}\|u-u'\|_\U^p\d \pi(u,u')\, .\]

    \item Dual bounded-Lipschitz  defined as
    \[D_{BL}(\mu,\mu'):=\sup_{g\in\text{Lip}(\U)} \sqrt{\mathbb{E}\left|\int g(u) \d\mu(u) - \int g(u') \d\mu(u')\right|^{2}}\, .\] 
\end{itemize}

\section{Proofs for \Cref{Sec:Error analysis for conditional OT}}
\label{app:proofs:conditionalOT}

In this section we collect the auxiliary theoretical results and the technical proofs of 
our main results from \Cref{Sec:Error analysis for conditional OT}. 
In \Cref{app:conditionalOT:preliminary-results} we give a series of 
preliminary lemmata concerning the dual of the conditional Kantorovich problem 
followed by the proof of \Cref{thm: slow rate} (slow rate) 
in \Cref{proof: thm: slow rate,proof: thm: slow rate unbounded extension} and the proof of \Cref{thm: fast rate} 
(fast rate) in \Cref{proof: thm: fast rate,proof: thm: fast rate unbounded extension}.

\subsection{Preliminary estimates}\label{app:conditionalOT:preliminary-results}
We present a fundamental lemma that serves as the basis for our subsequent error analysis. This result builds upon key insights from the prior works \cite{chewi2024statistical,divol2025optimal,hutter2019minimax}, which we generalize to the setting of conditional OT maps.  In order to state the result, define the pointwise and aggregated excess risks, respectively, as 
\begin{equation}\label{eq: centered SD}
    \cS^c(\phi, y):= \cS(\phi, y) - \cS(\phi^{\dagger}, y),\quad \text{and}\quad  \cS^c(\phi) \coloneqq \cS(\phi) - \cS(\phi^{\dag}),
\end{equation}
for any function $\phi$, where $\cS$ is defined according to~\eqref{eq: dual} and $\phi^\dagger$ is the unique minimizer of $\cS(\phi)$, whose existence is ensured in \Cref{prop: solvability of monge + conditional brenier}, i.e., the 
conditionally optimal Brenier potential function. Furthermore, recall that 
for $\nu_\Y$ a.e. $y$, $\nabla_{u} \phi^{\dagger}(y, \cdot) \# \eta_{\mathcal{U}} = \nu(\cdot\mid y)$.

\begin{lemma}[Map stability]
\label{cor: map stability}
 Suppose $\F$ satisfies \Cref{assump: slow rate}. Then for every $\phi \in \F$
 it holds that
     \[\frac{1}{2\betamax}\|\nabla_u\phi^{\dagger}-\nabla_u\phi\|_{L_{\eta}^2}^2\leq \cS^c(\phi) \leq
      \frac{1}{2\alphamin}\|\nabla_{u} \phi^{\dagger} - \nabla_{u} \phi\|_{L_{\eta}^2}^2.
      \]
\end{lemma}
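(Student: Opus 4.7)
The plan is to reduce the statement to a pointwise-in-$y$ identity, rewrite the pointwise excess risk as an integrated Bregman divergence, and then close the bounds via the standard duality between strong convexity and smoothness under the Legendre transform. Since $\cS^c(\phi)=\int \cS^c(\phi\mid y)\,\d\nu_\Y(y)$, it suffices to prove, for $\nu_\Y$-a.e.\ $y$,
\[\frac{1}{2\beta(y)}\|\nabla_u\phi(y,\cdot)-\nabla_u\phi^\dagger(y,\cdot)\|_{L^2_{\eta_\U}}^2 \le \cS^c(\phi\mid y) \le \frac{1}{2\alpha(y)}\|\nabla_u\phi(y,\cdot)-\nabla_u\phi^\dagger(y,\cdot)\|_{L^2_{\eta_\U}}^2,\]
and then use $\alpha(y)\ge\alphamin$ and $\beta(y)\le\betamax$ to conclude.

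First I would use the pushforward relation $\nabla_u\phi^\dagger(y,\cdot)\#\eta_\U=\nu(\cdot\mid y)$ from \Cref{prop: solvability of monge + conditional brenier} to change variables in the second term of $\cS(\phi\mid y)$, giving $\int \phi^*(y,u)\,\d\nu(u\mid y)=\int \phi^*(y,\nabla_u\phi^\dagger(y,v))\,\d\eta_\U(v)$. Combined with the Fenchel--Young equality for $\phi^\dagger(y,\cdot)$, namely $\phi^\dagger(y,v)+(\phi^\dagger)^*(y,\nabla_u\phi^\dagger(y,v))=\langle v,\nabla_u\phi^\dagger(y,v)\rangle$, this yields
\[\cS^c(\phi\mid y)=\int\bigl[\phi(y,v)+\phi^*(y,\nabla_u\phi^\dagger(y,v))-\langle v,\nabla_u\phi^\dagger(y,v)\rangle\bigr]\,\d\eta_\U(v).\]
Introducing $v^\ast(y,v)\coloneqq\nabla_u\phi^*(y,\nabla_u\phi^\dagger(y,v))$, which is the unique argmax in the definition of $\phi^*(y,\nabla_u\phi^\dagger(y,v))$ by strong convexity of $\phi(y,\cdot)$, and which satisfies $\nabla_u\phi(y,v^\ast)=\nabla_u\phi^\dagger(y,v)$, another Fenchel--Young equality applied at $v^\ast$ reduces the integrand to the Bregman divergence
\[D_{\phi(y,\cdot)}(v,v^\ast)\coloneqq \phi(y,v)-\phi(y,v^\ast)-\langle \nabla_u\phi(y,v^\ast),\,v-v^\ast\rangle.\]

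The key step is to convert the Bregman divergence into a gradient difference via the standard Legendre duality identity $D_{\phi(y,\cdot)}(v,v^\ast)=D_{\phi^*(y,\cdot)}(u^\ast,u)$, where $u=\nabla_u\phi(y,v)$ and $u^\ast=\nabla_u\phi(y,v^\ast)=\nabla_u\phi^\dagger(y,v)$. Since $\phi(y,\cdot)$ is $\alpha(y)$-strongly convex and $\beta(y)$-smooth, $\phi^*(y,\cdot)$ is $1/\beta(y)$-strongly convex and $1/\alpha(y)$-smooth, so
\[\frac{1}{2\beta(y)}\|u-u^\ast\|^2 \le D_{\phi^*(y,\cdot)}(u^\ast,u) \le \frac{1}{2\alpha(y)}\|u-u^\ast\|^2.\]
Substituting $u-u^\ast=\nabla_u\phi(y,v)-\nabla_u\phi^\dagger(y,v)$, integrating against $\eta_\U$ and then $\nu_\Y$, and invoking the uniform bounds on $\alpha(\cdot),\beta(\cdot)$ from \Cref{assump: slow rate}(2) completes the argument.

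The main obstacle is not conceptual but rather verifying that all Legendre-transform manipulations are legitimate for elements of $\F$: biconjugacy $\phi^{**}=\phi$ requires lower-semicontinuity and convexity (both guaranteed by \Cref{assump: slow rate}(2)), and the identity $\nabla_u\phi^*(y,\nabla_u\phi(y,v^\ast))=v^\ast$ requires the strong convexity/smoothness of $\phi(y,\cdot)$, which ensures $\nabla_u\phi(y,\cdot)$ is a bijection with $\nabla_u\phi^*(y,\cdot)$ as its inverse. The duality identity between Bregman divergences of $\phi$ and $\phi^*$ is then a short direct computation using Fenchel--Young.
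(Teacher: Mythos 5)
Your proof is correct, and at its core it is the same argument the paper gives: both rest on the Fenchel--Young equality, the pushforward relation $\nabla_u\phi^\dagger(y,\cdot)\#\eta_\U=\nu(\cdot\mid y)$, and the duality that $\alpha$-strong convexity / $\beta$-smoothness of $\phi(y,\cdot)$ translates into $1/\alpha$-smoothness / $1/\beta$-strong convexity of $\phi^*(y,\cdot)$. The paper carries these ingredients out as two direct quadratic upper-/lower-estimates on $\phi^*$, done once in each direction; you instead first rewrite the pointwise excess risk $\cS^c(\phi\mid y)$ as an $\eta_\U$-integral of the Bregman divergence $D_{\phi(y,\cdot)}(v,v^\ast)$, then invoke the Bregman duality identity $D_{\phi(y,\cdot)}(v,v^\ast)=D_{\phi^*(y,\cdot)}(\nabla_u\phi^\dagger(y,v),\nabla_u\phi(y,v))$, and conclude with the two-sided quadratic bound on the dual Bregman divergence. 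This is a tidy repackaging: it makes the two bounds fall out symmetrically from a single displayed inequality rather than requiring a ``mirror image'' pass, and it makes explicit that the excess risk at conditional level $y$ is precisely an integrated Bregman divergence against $\eta_\U$. The asymptotic content and required hypotheses are identical; your extra care about biconjugacy and invertibility of $\nabla_u\phi(y,\cdot)$ is exactly what \Cref{assump: slow rate}(2) supplies.
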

Note that the above result does not require any smoothness or 
convexity assumptions on $\phi^\dagger$ but only on the 
approximation class $\F$.

    \begin{proof}
        This result is an extension of \cite[Prop. 2.1]{divol2025optimal}. The idea is to leverage the duality between convexity and smoothness. We begin with the upper bound.
    Recall that $\phi(y,\cdot)$ is $\alpha(y)$-strongly convex in $u$ iff $\phi^*(y,\cdot)$ is $\frac{1}{\alpha(y)}$-smooth in $u$. Hence, for any $z,w \in \mathbb{R}^d$,
\[\phi^*(y,w) \leq \phi^*(y,z) + \langle \nabla_{u} \phi^*(y,z) , w-z \rangle + \frac{1}{2\alpha(y)} \|z-w\|^2.\]
Set $z=\nabla_{u} \phi(y,u)$ and $w=\nabla_{u} \phi^{\dagger}(y,u)$. Using the fact that $\nabla_{u}\phi^*(y,\nabla_{u}\phi(y,u)) = u$, and $\langle u,\nabla_{u}\phi(y,u) \rangle = \phi(y,u) + \phi^*(y,\nabla_{u}\phi(y,u))$, we obtain:
\begin{align*}
    \phi^*(y,\nabla_{u}\phi^{\dagger}(y,u)) + \phi(y,u) \leq \langle u, \nabla_{u} \phi^{\dagger}(y,u) \rangle + \frac{1}{2\alpha(y)} \|\nabla_{u} \phi^{\dagger}(y,u) - \nabla_{u} \phi(y,u)\|^2.
\end{align*}
Integrating over $u \sim \eta_{\U}$, noting that $\nabla_{u}\phi^{\dagger}\#\eta_{\U} = \nu(\cdot\mid u)$, and recalling \eqref{eq: marginal dual} and \eqref{eq: dual}, we obtain the bound
\[\cS(\phi,y) \leq \int \langle u, \nabla_{u} \phi^{\dagger}(y,u) \rangle + \frac{1}{2\alpha(y)} \|\nabla_{u} \phi^{\dagger}(y,u) - \nabla_{u} \phi(y,u)\|^2 \, \d\eta_{\mathcal{U}}(u).\]
Now since $\mathcal{S}(\phi^{\dagger},y) = \int \langle u, \nabla_{u}\phi^{\dagger}(y,u)\rangle \d\eta_{\U}(u)$, the excess risk satisfies
\begin{align*}
    \cS^c(\phi,y) = \cS(\phi,y) - \cS(\phi^{\dagger},y) \leq \frac{1}{2\alpha(y)} \|\nabla_{u} \phi^{\dagger}(y,\cdot) - \nabla_{u} \phi(y,\cdot)\|^2_{L_{\eta_{\mathcal{U}}}^2}.
    \label{eq: map stability convexity bound}
\end{align*}
Integrating over $y \sim \nu_{\Y}$, and applying the inequality $\frac{1}{2\alpha(y)} \leq \frac{1}{2\alphamin}$ concludes the upper-bound. 
The proof for the lower-bound follows identically, using the  dual statement: $\phi(y,\cdot)$ is $\beta(y)$-smooth iff $\phi^{*}(y,\cdot)$ is $\frac{1}{\beta(y)}$-strongly convex. 
{}
\end{proof}
Next we obtain an upper bound on the error of the conjugate Brenier maps ,i.e., 
the inverse conditional transport maps, in terms of the error of the 
forward conditional maps.
\begin{corollary}\label{cor:conjugate-norm-bound}
    Suppose $\F$ satisfies \Cref{assump: slow rate}. Then for every $\phi \in \F$
 it holds that
    \begin{equation}
            \frac{1}{\betamax}\|\nabla_{u}\phi^{\dagger*} - \nabla_{u}\phi^{*}\|_{L_{\nu}^2}^2 \leq \frac{1}{\alphamin}\|\nabla_{u}\phi^\dagger - \nabla_{u}\phi\|_{L_{\eta}^2}^2.
    \end{equation}
\end{corollary}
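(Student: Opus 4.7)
The plan is to exploit the symmetry of the conditional OT problem under conjugation: the conjugates $\phi^{\dagger*}(y,\cdot)$ and $\phi^{*}(y,\cdot)$ serve as Brenier-type potentials for the \emph{reverse} conditional OT problem, in which the source measure is $\nu(\cdot\mid y)$ and the target is $\eta_\U$. This reduces the corollary to an application of \Cref{cor: map stability} in reversed roles.

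First, I would observe that $\nabla_u\phi^{\dagger*}(y,\cdot)$ is the inverse of the Brenier map $\nabla_u\phi^{\dagger}(y,\cdot)$, and therefore pushes $\nu(\cdot\mid y)$ forward to $\eta_\U$. Next, I would record that convex conjugation swaps strong convexity with smoothness: for $\phi\in\F$ satisfying \Cref{assump: slow rate}(2), the conjugate $\phi^{*}(y,\cdot)$ is $\tfrac{1}{\betamax}$-strongly convex and $\tfrac{1}{\alphamin}$-smooth, uniformly in $y$. Hence the class $\F^{*}\coloneqq\{\phi^{*}:\phi\in\F\}$ inherits the hypotheses needed to invoke \Cref{cor: map stability} for the reverse problem.

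Second, I would identify the reverse excess risk with the original one. Defining the reverse dual functional $\widetilde{\cS}(\psi)\coloneqq\int\psi^{*}(y,v)\,\d\eta(y,v)+\int\psi(y,u)\,\d\nu(y,u)$ and using the involutive property $\phi^{**}=\phi$ for convex $\phi$, one checks that $\widetilde{\cS}(\phi^{*})=\cS(\phi)$ and $\widetilde{\cS}(\phi^{\dagger*})=\cS(\phi^\dagger)$, so their centered versions coincide. Applying \Cref{cor: map stability} to the reverse problem with the swapped parameters then yields
\[
\tfrac{\alphamin}{2}\,\|\nabla_u\phi^{\dagger*}-\nabla_u\phi^{*}\|_{L_\nu^2}^{2}\leq \cS^{c}(\phi)\leq \tfrac{\betamax}{2}\,\|\nabla_u\phi^{\dagger*}-\nabla_u\phi^{*}\|_{L_\nu^2}^{2}.
\]
Chaining the lower bound above with the upper bound from the original \Cref{cor: map stability}, $\cS^{c}(\phi)\leq \frac{1}{2\alphamin}\|\nabla_u\phi^\dagger-\nabla_u\phi\|_{L_\eta^2}^{2}$, then delivers the claimed inequality after rearranging the constants.

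The main obstacle, and the only technical ingredient beyond \Cref{cor: map stability}, is the rigorous identification of the reverse problem's excess risk with $\cS^c(\phi)$ and the verification that $\F^{*}$ inherits the hypotheses of \Cref{assump: slow rate}(2) (lower semicontinuity and the swapped convexity/smoothness bounds). As a more elementary alternative that avoids introducing the reverse dual altogether, one can argue directly by change of variables $u=\nabla_u\phi^\dagger(y,v)$, which together with the identities $\nabla_u\phi^{\dagger*}(y,\nabla_u\phi^\dagger(y,v))=v$ and $\nabla_u\phi^{*}(y,\nabla_u\phi(y,v))=v$ recasts $\|\nabla_u\phi^{\dagger*}-\nabla_u\phi^{*}\|_{L_\nu^2}^{2}$ as $\int\|\nabla_u\phi^{*}(y,\nabla_u\phi(y,v))-\nabla_u\phi^{*}(y,\nabla_u\phi^\dagger(y,v))\|^{2}\,\d\eta(y,v)$; the Lipschitz continuity of $\nabla_u\phi^{*}(y,\cdot)$, with constant $1/\alpha(y)\leq 1/\alphamin$ following from strong convexity of $\phi(y,\cdot)$, then immediately yields a bound of the stated type.
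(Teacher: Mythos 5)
Your route is the same one the paper takes: introduce the reverse dual $\cS^*(\psi)=\int\psi\,\d\nu+\int\psi^*\,\d\eta$, use $\phi^{**}=\phi$ to identify $\cS^*(\phi^*)=\cS(\phi)$ with minimizer $\phi^{\dagger*}$, apply \Cref{cor: map stability} to the conjugate class, and chain with the forward upper bound $\cS^c(\phi)\le\tfrac{1}{2\alphamin}\|\nabla_u\phi^\dagger-\nabla_u\phi\|_{L^2_\eta}^2$. The place where your argument does not close is the final sentence ``after rearranging the constants.'' Because $\phi^*(y,\cdot)$ is $\tfrac{1}{\betamax}$-strongly convex and $\tfrac{1}{\alphamin}$-smooth, the correct reverse lower bound is the one you wrote, $\tfrac{\alphamin}{2}\|\nabla_u\phi^{\dagger*}-\nabla_u\phi^*\|_{L^2_\nu}^2\le\cS^c(\phi)$, and chaining it with the forward upper bound gives
\begin{equation*}
\|\nabla_u\phi^{\dagger*}-\nabla_u\phi^*\|_{L^2_\nu}^2\;\le\;\frac{1}{\alphamin^2}\,\|\nabla_u\phi^\dagger-\nabla_u\phi\|_{L^2_\eta}^2,
\end{equation*}
which is \emph{not} the stated inequality with constant $\tfrac{\betamax}{\alphamin}$: your bound implies the stated one only when $\alphamin\betamax\ge1$. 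For quadratic potentials $\phi(y,u)=\tfrac a2\|u\|^2$, $\phi^\dagger(y,u)=\tfrac c2\|u\|^2$ with $a=\alphamin=\betamax<1$ and $a\neq c$, your $\tfrac{1}{\alphamin^2}$ bound holds with equality while the $\tfrac{\betamax}{\alphamin}$ version fails, so this is not a removable slack. In other words, your intermediate display is correct but does not deliver the corollary as literally stated.

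It is worth saying that the discrepancy originates in the paper's own proof, which asserts the reverse lower bound with constant $\tfrac{1}{2\betamax}$, i.e., it applies \Cref{cor: map stability} to $\F^*$ with the \emph{unswapped} constants; your bookkeeping (swapping $\alphamin\leftrightarrow 1/\betamax$ and $\betamax\leftrightarrow 1/\alphamin$ for the conjugate class) is the internally consistent one. Your alternative elementary argument at the end — change of variables $u=\nabla_u\phi^\dagger(y,v)$ together with the $\tfrac{1}{\alphamin}$-Lipschitzness of $\nabla_u\phi^*(y,\cdot)$ — is correct, avoids the reverse dual entirely, and again yields exactly the $\tfrac{1}{\alphamin^2}$ constant. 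So the clean resolution is either to prove (and use downstream, e.g., in \Cref{thm: PI extension consequence for COT}, where only the size of the constant matters) the corollary with $\tfrac{1}{\alphamin^2}$ in place of $\tfrac{\betamax}{\alphamin}$, or to add the hypothesis $\alphamin\betamax\ge1$; as written, the last step of your main argument is a genuine gap relative to the stated statement, even though every inequality you actually derive is valid.
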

\begin{proof}
    Define 
\[\cS^*(\phi) := \int \phi^{*}(y,u) d\eta(y,u) + \int \phi(y,u) d\nu(y,u).\] 
Since $\phi$ is the convex conjugate of $\phi^{*}$, we observe that
\begin{align*}
    \cS^*(\phi^{*}) = \int \phi(y,u) d\eta(y,u) + \int \phi^{*}(y,u) d\nu(y,u) = \mathcal{S}(\phi),
\end{align*}
and the minimizer of $\cS^*$ is given by $\phi^{\dagger *}$.
By \cref{cor: map stability}, we have the inequality
\begin{align}
    \frac{1}{2\beta_{max}}\|\nabla_{u}\phi^* - \nabla_{u}\phi^{\dagger*}\|_{L_{\nu}^2}^2 &\leq \cS^*(\phi^{*}) - \cS^*(\phi^{\dagger*}) \label{eq: conjugate gradient norm bound}
    \\
    \nonumber
    &= \mathcal{S}(\phi) - \mathcal{S}(\phi^{\dagger}) \leq \frac{1}{2\alpha_{min}}\|\nabla_{u}\phi - \nabla_{u}\phi^{\dagger}\|_{L_{\eta}^2}^2,
\end{align}
completing the proof.
\end{proof}

\editstart
Next, we delve into the proof for the slow rate. To simplify the exposition of the result, we first prove it in the simplified setting where the potentials are bounded (e.g., compact domain case) in~\Cref{proof: thm: slow rate}. Afterwards, we show the extension to unbounded potentials in~\Cref{proof: thm: slow rate unbounded extension}.
\editend

\editstart
\subsection{Proof of \Cref{thm: slow rate} (Slow rate) for bounded potentials}
\label{proof: thm: slow rate}
Assuming uniformly bounded potentials, we work under~\Cref{assump: slow rate} setting $q_\Y=q_\U=0$ and where conditions 2,3, and 6 are not needed.
\editend
Before presenting the proof we collect two technical results: 
an upper bound on the covering number of the function class $\mathcal{F}$ and its conjugate class $\mathcal{F}^{*}$ (defined below), and a classic generalization bound for bounded function classes from empirical process theory. 

\begin{lemma}
\label{lem: Conjugate class complexity}
Define $\mathcal{F}^{*}:=\{\phi^{*}\mid\phi\in\mathcal{F}\}$. 
Then for any $\delta>0$,
\[\mathcal{N}(\delta, \mathcal{F}^*, \|\cdot\|_{L^\infty(\Y \times \U)})\leq \mathcal{N}(\delta, \mathcal{F}, \|\cdot\|_{L^\infty(\Y \times \U)}).\]
\end{lemma}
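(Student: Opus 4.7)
The plan is to establish that taking convex conjugates in the $u$-coordinate is a $1$-Lipschitz operation in the $L^\infty(\Y\times\U)$ norm, and then immediately deduce the covering-number bound by transporting any $\delta$-cover of $\mathcal{F}$ to a $\delta$-cover of $\mathcal{F}^*$ via conjugation.

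The key step will be the pointwise estimate
\[
\|\phi_1^* - \phi_2^*\|_{L^\infty(\Y\times\U)} \;\le\; \|\phi_1 - \phi_2\|_{L^\infty(\Y\times\U)},\qquad \forall\,\phi_1,\phi_2 \in \mathcal{F}.
\]
To see this, fix $(y,u)\in \Y\times\U$ and recall $\phi_i^*(y,u)=\sup_{v\in\U}\{\langle v,u\rangle_\U-\phi_i(y,v)\}$. Using the elementary identity $\sup_v f_1(v)-\sup_v f_2(v)\le \sup_v(f_1(v)-f_2(v))$ with $f_i(v)=\langle v,u\rangle_\U-\phi_i(y,v)$, I would obtain
\[
\phi_1^*(y,u)-\phi_2^*(y,u) \;\le\; \sup_{v\in\U}\bigl(\phi_2(y,v)-\phi_1(y,v)\bigr) \;\le\; \|\phi_1-\phi_2\|_{L^\infty(\Y\times\U)}.
\]
Swapping the roles of $\phi_1$ and $\phi_2$ and taking a supremum over $(y,u)$ yields the claimed $1$-Lipschitz property.

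With that in hand, the covering-number bound is immediate. Let $N := \mathcal{N}(\delta,\mathcal{F},\|\cdot\|_{L^\infty(\Y\times\U)})$ and choose a $\delta$-cover $\{\phi_1,\ldots,\phi_N\}\subset\mathcal{F}$. For any $\phi^*\in\mathcal{F}^*$ pick $\phi\in\mathcal{F}$ with $\phi^*=(\phi)^*$, and let $\phi_i$ be a covering center with $\|\phi-\phi_i\|_{L^\infty(\Y\times\U)}\le \delta$. The Lipschitz estimate above then gives $\|\phi^*-\phi_i^*\|_{L^\infty(\Y\times\U)}\le \delta$, so $\{\phi_1^*,\ldots,\phi_N^*\}$ constitutes a $\delta$-cover of $\mathcal{F}^*$, establishing the result.

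I do not anticipate a meaningful obstacle: the only technicality worth flagging is that when $\U$ is unbounded one must check that the relevant suprema defining $\phi^*$ are finite in order for $\mathcal{F}^*\subset L^\infty$ to make sense, but under \Cref{assump: slow rate} the uniform boundedness and $\alphamin$-strong convexity of $\phi(y,\cdot)$ together guarantee that $\phi^*(y,u)$ is finite for every $u\in\U$ (with growth controlled by $\frac{1}{2\alphamin}\|u\|^2+R$), and the difference estimate above does not depend on pointwise finiteness since it is purely a supremum-of-differences argument.
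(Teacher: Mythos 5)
Your proof is correct and follows essentially the same route as the paper: conjugation in the $u$-variable is $1$-Lipschitz in the $L^\infty(\Y\times\U)$ norm via the elementary supremum-of-differences inequality, and any $\delta$-cover of $\mathcal{F}$ is pushed forward to a $\delta$-cover of $\mathcal{F}^*$. The only difference is that you additionally flag the well-posedness of $\phi^*$ (finiteness of the supremum), which the paper leaves implicit, but the core argument is identical.
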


\begin{proof}
For any $\phi,\psi\in \mathcal{F}$ we have that
    \begin{align*}
    |\phi^*(y,u)-\psi^*(y,u)|&=|\sup_{v}\{\langle u,v \rangle-\phi(y,v)\}-\sup_{v}\{\langle u,v \rangle-\psi(y,v)\}|\\
    &\leq\sup_{v}|\phi(y,v)-\psi(y,v)|\leq \sup_{y,v}|\phi(y,v)-\psi(y,v)|=\|\phi-\psi\|_{L^{\infty}(\Y \times \U)}.
    \end{align*}
Therefore, if $\{\phi_1,...,\phi_N\}$ forms a $\delta$-cover of $\mathcal{F}$, then $\{\phi_1^*,...,\phi_N^*\}$ forms a $\delta$-cover of $\mathcal{F}^*$
which yields the desired result.
\end{proof}
Next, we recall a classical generalization bound for bounded function classes.
\begin{proposition}
\label{thm: generalization for slow rate}
Let $\mathcal{F}$ be a class of real-valued functions on a vector space $\mathcal{X}$ 
such that $\|\phi\|_{L^{\infty}(\mathcal{X})} \leq R$ for all $\phi \in \mathcal{F}$ and 
let $\{x_i\}_{i=1}^N \sim \mu$ for some $\mu \in \PP(\mathcal{X})$. 
Then,
\[\mathbb{E}\left[\underset{\phi\in\mathcal{F}}{\sup} \frac{1}{N} \sum_{i=1}^{N}\phi(x_i) - 
\mu(\phi) \right] \le  \frac{C}{\sqrt{N}} \int_{0}^{R} \sqrt{\log \mathcal{N}(\delta, \mathcal{F}, \|\cdot\|_{L^\infty(\mathcal{X})})} \d\delta,\]
where $C >0$ is a universal constant and the expectation is with respect to the empirical samples.
\end{proposition}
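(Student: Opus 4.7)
The plan is to prove this by a standard two-step argument: symmetrization followed by chaining via Dudley's entropy integral. This is a classical empirical-process result and our strategy will closely follow the template in, e.g., \cite{wainwright2019high, vershynin2018high}, adapted to the $L^\infty$-covering number formulation assumed here.

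First, I would apply the symmetrization inequality. Let $\{x_i'\}_{i=1}^N$ be an independent ghost sample drawn from $\mu$ and let $\{\varepsilon_i\}_{i=1}^N$ be i.i.d.\ Rademacher signs independent of everything else. By Jensen's inequality applied to the inner expectation over the ghost sample and then swapping $x_i$ with $x_i'$ using $\varepsilon_i$, one obtains
\begin{equation*}
\mathbb{E}\left[\sup_{\phi\in\mathcal{F}} \frac{1}{N}\sum_{i=1}^N \phi(x_i) - \mu(\phi)\right] \le 2\,\mathbb{E}\left[\sup_{\phi\in\mathcal{F}} \frac{1}{N}\sum_{i=1}^N \varepsilon_i\phi(x_i)\right].
\end{equation*}
This reduces the problem to bounding the (expected) empirical Rademacher complexity of $\mathcal{F}$.

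Second, I would bound the Rademacher complexity by Dudley's chaining integral. Condition on $\{x_i\}_{i=1}^N$ and view the stochastic process $Z_\phi := \frac{1}{N}\sum_{i=1}^N \varepsilon_i \phi(x_i)$ as indexed by $\phi\in\mathcal{F}$. Since the $\varepsilon_i$ are bounded Rademacher variables, Hoeffding's inequality gives the sub-Gaussian increment bound
\begin{equation*}
\|Z_\phi - Z_\psi\|_{\psi_2} \lesssim \frac{1}{\sqrt{N}}\,\|\phi-\psi\|_{L^2(\mu_N)},
\end{equation*}
where $\mu_N$ is the empirical measure. Since $\|\phi-\psi\|_{L^2(\mu_N)} \le \|\phi-\psi\|_{L^\infty(\mathcal{X})}$, covers in the $L^\infty$ metric automatically yield covers in $L^2(\mu_N)$ with the same or smaller cardinality. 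The standard Dudley entropy-integral theorem then gives
\begin{equation*}
\mathbb{E}\left[\sup_{\phi\in\mathcal{F}} Z_\phi\right] \le \frac{C}{\sqrt{N}}\int_0^{\operatorname{diam}(\mathcal{F})} \sqrt{\log \mathcal{N}(\delta, \mathcal{F}, \|\cdot\|_{L^\infty})}\,\d\delta.
\end{equation*}
Because $\|\phi\|_{L^\infty}\le R$ for all $\phi\in\mathcal{F}$, the diameter of $\mathcal{F}$ is at most $2R$; a standard rescaling (and absorbing the factor of $2$ into the universal constant $C$) then allows us to truncate the upper limit to $R$, as announced in the theorem.

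There is no genuine obstacle here beyond bookkeeping of universal constants; the only subtle point is making sure the chaining argument is properly run on the conditional-on-data Rademacher process and then the $L^\infty$-covering number is used as a uniform upper bound on the $L^2(\mu_N)$-covering number so that the final integral becomes data-free and the outer expectation simply passes through. Taking expectations and combining with the factor of $2$ from symmetrization yields the stated bound with a universal constant $C$ that is independent of $N$, $R$, and $\mathcal{F}$.
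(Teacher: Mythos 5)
Your proposal is correct and follows essentially the same route as the paper's proof: symmetrization to reduce to the Rademacher complexity, then Dudley's chaining bound for the conditionally sub-Gaussian process, with the empirical $L^2$ covering numbers dominated by the $L^\infty$ ones and a rescaling/monotonicity step to shrink the integration range from $2R$ to $R$ while absorbing constants. No gaps to report.
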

\begin{proof}
The proof is based on  two fundamental results in statistical learning theory. First, using the symmetrization lemma~\cite[Lem. 26.2]{shalev2014understanding}, we obtain the inequality
\begin{align*}
\mathbb{E}\left[\underset{\phi\in\mathcal{F}}{\sup} \frac{1}{N} \sum_{i=1}^{N} \phi(x_i) - \mu(\phi) \right]\leq \frac{2}{N}\mathbb{E}\left[\sup_{\phi\in\F}\sum_{i=1}^N \epsilon_i\phi(x_i)\right]\, ,
\end{align*}
where $\epsilon_i \in\{\pm 1\}^N$ are a Rademacher  random variables and the quantity on the right-hand side is known as the Rademacher complexity of $\F$~\cite{koltchinskii2000rademacher}. Next, observe that the process 
$Z_\phi\coloneqq \frac{1}{\sqrt{N}}\sum_{i=1}^N\epsilon_i\phi(x_i)$ has sub-Gaussian increments $Z_{\phi}-Z_{\phi'}$ 
which allows us to use the chaining technique to bound the 
Rademacher complexity  in terms of
 Dudley's integral (see \cite[Thm 5.22, Ex. 5.24]{wainwright2019high} or \cite[Thm. 8.1.3, Rem.8.1.5, Thm. 8.2.3]{vershynin2018high} for more details):
\begin{align}
\frac{2}{N}\mathbb{E}\left[\sup_{\phi\in\F}\sum_{i=1}^N \epsilon_i\phi(x_i)\right]&\leq\frac{48}{\sqrt{N}} \int_{0}^{2R} \sqrt{\log \mathcal{N}(\delta, \mathcal{F}, \|\cdot\|_{2,\mu^N}\footnotemark)} \d\delta \label{point where chaining is modified for unbounded case}\\
&\leq \frac{48}{\sqrt{N}} \int_{0}^{2R} \sqrt{\log \mathcal{N}(\delta, \mathcal{F}, 2\|\cdot\|_{L^\infty(\mathcal{X})})} \d\delta\nonumber\\
& = \frac{48}{\sqrt{N}} \int_{0}^{2R} \sqrt{\log \mathcal{N}(\delta/2, \mathcal{F}, \|\cdot\|_{L^\infty(\mathcal{X})})} \d\delta \nonumber\\
& = \frac{96}{\sqrt{N}} \int_{0}^{R} \sqrt{\log \mathcal{N}(\delta, \mathcal{F}, \|\cdot\|_{L^\infty(\mathcal{X})})} \d\delta\, ,\nonumber
\end{align}
\footnotetext{Here we define the induced empirical metric as $\|f-g\|_{2,\mu^N}\coloneqq \left(\sum_{i=1}^N|\phi(x_i)-\phi'(x_i)|^2\right)^{1/2}$ for any $\phi,\phi'\in\F$.}
where the second step follows by triangle inequality, the third one by the rescaling property of the covering number, and the last one by change of variable.
\end{proof}
We are now in position to present the main proof for this section.
\begin{proof}[Proof of~\Cref{thm: slow rate}]
We begin by deriving a bias-variance decomposition for the excess risk $\cS^c(\wh{\phi})$. 
For simplicity define
\begin{equation}\label{def:tilde-phi}
    \widetilde{\phi} := \argmin_{\phi \in \mathcal{F}} \| \nabla_u \phi - \nabla_u \phi^\dagger \|_{L^2_\eta},
\end{equation}
and suppose this minimizer is defined (up to constant shifts). If the minimizer does not 
exist we can repeat the rest of our proof with $\widetilde{\phi}$ replaced 
with a minimizing sequence.
Then we can decompose the excess risk of $\wh{\phi}$ as 
\begin{align*}
    \cS^c(\wh{\phi}) &= [\cS(\wh{\phi}) - \cS(\widetilde{\phi})] + [\cS(\widetilde{\phi})- \cS(\phi^\dagger)]
    \\
    &= \left[(\cS(\wh{\phi}) - \wh{\cS}(\wh{\phi})) + (\wh{\cS}(\wh{\phi}) - \wh{\cS}(\widetilde{\phi})) + (\wh{\cS}(\widetilde{\phi}) - \cS(\widetilde{\phi}
    ))\right] + \left[\mathcal{S}(\widetilde{\phi})- \mathcal{S}(\phi^\dagger)\right].
\end{align*}
Since $\wh{\phi} = \arg\min_{\phi \in \mathcal{F}} \wh{\cS}(\phi)$ is optimal, it follows that $\wh{\cS}(\wh{\phi}) - \wh{\cS}(\widetilde{\phi}) \leq 0$. Thus, 
\[
\cS^c(\wh{\phi})  \leq \cS^c(\widetilde{\phi}) + 2 \sup_{\phi \in \mathcal{F}} 
|\cS(\phi) - \wh{\cS}(\phi)|.
\]
We view this bound as a bias-variance decomposition. The first term is the 
excess risk of $\widetilde{\phi}$ encoding the bias in approximating 
$\phi^\dagger$ due to the choice of the class $\mathcal{F}$. The second term
encodes the stochastic errors or the variance part of the error bound due to randomness 
in the empirical approximation of $\cS$ with $\wh{\cS}$.
Upon application of \Cref{cor: map stability} we can bound  the bias term 
\begin{align*}
    \cS^c(\widetilde{\phi}) \leq \frac{1}{2\alphamin}| |\nabla_{u}\widetilde{\phi} - \nabla_{u}\phi^{\dagger}\|_{L_{\eta}^2}^2. 
\end{align*}
The expectation of the variance term can be further decomposed as
\begin{align}\label{eq: decomposition emp process slow rate}
  \Expect\left[ \sup_{\phi \in \mathcal{F}} |\cS (\phi) - \wh{\cS}(\phi)|\right]  \leq 
   \mathbb{E} \Bigl[\sup_{\phi \in \mathcal{F}} |(\eta - \eta^N)(\phi)|\Bigr] + 
   \mathbb{E} \Bigl[\sup_{\phi \in \mathcal{F}}| (\nu - \nu^N)(\phi^*)| \Bigr],
\end{align}
where we used the shorthand notation $\eta^N$ and $\nu^N$ for the empirical 
measures associated to  $\eta$ and $\nu$ respectively.
By \Cref{assump: slow rate} and \Cref{lem: Conjugate class complexity}, there exists $R>0$ such that $\|\phi\|_{L^{\infty}(\Omega)}\leq R$ for all $\phi \in \mathcal{F}$ and the same bound holds for all $\phi^{*}\in\mathcal{F}^{*}$. Applying \Cref{thm: generalization for slow rate} with $\tau =0$, we obtain:
\[\mathbb{E} \Bigl[\sup_{\phi \in \mathcal{F}} |(\eta - \eta^N)(\phi)| \Bigr] \le C \frac{1}{\sqrt{N}} \int_0^R \sqrt{\log \mathcal{N}(\delta, \mathcal{F}, \|\cdot\|_{L^\infty})} \, \d \delta,\]
and
\begin{align*}
\mathbb{E} \Bigl[\sup_{\phi \in \mathcal{F}} |(\nu - \nu^N)(\phi^*)| \Bigr] &\le C \frac{1}{\sqrt{N}} \int_0^R \sqrt{\log \mathcal{N}(\delta, \mathcal{F}^*, \|\cdot\|_{L^\infty})} \, \d \delta\\
&\leq C\frac{1}{\sqrt{N}} \int_0^R \sqrt{\log \mathcal{N}(\delta, \mathcal{F}, \|\cdot\|_{L^\infty})} \, \d \delta.
\end{align*}
By the covering number assumption in \Cref{assump: slow rate},
\[\log \mathcal{N}(\delta,\mathcal{F},\|\cdot\|_{L^\infty}) \leq C_{\F}\delta^{-\gamma} \log(1 + \delta^{-1}),\]
and hence,
\[\int_{0}^{R}\sqrt{\log \mathcal{N}(\delta,\mathcal{F}, \|\cdot\|_{L^\infty})}\d\delta \le  \sqrt{C_{\F}} \int_{0}^{R}\delta^{-\gamma/2}\sqrt{\log(1+\delta^{-1})}\d \delta.\]
Combining all terms, we conclude:
\begin{equation}\label{disp-1}
\cS^c(\wh{\phi}) \le C \left[  \frac{1}{\alphamin} \| \nabla_u \widetilde{\phi} - \nabla_u \phi^\dagger \|^2_{L^2_\eta} 
+ \sqrt{\frac{C_\F}{N}} 
\int_0^R \delta^{- \gamma/2} \sqrt{ \log (1 + \delta^{-1})} 
\d \delta \right].
\end{equation}
Using \Cref{cor: map stability} once again along with 
\Cref{lem: prop const slow rate} below (to bound the 
integral on the right hand side) we arrive at the 
desired bound:
\begin{align*}
    \mathbb{E} \|\nabla_u \wh{\phi} - \nabla_u \phi^\dagger\|_{L_\eta^2}^2 
    &\le C \cdot \betamax \left[ \frac{1}{\alphamin} \|\nabla_{u}\widetilde{\phi} - \nabla_{u}\phi^{\dagger}\|_{L_{\eta}^2}^2 +  \sqrt{\frac{ R^2 C_{\F}}{N}} 
    \right]\, ,
\end{align*}
where $C > 0$ is a universal constant. 
\end{proof}
Let us now present a calculation that gives a bound on 
the integral on the right hand side of \eqref{disp-1}
in terms of the parameter $R$.
\begin{lemma}\label{lem: prop const slow rate}
For all $R\ge 1$ and $\gamma \in [0,1)$ it 
holds that
\begin{align*}
\int_0^R \delta^{-\gamma/2} \sqrt{\log (1 + \delta^{-1})} \d \delta \leq  4R.
\end{align*}
\end{lemma}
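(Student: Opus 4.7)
The plan is to split the integration range at the natural scale $\delta = 1$ and treat the two pieces separately, using the hypothesis $R \ge 1$ at the very end to absorb an $O(1)$ constant into the target $4R$ bound.

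For the outer piece $\int_1^R$, both factors are trivially bounded: $\delta^{-\gamma/2}\le 1$ for $\delta\ge 1$ and $\gamma\ge 0$, while $\log(1+1/\delta)\le \log 2$. Hence this contribution is at most $\sqrt{\log 2}\,(R-1)$, which is already comfortably smaller than $R$.

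The singular piece $\int_0^1$ requires a little more care, though nothing deep. First I would use the inequality $\delta^{-\gamma/2}\le \delta^{-1/2}$, valid uniformly over $\gamma\in[0,1)$ on $(0,1]$ (this is exactly where the split at $\delta=1$ pays off, since the direction of this inequality is reversed for $\delta>1$), together with $\log(1+1/\delta)\le \log(2/\delta)=\log 2-\log\delta$. The substitution $u=-\log\delta$ then converts the resulting bound into $\int_0^\infty e^{-u/2}\sqrt{\log 2+u}\,du$, a single explicit integral independent of both $\gamma$ and $R$. To extract a clean numerical value I would apply Cauchy--Schwarz against $e^{-u/2}=\sqrt{e^{-u/2}}\cdot\sqrt{e^{-u/2}}$, reducing the estimate to the first and second moments of an exponential density of rate $1/2$; this yields an explicit universal constant $C_0 = \sqrt{4\log 2 + 8} < 3.3$.

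Combining the two estimates gives $I \le C_0 + \sqrt{\log 2}\,(R-1)$. The desired bound $I \le 4R$ then reduces to $C_0 - \sqrt{\log 2}\le (4-\sqrt{\log 2})R$, which holds for every $R\ge 1$ since the right-hand coefficient $4-\sqrt{\log 2}\approx 3.17$ leaves plenty of slack against $C_0-\sqrt{\log 2}\approx 2.45$. The only real ``obstacle'' is this final bookkeeping of explicit constants; conceptually, exponential decay in $u$ after the substitution dominates the slow $\sqrt{\log 2+u}$ growth, so the singular contribution is summable to a small universal number that the assumption $R\ge 1$ readily accommodates.
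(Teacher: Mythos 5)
Your proof is correct and follows the same basic template as the paper's: split at $\delta=1$, bound the outer piece by $O(R)$ using $\log(1+\delta^{-1})\le\log 2$, and handle the singular piece via the substitution $u=-\log\delta$ (the paper's equivalent $s=\log(2/\delta)$). The one genuine difference is that you eliminate the $\gamma$-dependence up front by bounding $\delta^{-\gamma/2}\le\delta^{-1/2}$ on $(0,1]$ and $\delta^{-\gamma/2}\le 1$ on $(1,R]$, which makes the singular integral a single $\gamma$-free quantity; the paper instead carries $\gamma$ through and ends up with a $\Gamma(3/2)/(1-\gamma/2)^{3/2}$ factor that it only simplifies at the very last step using $\gamma\in[0,1)$. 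Your version is marginally tidier for the purpose of a fixed universal constant, while the paper's retains the sharper $\gamma$-dependence should it ever be wanted. Your use of Cauchy--Schwarz to evaluate $\int_0^\infty e^{-u/2}\sqrt{\log 2+u}\,du$ in place of recognizing the Gamma integral is a harmless cosmetic choice; both yield a constant below $3.3$, and the final bookkeeping $C_0+\sqrt{\log 2}(R-1)\le 4R$ for $R\ge 1$ checks out.
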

\begin{proof}
Decompose the integral into two  intervals $[0,1]$ and $(1,R]$:
\begin{align}
\int_0^{R}\delta^{-\gamma/2}&\sqrt{\log(1+\delta^{-1})}\d \delta= \int_0^{1}\delta^{-\gamma/2}\sqrt{\log(1+\delta^{-1})}\d \delta+\int_1^{R}\delta^{-\gamma/2}\sqrt{\log(1+\delta^{-1})}\d \delta \nonumber\\
& \leq \int_0^{1}\delta^{-\gamma/2}\sqrt{\log(2/\delta)}\d \delta+\sqrt{\log(2)}\int_1^{R}\delta^{-\gamma/2}\d \delta \nonumber \\
&\overset{s=\log(2/\delta)}=2^{1-\gamma/2}\int_{\log(2/R)}^\infty e^{-(1-\gamma/2)s}s^{1/2}\d s+\frac{2\sqrt{\log(2)}}{2-\gamma}\left(R^{1-\gamma/2}-1\right) \nonumber \\
&\leq 2^{1-\gamma/2}\int_{0}^\infty e^{-(1-\gamma/2)s}s^{1/2}\d s+\frac{2\sqrt{\log(2)}}{2-\gamma}\left(R^{1-\gamma/2}-1\right) \nonumber \\
&=2^{1-\gamma/2}\frac{\Gamma(3/2)}{2(1-\gamma/2)^{3/2}}+\frac{2\sqrt{\log(2)}}{2-\gamma}\left(R^{1-\gamma/2}-1\right) \nonumber \\
&=2^{-(1+\gamma/2)}\frac{\sqrt{\pi}}{(1-\gamma/2)^{3/2}}+\frac{2\sqrt{\log(2)}}{2-\gamma}\left(R^{1-\gamma/2}-1\right)\, .\label{eq: bound C_S}
\end{align}
Simplifying the right hand side using the fact that $\gamma \in [0,1)$ and $1\leq R$: 
\begin{align*}
  2^{-(1+\gamma/2)}\frac{\sqrt{\pi}}{(1-\gamma/2)^{3/2}}+\frac{2\sqrt{\log(2)}}{2-\gamma}\left(R^{1-\gamma/2}-1\right)&\leq   2^{-1}\frac{\sqrt{\pi}}{(1-1/2)^{3/2}}+\frac{2\sqrt{\log(2)}}{2-1}\left(R-1\right)\\&\leq \sqrt{2\pi}R+2\sqrt{\log(2)}R \leq 4 R.
\end{align*}
\end{proof}

\editstart
\subsection{
Extending the proof of \Cref{thm: slow rate} (Slow rate) to unbounded potentials}
\label{proof: thm: slow rate unbounded extension}
We now extend our proof to the case of unbounded potentials under~\Cref{assump: slow rate} in its full generality.
We begin with a set of preiminary results.

\begin{lemma}\label{lem: gradients boundedness}
Under conditions 2 and 6 in~\Cref{assump: slow rate}, for any $y\in\Y$,
\begin{align*}
\|\nabla_u\phi(y,0)\|\leq\widetilde{K}\langle y\rangle^{1+\widetilde{q}}\, ,\qquad \widetilde{K} = K+ \left(1+ \nu_\Y(\langle \cdot\rangle^{\widetilde{q}+1})\right)L_\F\, .
\end{align*}
\end{lemma}
\begin{proof}
To compactify notation, let $g(y)\coloneqq \nabla_u\phi(y,0)$. Next, note that by the fundamental 
theorem of calculus
\begin{align*}
g(y)-g(0) = \int_{0}^{1} \nabla_{y} g(ty)y\d t\, \forall y \in \Y.
\end{align*}
Then,
\begin{align*}
\|g(y)-g(0)\|\leq \|y\| L_\F \int_{0}^{1}\langle t y\rangle^{\widetilde{q}}\d t\leq L_\F\langle y\rangle^{\widetilde{q}+1}\, .
\end{align*}
By triangle inequality,
\begin{align*}
\|g(0)\|\leq \|g(y)\|+L_\F\langle y\rangle^{\widetilde{q}+1}\, .
\end{align*}
Consequently, marginalizing over $\nu_\Y$,
\begin{align}\label{eq: adaptable inequality}
\|g(0)\|\leq \int \left(\|g(y)\|+L_\F\langle y\rangle^{\widetilde{q}+1}\right) \d \nu_\Y(y)\leq K+ \nu_\Y(\langle \cdot\rangle^{\widetilde{q}+1})L_\F .
\end{align}
Finally, 
\begin{align*}
\|g(y)\| \leq \|g(0)\| + \|y\|L_\F\langle y\rangle^{\widetilde{q}}\leq \widetilde{K}\langle y\rangle^{\widetilde{q}+1}\, ,
\end{align*}
concluding the lemma.
{}
\end{proof}

Next, we show that the metric entropy of the conjugate class $\F^*$ with respect to the conjugate weighted norm $L^\infty(w_*)$ can be controlled by the corresponding metric entropy of the original class $\F$ with respect to $L^\infty(w)$  exploiting $\alphamin$-strong convexity and~\Cref{lem: gradients boundedness}. In doing so, we adapt the argument of
~\cite[Lem. A.8]{divol2025optimal-sm} to the conditional setting.
Recall that in \Cref{assump: slow rate} we defined the weights 
\begin{equation*}
  w(y,u)\coloneq \langle y\rangle^{-q_\Y} \langle u\rangle^{-q_\U} \quad \text{and} \quad 
  w_*(y, u) := \langle y\rangle^{-(q_\Y + (1+\widetilde{q})q_\U)}\langle u\rangle^{-q_\U}.
\end{equation*}
\begin{lemma}\label{lem: conjugate to non conjugate unbounded}
Under conditions 2, 4 and 6 of~\Cref{assump: slow rate}, there exists a constant $C^*(\alphamin, K,L_\F, q_\U, \widetilde{q})>0$ such that 
\begin{align}\label{eq: conjugate weighted to weighted}
\log\mathcal{N}\left(\delta, \F^*, \|\cdot\|_{L^\infty(w_*)}\right)\leq \log\mathcal{N}\left(C^{*-1}\delta, \F, \|\cdot\|_{L^\infty(w)}\right)\, .
\end{align}
\end{lemma}
\begin{proof}
Fix any two elements $\phi,\psi\in\F$. By applying~\cite[Lem. D.2]{divol2025optimal-sm} (setting $a=0$ in that lemma) followed by~\Cref{lem: gradients boundedness} for any fixed $y\in\Y$, $\nabla_u\phi(y,\cdot)$ is a bijection and 
\begin{align*}
\|\nabla_u\phi(y,\cdot)^{-1}(v)\|
\leq \frac{4}{\alphamin}\left(\|v\|+\|\nabla_u\phi(y,0)\|\right)
\leq \frac{4}{\alphamin}\left(\|v\|+ \widetilde{K}\langle y\rangle^{1+\widetilde{q}}\right)\, .
\end{align*}
The same holds for $\psi$. Then, letting $u_\phi(v)\coloneqq \nabla_u\phi(y,\cdot)^{-1}(v)$ and $u_\psi(v)\coloneqq \nabla_u\psi(y,\cdot)^{-1}(v)$,
\begin{equation}\label{disp-2}
\langle u_\phi(v)\rangle\leq C \langle y\rangle^{1+\widetilde{q}} \langle v\rangle\, ,\qquad \langle u_\psi(v)\rangle\leq C \langle y\rangle^{1+\widetilde{q}} \langle v\rangle\, ,
\end{equation}
for $C = \frac{4}{\alphamin}\widetilde{K}$.
Moreover, by definition of convex conjugate,
\begin{align*}
\phi^*(y,v) &= \langle u_\phi(v),v\rangle-\phi(y,u_\phi(v))\geq \langle u_\psi(v),v\rangle-\phi(y,u_\psi(v))\, ,\\
\psi^*(y,v) &= \langle u_\psi(v),v\rangle-\psi(y,u_\psi(v))\geq \langle u_\phi(v),v\rangle-\psi(y,u_\phi(v))\, .
\end{align*}
Thus,
\begin{align*}
\phi^*(y,v)-\psi^*(y,v)&\leq \left[\langle u_\phi(v),v\rangle-\phi(y,u_\phi(v))\right] - \left[\langle u_\phi(v),v\rangle-\psi(y,u_\phi(v))\right] \\
&= \psi(y,u_\phi(v))-\phi(y,u_\phi(v))\, ,\\
\psi^*(y,v)-\phi^*(y,v) &\leq \phi(y,u_\psi(v))-\psi(y,u_\psi(v))\, .
\end{align*}
Combining the last two inequalities,
\begin{align}
\left|\phi^*(y,v)-\psi^*(y,v)\right|\leq \left|\phi(y,u_\phi(v))-\psi(y,u_\phi(v))\right| + \left|\phi(y,u_\psi(v))-\psi(y,u_\psi(v))\right|\, .
\end{align}
Then, to reintroduce the weight functions, we can bound 
\begin{align*}
\left| \phi^*(y,v)-\psi^*(y,v)\right| &\leq \|\phi- \psi\|_{L^{\infty}(w)}\langle y\rangle^{q_\Y}
\left( \langle u_\phi(v)\rangle^{q_\U} + \langle u_\psi(v)\rangle^{q_\U} \right)\\
&\leq  2C^{q_\U} \langle y\rangle^{q_\Y+q_\U(1+\widetilde{q})}\langle v\rangle^{q_\U} \|\phi-\psi\|_{L^\infty(w)}\, .
\end{align*}
where we used \eqref{disp-2} to obtain the second inequality.
\editstart
Multiplying both sides by $\langle y\rangle^{-(q_\Y+q_\U(1+\widetilde{q}))}\langle v\rangle^{-q_\U}$  and taking the essential supremum over $(y,v)$ yields
\begin{align*}
\|\phi^*-\psi^*\|_{L^\infty(w_*)}\leq C^*\|\phi-\psi\|_{L^\infty(w)}\ ,\qquad C^* \coloneqq 2C^{q_\U}\, .
\end{align*}
Thus, any $C^{*-}\delta$-net of $\F$ in $\|\cdot\|_{L^\infty(w)}$ is also a $\delta$-net of $\F^*$ in $\|\cdot\|_{L^\infty(w_*)}$, yielding~\eqref{eq: conjugate weighted to weighted}.
{}
\end{proof}

We are now ready to establish our extension of the proof in \Cref{proof: thm: slow rate}
to the unbounded potential setting. Since most of the arguments are repetitive we 
give a detailed list of required modifications:

\begin{itemize}
\item [(1)] \textbf{The general tool: a weighted chaining bound.}
Using the same approach as in~\cite[Prop.~A.2]{divol2025optimal-sm}, the generalization
result of~\Cref{thm: generalization for slow rate} can be modified to replace the metric
entropy with respect to the uniform norm $L^\infty$ with one defined in terms of an
anisotropically weighted uniform norm $L^\infty(\langle\cdot\rangle^{-q'_\Y}\langle\cdot\rangle^{-q'_\U})$,
for any $q'_\Y,q'_\U>0$, at the expense of a $(q'_\Y,q'_\U)$-dependent multiplicative
constant $C(q'_\Y,q'_\U)$ in place of the universal $C$. Concretely, this is the following weighted analogue of~\Cref{thm: generalization for slow rate}. 
\begin{proposition}
\label{thm: generalization for slow rate weighted} 
Let $\mathcal{F}$ be a class of real-valued functions on $\Y\times\U$, let $q'_\Y,q'_\U>0$ and set $w'(y,u):=\langle y\rangle^{-q'_\Y}\langle u\rangle^{-q'_\U}$.  
Suppose $\|\phi\|_{L^\infty(w')}\le R'$ for all $\phi\in\F$, and let $\{x_i\}_{i=1}^N\sim\mu$ for some $\mu \in \mathcal{P}(\Y \times \U)$ such that
$C(q'_\Y,q'_\U,\mu):=\|\langle
\cdot\rangle^{q'_\Y}\langle\cdot\rangle^{q'_\U}\|_{L^2_\mu(\Y\times\U)}<\infty$. Then,
\begin{align*}
\mathbb{E}\left[\underset{\phi\in\mathcal{F}}{\sup} \frac{1}{N} \sum_{i=1}^{N}\phi(x_i) -
\mu(\phi) \right] \le  \frac{C(q'_\Y,q'_\U,\mu)}{\sqrt{N}} \int_{0}^{R'} \sqrt{\log \mathcal{N}(\delta, \mathcal{F}, \|\cdot\|_{L^\infty(w')})} \d\delta\, .
\end{align*} 
\end{proposition}
\begin{sproof}
The result follows the proof of~\Cref{thm: generalization for fast rate} closely up to the one
step where the uniform bound on $\F$ enters. 
There, the pointwise bound $\|\phi-\psi\|_{2,\mu^N}\leq 2R\|\phi-\psi\|_{L^\infty(\mathcal{X})}$ is replaced by the Cauchy--Schwarz estimate
\[\frac{1}{N}\sum_{i=1}^N|\phi(x_i)-\psi(x_i)|^2 \leq \|\phi-\psi\|_{L^\infty(w')}^2\cdot\frac{1}{N}\sum_{i=1}^N w'(x_i)^{-2}\, ,\]
whose expectation over $\{x_i\}_{i=1}^N\overset{\mathrm{i.i.d.}}{\sim}\mu$ is exactly
$\|\phi-\psi\|_{L^\infty(w')}^2 C(q'_\Y,q'_\U,\mu)^2$ by definition of $C(q'_\Y,q'_\U,\mu)$.
The
remainder of the argument is unchanged.
\end{sproof}
When $\gamma<2$, this weighted bound can be invoked in the proof for the slow rate in
place of the unweighted one. We now apply it twice, with two different choices of
$q'_\Y$ and $q'_\U$.
\item [(2)] \textbf{Bounding the $\eta$-term in \eqref{eq: decomposition emp process slow rate}:} Applying ~\Cref{thm: generalization for slow rate weighted} 
with $q_\Y' = q_\Y$, $q'_\U = q_\U$, $\mu = \eta$ and $R' = R=\sup_{\phi\in\F}\|\phi\|_{L^\infty(w)}$ gives
\begin{align*}
\mathbb{E}\Bigl[\sup_{\phi \in \mathcal{F}} |(\eta - \eta^N)(\phi)|\Bigr]
\le \frac{C(q_\Y,q_\U,\eta)}{\sqrt{N}} \int_0^R \sqrt{\log \mathcal{N}(\delta, \mathcal{F}, \|\cdot\|_{L^\infty(w)})} \, \d \delta\, ,
\end{align*}
which bounds the first term on the right-hand side of~\eqref{eq: decomposition emp process slow rate}.

\item [(3)] \textbf{Bounding the $\nu$-term in \eqref{eq: decomposition emp process slow rate}:} We bound the empirical process with respect to $\nu$ in terms of the conjugate class $\F^*$ in~\eqref{eq: decomposition emp process slow rate} in two sub-steps. 
\begin{itemize}
    \item [(3a)] First, apply the weighted chaining result with $q'_\Y = q_\Y + q_\U(1+\widetilde{q})$ and $q'_\U = q_\U$, setting $R' = \sup_{\phi^*\in\F^*}\|\phi^*\|_{L^\infty(w_*)}\leq C{^*}\sup_{\phi\in\F}\|\phi\|_{L^\infty(w)} = C^*R$, with the bound holding by the proof of ~\Cref{lem: conjugate to non conjugate unbounded}. This yields
    \begin{adjustwidth}{-\dimexpr\textwidth-\linewidth\relax}{0pt}
    \[
    \begin{aligned}
\mathbb{E}\Bigl[\sup_{\phi \in \mathcal{F}} |(\nu - \nu^N)(\phi)|\Bigr]
\le \frac{C(q_\Y + q_\U(1+\widetilde{q}),q_\U,\nu)}{\sqrt{N}} \int_0^{C*R} \sqrt{\log \mathcal{N}(\delta, \mathcal{F^*}, \|\cdot\|_{L^\infty(w_*)})} \, \d \delta\, ,
\end{aligned}
\]
\end{adjustwidth}
    \item [(3b)] Then use \Cref{lem: conjugate to non conjugate unbounded} in place of~\Cref{lem: Conjugate class complexity} to bound the metric entropy of $\mathcal{F}^{*}$ from (3a) by that of $\mathcal{F}$, following with a linear change of variable,
    \begin{align}\label{eq: change var Dudley int}
    \int_0^R \sqrt{\log \mathcal{N}(\delta, \mathcal{F}, \|\cdot\|_{L^\infty(w)})} \, \d \delta&\leq \int_0^{C^*R} \sqrt{\log\mathcal{N}\left(C^{*-1}\delta, \F, \|\cdot\|_{L^\infty(w)}\right)}\d \delta \nonumber\\
    & = C^{*-1}\int_0^{(C^*)^2 R} \sqrt{\log\mathcal{N}\left(\delta, \F, \|\cdot\|_{L^\infty(w)}\right)}\d \delta \\
    & \leq 4 \frac{(C^*)^2}{C^*}R = 4C^*R\, .\nonumber
    \end{align}
by the same calculation as~\Cref{lem: prop const slow rate}.
\end{itemize}
\item [(4)]\textbf{Combining the two terms.} Putting items (2) and (3) together, the proof of~\Cref{thm: slow rate} in the bounded case extends verbatim to the unbounded case by replacing final $R \mapsto \left(1\vee C^*\right)R = \frac{4}{\alphamin}\widetilde{K}R$ and introducing the proportionality constant $C(q_\Y,q_\U,\eta)+C(q_\Y+(1+\widetilde{q})q_\U, q_\U,\nu) =\sqrt{\eta(w^{-2})}+\sqrt{\nu(w_*^{-2})}\leq 2\kappa$.
\end{itemize}

\editend

\editstart
\subsection{Proof of \cref{thm: fast rate} (Fast rate)  for bounded potentials}
\label{proof: thm: fast rate}
Mirroring the slow rate proofs we begin by presenting a detailed argument 
in the case where the potentials are assumed to be uniformly bounded, i.e., setting $q_\Y = q_\U = 0$ and discarding conditions 2,3, and 6 in~\Cref{thm: slow rate}. 
Once the technical steps are laid out we present the unbounded extention in~\Cref{proof: thm: fast rate unbounded extension}.
\editend
Our proof technique for \Cref{thm: fast rate} is as an extension of 
the approach in \cite[Prop.~11]{hutter2019minimax} and 
\cite[Prop.~14]{chewi2024statistical} to the conditional case.
Since the proof is long and technical we begin with a discussion of the overall strategy 
and summary of the key tools from empirical process theory that will be used. 
Next we present the main proof which relies on two technical 
propositions whose proofs are our novel theoretical contribution. To keep the proof 
focused on the main result we postpone the proofs of those propositions 
to the end of this section.

\paragraph{Strategy of the proof and some preliminaries}
We begin with an error bound on the excess risk 
$\sup_{\phi \in \F} \frac{1}{N} \phi(y_i, u_i) - \Expect \phi(y, u)$  for an i.i.d. set 
of samples using well-established bounds from the theory of 
empirical processes. In particular, we give bounds on the 
expectation of the excess risk as well as a high-probability/tail bound. Our interest in such a bound is clear in light 
of \Cref{cor: map stability} and proof of \Cref{thm: slow rate}. 
However, we wish to obtain an upper bound on $\mathbb{E} \|\nabla_u \wh{\phi} - \nabla_u \phi^\dagger\|_{L_\eta^2}$ that 
is $\mathcal{O}(N^{-\frac{1}{2 + \gamma}})$ rather than $\mathcal{O}(N^{-\frac14})$. 
To do this we need to control the excess risk 
by localizing our analysis in a neighborhood of the true map $\phi^\dagger$. 

The following proposition 
is familiar in the empirical process theory literature:

\begin{proposition}
\label{thm: generalization for fast rate}
Let $\mu$ be a probability measure supported on a vector space $\mathcal{X} \subseteq \mathbb{R}^{d}$, and let $\{x_i\}_{i=1}^{N} \overset{\mathrm{i.i.d}}{\sim}\mu$. Suppose $\mathcal{F}$ is a class of real-valued functions such that $\|\phi\|_{L_{\mu}^2(\mathcal{X})} \leq r$ and $\|\phi\|_{L^{\infty}(\mathcal{X})} \leq R$ for all $\phi \in \mathcal{F}$.  Define
\begin{align}\label{eq: J_n bound quantity}
J_N(\mathcal{F}):= 
\editstart
\frac{1}{\sqrt{N}}\int_{0}^{r} \sqrt{\log \mathcal{N}(\delta,\mathcal{F}, \|\cdot\|_{L^2_\mu(\mathcal{X})})}\d\delta 
\editend
+ \frac{1}{N}\int_{0}^{R}\log \mathcal{N}(\delta,\mathcal{F}, \|\cdot\|_{L^\infty(\mathcal{X})})\d\delta\, .
\end{align}
Then, it holds that:
\begin{enumerate}
\item There exists a universal constant $C_{\textnormal{exp}} > 0$ such that
\begin{align}\label{eq: expectation bound emp. proc.}
\mathbb{E}\left[\underset{\phi\in\mathcal{F}}{\sup} \frac{1}{N} \sum_{i=1}^{N}\phi(x_i) - \mu(\phi) \right]\leq 
C_{\textnormal{exp}}J_N(\mathcal{F})\, .
\end{align}
\item  There exists a universal constant 
$C_{\textnormal{prob}}>0$ such that for any $t\geq 0$,
\begin{equation}\label{eq: probability bound emp. proc.}
\begin{aligned}
& \mathbb{P}\left[ \underset{\phi\in\mathcal{F}}{\sup} \frac{1}{N} \sum_{i=1}^{N}\phi(x_i) 
- \mu(\phi) \geq 
C_{\textnormal{prob}}\left(J_N(\mathcal{F})+r \sqrt{\frac{t}{N}}+R\frac{t}{N}\right)
\right]\leq\exp(-t).
\end{aligned}
\end{equation}
\end{enumerate}
\end{proposition}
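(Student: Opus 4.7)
}

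The plan is to prove the two bounds by combining symmetrization, generic chaining, and Talagrand-type concentration, adapted to exploit both the $L^2$ radius $r$ and the $L^\infty$ radius $R$ of the class $\mathcal{F}$. First I would reduce the expectation bound \eqref{eq: expectation bound emp. proc.} to a Rademacher-type quantity via the standard symmetrization lemma (as in \cite[Lem.~26.2]{shalev2014understanding}), yielding
\begin{equation*}
\mathbb{E}\sup_{\phi\in\mathcal F}\Bigl(\frac{1}{N}\sum_{i=1}^N \phi(x_i)-\mu(\phi)\Bigr)\le \frac{2}{N}\,\mathbb{E}\sup_{\phi\in\mathcal F}\sum_{i=1}^N\epsilon_i\phi(x_i),
\end{equation*}
for Rademacher variables $\{\epsilon_i\}$. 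The key point distinguishing this from the proof of \Cref{thm: generalization for slow rate} is that here we must exploit the $L^2$ bound $r$ in addition to the $L^\infty$ bound $R$, since otherwise one only recovers a Dudley integral up to $R$ and misses the desired $1/N$ scaling.

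Second, I would apply a Bernstein-type generic chaining argument. Conditionally on $\{x_i\}$, the process $Z_\phi:=\frac{1}{\sqrt N}\sum_{i}\epsilon_i\phi(x_i)$ has mixed sub-Gaussian/sub-exponential increments controlled by
$\|\phi-\phi'\|_{L^2_{\mu}}$ and $\|\phi-\phi'\|_{L^\infty}$ via Bernstein's inequality. Standard chaining with respect to the $L^\infty$-covering numbers, truncating the chain at scale $r$ for the sub-Gaussian part and propagating the sub-exponential part up to scale $R$ (see e.g.~\cite[Thm.~8.1.6 and Ex.~8.1.9]{vershynin2018high} and \cite[Sec.~5.3]{wainwright2019high}), yields precisely the two-term quantity $J_N(\mathcal F)$ of \eqref{eq: J_n bound quantity}. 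The splitting $\int_0^r + \int_r^R$ is what produces the two integrals, with the tail controlled through Bernstein rather than the Gaussian tail bound. Taking expectations over $\{x_i\}$ then gives \eqref{eq: expectation bound emp. proc.}.

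Finally, for the high-probability bound \eqref{eq: probability bound emp. proc.}, I would invoke Talagrand's inequality for suprema of empirical processes in the sharp form due to Bousquet (see~\cite[Thm.~3.3.9]{vanderVaartWellner2023} or~\cite[Thm.~7.3]{bousquet2002concentration}). This gives, for any $t\ge 0$,
\begin{equation*}
\sup_{\phi\in\mathcal F}\Bigl(\frac{1}{N}\sum_{i=1}^N\phi(x_i)-\mu(\phi)\Bigr)\le \mathbb{E}\sup_{\phi\in\mathcal F}\Bigl(\frac{1}{N}\sum_{i=1}^N\phi(x_i)-\mu(\phi)\Bigr)+C\Bigl(r\sqrt{\tfrac{t}{N}}+R\tfrac{t}{N}\Bigr),
\end{equation*}
with probability at least $1-\exp(-t)$. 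Substituting the expectation bound from the previous step produces \eqref{eq: probability bound emp. proc.} with a single universal constant $C_{\textnormal{prob}}$.

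The main obstacle is carrying out the mixed chaining correctly so that the two integrals in $J_N(\mathcal F)$ appear with the stated exponents and integration ranges; in particular one must verify that truncation at scale $r$ (not $R$) in the sub-Gaussian part is legitimate, using the uniform $L^2$ bound $\|\phi\|_{L^2_\mu}\le r$. Since all of these steps are classical empirical process arguments and do not rely on any structure specific to conditional OT, no novel technical ingredients beyond the two-norm control of $\mathcal F$ are needed.
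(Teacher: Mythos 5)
Your treatment of the tail bound \eqref{eq: probability bound emp. proc.} is essentially the paper's own argument: both deduce it from a Talagrand-type concentration inequality for the supremum around its mean (the paper uses \cite[Thm.~2.14.25]{vanderVaartWellner2023}, you invoke Bousquet's form), choose the deviation parameter so that the sub-Gaussian and sub-exponential regimes produce $r\sqrt{t/N}$ and $Rt/N$, and then substitute the expectation bound. For the expectation bound \eqref{eq: expectation bound emp. proc.}, however, the paper does not chain by hand at all: it simply quotes \cite[Prop.~A.2]{divol2025optimal} (mentioning \cite{gine2021mathematical} as an alternative), so your plan to derive $J_N(\mathcal{F})$ in \eqref{eq: J_n bound quantity} by an explicit mixed-tail chaining is a genuinely more self-contained route, and it is the route the paper itself hints should be possible.

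That said, the chaining step as you describe it has a gap. After symmetrization and conditioning on $\{x_i\}$, the increments of $Z_\phi=\frac{1}{\sqrt N}\sum_i\epsilon_i\phi(x_i)$ are purely sub-Gaussian with respect to the \emph{empirical} metric $\|\phi-\phi'\|_{L^2(\mu_N)}$; at that stage there is no Bernstein/sub-exponential component, and the population norm $\|\cdot\|_{L^2_\mu}$ plays no role. Consequently, truncating the Dudley integral at the population radius $r$ is not licensed by the hypothesis $\sup_{\phi\in\mathcal{F}}\|\phi\|_{L^2_\mu}\le r$: the empirical radius $\sup_{\phi\in\mathcal{F}}\|\phi\|_{L^2(\mu_N)}$ is random and can exceed $r$. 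Two standard repairs exist: (a) skip symmetrization and chain the centered process $\frac{1}{\sqrt N}\sum_i\bigl(\phi(x_i)-\mu(\phi)\bigr)$ directly, bounding each increment with Bernstein's inequality (variance proxy $\|\phi-\phi'\|_{L^2_\mu}^2\lesssim r^2$ at the coarsest scale, sup bound $\lesssim R$); the sub-Gaussian part of the resulting bound is then legitimately truncated at $r$ and the sub-exponential part yields the $\frac1N\int_0^R\log\mathcal{N}$ term --- this is exactly the mechanism behind the result the paper cites; or (b) keep symmetrization but additionally control the random radius, e.g.\ via a symmetrization--contraction bound of the form $\mathbb{E}\sup_\phi\|\phi\|_{L^2(\mu_N)}^2\lesssim r^2+R\,\mathbb{E}\sup_\phi\bigl|\frac1N\sum_i\epsilon_i\phi(x_i)\bigr|$ followed by solving the resulting implicit inequality, which is what generates the extra $O(1/N)$ term. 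Either fix is routine, but it is precisely the step you flag as ``the main obstacle,'' and the justification you offer (the uniform $L^2_\mu$ bound) does not by itself close it.
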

\begin{proof}

The  bound in \eqref{eq: expectation bound emp. proc.} follows from 
\cite[Prop. A.2]{divol2025optimal-sm} by taking, in the notation of that result, 
$\epsilon = \widetilde{\epsilon} = \eta = 0$.
We note that this setup is much simpler than \cite{divol2025optimal-sm} which suggests a 
simpler proof may be possible, see for example \cite[Cor.~3.5.7 and Prop.~3.5.15]{gine2021mathematical}.
The high probability bound in \eqref{eq: probability bound emp. proc.} follows 
from the tail bound~\cite[Thm. 2.14.25]{vanderVaartWellner2023}, which provides a way to estimate with high probability how the process $\sup_{\phi\in\mathcal{F}} \frac{1}{N} \sum_{i=1}^{N}\phi(x_i) - \mu(\phi)$ deviates from its mean, which is controlled by $J_N(\F)$ according to \eqref{eq: expectation bound emp. proc.}. In the notation of \cite[Thm. 2.14.25]{vanderVaartWellner2023} set
\begin{align*}
\frac{1}{\sqrt{N}}\mathbb{G}_N\coloneqq \underset{\phi\in\mathcal{F}}{\sup} \frac{1}{N} \sum_{i=1}^{N} \phi(x_i) - \mu(\phi)\quad \text{and} \quad \mu_N\coloneqq \mathbb{E}\mathbb{G}_N\, , 
\end{align*}
to obtain the bound
\begin{align*}
\mathbb{P}\left(\frac{1}{\sqrt{N}}\mathbb{G}_N>C\left(\frac{1}{\sqrt{N}}\mu_N+\frac{s}{\sqrt{N}}\right)\right)\leq \exp\left(-D\min\left\{\frac{s^2}{\sigma^2},\frac{s\sqrt{N}}{R}\right\}\right)\, ,
\end{align*}
for universal constants $C,D$. 
To express the bound in terms of $t$ while satisfying both tail-regime constraints, set
    $\frac{s}{\sqrt{N}}\coloneqq \max\left\{r\sqrt{\frac{t}{DN}},\frac{Rt}{DN}\right\}$ so that
\begin{align*}
    \frac{s}{\sqrt{N}} \leq r \sqrt{\frac{t}{DN}} + \frac{Rt}{DN},\quad \textnormal{and} \; \min\left\{\frac{s^2}{r^2},\frac{s\sqrt{N}}{R}\right\} = \frac{t}{D}.
\end{align*}
This allows us to rewrite the bound as
\begin{align}\label{eq: prob. bound after substitution}
\mathbb{P}\left(\frac{1}{\sqrt{N}}\mathbb{G}_N>C\left(\frac{1}{\sqrt{N}}\mu_N + r \sqrt{\frac{t}{N}}+R\frac{t}{N}\right)\right)\leq \exp\left(-t\right)\, ,
\end{align}
absorbing the constant $\max\{1/D,1/\sqrt{D},1\}$ in $C$.
Ultimately, \eqref{eq: probability bound emp. proc.} follows from \eqref{eq: prob. bound after substitution} in combination with
\begin{align*}
\begin{split}
&\mathbb{P}\left(\frac{1}{\sqrt{N}}\mathbb{G}_N>C\left(J_N(\F)+ r \sqrt{\frac{t}{N}}+R\frac{t}{N}\right)\right) \\
& \hspace{8ex} \leq \mathbb{P}\left(\frac{1}{\sqrt{N}}\mathbb{G}_N>C\left(\frac{1}{\sqrt{N}}\mu_N
+ r\sqrt{\frac{t}{N}}+R\frac{t}{N}\right)\right)\, ,
\end{split}
\end{align*}
which is given immediately by the bound $\frac{\mu_N}{\sqrt{N}}\leq B J_N(\F)$ of \eqref{eq: expectation bound emp. proc.} and absorbing  $B$ into $C$.
\end{proof}
We can further simplify \Cref{thm: generalization for fast rate} by providing 
an upper bound on $J_N(\F)$:
\begin{lemma}\label{lem: bound-JF}
Assume $\mathcal{N}(\delta,\mathcal{F}, \|\cdot\|_{L^\infty})\leq C_{\F} \delta^{-\gamma}\log(1+\delta^{-1})$ and let $R>1$ and $r = Q \epsilon$, for some $\epsilon \in [0,1]$. Then
there exists a universal constant $C >0$ so that 
\begin{equation}
J_N(\F) \le C \left[  \sqrt{\frac{C_\F}{N}} Q \epsilon^{1-\gamma/2} \sqrt{ \log(1+\epsilon^{-1})}  + \frac{1}{N}C_\F R(1-\gamma)^{-2} \right]\, .
\end{equation}
\end{lemma}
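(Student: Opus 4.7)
The plan is to split $J_N(\mathcal{F})$ into its two defining integrals and bound each separately by substituting the covering-number hypothesis and then carrying out elementary one-variable integrations. No probabilistic or empirical-process machinery is needed here; it is purely a calculus estimate with careful tracking of the dependence on $\gamma \in [0,1)$ and $R \ge 1$.

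For the first integral, after inserting the bound $\log\mathcal{N}(\delta,\mathcal{F},\|\cdot\|_{L^\infty}) \le C_\F \delta^{-\gamma}\log(1+\delta^{-1})$ and factoring out $\sqrt{C_\F/N}$, it suffices to control $\int_0^{Q\epsilon} \delta^{-\gamma/2}\sqrt{\log(1+\delta^{-1})}\,\d\delta$. I would rescale via $\delta = (Q\epsilon)\, t$ and apply $\sqrt{a+b} \le \sqrt{a}+\sqrt{b}$ after the decomposition $\log(1+(Q\epsilon t)^{-1}) \le \log 2 + \log((Q\epsilon)^{-1}) + \log(t^{-1})$. The three resulting integrals in $t$ reduce to pure constants depending on $\gamma$, the only nontrivial one being $\int_0^1 t^{-\gamma/2}\sqrt{\log(t^{-1})}\,\d t = \Gamma(3/2)\,(1-\gamma/2)^{-3/2}$, which is bounded uniformly on $[0,1)$. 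This delivers a bound of the order $(Q\epsilon)^{1-\gamma/2}\bigl(1+\sqrt{\log((Q\epsilon)^{-1})}\bigr)$, which I then rewrite in the claimed form $Q\,\epsilon^{1-\gamma/2}\sqrt{\log(1+\epsilon^{-1})}$ by using $\epsilon \in (0,1]$ and the elementary inequalities $\sqrt{\log(1+\epsilon^{-1})}\ge \sqrt{\log 2}$ and $\log(\epsilon^{-1}) \le \log(1+\epsilon^{-1})$. If the upper limit satisfies $Q\epsilon > 1$, I split at $\delta = 1$ and handle the tail with the crude estimate $\log(1+\delta^{-1}) \le \log 2$ for $\delta \ge 1$.

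For the second integral, I would again split at $\delta = 1$. On $[1,R]$ the bound $\log(1+\delta^{-1}) \le \log 2$ gives $\int_1^R \delta^{-\gamma}\,\d\delta \le (1-\gamma)^{-1}\, R^{1-\gamma} \le (1-\gamma)^{-1}\, R$ since $R \ge 1$ and $\gamma \in [0,1)$. On $[0,1]$ I use $\log(1+\delta^{-1}) \le \log 2 + \log(\delta^{-1})$, so the contribution decomposes into $\frac{\log 2}{1-\gamma}$ plus $\int_0^1 \delta^{-\gamma}(-\log\delta)\,\d\delta$, the latter being $(1-\gamma)^{-2}$ by the substitution $\delta = e^{-s}$. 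Summing and absorbing $R$ (which is $\ge 1$) yields the advertised $C_\F R\,(1-\gamma)^{-2}/N$ form after multiplying back by $C_\F/N$.

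The main obstacle is not conceptual but bookkeeping: one must verify that the intermediate $\gamma$-dependent constants remain bounded uniformly on $[0,1)$ and that the two regimes $Q\epsilon \le 1$ and $Q\epsilon > 1$ are glued together so that the final expression is clean. The factor $(1-\gamma)^{-2}$ in the second integral is genuine and cannot be absorbed into a universal constant; it is forced simultaneously by the algebraic singularity $\int_0^1 \delta^{-\gamma}\,\d\delta = (1-\gamma)^{-1}$ and by the logarithmic refinement $\int_0^1\delta^{-\gamma}(-\log\delta)\,\d\delta = (1-\gamma)^{-2}$.
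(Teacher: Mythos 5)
Your proof is correct and follows essentially the same route as the paper's: split $J_N(\F)$ into the two Dudley integrals, insert the covering bound, rescale the first integral and use subadditivity of the logarithm together with the Gamma-type integral $\int_0^1 t^{-\gamma/2}\sqrt{\log(1/t)}\,\mathrm{d}t=\Gamma(3/2)(1-\gamma/2)^{-3/2}$ (which the paper packages as \Cref{lem: prop const slow rate}), and bound the second integral by elementary antiderivatives, which is exactly what produces the $(1-\gamma)^{-2}$ factor. The only caveat is that your final rewrite of $(Q\epsilon)^{1-\gamma/2}\bigl(1+\sqrt{\log((Q\epsilon)^{-1})}\bigr)$ into $Q\,\epsilon^{1-\gamma/2}\sqrt{\log(1+\epsilon^{-1})}$ implicitly uses $Q\ge 1$ (so that $Q^{1-\gamma/2}\le Q$ and $\log((Q\epsilon)^{-1})\le\log(\epsilon^{-1})$), but the paper's own proof carries the same implicit restriction when it applies its auxiliary lemma with upper limit $Q$ and absorbs $Q^{1-\gamma/2}$ into $Q$, so this is not a gap specific to your argument.
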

\begin{proof}
Using the hypothesis of the theorem 
\editstart
and bounding the $L^2$ metric entropy by the $L^\infty$ one in the first integral of~\eqref{eq: J_n bound quantity},
\editend
we can readily write
\begin{align}\label{eq: J_n decomposition}
J_N(\mathcal{F})\leq \sqrt{\frac{C_\F}{N}} I_1+ \frac{C_\F}{N}I_2\,,
\end{align}
where we introduced the quantities
\[
  I_{1}
  :=
  \int_{0}^{Q \epsilon}
        \delta^{-\gamma/2}\sqrt{\log(1+\delta^{-1})}\,d\delta,
  \qquad
  I_{2}
  :=
  \int_{0}^{R}
        \delta^{-\gamma}\log(1+\delta^{-1})\,d\delta.
\] 
Below we will show that 
\begin{align}
  I_{1}
  &\leq
  10 Q \,
  \epsilon^{1-\gamma/2}
  \sqrt{\log(1+\epsilon^{-1})},
  \label{eq: I1goal}\\
  I_{2}
  &\leq
  2 R(1-\gamma)^{-2}\, ,
  \label{eq: I2goal}
\end{align}
which completes the proof upon substitution in \eqref{eq: J_n decomposition}.
\vspace{0.5ex}
\textit{Bound for $I_{1}$.}
Introduce the rescaling $\delta= q \epsilon $ with $q\in[0,Q]$ to
obtain
\begin{equation}\label{eq:I1def}
  I_{1}
  =
  \epsilon^{1-\gamma/2}
  \int_{0}^{Q}
        q^{-\gamma/2}
        \sqrt{\log\bigl(1+(\epsilon q)^{-1}\bigr)}\,dq.
\end{equation}

Using the elementary inequalities 
$\log( 1 + 1/ab) \le \log( 1 + 1/a) 
+ \log(1 + 1/b)$ 
and 
$\sqrt{a+b}\le\sqrt a+\sqrt b$
we can write

\[
  \sqrt{\log\bigl(1+(\epsilon u)^{-1}\bigr)}
  \le
  \sqrt{\log(1+\epsilon^{-1})}
  +\sqrt{\log(1+u^{-1})}.
\]
Since
\(
 \int_{0}^{Q}q^{-\gamma/2}\,dq
   =\tfrac{2Q^{1-\gamma/2}}{2-\gamma},
\)
substituting into~\eqref{eq:I1def} and recalling that $\sqrt{\log(1+\epsilon^{-1})}\geq\sqrt{\log(2)}$ for $\epsilon<1$, gives
\begin{equation*}
  I_{1}
  \le\left(
  \frac{2}{2-\gamma}\,
  Q^{1-\gamma/2}+\frac{\int_0^{Q}q^{-\gamma/2}\sqrt{\log(1+q^{-1})}\d q}{\sqrt{\log(2)}}\right)
  \epsilon^{1-\gamma/2}
  \sqrt{\log(1+\epsilon^{-1})}.
\end{equation*}
Applying \Cref{lem: prop const slow rate} to further bound the integral inside the 
brackets further simplifies the bound to the desired form
\begin{align*}\label{eq:I1small}
I_1&\leq \left(\frac{2}{2-\gamma}\,Q^{1-\gamma/2}+\frac{4Q}{\sqrt{\log(2)}}\right)\epsilon^{1-\gamma/2}\sqrt{\log(1+\epsilon^{-1})}\\
&\leq 10Q \epsilon^{1-\gamma/2}\sqrt{\log(1+\epsilon^{-1})}\, ,
\end{align*}
where we note that the last inequality holds since  $\epsilon, \gamma \in(0,1)$. 

\vspace{0.5ex}
\textit{Bound for $I_{2}$.}
Decompose
\(
\log(1+\delta^{-1})=\log(\delta^{-1})+\log(1+\delta)
\)
and note that $\log(1+\delta)\le\log(1+R)$. Further noting the identities
\[
  \int_{0}^{R}\!\delta^{-\gamma}\,d\delta
      =\frac{R^{1-\gamma}}{1-\gamma},
  \qquad
  \int_{0}^{R}\!\delta^{-\gamma}\log(\delta^{-1})\,d\delta
      =R^{1-\gamma}\left(\frac{1}{(1-\gamma)^{2}}-\frac{\log(R)}{1-\gamma}\right),
\]
we obtain the bound
\begin{equation*}%
  I_{2}
  \le
  R^{1-\gamma}
  \Bigl(
     \frac{1}{(1-\gamma)^{2}}
     +\frac{\log\left(1+1/R\right)}{1-\gamma}
  \Bigr) \leq 2\frac{R^{1-\gamma}}{(1-\gamma)^2}\leq 2R(1-\gamma)^{-2}\, ,
\end{equation*}
where we  used the fact that $\log(1+1/R)\leq \frac{1}{R} \leq 1 \leq  \frac{1}{1-\gamma}$
in the second inequality.
\end{proof}

\editstart

\begin{remark}\label{rem: where to adapt proof}
We make two points regarding the extension of the above lemma beyond the simplified setting considered here: (i)
 how one can handle $\gamma > 1$; and (ii) how the result can be sharpened for large parametric classes at $\gamma = 0$. 
 \begin{itemize}
     \item [(i)] \textbf{Extending to $\gamma > 1$.} The calculation of~\Cref{lem: bound-JF} 
     (~\Cref{lem: prop const slow rate} respectively for the slow rate), which follows from applying ~\Cref{thm: generalization for fast rate} (~\Cref{thm: generalization for slow rate} respectively for the slow rate), is precisely the part of the proof that must be adapted to handle $\gamma>1$. In that regime, one can instead use chaining bounds with truncated Dudley integrals, as in \cite[Prop. A.2]{divol2025optimal-sm}, to obtain a bound analogous to~\Cref{lem: bound-JF}, and then proceed with the rest of the proof as we present it (with the needed adjustments) to obtain the rates reported in ~\Cref{tab:cond-ot-rates}.
     \item [(ii)] \textbf{Sharpening at $\gamma=0$.} For the case $\gamma = 0$, a more refined calculation -- following ~\cite[Prop. A.9]{divol2025optimal-sm}
     under additional smoothness assumptions and $L_2$-metric entropy bounds akin to   
     in~\cite[Condition~C2]{divol2025optimal} -- sharpens the bound of ~\Cref{lem: bound-JF} from $C_\F$ to $C_\F^{1-2/d}$, where $d\geq 2$ is the ambient dimension of $\Y \times \U$. This improvement matters for deriving optimal global rates, as discussed later in ~\Cref{rem:C_sigma_claculation}.
 \end{itemize}

We stress that both (i) and (ii) rely on chaining calculations established in~\cite{divol2025optimal}; they are independent of and compatible with the results we provide to extend the rates to the conditional setting ( \Cref{thm: PI extension consequence for COT} and \Cref{thm: centered semidual empirical estimation rate}). We therefore expect these possible adaptations to carry over smoothly to the conditional setting as well.
\end{remark}
\editend

The result of \cref{thm: centered semidual empirical estimation rate} below combines \Cref{thm: generalization for fast rate,lem: bound-JF} to bound, for a localized class $\F_\epsilon\subset\F$ around the true potential $\phi^\dagger$,
\begin{equation*}
    \Expect \left[\sup_{\phi \in \F_\epsilon} |\cS^c(\phi)-\wh\cS^c(\phi)| \right] 
    \lesssim \frac{\psi(\epsilon)}{\sqrt{N}} + \frac{1}{N},
\end{equation*}
with a function $\psi(\epsilon)$ that vanishes as $\epsilon \downarrow 0$. 
We then aim to make the two terms on the right hand side comparable at order $N^{-\frac{1}{2 + \gamma}}$ (saving logarithmic factors) by controlling $\psi(\epsilon)$ through $\F_\epsilon$, more precisely defined as
\begin{align}\label{eq: localized class}
\mathcal{F}_{\epsilon}:=\{\phi \in \mathcal{F}\quad|\quad \|\nabla_{u}\phi-\nabla_{u}\phi^\dag\|_{L_{\eta}^2}\leq\epsilon\}\, .
\end{align}
The particular localization technique we use  is attributed to the seminal  papers of 
van de Geer \cite{van1987new,van2002m} and was also employed in \cite{chewi2024statistical,divol2025optimal,hutter2019minimax} 
although the technical aspects of the proof deviate significantly for us, especially in controlling the excess risk's localized empirical gap in \Cref{thm: centered semidual empirical estimation rate} in the conditional OT setting.

\paragraph{Proof of \Cref{thm: fast rate}}
 Let $\widetilde{\phi}$ be the best approximator of $\phi^\dagger$ in $\F$ as in \eqref{def:tilde-phi}. Once again we assume that this element exists and if it does not, then we simply take 
 $\widetilde \phi$ to be an element of a minimizing sequence and pass 
 to the limit in the end. For convenience of notation let us define the bias error
\begin{align*}
    \bias(\widetilde \phi) := \| \nabla_u \widetilde \phi - \nabla_u \phi^\dagger \|_{L_{\eta}^{2}}.
\end{align*}
By the triangle inequality we can then write 
\begin{align*}
    \|\nabla_u \widehat \phi - \nabla_u \phi^\dagger\|_{L_{\eta}^{2}} \leq 
    \|\nabla_u \widehat \phi - \nabla_u \widetilde \phi\|_{L_{\eta}^{2}} +  \|\nabla_u \phi^\dagger - \nabla_u \widetilde \phi\|_{L_{\eta}^{2}}
 \leq \|\nabla_u \widehat \phi - \nabla_u \widetilde \phi\|_{L_{\eta}^{2}} + \bias(\widetilde \phi),
\end{align*}
which resembles a bias-variance decomposition similar to our proof of \Cref{thm: slow rate}.
Our main task is to control the first term, i.e., the variance term.

\noindent
{\it A high probability bound:} Consider the element $\phi_\sigma \in \F$ defined as 
\[
\phi_{\sigma} := (1-\lambda)\widetilde\phi + \lambda\widehat{\phi}, \quad\lambda := \frac{\sigma}{\sigma + \|\nabla_{u} \widehat \phi - \nabla_{u}\widetilde\phi\|_{L_{\eta}^{2}}}.
\] 
Here  $\sigma >0 $ is a parameter to be selected later. 
A direct calculation then shows that 
\begin{align*}
   \| \nabla_u \phi_\sigma - \nabla_u \widetilde \phi \|_{L_{\eta}^{2}}  
   = \lambda \| \nabla_u \widehat \phi - \nabla_u  \widetilde \phi \|_{L_{\eta}^{2}} 
   = \frac{\sigma}{\sigma + \|  \nabla_u  \widetilde \phi - \nabla_u \widehat \phi \|_{L_{\eta}^{2}} } 
   \| \nabla_u \widehat \phi - \nabla_u  \widetilde \phi \|_{L_{\eta}^{2}},
\end{align*}
implying, in turn, that 
\begin{align*}
    \| \nabla_u  \widehat \phi - \nabla_u \widetilde \phi \|_{L_{\eta}^{2}} \leq \sigma
    \quad \text{whenever} \quad 
    \| \nabla_u  \phi_\sigma - \nabla_u \widetilde \phi \|_{L_{\eta}^{2}} \leq \frac{\sigma}{2}.
\end{align*}
This calculation allows us to work with $\phi_\sigma$ rather than $\widehat{\phi}$ which is  
 helpful because both $\widetilde{\phi}$ and $\phi_\sigma$ belong to $\F_\epsilon$ 
whenever $\epsilon = \bias(\widetilde \phi) + \sigma$. This is the essence of 
the localization argument since we can now focus on obtaining a high-probability 
bound on $\| \nabla_u \phi_\sigma  - \nabla_u  \widetilde\phi \|_{L^2_\eta}$ 
while working with $\F_\epsilon$ rather than $\F$. 

Indeed by the triangle inequality and an application of \Cref{cor: map stability} 
we have 
\begin{equation}\label{eq:phi-sigma-bound-interim}
\begin{aligned}
 \| \nabla_u   \phi_\sigma - \nabla_u \widetilde \phi \|_{L_{\eta}^{2}} 
 &\leq \| \nabla_u   \phi_\sigma - \nabla_u \phi^\dagger \|_{L_{\eta}^{2}} + \| \nabla_u \widetilde \phi - \nabla_u \phi^\dagger\|_{L_{\eta}^{2}} \\
 & \leq  \sqrt{2\betamax \mathcal{S}^c(\phi_\sigma)} + \bias(\widetilde \phi).
\end{aligned}
\end{equation}
Towards controlling the first term, we can write 
\begin{align*}
    \mathcal{S}^{c}(\phi_{\sigma}) - \mathcal{S}^{c}(\widetilde\phi) 
    &=  [\mathcal{S}^{c}(\phi_{\sigma}) - \widehat{\mathcal{S}}^{c}(\phi_{\sigma})] + [\widehat{\mathcal{S}}^{c}(\phi_{\sigma}) - \widehat{\mathcal{S}}^{c}(\widetilde\phi)] + [\widehat{\mathcal{S}}^{c}(\widetilde\phi) - \mathcal{S}^{c}(\widetilde\phi)]
    \\
    &\leq 2 \sup_{\phi \in \mathcal{F}_{\epsilon}}|(\mathcal{S}^{c} - \widehat{\mathcal{S}}^{c})(\phi)| + [\widehat{\mathcal{S}}(\phi_{\sigma}) - \widehat{\mathcal{S}}(\widetilde\phi)]
    \\
    &\leq 2\sup_{\phi \in \mathcal{F}_{\epsilon}}|(\mathcal{S}^{c} - \widehat{\mathcal{S}}^{c})(\phi)| + \lambda[\widehat{\mathcal{S}}(\widehat\phi) - \widehat{\mathcal{S}}(\widetilde\phi)]
    \\
    &\leq 2\sup_{\phi \in \mathcal{F}_{\epsilon}}|(\mathcal{S}^{c} - \widehat{\mathcal{S}}^{c})(\phi)|,
\end{align*}
where the second to last inequality follows from the fact that $\widehat{\mathcal{S}}$
is convex with respect to $\phi$ while the last inequality is a consequence of 
the fact that $\widehat{\phi}$ minimizes $\widehat{\mathcal{S}}$.

The following proposition allows us to further bound the last display above. 
The proof is postponed to the end of this section.
\begin{proposition}
\label{thm: centered semidual empirical estimation rate}
    Suppose \Cref{assump: slow rate,assump: fast rate} hold. Then, 
    with probability at least $1 - \exp(t)$ for any $t \ge 0$, it holds that
    \begin{equation}
    \label{eq:empirical process bound}
    \sup_{\phi\in\mathcal{F}_{\epsilon}} |(\mathcal{S}^{c} - \widehat{\mathcal{S}}^{c})(\phi)| \lesssim \theta(N,\epsilon,t),
    \end{equation}
 where 
\begin{equation}\label{eq:theta}
    \theta(N,\epsilon,t):=\frac{\epsilon}{\sqrt{N}}\sqrt{\CPI} \left(\sqrt{C_\F}\epsilon^{-\gamma/2} \sqrt{\log(1+\epsilon^{-1})} +   \sqrt{t} \right)+ \frac{1}{N}R(C_\F(1-\gamma)^{-2} + t)\, ,
\end{equation}
and
\editstart
$\CPI: = \sqrt{\frac{\betamax}{\alphamin}}\left(\sqrt{C_{PI}^{\eta_\U}}+\sqrt{C_{PI}^{\nu(\cdot\mid\Y)}}+2L_\F\sqrt{C_{PI}^{\nu_\Y,\widetilde{q}}}\right)$.
\editend
\end{proposition}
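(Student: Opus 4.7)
The plan is to view the centered empirical gap $(\mathcal{S}^c - \widehat{\mathcal{S}}^c)(\phi)$ as a single empirical process indexed by $\phi \in \mathcal{F}_\epsilon$ and reduce the claim to the tail bound (ii) of \cref{thm: generalization for fast rate}. Writing
\begin{equation*}
(\mathcal{S}^c - \widehat{\mathcal{S}}^c)(\phi) = (\eta - \eta^N)(\phi - \phi^\dagger) + (\nu - \nu^N)(\phi^* - \phi^{\dagger*}),
\end{equation*}
this is the centered empirical mean of $F_\phi(y,v,u) := (\phi - \phi^\dagger)(y,v) + (\phi^* - \phi^{\dagger*})(y,u)$ against the joint law of the i.i.d.\ samples. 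A key observation is that both $\mathcal{S}$ and $\widehat{\mathcal{S}}$ are invariant under the $y$-dependent shifts $\phi \mapsto \phi + a(y)$: since $(\phi+a)^* = \phi^* - a$, the $\eta$- and $\nu$-contributions cancel exactly. I therefore pass to the canonical representatives satisfying $\int \phi(y,v)\, d\eta_\U(v) = \int \phi^\dagger(y,v)\, d\eta_\U(v) = 0$ for every $y \in \Y$, under which the cross term in the joint $L^2$-norm vanishes and $\|F_\phi\|^2 = \|\phi - \phi^\dagger\|_{L^2_\eta}^2 + \|\phi^* - \phi^{\dagger*}\|_{L^2_\nu}^2$. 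To invoke \cref{thm: generalization for fast rate} it then suffices to supply (a) a uniform $L^\infty$-envelope $R_* \lesssim R$, immediate from \cref{assump: slow rate}(1) and \cref{lem: Conjugate class complexity}; (b) a covering-number bound on both summands, inherited from $\mathcal{F}$ via translation invariance and \cref{lem: Conjugate class complexity}; and (c) an $L^2$-radius $r \lesssim \epsilon\sqrt{\CPI}$, which is the heart of the proof.

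Supplying (c) proceeds by separately bounding the two $L^2$-norms. Conditional Poincar\'e on $\eta_\U$ at every $y$ combined with the normalization gives
$\|\phi - \phi^\dagger\|_{L^2_\eta}^2 \le C_{\textnormal{PI}}^{\eta_\U}\|\nabla_u(\phi - \phi^\dagger)\|_{L^2_\eta}^2 \le C_{\textnormal{PI}}^{\eta_\U}\epsilon^2$, which produces the $\sqrt{C_{\textnormal{PI}}^{\eta_\U}}$ summand of $\CPI$. For $\phi^* - \phi^{\dagger*}$ I introduce the conditional mean $d(y) := \int (\phi^* - \phi^{\dagger*})(y,u)\, d\nu(u\mid y)$ and split $\phi^* - \phi^{\dagger*} = [(\phi^* - \phi^{\dagger*}) - d(y)] + d(y)$. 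The centered piece has zero conditional $\nu(\cdot\mid y)$-mean, so conditional Poincar\'e on $\nu(\cdot\mid y)$, integrated against $\nu_\Y$ and combined with the conjugate-stability bound of \cref{cor:conjugate-norm-bound}, yields a contribution of order $(\betamax/\alphamin)\,C_{\textnormal{PI}}^{\nu(\cdot\mid\Y)}\epsilon^2$, accounting for the $\sqrt{\betamax/\alphamin}\sqrt{C_{\textnormal{PI}}^{\nu(\cdot\mid\Y)}}$ factor in $\CPI$.

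The main obstacle is the $L^2_{\nu_\Y}$-estimate on $d$. Under the chosen normalization one recognizes $d(y) = \mathcal{S}^c(\phi\mid y) \ge 0$, so by \cref{cor: map stability} the global mean $\bar d = \mathcal{S}^c(\phi) \le \epsilon^2/(2\alphamin)$ is quadratically small and negligible, and Poincar\'e on $\nu_\Y$ reduces the task to bounding $\|\nabla_y d\|_{L^2_{\nu_\Y}}$ by $O(L_\F\sqrt{\betamax/\alphamin}\,\epsilon)$. Here I perform the change of variables $u = \nabla_u\phi^\dagger(y, v)$ to rewrite $d(y) = \int (\phi^* - \phi^{\dagger*})(y, \nabla_u\phi^\dagger(y, v))\, d\eta_\U(v)$, differentiate under the now $y$-independent measure $\eta_\U$, and apply the envelope identity $\nabla_y\phi^*(y,u) = -\nabla_y\phi(y, \nabla_u\phi^*(y,u))$ together with its $\phi^\dagger$-analogue. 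Two pieces emerge: the first collapses via $\int \nabla_y(\phi^\dagger - \phi)(y, v)\, d\eta_\U(v) = \nabla_y(c^\dagger - c)(y) = 0$ up to a Lipschitz-in-$u$ remainder controlled by $\|\nabla_{uy}\phi\|_\infty \le L_\F$ from \cref{assump: fast rate}(2) and $\|\nabla_u\phi^* - \nabla_u\phi^{\dagger*}\|_{L^2_\nu}$ via \cref{cor:conjugate-norm-bound}; the second is a chain-rule term in $\nabla_{uy}\phi^\dagger$ with the same conjugate-gradient bound. Cauchy--Schwarz then yields $\|\nabla_y d\|_{L^2_{\nu_\Y}} \lesssim L_\F\sqrt{\betamax/\alphamin}\,\epsilon$, producing the $2L_\F\sqrt{C_{\textnormal{PI}}^{\nu_\Y}}$ contribution. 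Finally, feeding $r = \epsilon\sqrt{\CPI}$, $R_* \lesssim R$, and the covering exponent $\gamma$ into \cref{lem: bound-JF} with $Q = \sqrt{\CPI}$, and then applying the tail bound (ii) of \cref{thm: generalization for fast rate} at level $t$, delivers exactly $\theta(N,\epsilon,t)$ up to universal constants, completing the proof.
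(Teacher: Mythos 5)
Your proposal is correct and reaches the stated bound by essentially the same estimates as the paper, but with a different packaging of the empirical process. The paper's proof decomposes $\cS^c - \widehat\cS^c$ into three separately centered empirical processes over $\eta^N$, $\nu^N$, and $\nu_\Y^N$, bounds the $L^2$-radius of each via \Cref{thm: PI extension consequence for COT}, and applies the tail bound of \Cref{thm: generalization for fast rate} three times with a union bound. You instead exploit the invariance of both $\cS$ and $\widehat\cS$ under $y$-dependent shifts $\phi\mapsto\phi+a(y)$ to pass to canonical potentials with zero conditional $\eta_\U$-mean, treat the whole object as a single empirical process of $F_\phi(y,v,u)$ over the joint triple $(Y,V,U)$, and observe that the conditional independence of $V$ and $U$ given $Y$ makes the three variance contributions orthogonal; a single application of the tail bound then suffices. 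This is arguably a cleaner organization of the same argument, since the $L^2$-radius computation you sketch --- Poincar\'e on $\eta_\U$, conditional Poincar\'e on $\nu(\cdot\mid y)$ with \Cref{cor:conjugate-norm-bound}, and the envelope-theorem plus change-of-variables computation of $\nabla_y d$ controlled by $L_\F$, $\betamax/\alphamin$, and $\CPI^{\nu_\Y}$ --- is precisely the content of the paper's \Cref{thm: PI extension consequence for COT} and lands on the same $\sqrt{\CPI}\,\epsilon$. One small imprecision: you dismiss the global mean $\bar d = \cS^c(\phi)$ as ``quadratically small and negligible.'' The cleaner reason it does not contribute is that additive constants drop out of the centered empirical process $\frac{1}{N}\sum_i F_\phi(x_i)-\mathbb{E}F_\phi$, so only $\textnormal{Var}_{\nu_\Y}(d)$ enters the $L^2$-radius (the paper makes this explicit by subtracting $\nu_\Y(F)$); the quadratic bound $\bar d\lesssim\epsilon^2/\alphamin$ on its own would require a further comparison of $\epsilon/\alphamin$ with $\sqrt{\CPI}$ when $\epsilon$ is not small.
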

Combining this result with \Cref{cor: map stability} further yields that 
\begin{align}\label{eq:Sc-bound}
    \mathcal{S}^c(\phi_\sigma) 
    \leq \mathcal{S}^c (\widetilde \phi) + \theta(N, \epsilon,t) 
    \leq \frac{1}{2\alphamin}\bias(\widetilde{\phi})^2 + \theta(N, \bias(\widetilde{\phi}) + \sigma,t),
\end{align}
with probability $1 - e^{-t}$.  Combining~\eqref{eq:Sc-bound} and~\eqref{eq:phi-sigma-bound-interim} yields 
\begin{align*}
 \| \nabla_u   \phi_\sigma - \nabla_u \widetilde \phi \|_{L^2_\eta} &\leq  
 \sqrt{2\betamax \theta(N,\bias(\widetilde\phi) + \sigma,t)} + \left(\sqrt{\frac{\betamax}{\alphamin}}+1\right)
 \bias(\widetilde{\phi}),
\end{align*}
and as a result  we infer that 
\begin{align*}
    \| \nabla_u  \widehat \phi - \nabla_u \widetilde \phi \|_{L_{\eta}^{2}} \leq \sigma \, ,
\end{align*}
with probability  $1-e^{-t}$
for all $\sigma$ satisfying
\begin{align}\label{eq:sigma-condition}
  \sqrt{2\betamax \theta(N,\bias(\widetilde{\phi}) + \sigma,t)} + \left(\sqrt{\frac{\betamax}{\alphamin}}+1 \right)
  \bias(\widetilde{\phi}) \leq \frac{\sigma}{2}.
\end{align}
Since $\sigma$ was arbitrary, it remains for us to check whether an appropriate choice 
is possible \footnote{This  is the one-shot localization step of our proof since we 
identify the choice of $\sigma$ (equivalently $\epsilon$) in terms of $N$ in a 
single step to achieve our fast rate later.}

\editstart
{\it Selection of $\sigma$:} 
Next, by the calculation provided in~\Cref{lem: sigma calculation}, the ansatz 
\begin{align}\label{eq: ansatz sigma}
\sigma = C_\textnormal{bias}\textnormal{bias}(\widetilde{\phi}) + C_{\sigma,1}\left(\frac{\log(N)}{N}\right)^{\frac{1}{2+\gamma}}+C_{\sigma,2}\sqrt{\frac{t+1}{N}}\, ,
\end{align}
satisfies the inequality condition of~\eqref{eq:sigma-condition} with
\begin{align}\label{eq: constants identification}
\begin{split}
C_{\textnormal{bias}} &\geq 4\left(\sqrt{\frac{\betamax}{\alphamin}}+1\right)\, ,\qquad C_{\sigma,1}\geq (128\sqrt{6})^{\frac{2}{2+\gamma}}(\betamax^2 \CPI C_\F)^{\frac{1}{2+\gamma}} \, ,\\
C_{\sigma,2} &\geq \left(256\betamax\sqrt{\CPI} \vee8\right)8\frac{\sqrt{R C_\F}}{(1-\gamma)}\, .
\end{split}
\end{align}
With this selection, so far we have established that
\begin{equation*}
    \| \nabla_u \widehat{\phi} - \nabla_u \widetilde{\phi} \|_{L^2_\eta}
    \le C_{\textnormal{bias}} \| \nabla_u \widetilde{\phi} - \nabla_u \phi^\dagger \|_{L^2_\eta} + C_{\sigma,1}\left(\frac{\log(N)}{N}\right)^{\frac{1}{2+\gamma}}+C_{\sigma,2}\sqrt{\frac{t+1}{N}}\, ,
\end{equation*}
with probability $ 1 - e^{-t}$.

\begin{lemma}\label{lem: sigma calculation}
The ansatz in~\eqref{eq: ansatz sigma} satisfies the inequality condition of~\eqref{eq:sigma-condition} with constant dependencies as in~\eqref{eq: constants identification}.
\end{lemma}
\begin{proof}
For simplicity, assume $N\geq e$ so that $\log(N)\geq 1$ in the calculation below. The inequality condition in~\eqref{eq:sigma-condition} can be rewritten as 
\begin{align*}
\sqrt{\mathrm{I}+\mathrm{II}+\mathrm{III}+\mathrm{IV}}+\mathrm{V}\leq \frac{\sigma}{2}\, ,
\end{align*}
introducing in compact notation
\begin{align*}
\mathrm{I}&\coloneqq2\betamax \sqrt{\frac{\CPI C_\F}{N}}\left(\textnormal{bias}(\widetilde \phi)+\sigma\right)^{1-\gamma/2}\sqrt{\log\left(1+\left(\textnormal{bias}(\widetilde{\phi})+\sigma\right)^{-1}\right)}\, ,\\
\mathrm{II}&\coloneqq 2\betamax\sqrt{\frac{\CPI}{N}}\left(\textnormal{bias}(\widetilde \phi)+\sigma\right)\sqrt{t}\, ,\qquad \mathrm{III}\coloneqq \frac{R C_\F}{N(1-\gamma)^2}\, ,\qquad
\mathrm{IV}\coloneqq \frac{t}{N}\, ,\\
\mathrm{V}&\coloneqq \left(\sqrt{\frac{\alphamin}{\betamax}+1}\right)\textnormal{bias}(\widetilde \phi)\, .
\end{align*}
Straightforwardly, the ansatz in~\eqref{eq: ansatz sigma} ensures that $\mathrm{II},\mathrm{III},\mathrm{IV}$ are bounded by $\sigma^2/64$ and that $\mathrm{V}\leq \sigma/4$. To bound $\mathrm{I}$, first note that
\begin{align*}
\left(\textnormal{bias}(\widetilde{\phi})+\sigma\right)^{1-\gamma/2}\leq (2\sigma)^{1-\gamma/2}\leq 2\sigma^{1-\gamma/2}\, .
\end{align*}
Thus, using the inequality $\log(1+y)\leq 1+\left(0 \vee \log(y)\right)$, we can further bound
\begin{align*}
\log\left(1+\left(\textnormal{bias}(\widetilde \phi)+\sigma\right)^{-1}\right)&\leq 1 + \left(0 \vee\log\left((\textnormal{\bias}(\widetilde\phi)+\sigma)^{-1}\right)\right)\\
&\leq 1 + \left(0\vee \log\left(\sigma^{-1}\right)\right) \leq 1+ \log\left(\frac{N^{\frac{1}{2+\gamma}}}{C_\sigma}\right)\\
&\leq \log(N)\left(1+\frac{1}{2+\gamma}\right)\leq \frac{3}{2}\log(N)\, .
\end{align*}
Thus, the ansatz of~\eqref{eq: ansatz sigma} ensures $\mathrm{I}\leq \sigma^2/64$. Combining all terms verifies~\eqref{eq:sigma-condition}. 
{}
\end{proof}

{\it The expectation bound:}
With the high-probability bound at hand we now move on to obtaining an expectation 
bound. 
This part of the proof is essentially a calculation that relates our tail bound to
an expectation bound. 

So far we have shown that, with probability $1 - e^{-t}$, we have the bound 
\begin{align*}
\|\nabla_u \widehat \phi - \nabla_u \phi^\dagger\|_{L_{\eta}^{2}}^2\leq 2\left((C_{\textnormal{bias}}+1)^2\| \nabla_u \widetilde{\phi} - \nabla_u \phi^\dagger \|_{L^2_\eta}^2+C_{\sigma,1}^2\left(\frac{\log(N)}{N}\right)^{\frac{2}{2+\gamma}}+C_{\sigma,2}^2\frac{t+1}{N}\right)\, .
\end{align*}
Then, by Fubini-Tonelli theorem, we can decompose 
\begin{align*}
\mathbb{E}\|\nabla_u \widehat \phi - \nabla_u \phi^\dagger\|_{L_{\eta}^{2}}^2  =\int_{0}^A \mathbb{P}\left(\|\nabla_u \widehat \phi - \nabla_u \phi^\dagger\|_{L_{\eta}^{2}}^2>x\right)\d x + \int_{A}^\infty \mathbb{P}\left(\|\nabla_u \widehat \phi - \nabla_u \phi^\dagger\|_{L_{\eta}^{2}}^2>x\right)\d x\, ,
\end{align*}
where we set
\begin{align*}
A&\coloneqq 2\left((C_{\textnormal{bias}}+1)^2\| \nabla_u \widetilde{\phi} - \nabla_u \phi^\dagger \|_{L^2_\eta}^2+C_{\sigma,1}^2\left(\frac{\log(N)}{N}\right)^{\frac{2}{2+\gamma}}+C_{\sigma,2}^2\frac{1}{N}\right)\, .
\end{align*}
We additionally let $B= \frac{2C_{\sigma,2}^2}{N}$. Next, we can trivially upper bound
\begin{align*}
\int_{0}^A \mathbb{P}\left(\|\nabla_u \widehat \phi - \nabla_u \phi^\dagger\|_{L_{\eta}^{2}}^2>x\right)\d x\leq A\, ,
\end{align*}
while the second term is bounded after a change of variables as follows:
\begin{align*}
\int_{A}^\infty \mathbb{P}\left(\|\nabla_u \widehat \phi - \nabla_u \phi^\dagger\|_{L_{\eta}^{2}}^2>x\right)\d x &= B\int_{0}^\infty \mathbb{P}\left(\|\nabla_u \widehat \phi - \nabla_u \phi^\dagger\|_{L_{\eta}^{2}}^2>A+Bt\right)\d t\\
&\leq B\int_{0}^\infty \exp(-t)\d t = B\, .
\end{align*}
Thus, substituting $A$, $B$ and~\eqref{eq: constants identification} and combining the integral bounds
\begin{align*}
\mathbb{E}^{\textnormal{train}}\|\nabla_u \widehat \phi - \nabla_u \phi^\dagger\|_{L_{\eta}^{2}}^2&\leq 2(C_{\textnormal{bias}}+1)^2\| \nabla_u \widetilde{\phi} - \nabla_u \phi^\dagger \|_{L^2_\eta}^2+2C_{\sigma,1}^2\left(\frac{\log (N)}{N}\right)^{\frac{2}{2+\gamma}}+\frac{4C_{\sigma,2}^2}{N}\\
& \lesssim \frac{\betamax}{\alphamin}\| \nabla_u \widetilde{\phi} - \nabla_u \phi^\dagger \|_{L^2_\eta}^2+ C_{\textnormal{est.}}\left(\left(\frac{C_\F\log(N)}{N}\right)^{\frac{2}{2+\gamma}} \vee \frac{C_\F}{N}\right)\, ,
\end{align*}
for a suitable parameters-independent proportionality constant and $C_{\textnormal{est.}} = C_{\sigma,1}^2 \vee C_{\sigma,2}^2$, which 
concludes our proof. 

\begin{remark}\label{rem:C_sigma_claculation}
Note that the above calculation characterizes 
constant $C_\textnormal{est.} = C_{\sigma,1}^2 \vee C_{\sigma,2}^2$  in terms of the relevant constants pertaining the measures involved in the conditional OT problem setup and the hypothesis class of candidate potentials $\F$. In fact, by~\eqref{eq: constants identification} and the expression for $\CPI$ in~\Cref{thm: centered semidual empirical estimation rate}, it is possible to observe that $C_\textnormal{est.}$ scales at most linearly with $R, \frac{1}{1-\gamma^2}, \betamax\sqrt{\frac{\betamax}{\alphamin}}, \sqrt{C_{PI}^{\eta_\U}}, \sqrt{C_{PI}^{\nu(\cdot\mid\Y)}},\sqrt{C_{PI}^{\nu_\Y,\widetilde{q}}}$ and $L_\F$. 

Moreover, as introduced in~\Cref{rem: where to adapt proof}, we recall the dependence on $C_{\F}$, in the case $\gamma = 0$, can be sharpened to be with respect to $C_\F^{1-\frac{2}{d}}$ where $d$ is the ambient dimension of $\Y\times \U$ under additional smoothness assumptions and $L_2$-metric entropy bounds following the more technical chaining calculations of the proof of~\cite[Prop.~4.2]{divol2025optimal}. Improving the dependence in terms of $C_\F^{1-\frac{2}{d}}$ is important in order to derive optimal global rates by balancing the bias and variance error of the derived rate for large parametric classes that are characterized by a usually polynomial dependence of $C_\F$ with respect to $N$ to achieve a needed approximation power. We provide an example of such a class in~\Cref{sec:conditionalOT:applications}.
\end{remark}
\editend

\paragraph{Proof of \Cref{thm: centered semidual empirical estimation rate}}
We now present the proof of \Cref{thm: centered semidual empirical estimation rate} 
which relies on an auxiliary technical results that allows us to 
control various errors involving potentials $\phi$ and the ground truth $\phi^\dagger$
provided that their gradients are sufficiently close. Naturally, this 
result relies heavily on Poincar\'e inequalities.

\begin{proposition}
\label{thm: PI extension consequence for COT}
Under \Cref{assump: fast rate} the following inequalities hold for all $\phi \in \mathcal{F}$ such that $\| \nabla_u \phi - \nabla_u \phi^\dag\|_{L_{\eta}^2}\leq\epsilon$
for a constant $\epsilon > 0$:
\begin{align}
    &\quad\|\phi-\phi^{\dagger}-\eta_\U(\phi(y,\cdot)-\phi^{\dagger}(y,\cdot))\|_{L_{\eta}^2}\leq \sqrt{C_{PI}^{\eta_\U}}\epsilon \label{eq: PI extention eta_U}\, ,
    \\
    &\quad \|\phi^*-\phi^{\dagger*}-\nu(\cdot|y)(\phi^*(\cdot,u)-\phi^{\dagger*}(\cdot,u))\|_{L_{\nu}^2}\leq 
    \sqrt{\frac{\betamax C_{PI}^{\nu(\cdot\mid \Y)}}{\alphamin}}\epsilon \label{eq: PI extention nu(u|y)}\, ,
    \\
    &\quad \|F(y) - \nu_\Y(F)\|_{L_{\nu_\Y}^2}
    \leq 2L \sqrt{\frac{C_{PI}^{\nu_\Y,\widetilde{q}}\, \betamax}{\alphamin}}\epsilon\, ,
    \label{eq: PI extention eta_U + nu(u|y)}
    \\
    \nonumber
    &\quad \textnormal{where\,} F(y): = \eta_\U(\phi(y,\cdot)-\phi^\dagger(y,\cdot))+\nu(\cdot|y)(\phi^*(y,\cdot)-\phi^{\dagger*}(y,\cdot)). 
\end{align}
\end{proposition}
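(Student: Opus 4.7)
The first two bounds follow directly from the Poincar\'e inequalities of \Cref{assump: fast rate}(1). For~\eqref{eq: PI extention eta_U}, I would fix $y$ and apply the $\eta_\U$-Poincar\'e inequality to $u\mapsto\phi(y,u)-\phi^\dagger(y,u)$, obtaining the pointwise-in-$y$ bound
\[
\|\phi(y,\cdot)-\phi^\dagger(y,\cdot)-\eta_\U(\phi(y,\cdot)-\phi^\dagger(y,\cdot))\|_{L_{\eta_\U}^2}^2 \leq C_{PI}^{\eta_\U}\|\nabla_u\phi(y,\cdot)-\nabla_u\phi^\dagger(y,\cdot)\|_{L_{\eta_\U}^2}^2 .
\]
Integrating over $y\sim\nu_\Y$, using $\eta=\nu_\Y\otimes\eta_\U$, and invoking the localization hypothesis $\|\nabla_u\phi-\nabla_u\phi^\dagger\|_{L_\eta^2}\leq\epsilon$ yields the claim. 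For~\eqref{eq: PI extention nu(u|y)}, the argument is identical with the Poincar\'e inequality for $\nu(\cdot\mid y)$ applied to $u\mapsto\phi^*(y,u)-\phi^{\dagger*}(y,u)$, followed by \Cref{cor:conjugate-norm-bound} to convert $\|\nabla_u\phi^*-\nabla_u\phi^{\dagger*}\|_{L_\nu^2}^2$ into a bound in $\epsilon^2$, producing the extra factor $\sqrt{\betamax/\alphamin}$.

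The third inequality~\eqref{eq: PI extention eta_U + nu(u|y)} is the main obstacle, since $F(y)$ mixes integration against $\eta_\U$ with integration against $\nu(\cdot\mid y)$, and the latter depends on $y$. Differentiating $F$ in $y$ cannot naively move past the conditional measure. My strategy is to rewrite $F$ in a form that eliminates the $y$-dependent measure. Using the Brenier push-forward $\nabla_u\phi^\dagger(y,\cdot)\#\eta_\U=\nu(\cdot\mid y)$ together with the Fenchel--Young identity at equality, $\phi^{\dagger*}(y,\nabla_u\phi^\dagger(y,v)) = \langle v,\nabla_u\phi^\dagger(y,v)\rangle - \phi^\dagger(y,v)$, I would obtain
\[
F(y) = \int_\U G(y,v)\,d\eta_\U(v),\quad G(y,v):=\phi(y,v)+\phi^*(y,\nabla_u\phi^\dagger(y,v))-\langle v,\nabla_u\phi^\dagger(y,v)\rangle,
\]
after which differentiation under the integral in $y$ is legitimate.

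Next, I would compute $\nabla_y G$ via the envelope identity $\partial_{y_j}\phi^*(y,u) = -\partial_{y_j}\phi(y,\nabla_u\phi^*(y,u))$ combined with the chain rule applied to $\phi^*(y,\nabla_u\phi^\dagger(y,v))$. The contribution of the cross-derivative $\partial_y\nabla_u\phi^\dagger$ that arises from differentiating $\phi^*$ in its second slot combines with the $\langle v,\nabla_u\phi^\dagger\rangle$ term, leaving the clean expression
\[
\nabla_y G(y,v) = \bigl[\nabla_y\phi(y,v)-\nabla_y\phi(y,\tilde v)\bigr] + \nabla^2_{yu}\phi^\dagger(y,v)(\tilde v - v),
\]
where $\tilde v := \nabla_u\phi^*(y,\nabla_u\phi^\dagger(y,v))$. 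By the mean-value theorem and \Cref{assump: fast rate}(2) applied to $\phi$ and to $\phi^\dagger$, each summand has norm at most $L\|\tilde v - v\|$, so $\|\nabla_y G(y,v)\|\leq 2L\|\tilde v - v\|$, and Jensen's inequality gives $\|\nabla_y F(y)\|^2\leq 4L^2\int_\U\|v-\tilde v\|^2\,d\eta_\U(v)$.

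To conclude, I would change variables via $u = \nabla_u\phi^\dagger(y,v)$, which by the push-forward turns the $\eta_\U$-integral into a $\nu(\cdot\mid y)$-integral and, using $v=\nabla_u\phi^{\dagger*}(y,u)$ from Legendre duality for $\phi^\dagger$, identifies $v - \tilde v$ with $\nabla_u\phi^{\dagger*}(y,u) - \nabla_u\phi^*(y,u)$. Integrating over $y\sim\nu_\Y$ and applying \Cref{cor:conjugate-norm-bound} gives $\|\nabla_y F\|_{L_{\nu_\Y}^2}^2\leq 4L^2(\betamax/\alphamin)\epsilon^2$, and the $\nu_\Y$-Poincar\'e inequality of \Cref{assump: fast rate}(1) converts this into the claimed variance bound~\eqref{eq: PI extention eta_U + nu(u|y)} after taking square roots. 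The key technical difficulty is obtaining the cancellation in the computation of $\nabla_y G$; without it, the estimate would pick up terms reflecting the $y$-derivative of $\nu(\cdot\mid y)$ that cannot be controlled by $\epsilon$ and $L$ alone.
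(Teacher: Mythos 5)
Your proposal is correct and follows essentially the same route as the paper's proof: pointwise Poincar\'e in $u$ plus \Cref{cor:conjugate-norm-bound} for the first two bounds, and for the third, differentiating the $y$-dependent conditional expectation after pulling it back to $\eta_\U$ via the Brenier push-forward, using the envelope theorem and the $L^\infty$ bound on mixed derivatives, then \Cref{cor:conjugate-norm-bound} and the $\nu_\Y$-Poincar\'e inequality. Your use of Fenchel--Young at equality to collapse the $\phi^\dagger,\phi^{\dagger*}$ terms into a single integrand $G$ before differentiating is only a cosmetic reorganization of the paper's identity $\nabla_u\phi^{\dagger*}(y,\nabla_u\phi^\dagger(y,u))=u$, and yields the same cancellation and the same constant $2L$.
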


\begin{proof}
We first prove \cref{eq: PI extention eta_U}. By the Poincar\'e inequality for $\eta_{\U}$,
\begin{align*}
\|\phi-\phi^\dag-\eta_\U(\phi(y,\cdot)-\phi^\dag(y,\cdot))\|_{L_{\eta}^2}^2&=\int\|\phi(y,\cdot)-\phi(y,\cdot)^\dag-\eta_\U(\phi(y,\cdot)-\phi^\dag(y,\cdot))\|_{L_{\eta_\U}^2}^2d\nu_\Y(y)\\
&\leq C_{PI}^{\eta_\U}\int \|\nabla_u\phi(y,\cdot)-\nabla_u\phi^\dag(y,\cdot)\|_{L_{\eta_\U}^2}^2d\nu_\Y(y)\\
&=C_{PI}^{\eta_\U}\|\nabla_u\phi-\nabla_u\phi^\dag\|_{L_{\eta}^2}^2\leq C_{PI}^{\eta_\U}\epsilon^2.
\end{align*}

Next, we prove \cref{eq: PI extention nu(u|y)}. By the Poincar\'e inequality for $\nu(\cdot \mid y)$,
\begin{align*}
    \|\phi^*-\phi^{\dag*}-\nu(\cdot|y)&(\phi^*(\cdot,u)-\phi^{\dag*}(\cdot,u))\|_{L_{\nu}^2}^2 
    \\
    &= \int\int \|\phi^*-\phi^{\dag*}-\nu(\cdot|y)(\phi^*(\cdot,u)-\phi^{\dag*}(\cdot,u))\|^2 \d\nu(u\mid y)\d\nu_{Y}(y) \\
    &\leq \int C_{\textnormal{PI}}^{\nu(\cdot \mid y)} \int \|\nabla_{u} \phi^{*}(y,u) -\nabla_{u} \phi^{\dagger*}(y,u)\|^2 \d\nu(u\mid y)\d\nu_{\Y}(y)
    \\
    &\leq C_{\textnormal{PI}}^{\nu(\cdot \mid \Y)}\|\nabla_{u} \phi^{*}-\nabla_{u} \phi^{\dagger*}\|_{L_{\nu}^2}^2\\
    &\leq C_{\textnormal{PI}}^{\nu(\cdot \mid \Y)} \frac{\betamax}{\alphamin}\|\nabla_{u} \phi-\nabla_{u} \phi^{\dagger}\|_{L_{\nu}^2}^2\leq \frac{C_{\textnormal{PI}}^{\nu(\cdot \mid \Y)}\betamax}{\alphamin}\epsilon^2.
\end{align*}
where we used~\Cref{cor:conjugate-norm-bound} in the last step.
Finally, we prove \cref{eq: PI extention eta_U + nu(u|y)}.
The 
\editstart 
strengthened Poincar\'e 
\editend
inequality for $\nu_{\Y}$ implies
\editstart
\begin{align*}
\|F\|_{L_{\nu_\Y}^2}^2\leq  C_{PI}^{\nu_\Y,\widetilde{q}}\|\nabla_y F\|_{L_{\nu_\Y}^2(\langle\cdot\rangle^{-\widetilde{q}})}^2\, .
\end{align*}
\editend
Therefore, the proof follows by bounding 
\editstart
$\|\nabla_y F\|_{L_{\nu_\Y}^2(\langle\cdot\rangle^{-\widetilde{q}})}^2$.
\editend
By the Leibniz's integral rule, 
\begin{align*}
\nabla_y F  = \text{I}+\text{II}\, ,
\end{align*}
where we introduced
\begin{align*}
&\text{I}:=\int\nabla_y\Bigr( \phi(y,u)-\phi^\dag(y,u)\Bigr) d \eta_{\U}(u),
\\
&\text{II}:=\nabla_y \int \Bigr(\phi^*(y,u)-\phi^{\dag*}(y,u)\Bigr) d \nu(u \mid y).
\end{align*}
Recalling that
$\nabla_u\phi^{\dag}(y,\cdot)\#\eta_{\U}=\nu(\cdot \mid y)$, we can express the second term as
\begin{align*}
\text{II} =  \nabla_y &\int \Bigr(\phi^*(y,\nabla_u\phi^\dag(y,u))-\phi^{\dag*}(y,\nabla_u\phi^\dag(y,u))\Bigr) \d \eta_\U(u) 
 =\text{III}-\text{IV},
\end{align*}
where 
\begin{align*}
&\text{III}:=\int \nabla_y\Bigr[\phi^*\Bigr(y,\nabla_u\phi^\dag(y,u)\Bigr)\Bigr]d\eta_\U(u)\, ,\\
&\text{IV}:=\int\nabla_y \Bigr[\phi^{\dag*}\Bigr(y,\nabla_u\phi^\dag(y,u)\Bigr)\Bigr] d\eta_\U(u)\, .
\end{align*}
The envelope theorem implies $\nabla_y \phi^*(y,w) = -\nabla_y \phi(y,\nabla_u \phi^*(y,w))$, for all $w$. Therefore, 
\begin{align*}
\text{III}&=\int -\nabla_y\phi\Bigr(y, \nabla_u\phi^{*}\Bigr(y,\nabla_u\phi^\dag(y,u)\Bigr)\Bigr)+\nabla_{u}\phi^*(y, \nabla_u\phi^\dag(y,u) )\nabla_{yu}\phi^\dag(y,u)d\eta_\U(u).
\end{align*}
Similarly,
\begin{align*}
\text{IV}&=\int -\nabla_y\phi^\dag\Bigr(y,\nabla_u\phi^{\dag*}\Bigr(y,\nabla_u\phi^\dag(y,u)\Bigr)\Bigr)+\nabla_{u}\phi^{\dag*}(y, \nabla_u\phi^\dag(y,u) )\nabla_{yu}\phi^\dag(y,u)d\eta_\U(u)\\
&=\int -\nabla_y\phi^\dag(y,u)+\nabla_{u}\phi^{\dag*}(y, \nabla_u\phi^\dag(y,u) )\nabla_{yu}\phi^\dag(y,u)d\eta_\U(u).
\end{align*}
where the last step follows from the fact that $\nabla_u\phi^\dag(y,\cdot)^{-1}=\nabla_u\phi^{\dag*}(y,\cdot)$. Collecting the terms, we can express $\nabla_y F(y) =\text{I} + \text{II} = \text{I} + \text{III}- \text{IV} = \text{V} +  \text{VI}$ where
\begin{align*}
&\text{V}:=\int \nabla_y\phi(y,u)-\nabla_y\phi\Bigr(y,\nabla_u\phi^{*}\Bigr(y,\nabla_u\phi^\dag(y,u)\Bigr)\Bigr)d\eta_\U(u),
\\
&\text{VI}:=\int \nabla_{u}\phi^{*}(y, \nabla_u\phi^\dag(y,u) )\nabla_{yu}\phi^\dag(y,u)-\nabla_{u}\phi^{\dag*}(y, \nabla_u\phi^\dag(y,u) )\nabla_{yu}\phi^\dag(y,u)d\eta_\U(u).
\end{align*}
By change of measure $\nabla_u\phi^{\dag*}(y,\cdot)\#\nu(\cdot|y) =\eta_{\U}$, we can rewrite these as
\begin{align*}
&\text{V} = \int \nabla_y\phi(y,\nabla_u\phi^{\dag*}(y,u))-\nabla_y\phi(y,\nabla_u\phi^{*}(y,u))d\nu(u \mid y),
\\
& \text{VI}= \int \nabla_{u}\phi^{*}(y,u)\nabla_{yu}\phi^\dag(y,u)-\nabla_{u}\phi^{\dag,*}(y,u)\nabla_{yu}\phi^\dag(y,u)d\nu(u \mid y).
\end{align*}
By the boundedness assumption on the mixed derivatives of any potential $\phi\in\mathcal{F}$, it follows that 
\editstart
the maps $\nabla_y\phi(y,\cdot)$ are $L_\F\langle y\rangle^{\widetilde{q}}$-Lipschitz.
\editend
Using this fact and applying Jensen's inequality, we obtain
\editstart
\begin{align*}
\|\text{V}\|\leq L_\F\langle y \rangle^{\widetilde{q}} \int\|\nabla_u\phi^{\dag*}(y,u)-\nabla_u\phi^{*}(y,u)\|d\nu(u \mid y).
\end{align*}
\editend
Similarly, applying H\"older's inequality ($p=1$,$q=\infty$) together with the boundedness assumption on the mixed derivatives of $\phi^\dag$, we have
\editstart
\[\|\text{VI}\|\leq L_\F \langle y \rangle^{\widetilde{q}}\int\|\nabla_u\phi^{\dag*}(y,u)-\nabla_u\phi^{*}(y,u)\|d\nu(u \mid y).\]
\editend
Then, by the triangle inequality, we can bound:
\editstart
\begin{align*}
\|\nabla_y F\|_{L_{\nu_\Y}^2(\langle\cdot\rangle^{-\widetilde{q}})}^2&=\int \|\nabla_y F(y)\|^2 \langle y\rangle^{-2\widetilde{q}}d\nu_{\Y}\leq2\int \|\text{V}\|^2+\|\text{VI}\|^2 \langle y\rangle^{-2\widetilde{q}}d\nu_{\Y}(y)\\
&\leq 4L_\F^2 \int\left(\int\|\nabla_u\phi^{\dag*}(y,u)-\nabla_u\phi^{*}(y,u)\|^2 d\nu(u\mid y) \right)\langle y\rangle^{2\widetilde{q}} \langle y\rangle^{-2\widetilde{q}}d\nu_\Y(y) \\&
= 4L_\F^2\|\nabla_u\phi^{\dag*}-\nabla_u\phi^{*}\|_{L_{\nu}^2}^2\leq 4L^2\frac{\betamax}{\alphamin}\|\nabla_u\phi^\dag(y,\cdot)-\nabla_u\phi(y,\cdot)\|_{L_{\eta}^2}^2\, ,
\end{align*}
\editend
where we used{\color{black}~\eqref{cor:conjugate-norm-bound}} in the last step. Finally, the 
\editstart 
strengthened Poincar\`e inequality
\editend
for $\nu_{\Y}$ yields 
\editstart
\begin{align*}
\|F\|_{L_{\nu_\Y}^2}^2\leq C_{PI}^{\nu_\Y}\|\nabla_y F\|_{L_{\nu_\Y}^2(\langle\cdot\rangle^{-\widetilde{q}})}^2\leq 4L_\F^2C_{PI}^{\nu_\Y,\widetilde{q}} \frac{\betamax}{\alphamin} \epsilon^2\, ,
\end{align*}
\editend
which completes the proof. 
{}
\end{proof}

We are now ready to present the proof of \cref{thm: centered semidual empirical estimation rate}.
\begin{proof}[Proof of \Cref{thm: centered semidual empirical estimation rate}]
We begin with the decomposition
\begin{align}
\label{eq: empirical process bound decomposition}
\sup_{\phi\in\mathcal{F}_{\epsilon}} |(\mathcal{S}^{c} - \widehat{\mathcal{S}}^{c})(\phi)| &\leq \sup_{\phi\in\mathcal{F}_{\epsilon}} |(\eta - \eta^N)(\phi-\phi^{\dagger}) + (\nu-\nu^N)(\phi^{*}-\phi^{\dagger *})|
    \\
    \nonumber
    &\leq \text{I}+\text{II}+\text{III}\, ,
\end{align}
where we recall our shorthand notation $\eta^N, \nu^N$ for the empirical 
measures associated to i.i.d. samples from $\eta, \nu$, and 
where we introduced the terms:
\begin{align*}
&\text{I}:= \sup_{\phi \in \mathcal{F}_{\epsilon}} \Bigl\{ |(\eta^{N} - \eta)(\phi - \phi^{\dagger}-\eta_\U(\phi(y,\cdot)-\phi^\dag(y,\cdot))) |\Bigr\}, \\
&\text{II}:= \sup_{\phi \in \mathcal{F}_{\epsilon}}\Bigr\{| (\nu^{N} - \nu)(\phi^{*} - \phi^{\dagger *}-\nu(\cdot|y)(\phi^*(y,\cdot)-\phi^{\dag*}(y,\cdot)))|\Bigr\},\\
&\text{III}:=\sup_{\phi \in \mathcal{F}_{\epsilon}} \Bigr\{| (\nu^{N}_\Y - \nu_\Y)(F(y) - \nu_\Y(F))|\Bigr\},
\end{align*}
and 
\begin{align}\label{eq: F expression}
    F(y)\coloneqq \eta_\U(\phi(y,\cdot)-\phi^\dagger(y,\cdot))+\nu(\cdot|y)(\phi^*(y,\cdot)-\phi^{\dagger*}(y,\cdot))\, .
\end{align}
Note that the above decomposition is reliant on the assumption that 
$\nu_\Y=\eta_\Y$.

In order to bound~\eqref{eq: empirical process bound decomposition}, we apply 
\Cref{thm: generalization for fast rate} to each individual term. 
Starting with (I), we have $\|\phi-\phi^{\dagger}-\eta_\U(\phi(y,\cdot)-\phi^{\dagger}(y,\cdot))\|_{L_{\eta}^2}\leq \sqrt{C_{PI}^{\eta_\U}}\epsilon$ for all $\phi \in \F_\epsilon$ due to~\eqref{eq: PI extention eta_U}. Therefore, the assumption of \Cref{thm: generalization for fast rate} holds with $\sigma=\sqrt{C_{PI}^{\eta_\U}} \epsilon$.  As a result, 
\begin{align*}
    \text{I}
    &\lesssim  J_N(\F) + \sqrt{C_{PI}^{\eta_\U}} \epsilon \sqrt{\frac{t}{N}} + R\frac{t}{N}
    \\
    &\lesssim  \frac{\epsilon}{\sqrt{N}}\sqrt{C_{PI}^{\eta_\U}} \left(\sqrt{C_\F}\epsilon^{-\gamma/2} \sqrt{\log(1+\epsilon^{-1})} +   \sqrt{t} \right)+ \frac{1}{N}R(C_\F(1-\gamma)^{-2} + t).
\end{align*}
with probability larger than $1-\exp(-\frac{t}{3})$, 
where we used the result of~\Cref{lem: bound-JF} to bound $J_N(\F)$ with $C=\sqrt{C_{PI}^{\eta_\U}}$.Here $\lesssim$ hides universal constants.
The bounds for II and III terms follow similarly, with $\sqrt{C_{PI}^{\eta_\U}}$ replaced by $\sqrt{\frac{\betamax C_{PI}^{\nu(\cdot\mid \Y)}}{\alphamin}}$ and $2L\sqrt{\frac{\betamax C_{PI}^{\nu_\Y,\widetilde{q}}}{\alphamin}}$. Combining the three terms, using the union bound on probabilities, and
\begin{align*}
    \sqrt{C_{PI}^{\eta_\U}}  + \sqrt{\frac{\betamax C_{PI}^{\nu(\cdot\mid \Y)}}{\alphamin}} + 2L\sqrt{\frac{\betamax C_{PI}^{\nu_\Y}}{\alphamin}} \leq \sqrt{\frac{\betamax}{\alphamin}}\left(\sqrt{C_{PI}^{\eta_\U}}+\sqrt{C_{PI}^{\nu(\cdot\mid\Y)}}+2L\sqrt{C_{PI}^{\nu_\Y,\widetilde{q}}}\right)=\CPI\, ,
\end{align*}
we conclude the proof. 
{}
\end{proof}

\begin{remark}\label{rem: conditional extension core}
\Cref{thm: PI extension consequence for COT} and \Cref{thm: centered semidual empirical estimation rate} constitute the main theoretical extension to the conditional OT setup from the proof of \cite{chewi2024statistical} for the standard OT 
case. {\color{black} In particular, \Cref{thm: centered semidual empirical estimation rate} provides the right empirical processes decomposing the localized difference of the semidual excess risk and its empirical counterpart when dealing with conditional Brenier maps rather than the Brenier map pushing jointly $\eta$ to $\nu$ as done in the standard OT case. This results in the need to control the empirical processes in~\eqref{eq: PI extention eta_U},~\eqref{eq: PI extention nu(u|y)}, and~\eqref{eq: PI extention eta_U + nu(u|y)} by controlling the second moments of their elements (\Cref{thm: PI extension consequence for COT}). This is a crucial condition to meet to apply the chaining bound of~\Cref{thm: generalization for fast rate} on each of the three individuated empirical processes. These  steps constitute the adaptation of the proof of~\cite{chewi2024statistical} to the conditional setting and they are independent from the chaining bounds and $\sigma$-selection later adopted to complete the derivation of the rates. For this reason, the more technical chaining calculations discussed in~\Cref{rem: where to adapt proof} (and the consequent modification of the $\sigma$-selection argument) adopted in~\cite{divol2025optimal} are expected to be compatible and may provide an avenue to generalize our results.}
\end{remark}
We conclude the section with a well-known lemma (see for example
\cite[Prop. 3.14]{chewi2024statistical}) on 
the preservation of Poincar\'e inequalities under Lipschitz transformations that can be used to control $C_{PI}^{\nu(\cdot\mid\Y)}\leq \betamax C_{PI}^{\eta_\U}$ whenever we further assume that $\phi^{\dagger}(y,\cdot)$ is $\betamax$-smooth for $\nu_\Y$ a.e. $y\in \Y$.

\begin{lemma}\label{lem: PI for conditionals}
Suppose $\mu$ satisfies the Poincar\'e inequality with constant 
$C_{\textnormal{PI}}^\mu>0$ and let $T$ be a $\beta$-Lipschitz map.
Then, the pushforward $T\# \mu$ satisfies the Poincar\'e inequality with constant $C_{\textnormal{PI}}^{T\#\mu}\leq \beta^2 C_{\textnormal{PI}}^\mu$.
\end{lemma}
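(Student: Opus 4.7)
The plan is to apply the Poincar\'e inequality of $\mu$ to the composition $f = g \circ T$ for arbitrary test functions $g \in H^1_{T\#\mu}$. The key identity is the change-of-variables formula: by definition of the pushforward, $\int h \, \mathrm{d}(T\#\mu) = \int h \circ T \, \mathrm{d}\mu$ for any $(T\#\mu)$-integrable $h$. Applying this with $h = g$ and $h = (g - (T\#\mu)(g))^2$ yields
\begin{equation*}
(T\#\mu)(g) = \mu(g \circ T), \qquad \textnormal{Var}_{T\#\mu}(g) = \textnormal{Var}_\mu(g \circ T).
\end{equation*}

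Next, I would invoke Rademacher's theorem, which guarantees that the $\beta$-Lipschitz map $T$ is differentiable a.e. with $\|DT(x)\|_{\mathrm{op}} \leq \beta$. For smooth $g$ the chain rule gives $\nabla(g \circ T)(x) = DT(x)^\top \nabla g(T(x))$, so that
\begin{equation*}
\|\nabla(g \circ T)\|_{L_\mu^2}^2 = \int \|DT(x)^\top \nabla g(T(x))\|^2 \, \mathrm{d}\mu(x) \leq \beta^2 \int \|\nabla g(T(x))\|^2 \, \mathrm{d}\mu(x) = \beta^2 \|\nabla g\|_{L_{T\#\mu}^2}^2,
\end{equation*}
where the last equality is again the change-of-variables identity applied to $\|\nabla g\|^2$. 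Combining this with Poincar\'e for $\mu$ applied to $f = g \circ T$ gives
\begin{equation*}
\textnormal{Var}_{T\#\mu}(g) = \textnormal{Var}_\mu(g \circ T) \leq C_{\textnormal{PI}}^{\mu} \|\nabla(g \circ T)\|_{L_\mu^2}^2 \leq \beta^2 C_{\textnormal{PI}}^{\mu} \|\nabla g\|_{L_{T\#\mu}^2}^2,
\end{equation*}
which is the desired inequality with constant $\beta^2 C_{\textnormal{PI}}^\mu$.

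The only step requiring extra care is justifying the chain rule for a general $g \in H^1_{T\#\mu}$ and a merely Lipschitz $T$, since $g$ need not be smooth and $T$ need not be $C^1$. I would handle this by a standard density argument: take $g_n \in C^1$ converging to $g$ in $H^1_{T\#\mu}$, observe that $g_n \circ T \to g \circ T$ in $L_\mu^2$ by change of variables, and use the gradient bound above to show that $\{g_n \circ T\}$ is Cauchy in $H^1_\mu$, so that $g \circ T \in H^1_\mu$ with the expected gradient bound. This is the main technical obstacle; once past it, the inequality follows immediately from the three displays above.
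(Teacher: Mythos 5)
Your proof is correct and follows essentially the same route as the paper's: both use the pushforward variance identity $\textnormal{Var}_{T\#\mu}(g)=\textnormal{Var}_\mu(g\circ T)$, the chain rule together with the a.e.\ bound $\|DT\|\leq\beta$ for a $\beta$-Lipschitz $T$, and then the Poincar\'e inequality for $\mu$. The only difference is cosmetic: the paper tacitly works with smooth test functions $\psi$ and leaves the extension to $H^1_{T\#\mu}$ implicit, whereas you spell out the density/approximation step needed to justify the chain rule for merely Lipschitz $T$ and $g\in H^1_{T\#\mu}$, which is a welcome bit of extra rigor but not a different argument.
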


\begin{proof}
By definition of the pushforward, we have for any smooth $\psi$,
    $\text{Var}_{T\#\mu}(\psi) = \text{Var}_{\mu}(\psi \circ T)$.
By the Poincar\'e inequality for $\mu$ and chain rule we get
$\text{Var}_{\mu}(\psi \circ T)\leq C_{\text{PI}}^{\mu} \int \|\nabla T(u)\nabla\psi(T(u))\|^2\d\mu(u)$.
 By the hypothesis that $T$ is $\beta$-Lipschitz we have that 
 $\|\nabla T(u) \|\leq \beta$ a.e. and so 
\begin{align*}
    \text{Var}_{T\#\mu}(\psi(u))&\leq \beta^2C_{\text{PI}}^{\mu}\int\|  \nabla\psi(T(u))\|^2 \d\mu(u) = \beta^2 C_{\text{PI}}^{\mu} \int\|  \nabla\psi(u)\|^2 \d(T\#\mu)(u).
\end{align*}
\end{proof}

\begin{table}[htp]
\color{black}
\centering
\footnotesize
\renewcommand{\arraystretch}{1.25}
\setlength{\tabcolsep}{4pt}
\begin{tabular}{|p{0.17\textwidth}|p{0.38\textwidth}|p{0.38\textwidth}|}
\hline
 & \textbf{Slow rate} & \textbf{Fast rate} \\
\hline
\textbf{Quantity bounded}
& $\mathbb{E}\|\nabla_u\wh\phi - \nabla_u\phi^\dagger\|^2_{L^2_\eta}$ (squared $L^2_\eta$ gradient error)
& Same \\
\hline
\textbf{Variance-term bound}
& $R\,\betamax\sqrt{C_\F/N}$
& $C_{\textnormal{est.}}\,(C_\F\log N / N)^{2/(2+\gamma)}$ \\
\hline
\textbf{Squared rate in $N$} (for $\gamma\in[0,1)$)
& $N^{-1/2}$
& $N^{-2/(2+\gamma)} \in [N^{-1}, N^{-2/3})$ \\
\hline
\textbf{Where $\gamma$ enters}
& Dudley-integral constant only, bounded uniformly by $4R$ over $\gamma\in[0,1)$ via~\Cref{lem: prop const slow rate}. No blow-up as $\gamma\to 1$.
& (i) Rate exponent $\tfrac{2}{2+\gamma}$; (ii) multiplicatively inside $C_{\textnormal{est.}}$ via the $(1-\gamma)^{-2}$ factor from~\Cref{lem: bound-JF}; the channel through which the un-truncated chaining argument breaks as $\gamma\to 1$. \\
\hline
\textbf{Where $C_\F,\CPI,L_\F$ enter}
& $C_\F, L_\F$ multiplicatively in the variance term. 
& $C_\F, \CPI, L_\F$ all multiplicatively . Linear dependence with respect to $C_\F$ can be sharpened to $C_\F^{1-\frac{2}{d}}$ (cf.~\Cref{rem:C_sigma_claculation}). \\
\hline
$\gamma \in [0,1)$
& Squared rate $(\frac{C_\F}{N})^{1/2}$ (\Cref{thm: slow rate}) .
& Squared rate $(\frac{C_\F}{N})^{2/(2+\gamma)}$ (\Cref{thm: fast rate}). \\
\hline
$\gamma \in [1, 2)$
& Squared rate stays $(\frac{C_\F}{N})^{1/2}$. The proof of~\Cref{thm: slow rate} extends but with extra $(2-\gamma)^{-1}$ constant blowing up as $\gamma\to 2$. 
&
Squared rate $\lesssim (\frac{C_\F}{N})^{2/(2+\gamma)} \vee (\frac{C_\F}{N})^{1/\gamma} = (\frac{C_\F}{N})^{2/(2+\gamma)}$ for $C_\F\ll N$. Replace~\Cref{thm: generalization for fast rate} with the truncated form of \cite[Prop. A.2]{divol2025optimal-sm} ($\varepsilon = (\frac{C_\F}{N})^{1/\gamma}$, $\tilde\varepsilon = 0$). \\
\hline
$\gamma \ge 2$
& Squared rate $\lesssim (\frac{C_\F}{N})^{1/\gamma}$. The proof of~\Cref{thm: slow rate} extends with truncated chaining at $\delta\sim (\frac{C_\F}{N})^{1/\gamma}$; 
& Squared rate $\lesssim (\frac{C_\F}{N})^{1/\gamma}$ for $C_\F\ll N$; same sample complexity as slow rate. Localization no longer helps. Replace~\Cref{thm: generalization for fast rate} with fully-truncated \cite[Prop. A.2]{divol2025optimal-sm} ($\varepsilon=\tilde\varepsilon = (\frac{C_\F}{N})^{1/\gamma}$).\\
\hline
\end{tabular}
\caption{{\color{black}Summary of slow- and fast-rate behaviour for conditional Brenier map estimation. Rates bound the variance term of $\mathbb{E}\|\nabla_u\wh\phi - \nabla_u\phi^\dagger\|^2_{L^2_\eta}$ as a function of $N$.
}}
\label{tab:cond-ot-rates}
\end{table}

\editstart
\subsection{
Extending the proof of \Cref{thm: fast rate} (fast rate) to unbounded potentials}
\label{proof: thm: fast rate unbounded extension}
Next, we extend the proof of the fast rate for the case of unbounded potentials under~\Cref{assump: slow rate} and~\Cref{assump: fast rate}. To do so, we rely on the same preliminary lemmas used for the extension of the slow rate in~\Cref{proof: thm: slow rate unbounded extension} plus the following additional results.

We start by showing that the $L^2$-metric entropy of an hypothesis class can be controlled by the metric entropy with respect to the weighted uniform norm.
\begin{lemma}\label{lem: L2 to weighted uniform}
For $q_\Y,q_\U>0$ and weight function $w(y,u) = \langle y\rangle^{-q_\Y}\langle u\rangle^{-q_\U}$, let $\mu\in\mathcal{P}(\Y\times \U)$ satisfy the finite moment condition
\begin{align}\label{eq: finite moment condition}
\mu(w^{-2}) =\int \langle y\rangle^{2q_\Y}\langle u\rangle^{2q_\U}\d\mu(y,u)<\infty\, .
\end{align}
Then, it holds that
\begin{align}\label{eq: covering L2 to weighted}
\log \mathcal{N}\left(\delta,\F,\|\cdot\|_{L^2_\mu}\right)\leq \log \mathcal{N}\left(\mu(w^{-2})^{-1/2}\delta,\F,\|\cdot\|_{L^\infty(w)}\right)\, ,
\end{align}
\end{lemma}
\begin{proof}
For any two elements $\phi,\psi\in\F$,
\begin{align*}
\|\phi - \psi \|_{L^2_\mu}^2 &= \int \langle y\rangle^{2 q_\Y}\langle u\rangle^{2 q_\U}\langle y\rangle^{-2 q_\Y}\langle u\rangle^{-2 q_\U}\left|\phi(y,u) - \psi(y,u) \right|^2\d\mu(y,u)\\
&\leq \mu(w^{-2})\|\phi-\psi\|_{L^\infty(w)}^2\, .
\end{align*}
Thus, any $\mu(w^{-2})^{-1/2}\delta$-net of $\F$ in $\|\cdot\|_{L^\infty(w)}$ is also an $\delta$-net of $\F$ in $\|\cdot\|_{L^2_\mu}$, yielding the inequality in~\eqref{eq: covering L2 to weighted}. 
\end{proof}
\begin{remark}\label{rem: finite moments due poincare}
We note that in the case where $\mu$ has marginal $\mu_\Y$ and all conditionals $\mu(\cdot\mid y)$ satisfying the Poincaré inequality with constants $\CPI^{\mu_\Y}$ and $\CPI^{\mu(\cdot\mid \Y)}$ uniform in $y$, the finite moment condition of~\eqref{eq: finite moment condition} is automatically satisfied. Indeed, upon  disintegration
\begin{align*}
\mu(w^{-2})\leq \int\langle y\rangle^{2q_\Y}\d \mu_\Y(y)\sup_{y\in\Y}\int\langle u\rangle^{2q_\U}\d \mu(u\mid y)<\infty\, ,
\end{align*}
due to the fact that distributions satisfying the Poincaré inequality are subexponential 
\cite{bobkov1997poincare}
and therefore have finite moments of any order.
\end{remark}

Next, we provide a lemma that establishes the relation between the complexity of the class $\F$ and the one of a conditionally marginalized class
\begin{align}\label{eq: conditionally marginalized class}
\mathbb{E}_{\mu(\cdot\mid \Y)}(\F)\coloneqq \left \{y\to \mathbb{E}_{\mu(\cdot\mid \Y)}(\phi)(y)\}\mid \phi\in\F\right\}\, ,\qquad \mathbb{E}_{\mu(\cdot\mid \Y)}(\phi)(y)\coloneqq\mu(\cdot\mid y)(\phi(y,\cdot)) \, .
\end{align}
where the conditional distributions $\mu(\cdot\mid y)\in\mathcal{P}(\U)$ for any $y\in \Y$.
\begin{lemma}\label{lem: conditionally marginalized class complexity}
For $q_\U>0$ and weight function $w_\U(u) = \langle u\rangle^{-q_\U}$, let $\mu\in\mathcal{P}(\Y\times \U)$ satisfy the uniform in $\Y$ finite conditional moment condition
\begin{align}\label{eq: finite moment condition 2}
\mu(\cdot\mid \Y)(w_\U^{-1})\coloneqq\sup_{y\in\Y}\mu(\cdot\mid y)(w_\U^{-1}) =\sup_{y\in\Y}\int \langle u\rangle^{q_\U}\d\mu(u\mid y)<\infty\, .
\end{align}
Then, it holds that
\begin{align}\label{eq: covering weighted in y to weighted}
\log \mathcal{N}\left(\delta,\mathbb{E}_{\mu(\cdot\mid \Y)}(\F),\|\cdot\|_{L^\infty(w_\Y)}\right)\leq \log \mathcal{N}\left(\mu(\cdot\mid \Y)(w_\U^{-1})^{-1}\delta,\F,\|\cdot\|_{L^\infty(w)}\right)\, ,
\end{align}
where $w_\Y(y)=\langle y\rangle^{-q_\Y}$ and $w(y,u) = \langle y\rangle^{-q_\Y}\langle u\rangle^{-q_\U}$.
\end{lemma}
\begin{proof}
For any two elements $\phi,\psi\in\F$ and any fixed $y\in\Y$,
\begin{align*}
\left|\mathbb{E}_{\mu(\cdot\mid \Y)}(\phi)(y) - \mathbb{E}_{\mu(\cdot\mid \Y)}(\psi)(y)\right| &\leq \|\phi-\psi\|_{L^\infty(w)}\langle y\rangle^{q_\Y}\int \langle u \rangle^{q_\U}\d\mu(u\mid y)\\
&=\mu(\cdot\mid \Y)(w_\U^{-1})\|\phi-\psi\|_{L^\infty(w)}\langle y\rangle^{q_\Y}\, .
\end{align*}
Multiplying by $\langle y\rangle^{-q_\Y}$ and taking the supremum over $y$ yields
\begin{align*}
\|\mathbb{E}_{\mu(\cdot\mid \Y)}(\phi) - \mathbb{E}_{\mu(\cdot\mid \Y)}(\psi)\|_{L^\infty(\omega_\Y)}\leq \mu(\cdot\mid \Y)(w_\U^{-1})\|\phi-\psi\|_{L^\infty(w)}\, .
\end{align*}
Thus, we have that for any $\mu(\cdot\mid \Y)(w_\U^{-1})^{-1}\delta$-net $\{\phi_1,...,\phi_n\}$ of $\F$ in $\|\cdot\|_{L^\infty(w)}$, the corresponding net $\{\mathbb{E}_{U\sim\mu(\cdot\mid \Y)}(\phi_1),...,\mathbb{E}_{U\sim\mu(\cdot\mid \Y)}(\phi_n)\}$ is a $\delta$-net of $\mathbb{E}_{\mu(\cdot\mid \Y)}(\F)$ in $\|\cdot\|_{L^\infty(w_\Y)}$, yielding the inequality in~\eqref{eq: covering weighted in y to weighted}. 
\end{proof}

\begin{remark}
The uniform finite conditional moment condition of~\eqref{eq: finite moment condition 2} is automatically satisfied whenever $\mu(\cdot\mid y)$ satisfy the Poincaré inequality with a uniform in $y$ constant $\CPI^{\mu(\cdot\mid \Y)}$ by the same argument of~\Cref{rem: finite moments due poincare}.
\end{remark}

Having established the needed preliminary results, we are now ready to extend the proof for the fast rate in \Cref{proof: thm: fast rate} to the unbounded 
potential setting. 

{\it Fast rate adaptation for unbounded potentials:} 
\begin{itemize}
\item[(1)] \textbf{The general tool: a weighted chaining bound.} A similar adaptation of the chaining bound used for the slow rate applies to the generic chaining bound of~\Cref{thm: generalization for fast rate} used in the fast rate proof. Namely, the $L^\infty$-metric entropy of the second Dudley integral in~\eqref{eq: J_n bound quantity} can be replaced by a weighted $L^\infty(\langle\cdot\rangle^{-q'_\Y}\langle\cdot\rangle^{-q'_\U})$-metric entropy for any $q_\Y,q_\U>0$ when the measure $\mu$ is sub-exponential. 
Explicitly, this is the following weighted analogue of~\Cref{thm: generalization for fast rate}:
\begin{proposition}
\label{thm: generalization for fast rate weighted}    
Let $\mu$ be a sub-exponential probability measure supported on $\Y\times\U$, and let $\{x_i\}_{i=1}^{N} \overset{\mathrm{i.i.d}}{\sim}\mu$. Suppose $\F$ is a class of real-valued functions such that $\|\phi\|_{L^2_\mu(\Y\times\U)} \leq r'$ for all $\phi \in \F$. Let $q'_\Y,q'_\U>0$, set $w'(y,u):=\langle y\rangle^{-q'_\Y}\langle u\rangle^{-q'_\U}$, and further suppose $\|\phi\|_{L^\infty(w')} \leq R'$ for all $\phi \in \F$.
Define
\begin{align*}
J_N^{w'}(\F):=
\frac{1}{\sqrt{N}}\int_{0}^{r'} &\sqrt{\log \mathcal{N}(\delta,\F, \|\cdot\|_{L^2_\mu(\Y\times\U)})}\d\delta
\\
&+ \frac{1}{N}\int_{0}^{R'}\log \mathcal{N}(\delta,\F, \|\cdot\|_{L^\infty(w')})\d\delta\, .
\end{align*}
Then, it holds that:
\begin{itemize}
\item There exists a universal constant $C_{\textnormal{exp}} > 0$ such that
\begin{align*}
\mathbb{E}\left[\underset{\phi\in\F}{\sup} \frac{1}{N} \sum_{i=1}^{N}\phi(x_i) - \mu(\phi) \right]\leq
C_{\textnormal{exp}}J_N^{w'}(\F)\, .
\end{align*}
\item There exists a universal constant $C_{\textnormal{prob}}>0$ such that for any $t\geq 0$,
\begin{align*}
\mathbb{P}\left[ \underset{\phi\in\F}{\sup} \frac{1}{N} \sum_{i=1}^{N}\phi(x_i)
- \mu(\phi) \geq C_{\textnormal{prob}}\left(J_N^{w'}(\F)+r' \sqrt{\frac{t}{N}}+R'\frac{t}{N}\right)\right] \leq \exp(-t).
\end{align*}
\end{itemize}
\end{proposition}

This result corresponds to the unweighted chaining result of~\cite[Prop~.A.2]{divol2025optimal-sm} setting $\epsilon = \widetilde{\epsilon} = 0$.

We now apply this tool three times, once for each term I, I, III of the decomposition ~\eqref{eq: empirical process bound decomposition}, with a different choice of weights each time. 
\item [(2)] \textbf{Bounding the $\eta$-term in \eqref{eq: empirical process bound decomposition}:} 
We proceed in four steps.
\begin{itemize}
\item [(2a)] Apply \Cref{thm: generalization for fast rate weighted} on the class $(\textnormal{Id}-\mathbb{E}_{\eta_\U})(\F)$ where $\textnormal{Id}$ is the identity operator and $\mathbb{E}_{\eta_\U}(\cdot)$ is the marginalized class operator with respect to $\eta_\U$ as defined in~\eqref{eq: conditionally marginalized class}. In this case, we then set  $q'_\Y = q_\Y$, $q'_\U = q_\U$, $r'=\sqrt{C_{PI}^{\eta_\U}} \epsilon$ and $R' =\sup_{\phi\in\F}\|\phi-\mathbb{E}_{\eta_\U}(\phi)\|_{L^\infty(w)} \leq (1+\eta_\U(w_\U^{-1}))R$. where $w_\U(u)\coloneqq\langle u \rangle^{-q_\U}$. Thus,
\begin{align*}
\text{I}\lesssim  J_N^w((\textnormal{Id}-\mathbb{E}_{\eta_\U})(\F)) + r' \sqrt{\frac{t}{N}} + R'\frac{t}{N}\, ,
\end{align*}
where we further bound
\begin{adjustwidth}{-\dimexpr\textwidth-\linewidth\relax}{0pt}
\begin{equation}\label{xyz}
\begin{aligned}
&J_N^w((\textnormal{Id}-\mathbb{E}_{\eta_\U})(\F))\\
&\leq \frac{2}{\sqrt{N}}\int_{0}^{r'} \sqrt{\log \mathcal{N}\left(\frac{\delta}{2},\F, \|\cdot\|_{L^2_\eta}\right)}+\sqrt{\log \mathcal{N}\left(\frac{\delta}{2},\mathbb{E}_{\eta_\U}(\F), \|\cdot\|_{L^2_{\nu_\Y}}\right)}\d\delta
\\
&+\frac{1}{N}\int_{0}^{R'}\log \mathcal{N}\left(\frac{\delta}{2},\F, \|\cdot\|_{L^\infty(w)}\right)+\log \mathcal{N}\left(\frac{\delta}{2},\mathbb{E}_{\eta_\U}(\F), \|\cdot\|_{L^\infty(w_\Y)}\right)\d\delta\, . 
\end{aligned}
\end{equation}
\end{adjustwidth}
where we recall that $w_\Y(y)\coloneqq \langle y\rangle^{-q_\Y}$.
We now bound the integrands in each integral in the above display with respect to an $L^\infty(w)$
metric entropy.

\item [(2b)] \textbf{($\mathbf{L^2\mapsto L^\infty}$)} By~\Cref{lem: L2 to weighted uniform} and its marginal analogue (identical proof),
we bound the first two terms in the right hand side of \eqref{xyz}:
\begin{adjustwidth}{-\dimexpr\textwidth-\linewidth\relax}{0pt}
\begin{equation}\label{yxz}
\begin{aligned}
\log \mathcal{N}\left(\frac{\delta}{2},\F, \|\cdot\|_{L^2_\eta}\right)&\leq \log \mathcal{N}\left(\frac{\delta}{2\eta(w^{-2})^{1/2}},\F, \|\cdot\|_{L^\infty(w)}\right)\, , \\
\log \mathcal{N}\left(\frac{\delta}{2},\mathbb{E}_{\eta_\U}(\F), \|\cdot\|_{L^2_{\nu_\Y}}\right)&\leq \log \mathcal{N}\left(\frac{\delta}{2\nu_{\Y}(w_{\Y}^{-2})^{1/2}},\mathbb{E}_{\eta_\U}(\F), \|\cdot\|_{L^\infty(w_\Y)}\right)\, .
\end{aligned}
\end{equation}
\end{adjustwidth}

\item [(2c)] \textbf{(Marginal $\mathbf{\mapsto}$ Joint)} Then, by~\Cref{lem: conditionally marginalized class complexity} we can bound the 
fourth term in \eqref{xyz} as
\begin{adjustwidth}{-\dimexpr\textwidth-\linewidth\relax}{0pt}
\[
\begin{aligned}
\log \mathcal{N}\left(\frac{\delta}{2},\mathbb{E}_{\eta_\U}(\F), \|\cdot\|_{L^\infty(w_\Y)}\right) \leq \log \mathcal{N}\left(\frac{\eta_\U(w_\U^{-1})^{-1}\delta}{2},\F, \|\cdot\|_{L^\infty(w)}\right)\, .
\end{aligned}
\]
\end{adjustwidth}
We further note that the second row of \eqref{yxz} can also be bounded 
in the same way with  $\delta$ replaced with $\frac{\delta}{2 \nu_\Y (w_\Y^{-2})^{1/2}}$ to obtain 
\begin{adjustwidth}{-\dimexpr\textwidth-\linewidth\relax}{0pt}
\[
\begin{aligned}
\log \mathcal{N}\left(\frac{\delta}{2\nu_{\Y}(w_{\Y}^{-2})^{1/2}},\mathbb{E}_{\eta_\U}(\F), \|\cdot\|_{L^\infty(w_\Y)}\right) \leq \log \mathcal{N}\left(\frac{\eta_\U(w_\U^{-1})^{-1}\delta}{2\nu_{\Y}(w_{\Y}^{-2})^{1/2}},\F, \|\cdot\|_{L^\infty(w)}\right)\, .
\end{aligned}
\]
\end{adjustwidth}

\item [(2d)] Combining items (2a)--(2c), the integrands in the upper bound in 
\eqref{xyz} can be replaced by metric entropy  of $\F$ with respect to $L^\infty(w)$ only. Thus, term $\mathrm{I}$ can be bounded following verbatim the calculation of~\Cref{lem: bound-JF}. Note that the constants $\frac{1}{2}$, $\frac{1}{2\eta(w^{-2})^{1/2}}$, and $\frac{\eta_\U(w_\U^{-1})^{-1}}{2\nu_{\Y}(w_{\Y}^{-2})^{1/2}}$ do not propagate in the final bound as they cancel by the change of variable akin to~\eqref{eq: change var Dudley int} followed by bounding above the Dudley integrals proportionally to their upper limit.   
\end{itemize}
\item [(3)] \textbf{Bounding the $\nu$-term in \eqref{eq: empirical process bound decomposition}:} We proceed in five steps.
\begin{itemize}
\item [(3a)] Apply \Cref{thm: generalization for fast rate weighted} on the class $(\textnormal{Id}-\mathbb{E}_{\nu(\cdot\mid \Y)})(\F^*)$ where $\mathbb{E}_{\nu(\cdot\mid \Y)}(\F^*)$ is the conditionally marginalized conjugate class with respect to the conditionals $\nu(\cdot\mid y)$ as defined in~\eqref{eq: conditionally marginalized class}. In this case, we then set  $q'_\Y = q_\Y+(1+\widetilde{q})q_\U$, $q'_\U = q_\U$, $r'=\sqrt{\frac{\betamax C_{PI}^{\nu(\cdot\mid \Y)}}{\alphamin}}\epsilon$ and $R' =\sup_{\phi^*\in\F^*}\|\phi^* -\mathbb{E}_{\nu(\cdot\mid \Y)}(\phi^*)\|_{L^\infty(w_*)} \leq (1+\nu(w_{\U}^{-1}\mid\Y))C^* R$ where $w_{\U}(u)\coloneqq \langle u \rangle^{-q_\U}$ and $\nu(\cdot\mid \Y)\coloneqq \sup_{y\in\Y} \nu(\cdot\mid y)$ as in~\eqref{eq: finite moment condition 2}. Thus,
\begin{align*}
\text{II}\lesssim  J_N^{w_*}(\F^*-\mathbb{E}_{\nu(\cdot\mid \Y)}(\F^*)) + r' \sqrt{\frac{t}{N}} + R'\frac{t}{N}\, ,
\end{align*}
where we further bound
\begin{adjustwidth}{-\dimexpr\textwidth-\linewidth\relax}{0pt}
\begin{equation}\label{xyz 2}
\begin{aligned}
&J_N^{w_*}((\textnormal{Id}-\mathbb{E}_{\nu(\cdot\mid \Y)})(\F^*))\\
&\leq \frac{2}{\sqrt{N}}\int_{0}^{r'} \sqrt{\log \mathcal{N}\left(\frac{\delta}{2},\F^*, \|\cdot\|_{L^2_\nu}\right)}+\sqrt{\log \mathcal{N}\left(\frac{\delta}{2},\mathbb{E}_{\nu(\cdot\mid \Y)}(\F^*), \|\cdot\|_{L^2_{\nu_\Y}}\right)}\d\delta
\\
&+\frac{1}{N}\int_{0}^{R'}\log \mathcal{N}\left(\frac{\delta}{2},\F^*, \|\cdot\|_{L^\infty(w_{*})}\right)+\log \mathcal{N}\left(\frac{\delta}{2},\mathbb{E}_{\nu(\cdot\mid \Y)}(\F^*), \|\cdot\|_{L^\infty(w_{\Y*})}\right)\d\delta\, , 
\end{aligned}
\end{equation}
\end{adjustwidth}
 and where we set $w_{\Y*}(y) \coloneqq \langle y\rangle^{-(q_\Y+(1+\widetilde{q})q_\U)}$.

\item [(3b)] \textbf{($\mathbf{L^2\mapsto L^\infty}$)} By~\Cref{lem: L2 to weighted uniform} and its marginal analogue, we bound the first two terms in the right-hand-side of~\eqref{xyz 2}
\begin{adjustwidth}{-\dimexpr\textwidth-\linewidth\relax}{0pt}
\begin{equation}\label{yxz 2}
\begin{aligned}
\log \mathcal{N}\left(\frac{\delta}{2},\F^*, \|\cdot\|_{L^2_\nu}\right)&\leq \log \mathcal{N}\left(\frac{\delta}{2\nu(w_*^{-2})^{1/2}},\F^*, \|\cdot\|_{L^\infty(w_*)}\right)\, , \\
\log \mathcal{N}\left(\frac{\delta}{2},\mathbb{E}_{\nu(\cdot\mid\Y)}(\F^*), \|\cdot\|_{L^2_{\nu_\Y}}\right)&\leq \log \mathcal{N}\left(\frac{\delta}{2\nu_{\Y}(w_{\Y*}^{-2})^{1/2}},\mathbb{E}_{\nu(\cdot\mid\Y)}(\F^*), \|\cdot\|_{L^\infty(w_{\Y*})}\right)\, .
\end{aligned}
\end{equation}
\end{adjustwidth}

\item [(3c)] \textbf{(Marginal $\mathbf{\mapsto}$ Joint)} Then, by~\Cref{lem: conditionally marginalized class complexity}, we bound the second row of~\eqref{yxz 2} 
\begin{adjustwidth}{-\dimexpr\textwidth-\linewidth\relax}{0pt}
\[
\begin{aligned}
\log \mathcal{N}\left(\frac{\delta}{2\nu_{\Y}(w_{\Y*}^{-2})^{1/2}},\mathbb{E}_{\nu(\cdot\mid\Y)}(\F^*), \|\cdot\|_{L^\infty(w_{\Y*})}\right) \leq \log \mathcal{N}\left(\frac{\eta_\U(w_\U^{-1})^{-1}\delta}{2\nu_{\Y}(w_{\Y*}^{-2})^{1/2}},\F^*, \|\cdot\|_{L^\infty(w_*)}\right)\, .
\end{aligned}
\]
\end{adjustwidth}

\item [(3d)] \textbf{(Conjugate $\mathbf{\mapsto}$ Non-Conjugate)} Ultimately, by~\Cref{lem: conjugate to non conjugate unbounded}, we can further bound the previous entropy term
\begin{adjustwidth}{-\dimexpr\textwidth-\linewidth\relax}{0pt}
\[
\begin{aligned}
\log \mathcal{N}\left(\frac{\delta}{2\nu(w_*^{-2})^{1/2}},\F^*, \|\cdot\|_{L^\infty(w_*)}\right)&\leq \log \mathcal{N}\left(\frac{C^{*-1}\delta}{2\nu(w_*^{-2})^{1/2}},\F, \|\cdot\|_{L^\infty(w)}\right)\, ,
\end{aligned}
\]
\end{adjustwidth}

Similarly the fourth term in~\eqref{yxz 2} is also bounded
\begin{adjustwidth}{-\dimexpr\textwidth-\linewidth\relax}{0pt}
\[
\begin{aligned}
\log \mathcal{N}\left(\frac{\eta_\U(w_\U^{-1})^{-1}\delta}{2\nu_{\Y}(w_{\Y}^{-2})^{1/2}},\F^*, \|\cdot\|_{L^\infty(w_*)}\right)&\leq \log \mathcal{N}\left(\frac{C^{*-1}\eta_\U(w_\U^{-1})^{-1}\delta}{2\nu_{\Y}(w_{\Y*}^{-2})^{1/2}},\F, \|\cdot\|_{L^\infty(w)}\right)\, .
\end{aligned}
\]
\end{adjustwidth}

\item [(3e)] Combining items (3a)--(3d), the integrand of the upper bound of~\eqref{xyz 2} can be defined in terms of metric entropy terms of $\F$ with respect to $L^\infty(w)$ only. Thus, $\mathrm{II}$ can be bounded following verbatim the calculation of~\Cref{lem: bound-JF}. Note that the constants $\frac{1}{2}$, $\frac{1}{2\nu(w_*^{-2})^{1/2}}$, $\frac{C^{*-1}}{2\nu(w_*^{-2})^{1/2}}$, and $\frac{C^{*-1}\eta_\U(w_\U^{-1})^{-1}}{2\nu_{\Y}(w_{\Y}^{-2})^{1/2}}$ do not propagate in the final bound as they cancel by the change of variable akin to~\eqref{eq: change var Dudley int} followed by bounding above the Dudley integrals proportionally to their upper limit.  
\end{itemize}

\item [(4)] \textbf{Bounding the $\nu_{\Y}$-term in \eqref{eq: empirical process bound decomposition}:} We proceed in five steps.
\begin{itemize}
\item [(4a)] Apply \Cref{thm: generalization for fast rate weighted} on the class $(\mathbb{E}_{\eta_\U}+\mathbb{E}_{\nu(\cdot\mid \Y)}^*)(\F)$ where $\mathbb{E}_{\nu(\cdot\mid \Y)}^*(\cdot)$ is the composition of the fiber-wise conjugation operator and $\mathbb{E}_{\nu(\cdot\mid \Y)}(\cdot)$. In this case, picking the smallest norm, we then set  $q'_\Y = q_\Y + (1+\widetilde{q})q_\U$, $q'_\U = q_\U$, $r'=\sqrt{\frac{C_{PI}^{\nu_\Y,\widetilde{q}}\, \betamax}{\alphamin}}\epsilon$ and $R' =\sup_{\phi\in\F}\|(\mathbb{E}_{\eta_\U}+ \mathbb{E}_{\nu(\cdot\mid \Y)}^*)(\phi)\|_{L^\infty(w_*)}\leq (\eta_\U(w_\U^{-1}) + \nu(w_\U^{-1}\mid \Y)C^*)R$. Thus,
\begin{align*}
\text{III}\lesssim  J_N^{w_*}((\mathbb{E}_{\eta_\U}+\mathbb{E}_{\nu(\cdot\mid \Y)}^*)(\F)) + r' \sqrt{\frac{t}{N}} + R'\frac{t}{N}\, ,
\end{align*}
where we further bound
\begin{adjustwidth}{-\dimexpr\textwidth-\linewidth\relax}{0pt}
\begin{equation}\label{xyz 3}
\begin{aligned}
&J_N^{w_*}((\mathbb{E}_{\eta_\U}+\mathbb{E}_{\nu(\cdot\mid \Y)}^*)(\F))\\
&\leq \frac{2}{\sqrt{N}}\int_{0}^{r'} \sqrt{\log \mathcal{N}\left(\frac{\delta}{2},\mathbb{E}_{\eta_\U}(\F), \|\cdot\|_{L^2_{\nu_\Y}}\right)}+\sqrt{\log \mathcal{N}\left(\frac{\delta}{2},\mathbb{E}_{\nu(\cdot\mid \Y)}(\F^*), \|\cdot\|_{L^2_{\nu_\Y}}\right)}\d\delta
\\
&+\frac{1}{N}\int_{0}^{R'}\log \mathcal{N}\left(\frac{\delta}{2},\mathbb{E}_{\eta_\U}(\F), \|\cdot\|_{L^\infty(w_{\Y})}\right)+\log \mathcal{N}\left(\frac{\delta}{2},\mathbb{E}_{\nu(\cdot\mid \Y)}(\F^*), \|\cdot\|_{L^\infty(w_{\Y*})}\right)\d\delta\, . 
\end{aligned}
\end{equation}
\end{adjustwidth}
and where we recall $w_{\Y*}(y) \coloneqq \langle y\rangle^{-(q_\Y+(1+\widetilde{q})q_\U)}$.

\item [(4b)] \textbf{($\mathbf{L^2\mapsto L^\infty}$)} By the marginal version of~\Cref{lem: L2 to weighted uniform}, we bound the first two terms in the right-hand-side of~\eqref{xyz 3}
\begin{adjustwidth}{-\dimexpr\textwidth-\linewidth\relax}{0pt}
\begin{equation}\label{yxz 3}
\begin{aligned}
\log \mathcal{N}\left(\frac{\delta}{2},\mathbb{E}_{\eta_\U}(\F), \|\cdot\|_{L^2_{\nu_\Y}}\right)&\leq \log \mathcal{N}\left(\frac{\delta}{2\nu_\Y(w_{\Y}^{-2})^{1/2}},\mathbb{E}_{\eta_\U}(\F), \|\cdot\|_{L^\infty(w_\Y)}\right)\, , \\
\log \mathcal{N}\left(\frac{\delta}{2},\mathbb{E}_{\nu(\cdot\mid\Y)}(\F^*), \|\cdot\|_{L^2_{\nu_\Y}}\right)&\leq \log \mathcal{N}\left(\frac{\delta}{2\nu_\Y(w_{\Y*}^{-2})^{1/2}},\mathbb{E}_{\nu(\cdot\mid\Y)}(\F^*), \|\cdot\|_{L^\infty(w_{\Y*})}\right)\, .
\end{aligned}
\end{equation}
\end{adjustwidth}

\item [(4c)] \textbf{(Marginal $\mathbf{\mapsto}$ Joint)} Then, by~\Cref{lem: conditionally marginalized class complexity}, we cna bound the first row of~\eqref{yxz 3}
\begin{adjustwidth}{-\dimexpr\textwidth-\linewidth\relax}{0pt}
\[
\begin{aligned}
\log \mathcal{N}\left(\frac{\delta}{2\nu_\Y(w_{\Y}^{-2})^{1/2}},\mathbb{E}_{\eta_\U}(\F), \|\cdot\|_{L^\infty(w_\Y)}\right) \leq \log \mathcal{N}\left(\frac{\eta_\U(w_\U^{-1})^{-1}\delta}{2\nu_{\Y}(w_{\Y}^{-2})^{1/2}},\F, \|\cdot\|_{L^\infty(w)}\right)\, .
\end{aligned}
\]
\end{adjustwidth}

Similarly, we bound the second row of~\eqref{yxz 3} and the last two terms of~\eqref{xyz 3} as
\begin{adjustwidth}{-\dimexpr\textwidth-\linewidth\relax}{0pt}
\[
\begin{aligned}
\log \mathcal{N}\left(\frac{\delta}{2\nu_\Y(w_{\Y*}^{-2})^{1/2}},\mathbb{E}_{\nu(\cdot\mid\Y)}(\F^*), \|\cdot\|_{L^\infty(w_{\Y*})}\right) &\leq \log \mathcal{N}\left(\frac{\eta_\U(w_\U^{-1})^{-1}\delta}{2\nu_{\Y}(w_{\Y*}^{-2})^{1/2}},\F^*, \|\cdot\|_{L^\infty(w_*)}\right)\, ,\\
\log \mathcal{N}\left(\frac{\delta}{2},\mathbb{E}_{\eta_\U}(\F), \|\cdot\|_{L^\infty(w_{\Y})}\right)&\leq \log \mathcal{N}\left(\frac{\eta_\U(w_\U^{-1})^{-1}\delta}{2},\F, \|\cdot\|_{L^\infty(w)}\right)\, ,\\
\log \mathcal{N}\left(\frac{\delta}{2},\mathbb{E}_{\nu(\cdot\mid \Y)}(\F^*), \|\cdot\|_{L^\infty(w_{\Y*})}\right)&\leq \log \mathcal{N}\left(\frac{\eta_\U(w_\U^{-1})^{-1}\delta}{2},\F^*, \|\cdot\|_{L^\infty(w_{*})}\right)\, .\\
\end{aligned}
\]
\end{adjustwidth}

\item [(4d)] \textbf{(Conjugate $\mathbf{\mapsto}$ Non-Conjugate)} Ultimately, by~\Cref{lem: conjugate to non conjugate unbounded}, we bound the first and last row of the last step by
\begin{adjustwidth}{-\dimexpr\textwidth-\linewidth\relax}{0pt}
\[
\begin{aligned}
\log \mathcal{N}\left(\frac{\eta_\U(w_\U^{-1})^{-1}\delta}{2\nu_{\Y}(w_{\Y*}^{-2})^{1/2}},\F^*, \|\cdot\|_{L^\infty(w_*)}\right)&\leq \log \mathcal{N}\left(\frac{C^{*-1}\eta_\U(w_\U^{-1})^{-1}\delta}{2\nu_{\Y}(w_{\Y*}^{-2})^{1/2}},\F, \|\cdot\|_{L^\infty(w)}\right)\, ,\\
\log \mathcal{N}\left(\frac{\eta_\U(w_\U^{-1})^{-1}\delta}{2},\F^*, \|\cdot\|_{L^\infty(w_{*})}\right)&\leq \log \mathcal{N}\left(\frac{C^{*-1}\eta_\U(w_\U^{-1})^{-1}\delta}{2},\F, \|\cdot\|_{L^\infty(w)}\right) \, .
\end{aligned}
\]
\end{adjustwidth}

\item [(4e)] Combining items (4a)--(4d), the integrand of the upper bound of~\eqref{xyz 3} can be defined in terms of metric entropy terms of $\F$ with respect to the $L^\infty(w)$-metric entropy only. Thus, $\mathrm{III}$ can be bounded following verbatim the calculation of~\Cref{lem: bound-JF}. Also in this case, all the constants sitting in front of $\delta$ in the entropy terms do not propagate in the final bound as they cancel by the change of variable akin to~\eqref{eq: change var Dudley int} followed by bounding above the Dudley integrals proportionally to their upper limit.  
\end{itemize}

\item[(5)] \textbf{Combining the terms.} Putting items (2)--(4) together, the proof of~\Cref{thm: fast rate} in the bounded case goes through also in the unbounded case by replacing the final $R \mapsto \left((1+\eta_\U(w_\U^{-1}))\vee (1+\nu(w_{\U}^{-1}\mid\Y))C^* \vee (\eta_\U(w_\U^{-1}) + \nu(w_\U^{-1}\mid \Y)C^*)\right)R 
= (\eta_\U(w_\U^{-1}) + \nu(w_\U^{-1}\mid \Y))(C^*\vee 1)R
= (\eta_\U(w_\U^{-1}) + \nu(w_\U^{-1}\mid \Y))\frac{4}{\alphamin}\widetilde{K}R$.
\end{itemize}

\begin{remark}
 The weighted chaining bound used above also 
admits truncated versions to handle the divergent Dudley integrals when $\gamma\geq 2$. In that case, 
additional polylogarithmic terms of $\log(N/C_\F)$  appear in front of the Dudley integral. To see where this extra term arises, look at the term $\langle L\rangle^\eta$ of the truncated chaining result of~\cite[Prop. A.2]{divol2025optimal-sm}. Thus, in the unbounded case, the extensions reported in the left column of~\Cref{tab:cond-ot-rates} are expected to hold modulo this extra polylogarithmic term.
\end{remark}

\editstart 
\subsection{Fiber-wise affine transformations of potentials in the conditional OT setting}\label{sec: verifying anchor point cond wolog}
In this section, we show that~\Cref{anchor point condition} in~\Cref{assump: slow rate} can be completely dropped in the proposed estimation error bounds. To do so, we state and prove the analogues of~\cite[Prop. F.1, Lem. H.1]{divol2025optimal-sm} in the conditional OT setting, analyzing fiber-wise affine transformations of potentials. To this end, we introduce the modified class
\begin{align}\label{eq: fiberwise affine transformations}
\F^{r_1,r_2}_{\textnormal{aff}, \Y}\coloneqq \left\{ (y,u)\to b\phi(y,u)+\langle t(y), u\rangle \mid \phi\in\F\, , 0\leq b\leq r_1\, , \|t\|_{L^2_{\nu_\Y,N}}\leq r_2\right\}\, ,
\end{align}
defined for any $r_1,r_2\in [0,\infty]$ and data dependent norm $\|t\|_{L^2_{\nu_\Y,N}} \coloneqq \left(\frac{1}{N}\sum_{i=1}^N\|t(y_i)\|^2\right)^{1/2}$.
We then show the conditional analogue of~\cite[Prop. F.1]{divol2025optimal-sm}.

\begin{proposition}\label{prop: affine transformation wolog}
Let $\eta_\U
$ and $\nu_\U$ be sub-exponential. Let $\F$ be a class of potentials such that $\phi(y,\cdot)$ is $\betamax$-smooth for any $y\in \Y$ and $\nabla_u\phi(y,0) = 0$ for every $\phi\in\F$, that contains at least one potential $\overline{\phi}$ such that $\overline{\phi}(y,\cdot)$ is $\alphamin$-strongly convex for any $y\in\Y$ for some $\alphamin>0$. Let $\widetilde{\F}\subseteq \F^{r_1,+\infty}_{\textnormal{aff},\Y}$ for some $1\leq r_1<+\infty$. 
Then, there exists an $N$-independent $r>0$ and $\ell>0$ such that 
\begin{align*}
\mathbb{E}^{\textnormal{train}}\left[\|\nabla_u \wh\phi_{\widetilde{\F}} - \nabla_u \phi^\dag\|_{L^2_\eta}^2\mathbbm{1}\left\{\wh \phi_{\widetilde{\F}} \neq \wh \phi_{\widetilde{\F}^r}\right\}\right]\leq c\exp\left(-CN^\ell\right)\, ,
\end{align*}
with constants $r,\ell,C$ depending on the different parameters involved, $\eta$ and $\nu$, and where $\widetilde{\F}^r\coloneqq \widetilde{\F}\cap\F_{\textnormal{aff},\Y}^{r,r}$ and $\wh \phi_{\widetilde\F}$ (similarly $\wh \phi_{\widetilde{\F}^r}$) denotes the minimizer of the marginalized empirical semidual problem in~\eqref{eq:empirical-min} over $\widetilde{\F}$ (similarly $\widetilde{\F}^r$).
\end{proposition}

\begin{proof}
Denoting the empirical samples of the marginalized empirical semidual problem of~\eqref{eq:empirical-min}, let $\{y_i, u_i\}_{i=1}^N\overset{\textnormal{i.i.d.}}{\sim} \nu \perp \{v_i\}_{i=1}^N\overset{\textnormal{i.i.d.}}{\sim} \eta_\U$, where $\perp$ is used to denote independence. 
Moreover, we define the empirical second moments $m_{N,2}^v = \frac{1}{N}\sum_{i=1}^{N} \|v_i\|^2$ and $m_{N,2}^u = \frac{1}{N}\sum_{i=1}^{N} \|u_i\|^2$.

Since, $\widetilde{\F}\subseteq \F^{r_1,+\infty}_{\textnormal{aff},\Y}$, we can write $\wh \phi_{\widetilde{\F}}(y,u)= b\phi(y,u)+\langle t(y),u\rangle$ 
for $\phi\in\F$ and parameters $b$ and $t(\cdot)$. Consequently, $\widehat{\phi}_{\widetilde{\F}}^*(y,v) = b\phi^*\left(y, \frac{v-t(y)}{b}\right)$ and
\begin{align}\label{eq: transf proof 1}
\wh\cS(\wh \phi_{\widetilde{\F}}) =  \frac{1}{N}\sum_{i=1}^N\left( b\phi(y_i,v_i) + \langle t(y_i), v_i\rangle + b\phi^* \left(y_i, \frac{u_i-t(y_i)}{b} \right)\right)\, ,
\end{align}
By~\cite[Lem. D.4]{divol2025optimal-sm}, we can bound below the conjugate terms
\begin{align}\label{eq: transf proof 2}
\phi^*\left(y_i, \frac{u_i-t(y_i)}{b}\right)\geq -\phi(y_i, x_i)+\left\langle \frac{u_i-t(y_i)}{b},x_i\right\rangle + \frac{1}{2\betamax}\left\|\frac{u_i-t(y_i)}{b}-\nabla_u\phi(y_i,x_i)\right\|^2\, .
\end{align}
for any choice of points $\{x_i\}_{i=1}^N$ in $\mathbb{R}^n$.
By substituting~\eqref{eq: transf proof 2} into~\eqref{eq: transf proof 1} we can then further bound
\begin{align}\label{eq: transf proof 3}
\wh\cS(\wh \phi_{\widetilde{\F}})\geq \left(\mathrm{I}+\mathrm{II}+\mathrm{III}+
\mathrm{IV}\right)\left(\{x_i\}_{i=1}^N\right)\, ,
\end{align}
where we define compactly the functions
\begin{align*}
\mathrm{I}\left(\{x_i\}_{i=1}^N\right)&=b\frac{1}{N}\sum_{i=1}^N\left(\phi(y_i,v_i)- \phi(y_i,x_i)\right)\, ,\quad
\mathrm{II}\left(\{x_i\}_{i=1}^N\right)= \frac{1}{N}\sum_{i=1}^N\langle t(y_i), v_i -x_i\rangle\, ,\\
\mathrm{III}\left(\{x_i\}_{i=1}^N\right)&= \frac{1}{2\betamax b N}\sum_{i=1}^N \left\|t(y_i)-\left(u_i-b\nabla_u \phi(y_i,x_i)\right)\right\|^2\, ,\quad
\mathrm{IV}\left(\{x_i\}_{i=1}^N\right)=\frac{1}{N}\sum_{i = 1}^N\langle x_i,u_i\rangle\, .
\end{align*}
Also note that, by optimality of $\wh \phi_{\widetilde{\F}}$, $\wh \cS (\wh \phi_{\widetilde{\F}})-\wh\cS(\overline\phi) \leq 0$. The goal is then to pick suitable choices of $\{x_i\}_{i=1}^N$ to show that, with overwhelming probability, this condition forces $\|t\|_{L^2_{\nu_\Y,N}}$ to be smaller than some $r$, ultimately implying $\wh \phi_{\widetilde{\F}}\in \widetilde{\F}^r$ and therefore $\wh \phi_{\widetilde{\F}} = \wh \phi_{\widetilde{\F}^r}$.

\paragraph{Bound on $\|t\|_{L^2_{\nu_\Y}}$}
For this bound, we pick points $x_i = v_i$ for any $1\leq i \leq N$ in~\eqref{eq: transf proof 2}. This substitution immediately sets $\mathrm{I}\left(\{v_i\}_{i=1}^N\right) = \mathrm{II}\left(\{v_i\}_{i=1}^N\right) = 0$ in~\eqref{eq: transf proof 3}, and then
\begin{align}\label{eq: transf proof 4}
\mathrm{III}\left(\{v_i\}_{i=1}^N\right)\leq \wh\cS(\overline{\phi})-\mathrm{IV}\left(\{v_i\}_{i=1}^N\right)\, .
\end{align}
Next, note that, by triangle inequality and the bound $\|\nabla_u \phi(y_i,v_i)\|\leq \beta\|v_i\|$ (following by~[Lem. D.1]\cite{divol2025optimal-sm} and the hypothesis $\nabla_u \phi(y_i,0) = 0$),
\begin{align}\label{eq: transf proof 5}
\|t(y_i)\|&\leq \left\|t(y_i)-\left(u_i-b\nabla_u \phi(y_i,v_i)\right)\right\| +\|u_i\| +b\|\nabla_u \phi(y_i,v_i)\|\\
&\leq \left\|t(y_i)-\left(u_i-b\nabla_u \phi(y_i,v_i)\right)\right\| +\|u_i\| +b\betamax\|v_i\|\, .\nonumber
\end{align}
Then, we can upper bound by~\eqref{eq: transf proof 4} and~\eqref{eq: transf proof 5},
\begin{align}\label{eq: transf proof 6}
\|t\|_{L^2_{\nu_\Y,N}}^2&\leq 6\betamax b\mathrm{III}\left(\{v_i\}_{i=1}^N\right) + \frac{3}{N}\sum_{i=1}^N\left(\|u_i\|^2 + b^2\betamax\|v_i\|^2\right)\\
& \leq 2\betamax b_{\max}\left(\cS(\overline{\phi})-\mathrm{IV}\left(\{v_i\}_{i=1}^N\right)\right) + 3\left(m_{N,2}^u+ _{\max}^2\betamax m_{N,2}^v\right)\, .\nonumber
\end{align}
Next, we can further bound by Cauchy-Schwartz
\begin{align}\label{eq: transf proof 7}
|\mathrm{IV}\left(\{v_i\}_{i=1}^N\right)|\leq \sqrt{m_{N,2}^u m_{N,2}^v}\leq \frac{1}{2}\left( m_{N,2}^u+ m_{N,2}^v\right)\, .
\end{align}
Next, note that $\overline{\phi}$ being $\alphamin$-strongly convex fiber-wise implies that $\overline{\phi^*}$ is $\frac{1}{\alphamin}$-smooth fiber-wise. Moreover, by the definition of smoothness and the hypothesis $\nabla_u\overline{\phi}(y,0) = 0$, we have
\begin{align*}
\overline \phi(y,u)\leq \overline{\phi}(y,0) + \frac{\betamax}{2}\|u\|^2\, ,\qquad \overline \phi^*(y,u)\leq \overline{\phi}^*(y,0) + \frac{1}{2\alphamin}\|u\|^2\, .
\end{align*}
Also notice that as a consequence of Fenchel inequality $\overline{\phi}(y,0) + \overline{\phi}^*(y,0) = 0$.
Thus, we can also control
\begin{align}\label{eq: transf proof 8}
\wh\cS(\overline{\phi})\leq \frac{1}{\betamax}m_{2,N}^v + \frac{1}{2\alphamin}m_{2,N}^u\, .
\end{align}
Therefore, combining~\eqref{eq: transf proof 6}, ~\eqref{eq: transf proof 7}, and~\eqref{eq: transf proof 8}  we can conclude that
\begin{align}
\|t\|_{L^2_{\nu_\Y, N}}^2\leq \Theta_{\max}^2\, .
\end{align}\label{eq: transf proof 9}
where $\Theta_{\max}$ is a polynomial in 
the empirical moments  $m_{2,N}^v$ and the 
random variable $b_{\max}$ and hence 
is itself a random variable.

\paragraph{Conclusion}
Let $r\geq 1$. The key observation is that $\wh\phi_{\widetilde{\F}}\neq \wh\phi_{\widetilde{\F}_r}$ implies $\Theta_{\max}>r$ or $b_{\max}>r$. Then,
\begin{align*}
\mathbb{E}^{\textnormal{train}}\left[\|\nabla_u \wh\phi_{\widetilde{\F}} - \nabla_u \phi^\dag\|_{L^2_\eta}^2\mathbbm{1}\left\{\wh \phi_{\widetilde{\F}} \neq \wh \phi_{\widetilde{\F}^r}\right\}\right]&\leq 2\mathbb{E}^{\textnormal{train}}\left[\|\nabla_u \wh\phi_{\widetilde{\F}}\|_{L^2_\eta}^2 +2\|\nabla_u\phi^\dag\|_{L^2_{\eta}}^2\mathbbm{1}\left\{\wh \phi_{\widetilde{\F}} \neq \wh \phi_{\widetilde{\F}^r}\right\}\right] \\
&+ 2\|\nabla_u\phi^\dag\|_{L^2_{\eta}}^2\mathbb{P}^{\textnormal{train}}\left( \left\{\wh \phi_{\widetilde{\F}} \neq \wh \phi_{\widetilde{\F}^r}\right\}\right) \, ,
\end{align*}
Moreover, recalling the representation $\wh \phi_{\widetilde{\F}}(y,u)= b\phi(y,u)+\langle t(y),u\rangle$, \cite[Lem. D.1]{divol2025optimal-sm}, and the hypothesis $\nabla_{u}\phi(y,0) = 0$,
\begin{align*}
\|\nabla_u \wh\phi_{\widetilde{\F}}(y,u)\|\leq b_{\max}\|\nabla_u\phi(y,u)\|+\Theta_{\max}\|u\| \leq \left(\betamax b_{\max} + \Theta_{\max}\right)\|u\|\, .
\end{align*}
Therefore, there exists a $c>0$ such that
\begin{align*}
\mathbb{E}^{\textnormal{train}}\left[\|\nabla_u \wh\phi_{\widetilde{\F}} - \nabla_u \phi^\dag\|_{L^2_\eta}^2\mathbbm{1}\left\{\wh \phi_{\widetilde{\F}} \neq \wh \phi_{\widetilde{\F}^r}\right\}\right]&\leq c \mathbb{E}^{\textnormal{train}}\left[\left(b_{\max}^2+1\right)\mathbbm{1}\left\{b_{\max}>r\right\} \right] \\
&+c \mathbb{E}^{\textnormal{train}}\left[\left(T_{\max}^2+1\right)\mathbbm{1}\left\{\Theta_{\max}>r\right\} \right]\\
& = c \mathbb{E}^{\textnormal{train}}\left[\left(T_{\max}^2+1\right)\mathbbm{1}\left\{\Theta_{\max}>r\right\} \right]\, ,
\end{align*}
where the last step follows for $r>r_1=b_{\max}$, since we assume the simplification $r_1<\infty$ by hypothesis. Note that $\Theta_{\max}$ concentrates around its expectation with high-probability since $b_{\max}<+\infty$ and $m_{2,N}^v$ and $m_{2,N}^u$ concentrate by~\cite[Lem. F.2]{divol2025optimal-sm}, applicable since $\eta_\U$ and $\nu_\U$ are assumed to be sub-exponential. Thus, there exists a sufficiently big $N$-independent $r$ and constants $c,C,\ell>0$ such that the result of the proposition holds.
\end{proof}

\begin{remark}
In~\Cref{prop: affine transformation wolog}, we assume that $\nu_\U$ is sub-exponential for the empirical moment $m_{2,N}^u$ to concentrate. However, we remark that whenever $\phi^\dag$ is assumed to be fiber-wise $\betamax$-smooth and to satisfy condition 1 of~\Cref{assump: fast rate} this condition is readily satisfied. Indeed, by applying~[Lem. D.1]\cite{divol2025optimal-sm},
\begin{align*}
\|u_i\| \leq \nabla_u\phi^\dag(y_i, 0) + \beta\|v_i\|\leq L_\F\langle y\rangle^{\widetilde{q}} + \beta\|v_i\|\, .
\end{align*}
Thus, in the case where $\widetilde{q} = 0$, $\eta_\U$ being sub-exponential immediately yields each of the $u_i$ having sub-exponential distribution and $m_{2,N}^u$ concentrates. In the case where $\widetilde{q}>0$, further requiring that $\nu_\Y$ is sub-exponential, each of the $u_i$ is distributed according to a sub-Weibull distribution (sub-exponential for $\widetilde{q}\leq 1$) and the concentration of $m_{2,N}^u$ still holds.
\end{remark}

\begin{remark}
To simplify the proof, we set the inflation parameter $r_1 < +\infty$ in~\Cref{prop: affine transformation wolog}. However, we remark that the unconditional analogue of~\Cref{prop: affine transformation wolog} has been shown for arbitrarily big $r_1 = +\infty$, important requirement to extend estimation rates to not pre-compact classes that satisfy condition 5 of~\Cref{assump: slow rate} only if additionally constrained with respect to some norm (e.g.~\cite[Eq.~44]{divol2025optimal-sm}). Due to the lack of asymmetry arising from fiber-wise transformations proving the result for $r_1=+\infty$ in the conditional case seems to be more challenging compared to the unconditional case. Nevertheless, we expect it be achievable under the additional assumption that, for any $\phi\in\F$, $\phi(y,\cdot)$ is $\alphamin$-strongly convex for any $y\in\Y$ and $\eta_\U$ has non-zero variance. As a motivation, we suggest that, when $r_1 =\infty$, by the additional fiber-wise $\alphamin$-strong convexity assumption
of all potentials in $\F$, we can bound below
\begin{align*}
\mathrm{I}\left(\{x_i\}_{i=1}^N\right)\geq b\frac{1}{N}\sum_{i=1}^N \left(\langle v_i-x_i, \nabla_u\phi(y_i,x_i)\rangle + \frac{\alphamin}{2}\left\|v_i-x_i\right\|^2\right)\, .
\end{align*}
Moreover, discarding $\mathrm{III}\left(\{x_i\}_{i=1}^N\right)\geq 0$ and recomposing $\wh \phi_{\widetilde{\F}}(y,u)= b\phi(y,u)+\langle t(y),u\rangle$,
\begin{align*}
\wh \cS(\overline{\phi}) &\geq \frac{1}{N}\sum_{i=1}^N \left(\langle v_i-x_i, b\nabla_u\phi(y_i,x_i)+t(y_i)\rangle +\langle x_i,u_i\rangle + \frac{b\alphamin}{2}\left\|v_i-x_i\right\|^2\right)\\
&= \frac{1}{N}\sum_{i=1}^N \left(\langle v_i-x_i, \nabla_u \wh \phi_{\widetilde{\F}}(y_i,x_i)\rangle +\langle x_i,u_i\rangle + \frac{b\alphamin}{2}\left\|v_i-x_i\right\|^2\right)\, .
\end{align*}
Then we can pick $x_i = \nabla_u\wh \phi^*_{\widetilde{\F}}(y_i,u_i)$ and, recalling the fiber-wise bijection relation $\nabla_u\phi^*(y,u) = \left(\nabla_u\phi(y,\cdot)\right)^{-1}(u)$ for fiber-wise strongly convex functions, we obtain 
\begin{align*}
\wh \cS(\overline{\phi})\geq \frac{b\alphamin}{2} \xi_N + \mathrm{IV}\left(\{v_i\}_{i=1}^N\right)\, ,\qquad \xi_N = \frac{1}{ N}\sum_{i=1}^N \|v_i - \nabla_u\wh \phi^*_{\widetilde{\F}}(y_i,u_i)\|^2\, .
\end{align*}
Then, rearranging, we can bound
\begin{align}
b\leq b_{\max} \coloneqq 1\vee\frac{2\left(\wh\cS(\overline{\phi})-\mathrm{IV}\left(\{v_i\}_{i=1}^N\right)\right)}{\alphamin \xi_N}\, .
\end{align} 
To conclude the proof for the case $r_1=\infty$, we then have to show that
\begin{align*}
\xi_N = \frac{1}{ N}\sum_{i=1}^N \left\|v_i - \nabla_u \phi^*\left(y_i,\frac{v_i-t(y_i)}{b}\right)\right\|^2\geq \inf_{\phi_{\textnormal{ind}}\in \F}\left\|v_i - \nabla_u \phi_{\textnormal{ind}}^*\left(y_i,\frac{v_i-t(y_i)}{b}\right)\right\|^2\, ,
\end{align*}
concentrates away from $0$ with high probability. We hypothesize two viable approaches. 

To see heuristically how the concentration could hold, note that by the crucial independence of the point clouds $\{v_i\}_{i=1}^N$ and $\{z_i \coloneqq\nabla_u\phi_{\textnormal{ind}}^*(y_i,u_i)\}_{i=1}^N$,
\begin{align}\label{eq: ind case suggestion}
\mathbb{E}^{\textnormal{train}}\|v_i - z_i\|^2 &= \mathbb{E}_{(y_i,u_i)\sim\nu}\mathbb{E}_{v_i\sim\eta_\U}\left[\|v_i-  z_i\|^2\mid z_i\right] \\
&=
\mathbb{E}_{v_i\sim\eta_\U}\left[\|v_i-\eta_\U(v_i)\|^2\right]+\mathbb{E}_{(y_i,u_i)\sim\nu}\left[\|\eta_\U(v_i) - z_i\|^2\right]\nonumber\\
&\geq \textnormal{Var}_{\eta_\U}(v_i)> 0\, .\nonumber
\end{align}
Thus, the missing part to prove is to show, perhaps additionally using the base class assumptions of~\Cref{assump: slow rate}(1)(2)(4)(5) and that the ratio $b/\|t\|_{L^2_{\nu_\Y,N}}$ is bounded in high-probability by~\eqref{eq: transf proof 6}, a uniform law of large numbers to show that the fluctuations of the lower bound of $\xi_N$ over all of $\F$ have order $O(1/\sqrt{N})$.

Alternatively, one could try to directly bound the initial expression of $\xi_N$ with a ``leave-one-out" approach that decouples the then correlated point clouds $\{v_i\}_{i=1}^N$ and $\{z_i \coloneqq\nabla_u\wh \phi_{\widetilde{\F}}^*(y_i,u_i)\}_{i=1}^N$. This would then require to show that the objective $\wh \cS$ to be stable in the sense of~\cite{bousquet2002stability, celisse2016stability}.

\end{remark}

The case $r_1 = 1<\infty$ of~\Cref{prop: affine transformation wolog} is sufficient to derive~\Cref{lem: bounded grad wolog}, which ultimately allows us to drop condition 6 in~\Cref{assump: slow rate} in the proof of the rates. This is the analogue of~\cite[Lem. H.1]{divol2025optimal-sm} transposed to the conditional OT setting.

\begin{lemma}\label{lem: bounded grad wolog}
Let $\F^{K'} = \left\{\phi\in\F\mid \|\nabla_u\phi(\cdot,0)\|_{L^2_{\nu_\Y,N}}\leq K'\right\}$. Then, for $N$-independent $K'$ large enough,
\begin{align*}
\mathbb{E}^{\textnormal{train}}\left[\|\nabla_u\wh \phi_{\F} - \nabla_u \phi^\dag\|_{L^2_\eta}\mathbbm{1}\left\{\wh \phi_\F \neq \wh \phi_{\F^{K'}}\right\}\right]\leq \frac{1}{N}\, .
\end{align*}
\end{lemma}

\begin{proof}
Introduce the fiber-wise shifted class \begin{align*}
\F_0 = \left\{(y,u)\to \phi(y,u)-\langle \nabla_{u}\phi(y,0), u\rangle \mid \phi\in\F\right\}\, .
\end{align*}
Then, $\F\subseteq (\F_0)_{\textnormal{aff},\Y}^{1,+\infty}$. We can then apply~\Cref{prop: affine transformation wolog} (with $r_1 = 1<+\infty$), which implies that there exists an $N$-independent $K'$ such that the statement of the lemma holds. 
\end{proof}

\paragraph{Obtaining estimation rates without ~\Cref{anchor point condition} in~\Cref{assump: slow rate}}
By~\Cref{lem: bounded grad wolog}, we can then decompose the estimation error by 
\begin{align*}
\mathbb{E}^{\textnormal{train}}\|\nabla_u \wh\phi_{\widetilde{\F}} - \nabla_u \phi^\dag\|_{L^2_\eta}^2\leq \mathbb{E}^{\textnormal{train}}\left[\|\nabla_u \wh\phi_{\widetilde{\F}} - \nabla_u \phi^\dag\|_{L^2_\eta}^2\mathbbm{1}\left\{\wh \phi_{\F} = \wh \phi_{\F^{K'}}\right\}\right]+\frac{1}{N}
\end{align*}
Thus, conditioning on the high-probability event, $\left\{\wh \phi_{\F} = \wh \phi_{\F^{K'}}\right\}$, the estimation rate can be bounded restricting $\F$ to $\F^{K'}$ without loss of generality. Note that on this event we have a slightly different condition compared to~\Cref{anchor point condition} in~\Cref{assump: slow rate}, i.e. $\|\nabla_u\phi(\cdot, 0)\|_{L^2_{\nu_\Y, N}}\leq K'$ for any $N$ instead of $\|\nabla_u\phi(\cdot, 0)\|_{L^1_{\nu_\Y}}\leq K$. However, this is still enough to bound the rate upon the modification of~\eqref{eq: adaptable inequality} to
\begin{align*}
\|g(0)\|\leq \frac{1}{N}\sum_{i=1}^N \left(\|g(y)\|+L_\F\langle y\rangle^{\widetilde{q}+1}\right)&\leq \|g(y)\|_{L^2_{\nu_\Y,N}}+L_\F\frac{1}{N}\sum_{i=1}^N\langle y\rangle^{\widetilde{q}+1}\, ,
\end{align*}
where $\|g(y)\|_{L^2_{\nu_\Y,N}}\leq K'$ for any $N$ and $\frac{1}{N}\sum_{i=1}^N\langle y\rangle^{\widetilde{q}+1}$ concentrates around $\nu_\Y\left(\langle \cdot\rangle^{\widetilde{q}+1}\right)$ since $\nu_\Y$ is sub-exponential. As a result, without assuming~\Cref{anchor point condition} in~\Cref{assump: slow rate}, both the rates of~\Cref{thm: slow rate} and~\Cref{thm: fast rate} hold replacing $K$ with $K'$ in the bounds.
\editend

\section{Proofs for \Cref{Sec:Error analysis for OTF}}
\label{sec: proof otf analysis}
In this section we present the theoretical details and proofs behind the OTF error analysis of \Cref{Sec:Error analysis for OTF}. In~\cref{subsec: proof triangle inequality filtering decomposition}, we establish a preliminary error decomposition applicable to both the exact and approximate filtering error of \eqref{eq: exact filter error} and \eqref{eq: approximate filter error}, reminiscent of the error analysis of~\cite{al-jarrah2023optimal}. We then combine the error decomposition with the estimation rates for conditional Brenier maps derived in~\Cref{Sec:Error analysis for conditional OT} to derive complete sample complexity bounds for the approximate and exact filtering errors in~\Cref{subsec: proof of approximate error bound},~\Cref{subsec: proof of exact error bound 2}, and~\Cref{subsec: proof of exact error bound}. In~\Cref{subsec: calculation for Y stability}, we present a small calculation showing that condition 2 in~\Cref{assump: exact filter error extra assump} can be achieved under mild conditions on the filtering system of \eqref{eq: simulater model}.

\subsection{Proof of \Cref{lem: div between true and estimated posterior}}\label{subsec: proof triangle inequality filtering decomposition}
The  bound follows from the uniform geometric stability of the filter and the stability of the divergence $D$. We follow the same argument proposed in \cite[Prop. 2]{al-jarrah2023optimal}.
\begin{proof}
Using $\widehat \pi^t = \T_{Y_{t,t}}(\widehat \pi^t)$, $\pi^t = \T_{Y_{t,0}}(\pi^0) =  \T_{Y_{t,0}}(\widehat \pi^0) $, and 
the repeated application of the triangle inequality, the divergence between $\widehat{\pi}^{t}$ and $\pi^{t}$ is decomposed  according to
\begin{align*}
D(\widehat{\pi}^{t},\pi^{t}) = D(\T_{Y_{t,t}}(\widehat \pi^t),\T_{Y_{t,0}}(\widehat \pi^0)) &\leq 
\sum_{\tau = 1}^{t}D(\T_{Y_{t,\tau}}(\widehat \pi^\tau),\T_{Y_{t,\tau-1}}(\widehat \pi^{\tau-1}))\\
&= \sum_{\tau=1}^{t} D(\T_{Y_{t,\tau}} \circ {\widehat \T}_{Y_\tau}(\widehat \pi^{\tau-1}) , \T_{Y_{t,\tau}}\circ {\T}_{Y_\tau}(\widehat \pi^{\tau-1}))\, ,
\end{align*}
Application of the uniformly geometrically stable property of the filter~\cref{eq:filter-stability}, and the stability property of the divergence according to~\cref{eq: metric D stability} yields
\begin{align}\label{eq: filter decomposition}
\begin{split}
 D(\widehat{\pi}^{t},\pi^{t}) 
&\leq \Cfs\sum_{\tau=1}^{t} (1-\lambda)^{t-\tau}   D(\widehat{\T}_{Y_\tau}(\widehat \pi^{\tau-1}), \T_{Y_\tau} (\widehat \pi^{\tau-1}))
    \\
    &=\Cfs\sum_{\tau=1}^{t} (1-\lambda)^{t-\tau} D(\nabla_{u} \widehat \phi_\tau( Y_\tau,\cdot) \# \widehat\eta_\U^\tau, \nabla_{u}  \phi^\dagger_\tau( Y_\tau,\cdot) \# \widehat\eta_\U^\tau)\\&\leq C_D \Cfs \sum_{\tau=1}^{t} (1-\lambda)^{t-\tau}
    \|\nabla_u \widehat \phi_\tau( Y_{\tau},\cdot) - \nabla_u \phi^{\dagger}_{\tau}(Y_{\tau},\cdot)\|_{L_{\widehat\eta_\U^\tau}^2}.
    \end{split}
\end{align}
In order to obtain the bound for the exact mean filtering error, we take  the expectation of both sides with respect to the true observation variables $\mathscr{Y}_t$, where each variable $Y_\tau$ is distributed according to $\nu^\tau_\Y$, and apply the Jensen's inequality 
\begin{align*}
 \mathbb{E}_{ {\mathscr{Y}}_{\tau}}\, \|\nabla_u \widehat \phi_\tau( Y_{\tau},\cdot) - \nabla_u \phi^{\dagger}_{\tau}(Y_{\tau},\cdot)\|_{L_{\widehat\eta_\U^\tau}^2} &=\mathbb{E}_{ {\mathscr{Y}}_{\tau-1}}\, \Expect_{Y_\tau}\,\|\nabla_u \widehat \phi_\tau( Y_{\tau},\cdot) - \nabla_u \phi^{\dagger}_{\tau}(Y_{\tau},\cdot)\|_{L_{\widehat\eta_\U^\tau}^2} \\&\leq \mathbb{E}_{ {\mathscr{Y}}_{\tau-1}}\|\nabla_{u}\widehat{\phi}_{\tau} - \nabla_{u}\phi^{\dagger}_{\tau}\|_{L_{\wh{\eta}^\tau_\U \otimes \nu^\tau_\Y}^2}\, .
\end{align*}
In order to obtain the bound for the approximate filtering error, we follow the same procedure and take the expectation with respect to artificial observations $\wh{\mathscr{Y}}_t$, where each variable $\wh{Y}_\tau$ is distributed according to $\wh{\nu}^\tau_\Y$. 
\end{proof}

\subsection{Proof of \Cref{thm: approx filter error bound}}\label{subsec: proof of approximate error bound}
We now move to the bound for the approximate filtering error, which follows directly from the results of~\cref{Sec:Error analysis for conditional OT} after the decomposition of~\Cref{lem: div between true and estimated posterior}.
\begin{proof}
Starting with the inequality~\cref{eq:first-bound-approx-filter-error},
    \begin{align*}
       \Expect_{1:t}^{\textnormal{train}}\,  \mathbb{E}_{ \wh{\mathscr{Y}}_{t}}D( \wh \pi^t, 
    \pi^t)\,  &\leq  C_D \Cfs \sum_{\tau=1}^t (1-\lambda)^{t-\tau}\mathbb{E}_{ \widehat{\mathscr{Y}}_{\tau-1}} \Expect_{1:\tau}^{\textnormal{train}} 
    \|\nabla_u \widehat \phi_\tau - \nabla_u \phi^{\dagger}_{\tau}\|_{L_{\widehat\eta^\tau}^2}\\
    &\leq  C_D \Cfs \sum_{\tau=1}^t (1-\lambda)^{t-\tau} e_\tau  \leq  C_D \Cfs \frac{1}{\lambda} e_t\, , 
    \end{align*}
    where we used \[\Expect_{1:\tau}^{\textnormal{train}} 
    \|\nabla_u \widehat \phi_\tau - \nabla_u \phi^{\dagger}_{\tau}\|_{L_{\widehat\eta^\tau}^2}\leq \sqrt{\mathbb{E}_{1:\tau}^{\textnormal{train}}\|\nabla_{u}\widehat{\phi}_{\tau} - \nabla_{u}\phi^{\dagger}_{\tau}\|_{L_{\widehat\eta^\tau}^2}^2}\leq e_\tau \leq e_t\, ,\] for all $\tau\leq t$, according to \cref{thm: slow rate} (for slow rate) or \cref{thm: fast rate} (for fast rate), and the fact that $\lambda \in (0,1)$ to bound the geometric sum. 
\end{proof}

\editstart
\subsection{Proof of~\Cref{multiplicative} in \Cref{thm: multiplicative+additive bound exact filtering error}}\label{subsec: proof of exact error bound 2}  In this section, we first extend the approximate mean filtering error bound to the exact mean filtering error via a multiplicative bound assuming that the minorization condition of~\eqref{eq: minorization condition} holds.

\begin{proof}
First notice that the minorization condition in~\eqref{eq: minorization condition} allows for uniform control for any $\tau\leq t$ of the density ratio
\begin{align*}
\left\|\frac{\d\nu_{\Y}^\tau(y)}{\d\widehat\nu_{\Y}^\tau(y)}\right\|_{L_{\wh\nu_\Y^\tau}^\infty} &= \left\|\frac{\int h(y\mid u)\d\mathcal{A}\pi^{\tau-1}(u)}{\int h(y\mid u)\d\mathcal{A}\wh \pi^{\tau-1}(u)}\right\|_{L_{\wh\nu_\Y^\tau}^\infty} = \left\|\frac{\d\mathcal{A}\pi^{\tau - 1}(u)}{\d\mathcal{A}\wh\pi^{\tau - 1}(u)}\right\|_{L_{\mathcal{A}\wh\pi^{\tau - 1}}^\infty}\\
& = \left\|\frac{\int a( u\mid u')\d\pi^{\tau - 1}(u')}{\int a( u\mid u')\d\wh\pi^{\tau - 1}(u')}\right\|_{L_{\mathcal{A}\wh\pi^{\tau - 1}}^\infty}\leq \frac{\sqrt{\Cfs}}{1/\sqrt{\Cfs}}  = \Cfs\, .
\end{align*}
Then, the following multiplicative bound holds
\begin{align}
\|\nabla_u \widehat \phi_\tau - \nabla_u \phi^{\dagger}_{\tau}\|_{L_{\widehat\eta^\tau}^2}&\leq \left\|\frac{\d\nu_{\Y}^\tau (y)}{\d\widehat\nu_{\Y}^\tau (y)}\right\|_{L_{\wh\nu_\Y^\tau}^\infty}\|\nabla_u \widehat \phi_\tau - \nabla_u \phi^{\dagger}_{\tau}\|_{L_{\widetilde\eta^\tau}^2}\label{eq: multiplicative bound}\leq \Cfs \|\nabla_u \widehat \phi_\tau - \nabla_u \phi^{\dagger}_{\tau}\|_{L_{\widetilde\eta^\tau}^2}\, .
\end{align}
The proof can then be concluded following the same steps employed in the proof of~\Cref{thm: approx filter error bound}.
\end{proof}

\begin{remark}\label{rem: comments on mult bound extension}
We remark that the proposed multiplicative bound approach could be further extended beyond the setting in which the minorization condition of~\eqref{eq: minorization condition} does not necessarily hold, e.g., on unbounded domains. Under a suitable set of assumptions allowing the uniform control of $\left\|\d\nu_{\Y}^\tau (y)/\d\widehat\nu_{\Y}^\tau (y)\right\|_{L_{\wh\nu_\Y^\tau}^q}$ for $q\in[1,\infty)$ and a bound on the mixed norm 
\begin{align*}
\left(\int\|\nabla_u \widehat \phi_\tau(y,\cdot) - \nabla_u \phi^{\dagger}_{\tau}(y,\cdot)\|_{L_{\widetilde\eta_\U^\tau}^{2}}^{2p} \d \widehat\nu_\Y^\tau\right)^{1/p}\lesssim \|\nabla_u \widehat \phi_\tau - \nabla_u \phi^{\dagger}_{\tau}\|_{L_{\widetilde\eta^\tau}^2}^{(p_0-p)/(p(p_0-1))}\, ,
\end{align*}
with $p = 1-1/q, p_0>p$ via Jensen's inequality and norm interpolation, one might relax the bound in~\eqref{eq: multiplicative bound} using H\"older's inequality towards obtaining slower rates proportional to up to $e_t^{1/p}$ in more general settings on unbounded domains. Moreover, by revisiting the conditional OT rates in $L^2_\eta$ of~\Cref{Sec:Error analysis for conditional OT} with respect to the stronger $L^{2,2p}_\eta$ mixed norm, improved filtering bounds avoiding the suboptimal norm interpolation argument could be obtained following this multiplicative approach. We leave this as an 
exciting avenue of future research.
\end{remark}
\editend

\editstart
\subsection{Proof of~\Cref{additive} in \Cref{thm: multiplicative+additive bound exact filtering error}}\label{subsec: proof of exact error bound}
The proof for the bound of the exact mean filtering error via the additive decomposition of $\|\nabla_u \widehat \phi_\tau - \nabla_u \phi^{\dagger}_{\tau}\|_{L_{\widehat\eta^\tau}^2}$
\editend
is more involved. In light of~\Cref{rem: general remark on mult+add bounds}, we first introduce the needed preliminary lemmas in the setup where the Lipschitz condition of~\eqref{eq:Lipshitzness in y} in~\Cref{assump: exact filter error extra assump}(1) is relaxed 
\editstart
to the H\"older one condition
\begin{align}\label{eq:Holder in y}
\begin{split}
                \| \nabla_u \phi^{\dagger}_{\tau}(y,\cdot) - \nabla_u \phi^\dagger_\tau(y',\cdot)\|            &\le L_\Y \| y- y'\|_\Y^\zeta, \quad
                \| \nabla_u \widehat\phi_{\tau}(y,\cdot) - \nabla_u \widehat\phi_\tau(y',\cdot)\|       \le L_\Y \| y- y'\|_\Y^\zeta\, ,
\end{split}
\end{align}
for $\zeta\in[0,1]$.
\editend

\begin{lemma}\label{lem:bound-phi-Y-Yhat}
    Under~\cref{assump: divergence} and~\cref{assump: exact filter error extra assump} relaxing~\eqref{eq:Lipshitzness in y} to~\eqref{eq:Holder in y} for $\zeta\in[0,1]$, we have
    \begin{align} \label{eq:bound-phi-Y-Yhat}
        \|\nabla_u \widehat \phi_\tau - \nabla_u \phi^{\dagger}_{\tau}\|_{L_{\widetilde \eta^\tau}^2} \leq  \|\nabla_u \widehat \phi_\tau - \nabla_u \phi^{\dagger}_{\tau}\|_{L_{\wh{ \eta}^\tau}^2} + 2C_{D}'
        L_\Y D(\pi^{\tau-1},\widehat \pi^{\tau-1})^{\zeta}.
    \end{align}
\end{lemma}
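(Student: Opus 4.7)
The strategy is to reduce the claim to an $L^2$-comparison of a scalar function on $\Y$ under two different marginal measures, then exploit the coupling assumption together with the H\"older regularity of the Brenier maps. The starting observation is that $\widetilde \eta^\tau = \wh \eta^\tau_\U \otimes \nu^\tau_\Y$ and $\wh \eta^\tau = \wh \eta^\tau_\U \otimes \wh \nu^\tau_\Y$ share the same $\U$-marginal $\wh \eta^\tau_\U$ and differ only in their $\Y$-marginals. Setting $g(y) := \|\nabla_u \wh \phi_\tau(y,\cdot) - \nabla_u \phi^\dagger_\tau(y,\cdot)\|_{L^2_{\wh \eta^\tau_\U}}$, an application of Fubini yields $\|\nabla_u \wh \phi_\tau - \nabla_u \phi^\dagger_\tau\|_{L^2_{\widetilde \eta^\tau}} = \|g\|_{L^2(\nu^\tau_\Y)}$ and the analogous identity for $\wh \eta^\tau$, reducing the claim to the scalar inequality $\|g\|_{L^2(\nu^\tau_\Y)} \le \|g\|_{L^2(\wh \nu^\tau_\Y)} + 2 C_D' L_\Y D(\pi^{\tau-1},\wh\pi^{\tau-1})^\zeta$.

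Next I would activate~\cref{assump: exact filter error extra assump}(2) with $\mu_1 = \pi^{\tau-1}$ and $\mu_2 = \wh\pi^{\tau-1}$. By the definitions in~\eqref{eq: COT measures notation}, the associated $\rho_1, \rho_2$ coincide with $\nu^\tau_\Y$ and $\wh \nu^\tau_\Y$, so the assumption furnishes a coupling $\Gamma \in \Pi(\nu^\tau_\Y, \wh \nu^\tau_\Y)$ with $\Expect_\Gamma \|Y_1 - Y_2\|_\Y \le C_D'\, D(\pi^{\tau-1}, \wh \pi^{\tau-1})$. Minkowski's inequality on $L^2(\Gamma)$ then gives
$\|g\|_{L^2(\nu^\tau_\Y)} = \|g(Y_1)\|_{L^2(\Gamma)} \le \|g(Y_2)\|_{L^2(\Gamma)} + \|g(Y_1) - g(Y_2)\|_{L^2(\Gamma)}$,
where the first summand equals $\|g\|_{L^2(\wh \nu^\tau_\Y)}$.

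To control the discrepancy term I would invoke the reverse triangle inequality on the norm defining $g$, split the difference, and apply~\eqref{eq:Holder in y} to both $\wh\phi_\tau$ and $\phi^\dagger_\tau$, obtaining the pointwise estimate $|g(Y_1) - g(Y_2)| \le 2 L_\Y \|Y_1 - Y_2\|_\Y^\zeta$. Taking $L^2(\Gamma)$ norms and combining Jensen's inequality (concavity of $x \mapsto x^\zeta$) with the coupling bound, one converts $\bigl(\Expect_\Gamma\|Y_1-Y_2\|^{2\zeta}\bigr)^{1/2}$ into a power of $\Expect_\Gamma\|Y_1-Y_2\|$ and closes the argument, absorbing $(C_D')^\zeta$ into $C_D'$ under the standing normalization $C_D' \geq 1$.

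The main technical obstacle is precisely this last Jensen step: strict concavity of $x \mapsto x^{2\zeta}$ on $[0,\infty)$ is only available for $2\zeta \le 1$, so the conversion of the $L^2(\Gamma)$-moment into a power of the $L^1$ cost is immediate in the regime $\zeta \le 1/2$. For the Lipschitz regime $\zeta = 1$ — which is the only exponent for which the Gr\"onwall argument of~\cref{rem: Holder vs Lipshitz,rem: zeta=1 regime} yields a uniform-in-time bound — one would supplement~\eqref{eq:coupling-assump} with the stronger $W_2$-type moment bound that is automatic in the setups of~\cref{subsec: calculation for Y stability} under the standing assumptions on the dynamic and observation models, and complete the estimate in the same way.
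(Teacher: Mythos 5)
Your core strategy coincides with the paper's: invoke \Cref{assump: exact filter error extra assump}(2) with $\mu_1=\pi^{\tau-1}$, $\mu_2=\widehat\pi^{\tau-1}$ to obtain a coupling $\Gamma_\tau\in\Pi(\nu^\tau_\Y,\widehat\nu^\tau_\Y)$, split by a triangle inequality, and control the discrepancy through the H\"older property of $\nabla_u\widehat\phi_\tau(\cdot,u)$ and $\nabla_u\phi^\dagger_\tau(\cdot,u)$. The gap lies in where you apply the triangle inequality. Working with $g(y)=\|\nabla_u\widehat\phi_\tau(y,\cdot)-\nabla_u\phi^\dagger_\tau(y,\cdot)\|_{L^2_{\widehat\eta^\tau_\U}}$ and Minkowski in $L^2(\Gamma_\tau)$ forces you to bound $\|g(Y_1)-g(Y_2)\|_{L^2(\Gamma_\tau)}\le 2L_\Y\bigl(\Expect_{\Gamma_\tau}\|Y_1-Y_2\|_\Y^{2\zeta}\bigr)^{1/2}$, whereas \eqref{eq:coupling-assump} only controls the first moment $\Expect_{\Gamma_\tau}\|Y_1-Y_2\|_\Y$; Jensen closes this only when $2\zeta\le1$. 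As you acknowledge, your argument therefore proves the claim only for $\zeta\le 1/2$, and for $\zeta=1$ --- the one exponent for which the downstream Gr\"onwall argument stays bounded (\Cref{rem: zeta=1 regime}) --- you must import a $W_2$-type second-moment coupling bound that is not among the lemma's hypotheses. That is a genuine gap relative to the statement, not a detail to be supplemented.

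The paper sidesteps this entirely by never forming an $L^2$-in-$y$ norm of the discrepancy: it applies the triangle inequality pointwise to $\|\nabla_u\widehat\phi_\tau(Y_\tau,\cdot)-\nabla_u\phi^\dagger_\tau(Y_\tau,\cdot)\|_{L^2_{\widehat\eta^\tau_\U}}$, splitting it into three terms $\mathrm{I}(\tau)+\mathrm{II}(\tau)+\mathrm{III}(\tau)$ with $\widehat Y_\tau$ coupled to $Y_\tau$ via $\Gamma_\tau$, and only then takes expectations: $\Expect_{\widehat Y_\tau}\mathrm{I}(\tau)\le\|\nabla_u\widehat\phi_\tau-\nabla_u\phi^\dagger_\tau\|_{L^2_{\widehat\eta^\tau}}$ by Jensen, while $\Expect\,\mathrm{II}(\tau)$ and $\Expect\,\mathrm{III}(\tau)$ are bounded by $L_\Y\Expect_{\Gamma_\tau}\|Y_\tau-\widehat Y_\tau\|_\Y^\zeta\le C_D'L_\Y D(\pi^{\tau-1},\widehat\pi^{\tau-1})^\zeta$ using only the concavity of $x\mapsto x^\zeta$, valid for every $\zeta\in[0,1]$, together with the first-moment bound. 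In other words, what is actually proved (and what is used in \Cref{lem: exact filtering error}, where the term enters through an expectation over $Y_\tau$) is a bound on the expectation of the conditional norm, an $L^1$-in-$y$ quantity; your literal $L^2_{\widetilde\eta^\tau}$ reading is the stronger claim that demands the second moment. Adopting the pointwise-then-expectation ordering recovers the full range $\zeta\in[0,1]$ with no extra assumption. A shared cosmetic point: both you and the paper silently replace $(C_D')^\zeta$ by $C_D'$, which requires $C_D'\ge1$ or an adjusted constant.
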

\begin{proof}
   By the application of the triangle inequality 
\begin{align*}
    \|\nabla_u \widehat \phi_\tau( Y_{\tau},\cdot) &- \nabla_u \phi^{\dagger}_{\tau}( Y_{\tau},\cdot)\|_{L_{\widehat\eta_\U^\tau}^2} \leq \text{I}(\tau) + \text{II}(\tau) + \text{III}(\tau),
\end{align*}
where 
\begin{align*}
    \text{I}(\tau) &\coloneqq \|\nabla_u \widehat \phi_\tau( \widehat Y_{\tau},\cdot) - \nabla_u \phi^{\dagger}_{\tau}( \widehat Y_{\tau},\cdot)\|_{L_{\widehat\eta_\U^\tau}^2}, \\
    \text{II}(\tau) &\coloneqq  \|\nabla_u \widehat \phi_\tau( Y_{\tau},\cdot) - \nabla_u \widehat \phi_{\tau}( \widehat Y_{\tau},\cdot)\|_{L_{\widehat\eta_\U^\tau}^2},\\
    \text{III}(\tau) &\coloneqq 
 \|\nabla_u  \phi^\dagger_\tau( \widehat Y_{\tau},\cdot) - \nabla_u \phi^{\dagger}_{\tau}( Y_{\tau},\cdot)\|_{L_{\widehat\eta_\U^\tau}^2}.
\end{align*}
Here,
$\widehat Y_\tau$ is distributed according to $\widehat \nu^\tau_\Y$, and coupled with $Y_\tau$ according the coupling $\Gamma_\tau$ suggested by~\cref{assump: exact filter error extra assump}. Upon taking the expectation of $\textnormal{I}(\tau)$ over $\widehat Y_\tau$, and the application of the Jensen's inequality,
\begin{align*}
    \mathbb E_{\widehat Y_\tau}\text{I}(\tau) \leq \|\nabla_u \widehat \phi_\tau - \nabla_u \phi^{\dagger}_{\tau}\|_{L_{\widehat\eta^\tau}^2}.
\end{align*}
Using the Hölder property of $\nabla_u \widehat \phi(\cdot,y)$ from~\cref{assump: exact filter error extra assump}, $\textnormal{II}(\tau)$ is bounded by
\begin{align*}
    \text{II}(\tau) \leq L_\Y \|Y_\tau  - \widehat Y_\tau\|_\Y^{\zeta}. 
\end{align*}
After taking the expectation over $Y_\tau$ and $\widehat Y_\tau$, the joint application of~\cref{eq:coupling-assump} and Jensen's inequality yields 
\begin{align*}
    \mathbb E_{Y_\tau,\widehat Y_\tau} \text{II}(\tau) &\leq L_\Y\mathbb E_{(Y_\tau,\widehat Y_\tau) \sim \Gamma_\tau} \|Y_\tau  - \widehat Y_\tau\|_\Y^{\zeta}  \leq C_D' L_\Y D(\pi^{\tau-1},\widehat \pi^{\tau-1})^{\zeta}.
\end{align*}
Following the same argument, expectation of $\text{III}(\tau)$ is also bounded according to
\begin{align*}
    \mathbb E_{Y_\tau,\widehat Y_\tau} \text{III}(\tau) &\leq  C_D' L_\Y D(\pi^{\tau-1},\widehat \pi^{\tau-1})^{\zeta}.
\end{align*}
Combining the bounds on the expectation of $\text{I}(\tau)$, $\text{II}(\tau)$, and $\text{III}(\tau)$, we arrive at the result~\cref{eq:bound-phi-Y-Yhat}.
\end{proof}

The combination of \cref{lem: div between true and estimated posterior} and \cref{lem:bound-phi-Y-Yhat} concludes the following bound for the exact mean filtering error. 
\begin{lemma}
\label{lem: exact filtering error}
 Suppose the exact filter satisfies  \cref{def: uniformly geometrically stable filter}, the divergence $D$ satisfies~\cref{assump: divergence}, and  \Cref{assump: exact filter error extra assump} relaxing~\eqref{eq:Lipshitzness in y} to~\eqref{eq:Holder in y} for $\zeta\in[0,1]$ holds. Then, the  exact mean filtering error~\cref{eq: exact filter error} satisfies the bound
 \begin{align}\label{eq:exact filtering error bound}
     \mathbb{E}&_{ \mathscr{Y}_{t}}D( \wh \pi_t, 
    \pi_t)\leq  C \overline e_t
   +C^{\zeta}\widetilde C(1-\lambda)^{t-1}\sum_{\tau=0}^{t-1}(1-\lambda)^{-\tau}\overline e_\tau^\zeta\prod_{s=\tau+1}^{t-1}\left(\frac{\zeta\widetilde C}{(1-\lambda)(C\overline e_s)^{1-\zeta}}+1\right)\, ,
\end{align}
where $\lambda\in (0,1)$, the constants $C=C_DC_{\textnormal{filter}}$ and $\widetilde C=2L_{\Y}C_DC_D'C_{\textnormal{filter}}$ are uniform in time, and $\overline e_\tau \coloneqq \sum_{k=1}^{\tau} (1-\lambda)^{\tau-k}    \mathbb{E}_{ \mathscr{Y}_{k-1}} 
    \|\nabla_u \widehat \phi_k - \nabla_u \phi^{\dagger}_{k}\|_{L_{\widehat\eta^k}^2}$. 
\end{lemma}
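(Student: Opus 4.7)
The plan is to combine the two preceding lemmas into a nonlinear recursion for the quantity $d_t := \mathbb{E}_{\mathscr{Y}_t}D(\wh\pi^t,\pi^t)$, linearize the recursion via a tangent-line (concavity) bound on $x\mapsto x^\zeta$, and then unroll the resulting first-order linear recursion to recover the claimed product-sum form. Concretely, I would first substitute the pointwise H\"older-type bound~\eqref{eq:bound-phi-Y-Yhat} of~\Cref{lem:bound-phi-Y-Yhat} into the decomposition~\eqref{eq:first-bound-exact-filter-error} of~\Cref{lem: div between true and estimated posterior}, take expectations over $\mathscr{Y}_t$, and invoke Jensen's inequality (valid because $x\mapsto x^\zeta$ is concave for $\zeta\in[0,1]$) to bound $\mathbb{E}[D(\pi^{\tau-1},\wh\pi^{\tau-1})^\zeta]$ by $d_{\tau-1}^\zeta$. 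Identifying $C = C_DC_{\mathrm{filter}}$ and $\widetilde C = 2L_{\Y}C_D'C_DC_{\mathrm{filter}}$ and using $d_0 = 0$ (since $\wh\pi^0 = \pi^0$), this produces the nonlinear recursion
\begin{equation*}
d_t \;\leq\; C\,\overline e_t + \widetilde C\sum_{\tau=1}^{t}(1-\lambda)^{t-\tau}d_{\tau-1}^{\zeta}.
\end{equation*}

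Next, I would introduce the auxiliary sequence $S_t := \sum_{\tau=1}^{t}(1-\lambda)^{t-\tau}d_{\tau-1}^{\zeta}$, which satisfies the one-step recursion $S_t = (1-\lambda)S_{t-1} + d_{t-1}^\zeta$ with $S_0 = 0$, and observe that $d_t \leq C\overline e_t + \widetilde C S_t$. The key linearization uses the tangent-line inequality $x^\zeta \leq \zeta xy^{\zeta-1} + (1-\zeta)y^\zeta$, valid for every $x\geq 0$, $y>0$, $\zeta\in[0,1]$ (it simply expresses that the concave function $x\mapsto x^\zeta$ lies below its tangent at $x=y$). Applying it with $x = d_{t-1}$ and $y = C\overline e_{t-1}$, and then substituting the bound $d_{t-1} \leq C\overline e_{t-1} + \widetilde C S_{t-1}$ one time step earlier, the $(C\overline e_{t-1})^\zeta$ contributions combine exactly and leave only a linear-in-$S_{t-1}$ remainder, yielding
\begin{equation*}
S_t \;\leq\; (1-\lambda)\!\left(1 + \frac{\zeta\widetilde C}{(1-\lambda)(C\overline e_{t-1})^{1-\zeta}}\right)S_{t-1} + (C\overline e_{t-1})^\zeta.
\end{equation*}

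Finally, unrolling this first-order linear recursion from $S_0 = 0$ gives
\begin{equation*}
S_t \;\leq\; \sum_{\tau=0}^{t-1}(C\overline e_\tau)^\zeta\prod_{s=\tau+1}^{t-1}(1-\lambda)\!\left(1 + \frac{\zeta\widetilde C}{(1-\lambda)(C\overline e_s)^{1-\zeta}}\right),
\end{equation*}
and pulling the factor $(1-\lambda)^{t-1-\tau} = (1-\lambda)^{t-1}(1-\lambda)^{-\tau}$ out of the product, then substituting back into $d_t \leq C\overline e_t + \widetilde C S_t$ with $(C\overline e_\tau)^\zeta = C^\zeta\overline e_\tau^\zeta$, reproduces~\eqref{eq:exact filtering error bound} verbatim.

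The main obstacle is the cancellation in the linearization step: the tangent-line bound and the bound on $d_{t-1}$ must be coordinated through the specific choice $y = C\overline e_{t-1}$ so that every $\zeta$-power of $C\overline e_{t-1}$ collapses into an affine expression in $S_{t-1}$; a different choice of $y$ would leave uncontrollable residual nonlinearities. A minor technicality is the degenerate case $\overline e_{t-1} = 0$, where the tangent-line bound formally requires $y>0$; this is resolved by a limiting argument $\overline e_{t-1}\downarrow 0$ applied after the linearization, and in any regime where the estimation rates of~\Cref{thm: slow rate,thm: fast rate} are informative one has $\overline e_\tau > 0$ so the issue does not arise. The remaining steps are essentially bookkeeping with product/sum indices and geometric factors of $(1-\lambda)$, together with checking that the joint training-plus-observation expectation to which Jensen is applied is well-defined, which follows from the integrability implicit in~\Cref{assump: exact filter error extra assump}.
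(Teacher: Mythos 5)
Your proof is correct and follows essentially the same route as the paper: both start by substituting the H\"older bound of \Cref{lem:bound-phi-Y-Yhat} into the decomposition of \Cref{lem: div between true and estimated posterior} to obtain the nonlinear convolution inequality in $d_t = \mathbb{E}_{\mathscr{Y}_t}D(\wh\pi^t,\pi^t)$, and both then discharge it with a nonlinear Gr\"onwall argument. The difference is purely one of exposition: where the paper cites a general discrete Gr\"onwall lemma for L-type kernels from the literature, you re-derive the bound from scratch by introducing the auxiliary sum $S_t$, linearizing the concave power $x\mapsto x^\zeta$ via the tangent-line inequality at $y=C\bar e_{t-1}$, and unrolling the resulting first-order linear recursion; this is precisely the mechanism underlying the cited lemma, and your explicit bookkeeping (the cancellation of the $(C\bar e_{t-1})^\zeta$ terms and the factorization $(1-\lambda)^{t-1-\tau}=(1-\lambda)^{t-1}(1-\lambda)^{-\tau}$) reproduces~\eqref{eq:exact filtering error bound} verbatim. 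One small value you add is making the Jensen step $\mathbb{E}[D^\zeta]\le(\mathbb{E}D)^\zeta$ explicit, which the paper's intermediate display leaves implicit before invoking Gr\"onwall, and flagging the $\bar e_s=0$ degenerate case, which the paper does not mention.
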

\begin{remark} \label{rem: zeta=1 regime}
\label{rem:general zeta bound issue}
    As introduced in~\Cref{rem: general remark on mult+add bounds}, notice that the bound of~\eqref{eq:exact filtering error bound} blows up when $\zeta<1$ and the errors $\bar{e}_{s}$ go to 0. This makes the bound meaningful only when $\zeta=1$.
\end{remark}
\begin{proof}
 Application of \cref{lem: div between true and estimated posterior} and \cref{lem:bound-phi-Y-Yhat} concludes 
\begin{align*}
\mathbb{E}_{ \mathscr{Y}_{t}}D(\widehat{\pi}_{t},\pi_{t}) &\leq C_D\Cfs \sum_{\tau=1}^{t} (1-\lambda)^{t-\tau}    \mathbb{E}_{ \mathscr{Y}_{\tau-1}} \left(
   \|\nabla_u \widehat \phi_\tau - \nabla_u \phi^{\dagger}_{\tau}\|_{L_{\widehat\eta^\tau}^2} 
   + 2L_\Y C_D' D(\pi_{\tau-1},\widehat \pi_{\tau-1})^{\zeta}
   \right)
   \\
    & \leq  C \overline e_t + \widetilde C(1-\lambda)^{t-1}\sum_{\tau=0}^{t-1} (1-\lambda)^{-\tau} \mathbb{E}_{ \mathscr{Y}_{\tau}}D(\widehat{\pi}_{\tau},\pi_{\tau})^{\zeta},
\end{align*}
where $C \coloneqq C_{D}\Cfs$, $\widetilde C \coloneqq 2L_{\Y}C_D'C_D \Cfs$, and $\overline e_t$ are defined in the statement of the Lemma. 
Then, applying a rescaled version of the discrete Gronwall's Lemma for L-type kernels (see~\cite[Lem. 100~-~107, eq. 2.66]{sever2003some}, referencing the original results of \cite{dragomir1992discrete,pachpatte1977discrete}), we obtain 
\begin{align*}
\mathbb{E}_{ \mathscr{Y}_{t}}D(\widehat{\pi}_{t},\pi_{t})
&\leq C \overline e_t
   +C^{\zeta}\widetilde C(1-\lambda)^{t-1}\sum_{\tau=0}^{t-1}(1-\lambda)^{-\tau}\overline e_\tau^\zeta\prod_{s=\tau+1}^{t-1}\left(\frac{\zeta\widetilde C}{(1-\lambda)(C\overline e_s)^{1-\zeta}}+1\right)\, .
\end{align*}
\end{proof}

Now, we are ready to apply \cref{thm: slow rate} and \cref{thm: fast rate} to the bound~\cref{eq:exact filtering error bound} with $\zeta=1$ and prove the result of{\color{black}~\cref{additive} in~\Cref{thm: multiplicative+additive bound exact filtering error} }for the exact mean filtering error. 
\begin{proof}[Proof of {\color{black}~\cref{additive} in~\Cref{thm: multiplicative+additive bound exact filtering error}}]
    Starting with the inequality~\cref{eq:exact filtering error bound} setting $\zeta =1$, the term in $\overline e_\tau$ is bounded by
    \begin{align*}
       \Expect_{1:\tau}^{\textnormal{train}}\, \overline e_\tau = \sum_{k=1}^\tau (1-\lambda)^{\tau-k}\mathbb{E}_{ \mathscr{Y}_{k-1}} \Expect_{1:k}^{\textnormal{train}} 
    \|\nabla_u \widehat \phi_k - \nabla_u \phi^{\dagger}_{k}\|_{L_{\widehat\eta^k}^2} \leq \sum_{k=1}^\tau (1-\lambda)^{\tau-k} e_\tau  \leq \frac{1}{\lambda} e_t,
    \end{align*}
    where we used $\Expect_{1:k}^{\textnormal{train}} 
    \|\nabla_u \widehat \phi_k - \nabla_u \phi^{\dagger}_{k}\|_{L_{\widehat\eta^k}^2}\leq e_\tau \leq e_t$ for all $k\leq \tau\leq t$, according to \cref{thm: slow rate} (for slow rate) or \cref{thm: fast rate} (for fast rate) , and the fact that $\lambda \in (0,1)$ to bound the geometric sum. Applying this bound to~\cref{eq:exact filtering error bound} with $\zeta=1$, we arrive at the inequality
    \begin{align*}     \Expect_{\textnormal{train}}\mathbb{E}_{ \mathscr{Y}_{t}}D( \wh \pi_t, 
    \pi_t)&\leq  \frac{C}{\lambda} e_t +  C\widetilde C\sum_{\tau=0}^{t-1}\left(\widetilde C+1-\lambda \right)^{t-\tau-1}\frac{1}{\lambda} e_t\\
    &\leq \frac{C}{\lambda}\left(1+\widetilde C\sum_{\tau=0}^{t-1}\left(\widetilde C+1-\lambda \right)^{t-\tau-1} \right) e_t\\&\leq  \frac{C}{\lambda}\left(1+ \frac{\widetilde C}{|\lambda-\widetilde C|}\max(1,\varrho^t) \right)  e_t\\&\leq \frac{C}{\lambda}\left(1+ \frac{\widetilde C}{|\lambda-\widetilde C|}\right) \max(1,\varrho^t) e_t,
    \end{align*}
    concluding the result. 
\end{proof}

\subsection{Calculation for condition 2 of \Cref{assump: exact filter error extra assump}}\label{subsec: calculation for Y stability}
Here, we show that the filtering model of \eqref{eq: simple filtering system} with the same Lipschitzness and noise assumptions of~\Cref{prop: Poincare filtering} satisfies condition 2 of~\Cref{assump: exact filter error extra assump} for $D=W_1$.

For any two distributions $\pi$ and $\wh{\pi}$, let $U\sim \pi$ and $\widehat U\sim\widehat \pi$. Also, let $V$ and $W$ be independent random variables with the same distributions as $V_t$ and $W_t$.   Then, $Y =  h( a(U)+V)+W$ and  $\wh{Y} =  h( a(\wh{U})+V)+W$ defines a coupling between $Y$ and $\wh{Y}$, with marginal distributions $\nu(y,u)=h(y\mid u)\mathcal A \pi(u)$ and $\wh{\nu}=h(y\mid u)\mathcal A \wh{\pi}(u)$, respectively. Hence,
\begin{align*}
\mathbb{E}\|Y - \wh{Y}\|_{\Y}
\le \mathbb{E}\|h( a(U)+V)- h( a(\wh{U})+V)\|_{\Y}\leq
L_{h}\mathbb{E}\| a(U)- a(\wh{U})\|_{\U}\leq 
L_{h}L_{a}\mathbb{E}\|U-\wh{U}\|_{\U}\, .
\end{align*}
Therefore, choosing the optimal $W_1$-coupling between $U$ and $\wh{U}$ yields
\begin{align*}
\mathbb{E}\|Y - \wh{Y}\|_{\Y}\leq L_{h}L_{a}D(\pi,\wh{\pi})\, ,
\end{align*}
proving condition 2 in \Cref{assump: exact filter error extra assump} for 
$D=W_1$ and the filtering model~\eqref{eq: simple filtering system}.

A particular instance of the just considered filtering system in \eqref{eq: simple filtering system} is given by the Euler-Maruyama discretized time stepping scheme characterized by
\begin{align*}
a(U_{t-1})=U_{t-1}+\Delta t \ b(U_{t-1})\, , 
\end{align*}
where $b$ is a deterministic map which is $L_b$-Lipschitz. In this case, $a$ is consequently Lipschitz and, for $D=W_1$, condition 2 in \Cref{assump: exact filter error extra assump} is satisfied with
\begin{align*}
C_D' = L_{h}L_{a} =  L_{h}(1+\Delta t \ L_b)\, ,
\end{align*}
showing how the time discretization resolution impacts the theoretical error bounds obtained in {\color{black}~\cref{additive} of~\Cref{thm: multiplicative+additive bound exact filtering error} }. Time schemes with finer resolutions (i.e., smaller $\Delta t$) require fewer training samples to achieve accurate filter predictions.

\subsection{Proof of~\cref{prop: log concavity smoothness result}}
\label{proof: thm: general conditions on FEs}
Before proceeding with the proof, we recall the following well-known Brascamp-Lieb and Cramer-Rao inequalities.
\begin{theorem}[Brascamp-Lieb inequality \cite{brascamp1976extensions}]\label{thm: BL inequality}
Let $\mu(u)=\exp(-\mathscr{W}_{\mu}(u))$ be a probability measure on $\mathbb{R}^n$, where $\mathscr{W}_\mu\in C^2(\mathbb{R}^n)$ is strictly convex. Then, for all $f\in H_{\mu}^2(\mathbb{R}^n)$, it holds that
\begin{align*}
\int|f(u)|^2\mu(u)\d u-\left(f(u)\mu(u)\d u\right)^2\leq\int\langle \nabla f(u), [\nabla^2 \mathscr{W}_\mu(u)]^{-1}\nabla f(u)\rangle\mu(u)\d u\, .
\end{align*}
\end{theorem}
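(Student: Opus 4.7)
The plan is to prove the Brascamp--Lieb variance inequality via the Helffer--Sj\"ostrand representation together with an integrated Bochner identity for the overdamped Langevin generator of $\mu$. A preliminary density argument reduces the claim to $f\in C_c^\infty(\mathbb{R}^n)$ with $\mu(f)=0$. In addition, replacing $\mathscr{W}_\mu$ by the perturbation $\mathscr{W}_\mu+\tfrac{\epsilon}{2}|u|^2$ yields a uniformly log-concave potential for which all analytic objects below are well-posed; the general strictly convex case then follows by monotone convergence as $\epsilon\downarrow 0$, using that $(\nabla^2\mathscr{W}_\mu+\epsilon I)^{-1}\nearrow (\nabla^2\mathscr{W}_\mu)^{-1}$ pointwise together with dominated convergence on both sides.

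The central object is the weighted Laplacian $L:=\Delta-\nabla\mathscr{W}_\mu\cdot\nabla$, which is self-adjoint and nonpositive on $L^2(\mu)$ with Dirichlet form $-\int(Lg)h\,\d\mu=\int\nabla g\cdot\nabla h\,\d\mu$. Under uniform convexity, $L$ has a spectral gap and the Poisson equation $-Lh=f$ admits a unique mean-zero smooth solution, so that integration by parts gives the identity $\textnormal{Var}_\mu(f)=\int fh\,\d\mu=\int|\nabla h|^2\,\d\mu$. Setting $v:=\nabla h$ and $A:=\nabla^2\mathscr{W}_\mu$, a direct calculation yields the commutator identity $\nabla L=L\nabla-A\nabla$ (with $L$ acting componentwise on vector fields); differentiating $-Lh=f$ then produces the key relation $\nabla f=-Lv+Av$. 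Moreover, invariance of $\mu$ combined with the definition of the iterated carr\'e du champ $\Gamma_2$ gives $\int\Gamma_2(h)\,\d\mu=\textnormal{Var}_\mu(f)$, while the Bochner--Weitzenb\"ock decomposition reads $\Gamma_2(h)=\|\nabla^2 h\|_{HS}^2+\langle v,Av\rangle$, together producing the refined variance representation
\[
\textnormal{Var}_\mu(f)=\int\|\nabla^2 h\|_{HS}^2\,\d\mu+\int\langle v,Av\rangle\,\d\mu.
\]

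Substituting $\nabla f=-Lv+Av$ into the target right-hand side and expanding (using that $A$ and $A^{-1}$ are symmetric) produces
\[
\int\langle\nabla f,A^{-1}\nabla f\rangle\,\d\mu=\int\langle Lv,A^{-1}Lv\rangle\,\d\mu-2\int\langle Lv,v\rangle\,\d\mu+\int\langle v,Av\rangle\,\d\mu.
\]
Componentwise integration by parts yields $-\int\langle Lv,v\rangle\,\d\mu=\int\|\nabla^2 h\|_{HS}^2\,\d\mu$. Subtracting the variance representation above cleanly gives
\[
\int\langle\nabla f,A^{-1}\nabla f\rangle\,\d\mu-\textnormal{Var}_\mu(f)=\int\langle Lv,A^{-1}Lv\rangle\,\d\mu+\int\|\nabla^2 h\|_{HS}^2\,\d\mu\geq 0,
\]
where the nonnegativity of the first term uses $A^{-1}\succ 0$. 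This is exactly the desired inequality.

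The main obstacle is analytical rather than algebraic: one must ensure that $h$, $v=\nabla h$, and $\nabla^2 h$ possess enough integrability and decay against $\mu$ to legitimize each integration by parts and the Bochner identity above. This is standard under uniform convexity via Witten-Laplacian theory and elliptic regularity for the operator $L$ on weighted Sobolev spaces. Propagating the inequality to the genuinely strictly convex setting relies on the $\epsilon$-approximation described in the first paragraph together with the observation that, for $f\in C_c^\infty$, the moments of $f$ and $\nabla f$ and of $A^{-1}\nabla f$ against $\mu_\epsilon$ are uniformly controlled for small $\epsilon$. Finally, extending from $C_c^\infty$ to general $f\in H^2_\mu(\mathbb{R}^n)$ is a routine density argument combined with dominated convergence on both sides of the inequality.
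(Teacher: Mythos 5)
The paper offers no proof of this statement at all: it is imported as a classical result with a citation to Brascamp--Lieb, so there is nothing internal to compare against. Your argument is therefore a genuine self-contained addition, following the standard Bakry--\'Emery/Helffer--Sj\"ostrand route, and its algebraic core checks out. With $-Lh=f$, $v=\nabla h$, $A=\nabla^2\mathscr{W}_\mu$, the commutation $\nabla L=L\nabla-A\nabla$, the integrated Bochner identity $\int\Gamma_2(h)\,\d\mu=\int (Lh)^2\,\d\mu=\textnormal{Var}_\mu(f)$ combined with $\Gamma_2(h)=\|\nabla^2 h\|_{HS}^2+\langle v,Av\rangle$, and the expansion of $\int\langle\nabla f,A^{-1}\nabla f\rangle\,\d\mu$ after substituting $\nabla f=-Lv+Av$ do produce the nonnegative remainder $\int\langle Lv,A^{-1}Lv\rangle\,\d\mu+\int\|\nabla^2 h\|_{HS}^2\,\d\mu$, which is exactly the inequality. (An equivalent shortcut, had you wanted it, is the Helffer--Sj\"ostrand representation $\textnormal{Var}_\mu(f)=\langle\nabla f,(-L+A)^{-1}\nabla f\rangle_{L^2(\mu)}$ together with operator monotonicity of the inverse, which packages the same computation.)

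Two caveats. First, the asserted identity $\textnormal{Var}_\mu(f)=\int f h\,\d\mu=\int|\nabla h|^2\,\d\mu$ is false: for mean-zero $f$, $\int f h\,\d\mu=\langle f,(-L)^{-1}f\rangle_{L^2(\mu)}$, which is not $\int f^2\,\d\mu$ unless $f$ is an eigenfunction. Fortunately this line is never used --- the variance representation your computation actually relies on is the correct one through $\Gamma_2$ --- so it should simply be removed. Second, the analytic scaffolding is only sketched and deserves care: strict convexity of a $C^2$ potential does not give $\nabla^2\mathscr{W}_\mu\succ 0$ pointwise, so $A^{-1}$ may fail to exist at some points (your $\epsilon$-regularization with monotone convergence does repair this, reading the right-hand side as possibly infinite); and since $\mathscr{W}_\mu$ is only $C^2$, the Poisson solution $h$ has third derivatives only in a weak $W^{3,p}_{\mathrm{loc}}$ sense, so the commutator identity and the componentwise integration by parts must be justified at that regularity (or after additionally mollifying the potential). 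These are standard repairs consistent with the obstacles you yourself flag, and they do not affect the soundness of the approach.
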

\begin{theorem}[Cramer-Rao inequality \cite{saumard2014log}]\label{thm: CR inequality}
Let $\mu(u)=\exp(-\mathscr{W}_\mu(u))$ be a probability measure on $\mathbb{R}^n$, where $\mathscr{W}_\mu\in C^2(\mathbb{R}^n)$ is strictly convex. Then, for all $f\in H_{\mu}^2(\mathbb{R}^n)$, it holds that
\begin{align*}
\int|f(u)|^2\mu(u)\d u-&\left(\int  f(u)\mu(u)\d u\right)^2\geq 
\\
&\left\langle \int \nabla f(u) \mu(u)\d u, \left[\int \nabla^2 \mathscr{W}_\mu(u) \mu(u) \d u \right]^{-1} \int \nabla f(u) \mu(u) \d u \right\rangle\, .
\end{align*}
\end{theorem}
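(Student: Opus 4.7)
The plan is to establish the inequality via the classical duality/Cauchy--Schwarz argument that identifies $\int \nabla^2 \mathscr{W}_\mu \d\mu$ with the Fisher information $\int \nabla \mathscr{W}_\mu (\nabla \mathscr{W}_\mu)^\top \d\mu$, and then extracts a matrix lower bound on the variance of $f$ by testing against arbitrary direction vectors. Without loss of generality I may subtract the mean, i.e.\ replace $f$ with $f - \bar f$ where $\bar f = \int f \d\mu$, since doing so leaves both sides of the claimed inequality unchanged: the variance on the left is invariant under constant shifts, and on the right $\int \nabla f \d\mu$ is unaffected.

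The first key step is an integration-by-parts identity. Since $\mu = \exp(-\mathscr{W}_\mu)$, we have $\nabla \mu = -\nabla \mathscr{W}_\mu \, \mu$, so for any smooth $g$ with adequate decay, $\int \nabla g \d\mu = \int g \, \nabla \mathscr{W}_\mu \d\mu$. Applied to $g = f - \bar f$ (using $\int \nabla \mathscr{W}_\mu \d\mu = 0$, which follows from $\int \nabla \mu \d u = 0$ at the boundary), this gives
\begin{equation*}
\int \nabla f \d\mu = \int (f - \bar f)\, \nabla \mathscr{W}_\mu \d\mu.
\end{equation*}
Applied instead componentwise to $g = \partial_i \mathscr{W}_\mu$, it gives the Fisher information identity
\begin{equation*}
\int \nabla^2 \mathscr{W}_\mu \d\mu = \int \nabla \mathscr{W}_\mu (\nabla \mathscr{W}_\mu)^\top \d\mu =: M,
\end{equation*}
and strict convexity of $\mathscr{W}_\mu$ implies $M \succ 0$, so $M^{-1}$ is well defined.

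Next, for any test vector $v \in \mathbb{R}^n$, I will apply the Cauchy--Schwarz inequality in $L^2_\mu$ to the scalar identity
\begin{equation*}
\langle v, \int \nabla f \d\mu \rangle = \int (f - \bar f) \, \langle v, \nabla \mathscr{W}_\mu \rangle \d\mu,
\end{equation*}
obtaining
\begin{equation*}
\bigl|\langle v, \tfrac{}{}\textstyle\int \nabla f \d\mu \rangle\bigr|^2 \leq \mathrm{Var}_\mu(f) \cdot v^\top M v.
\end{equation*}
Setting $w := \int \nabla f \d\mu$ and choosing $v = M^{-1} w$ yields $(w^\top M^{-1} w)^2 \leq \mathrm{Var}_\mu(f) \cdot w^\top M^{-1} w$, which after dividing by $w^\top M^{-1} w$ (or noting the inequality holds trivially when this quantity vanishes) gives exactly the claimed bound.

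The only real obstacle is the justification of the boundary terms vanishing during integration by parts, since $\mathscr{W}_\mu$ is assumed only strictly convex and $C^2$, but no explicit growth rate is imposed. One typically resolves this by a standard truncation/density argument: first establish the inequality on compactly supported smooth $f$ (where all boundary terms vanish trivially for strictly convex $\mathscr{W}_\mu$ which forces $\mu$ to have superpolynomial decay) and then extend to $H^2_\mu(\mathbb{R}^n)$ by density, using continuity of both sides of the inequality in the $H^1_\mu$-topology. Once this is in place, the Cauchy--Schwarz and matrix-optimization steps are elementary.
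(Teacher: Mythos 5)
The paper itself does not prove this statement: it is imported verbatim from \cite{saumard2014log} and used as a black box in the proof of \Cref{prop: log concavity smoothness result} (via \Cref{thm: hessian bounds}), so there is no internal proof to compare against. Judged on its own, your argument is the standard score-function proof and it is essentially correct: centering $f$, the identity $\int \nabla f\,\d\mu=\int (f-\bar f)\,\nabla\mathscr{W}_\mu\,\d\mu$, the Fisher-information identity $\int\nabla^2\mathscr{W}_\mu\,\d\mu=\int\nabla\mathscr{W}_\mu\nabla\mathscr{W}_\mu^{\top}\,\d\mu=:M$, Cauchy--Schwarz applied to $\langle v,\nabla\mathscr{W}_\mu\rangle$, and the choice $v=M^{-1}w$ with $w=\int\nabla f\,\d\mu$ deliver exactly the stated bound; strict convexity does give $M\succ 0$ (if $v^{\top}Mv=0$ then $\langle v,\nabla\mathscr{W}_\mu\rangle\equiv 0$, so $\mathscr{W}_\mu$ is affine along $v$, forcing $v=0$), and the degenerate case $w=0$ is trivial as you note.

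Two points deserve tightening. First, your truncation/density plan only addresses the integration by parts involving the test function $f$; the Fisher identity is an integration by parts applied to $\nabla\mathscr{W}_\mu$ itself, and compact support of $f$ does not help there. It does go through: integrate by parts over balls $B_R$, observe that $\int|\nabla\mathscr{W}_\mu|e^{-\mathscr{W}_\mu}\,\d u=\int|\nabla e^{-\mathscr{W}_\mu}|\,\d u<\infty$ because a log-concave density has finite total variation, so the flux terms vanish along a subsequence $R_k\to\infty$, and applying the computation to the nonnegative quadratic forms $v^{\top}\nabla^2\mathscr{W}_\mu v$ and $\langle v,\nabla\mathscr{W}_\mu\rangle^2$ with monotone convergence yields the identity in the quadratic-form sense, which is all your argument uses (the same remark justifies $\int\nabla\mathscr{W}_\mu\,\d\mu=0$). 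Second, the statement implicitly requires $\int\nabla^2\mathscr{W}_\mu\,\d\mu$ to be finite for the right-hand side to make sense; this is assumed (not derived) here, exactly as in the cited reference, and is worth stating. A cosmetic remark: strict convexity plus integrability forces exponential, not merely superpolynomial, tail decay of $\mu$, which is in fact what your boundary-term discussion needs.
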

As a useful corollary of these theorems, we derive the following Hessian bounds, which will later be used to prove \Cref{prop: log concavity smoothness result}.
\begin{theorem}[Hessian bounds]\label{thm: hessian bounds}
Let $\mu(x)=\mu(u,u')=\exp(-\mathscr{W}_\mu(u,u'))$ with $u\in\mathbb{R}^n$, $u'\in\mathbb{R}^{n'}$, $x\in \mathbb{R}^{d}$ and $n+n'=d$. Suppose $\mathscr{W}_\mu\in C^2(\mathbb{R}^d)$ is strictly convex. Define the marginal density $\rho(u)=\exp(-\mathscr{W}_\rho(u))$, where
$\mathscr{W}_\rho(u)=\int\exp(-\mathscr{W}_\mu(u,u'))\d u'$. Then, it holds that:
\begin{align}\label{eq: BL hessian bound}
\nabla_u^2\mathscr{W}_\rho(u)\succeq\frac{\int(\nabla_u^2 \mathscr{W}_\mu(u,u')-\nabla_{uu'}^2 \mathscr{W}_\mu(u,u')(\nabla_{u'}^2 \mathscr{W}_\mu(u,u'))^{-1}\nabla_{u'u}^2 \mathscr{W}_\mu(u,u'))\mu(u,u')\d u'}{\int\mu(u',u)\d u'}\, ,
\end{align}
and
\begin{align}\label{eq: CR hessian bound}
\begin{split}
\nabla_u^2 &\mathscr{W}_\rho(u)\preceq\frac{\int \nabla^2_{u}\mathscr{W}_\mu(u,u') \mu(u,u')\d u'}{\int \mu(u,u')\d u'}-\\
&\frac{\int \nabla^{2}_{u' u}\mathscr{W}_\mu(u,u') \mu(u,u')\d u' \cdot (\int \nabla^{2}_{u'}\mathscr{W}_\mu(u,u') \mu(u,u')\d u)^{-1} \cdot \int \nabla^{2}_{u u'}\mathscr{W}_\mu(u,u') \mu(u,u')\d u' }{\int \mu(u,u')\d u'}\, .
\end{split}
\end{align}
\end{theorem}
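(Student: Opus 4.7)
The strategy is to first derive an explicit identity for $\nabla_u^2 \mathscr{W}_\rho(u)$ in terms of conditional moments of derivatives of $\mathscr{W}_\mu$, and then apply the scalar Brascamp--Lieb and Cram\'er--Rao inequalities coordinate-wise to the vector-valued observable $u' \mapsto \nabla_u \mathscr{W}_\mu(u,u')$ under the conditional measure $\mu(\cdot\mid u) \propto \exp(-\mathscr{W}_\mu(u,\cdot))$. The BL inequality will upper bound a covariance and thus lower bound $\nabla_u^2 \mathscr{W}_\rho$, while CR will lower bound the same covariance and thus upper bound $\nabla_u^2 \mathscr{W}_\rho$.

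First, I would set $Z(u) := \int \exp(-\mathscr{W}_\mu(u,u'))\,\mathrm{d}u'$, so $\mathscr{W}_\rho(u) = -\log Z(u)$, and differentiate twice under the integral sign (justified by strict convexity of $\mathscr{W}_\mu$ and standard dominated-convergence arguments). A direct computation gives
\begin{equation*}
\nabla_u \mathscr{W}_\rho(u) \;=\; \mathbb{E}_{\mu(\cdot\mid u)}\!\bigl[\nabla_u \mathscr{W}_\mu(u,\cdot)\bigr],
\end{equation*}
and a second differentiation, using $\nabla_u \mu(u'\mid u) = \mu(u'\mid u)\bigl(\nabla_u \mathscr{W}_\rho(u) - \nabla_u \mathscr{W}_\mu(u,u')\bigr)$, yields the key identity
\begin{equation*}
\nabla_u^2 \mathscr{W}_\rho(u) \;=\; \mathbb{E}_{\mu(\cdot\mid u)}\!\bigl[\nabla_u^2 \mathscr{W}_\mu(u,\cdot)\bigr] \;-\; \mathrm{Cov}_{\mu(\cdot\mid u)}\!\bigl[\nabla_u \mathscr{W}_\mu(u,\cdot)\bigr].
\end{equation*}
All three objects are well defined and the potential of the conditional $\mu(\cdot\mid u)$ (as a function of $u'$ with $u$ frozen) has Hessian $\nabla_{u'}^2 \mathscr{W}_\mu(u,u')$, which is strictly positive by hypothesis.

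Next, for the lower bound \eqref{eq: BL hessian bound}, I would apply Theorem~\ref{thm: BL inequality} to the measure $\mu(\cdot\mid u)$ on $\mathbb{R}^{n'}$ and the scalar test function $f_v(u') := \langle v, \nabla_u \mathscr{W}_\mu(u,u')\rangle$ for an arbitrary $v\in \mathbb{R}^n$. Since $\nabla_{u'} f_v = \nabla_{u'u}^2 \mathscr{W}_\mu\, v$, Brascamp--Lieb gives
\begin{equation*}
v^\top \mathrm{Cov}_{\mu(\cdot\mid u)}\!\bigl[\nabla_u \mathscr{W}_\mu\bigr] v \;\leq\; v^\top \mathbb{E}_{\mu(\cdot\mid u)}\!\bigl[\nabla_{uu'}^2 \mathscr{W}_\mu\,(\nabla_{u'}^2 \mathscr{W}_\mu)^{-1}\, \nabla_{u'u}^2 \mathscr{W}_\mu\bigr] v,
\end{equation*}
and since $v$ was arbitrary this is a PSD inequality between matrices. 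Substituting into the key identity and rewriting expectations with respect to $\mu(\cdot\mid u)$ as ratios $\int(\cdot)\mu(u,u')\mathrm{d}u' / \int \mu(u,u')\mathrm{d}u'$ produces exactly \eqref{eq: BL hessian bound}.

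Finally, for the upper bound \eqref{eq: CR hessian bound}, I would apply Theorem~\ref{thm: CR inequality} to the same $f_v$: using $\mathbb{E}_{\mu(\cdot\mid u)}[\nabla_{u'} f_v] = \mathbb{E}[\nabla_{u'u}^2 \mathscr{W}_\mu]\, v$ and $\mathbb{E}_{\mu(\cdot\mid u)}[\nabla_{u'}^2 \mathscr{W}_\mu(u,\cdot)] = \mathbb{E}[\nabla_{u'}^2 \mathscr{W}_\mu]$, Cram\'er--Rao gives the PSD lower bound
\begin{equation*}
\mathrm{Cov}_{\mu(\cdot\mid u)}\!\bigl[\nabla_u \mathscr{W}_\mu\bigr] \;\succeq\; \mathbb{E}\!\bigl[\nabla_{uu'}^2 \mathscr{W}_\mu\bigr]\, \bigl(\mathbb{E}\!\bigl[\nabla_{u'}^2 \mathscr{W}_\mu\bigr]\bigr)^{-1}\, \mathbb{E}\!\bigl[\nabla_{u'u}^2 \mathscr{W}_\mu\bigr],
\end{equation*}
and inserting this into the key identity gives \eqref{eq: CR hessian bound} after the same rewriting. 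The only mild obstacle is the vector-valued extension of BL/CR, which is handled cleanly by the standard $\langle v,\cdot\rangle$ reduction, together with justifying the differentiation under the integral via the assumed $C^2$-regularity and strict convexity of $\mathscr{W}_\mu$ (which supplies the integrability needed for all moments appearing above).
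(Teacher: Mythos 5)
Your proposal is correct and follows essentially the same route as the paper: differentiate $\mathscr{W}_\rho=-\log\int e^{-\mathscr{W}_\mu(u,u')}\d u'$ twice to get the identity $\nabla_u^2\mathscr{W}_\rho=\mathbb{E}_{\mu(\cdot\mid u)}[\nabla_u^2\mathscr{W}_\mu]-\mathrm{Cov}_{\mu(\cdot\mid u)}[\nabla_u\mathscr{W}_\mu]$, then bound the covariance term from above by Brascamp--Lieb (giving \eqref{eq: BL hessian bound}) and from below by Cram\'er--Rao (giving \eqref{eq: CR hessian bound}). Your explicit reduction of the vector-valued observable to scalar test functions $f_v=\langle v,\nabla_u\mathscr{W}_\mu\rangle$ is a slightly more careful rendering of the same step the paper applies directly.
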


\begin{proof}
The lower bound \eqref{eq: BL hessian bound} is a direct consequence of \cite[Thm. A.2]{pathiraja2021mckean}, which references the original result of \cite[Thm. 4.2]{brascamp1976extensions}. By differentiating $\mathscr{W}_\rho(u)=\int\exp(-\mathscr{W}_\mu(u,u'))\d u'$ twice with respect to $u$, we obtain the following:
\begin{align}\label{eq: differentiation BL CR corollary}
\begin{split}
\nabla^2_{u}\mathscr{W}_\rho(u)&= \frac{\int \nabla^2_{u}\mathscr{W}_\mu(u,u')\mu(u,u')\d u'}{\int \mu(u,u')\d u'} \\
&-\left( \frac{\int (\nabla_{u}\mathscr{W}_\mu(u,u'))^2 \mu(u,u')\d u'}{\int \mu(u,u')\d u'}- \frac{(\int \nabla_{u}\mathscr{W}_\mu(u,u') \mu(u',u)\d u')^2}{(\int \mu(u',u) \d u')^2}\right)
\end{split}
\end{align}
The lower bound follows by applying the Brascamp-Lieb inequality \Cref{thm: BL inequality} to the variance term in \eqref{eq: differentiation BL CR corollary}, considering the probability measure 
\begin{equation*}
    \widetilde \mu(u,u')=\frac{\mu(u,u')}{\int \mu(u,u') \d u'}, \quad f(u,u')=\nabla_u \mathscr{W}_\mu(u,u').
\end{equation*} 
Similarly, the upper bound \eqref{eq: CR hessian bound} is obtained by applying the Cramér-Rao inequality \Cref{thm: CR inequality} to the same variance term, yielding the desired result.
\end{proof}

Next, we recall the following elementary result: the log-concavity and smoothness of a measure pushed forward by an affine map is preserved.
\begin{lemma}\label{lem: affine map log concavity}
Let $\mu(u)\propto\exp(-\mathscr{W}_{\mu}(u))$ with $0\prec \sigma_{\min}(\mathscr{W}_{\mu})I\preceq\nabla^2 \mathscr{W}_{\mu}\preceq \sigma_{\max}(\mathscr{W}_{\mu})I$ and $T(u)=Q u+b$ with $0\prec \sigma_{\min}(Q) I\preceq Q\preceq \sigma_{\max}(Q) I$ . Then, $(T\# \mu)(u)\propto\exp\left(-\mathscr{W}_{T\# \mu}(u)\right)$ with $\frac{\sigma_{\min}(\mathscr{W}_{\mu})}{\sigma_{\max}^2(Q)}I\preceq \nabla^2 \mathscr{W}_{T\#\mu} \preceq \frac{\sigma_{\max}(\mathscr{W}_{\mu})}{\sigma_{\min}^2(Q)}I$.
\end{lemma}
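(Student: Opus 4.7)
The plan is to reduce the statement to a direct change-of-variables computation for the density of $T\#\mu$ followed by a spectral bound on the resulting Hessian. Since $T(u)=Qu+b$ is affine with invertible $Q$ (invertibility follows from the assumption $\sigma_{\min}(Q)>0$), I would first apply the classical change-of-variables formula to write
\begin{equation*}
(T\#\mu)(v) \;=\; \frac{1}{|\det Q|}\,\mu(Q^{-1}(v-b)) \;\propto\; \exp\bigl(-\mathscr{W}_{\mu}(Q^{-1}(v-b))\bigr),
\end{equation*}
so that, up to an additive constant that is immaterial for the pushforward density,
\begin{equation*}
\mathscr{W}_{T\#\mu}(v) \;=\; \mathscr{W}_{\mu}\bigl(Q^{-1}(v-b)\bigr).
\end{equation*}

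Next I would differentiate twice using the chain rule: because $Q^{-1}$ is constant in $v$, one obtains cleanly
\begin{equation*}
\nabla^{2}\mathscr{W}_{T\#\mu}(v) \;=\; Q^{-\top}\,\nabla^{2}\mathscr{W}_{\mu}\bigl(Q^{-1}(v-b)\bigr)\,Q^{-1}.
\end{equation*}
The remainder is purely a spectral estimate. Setting $u=Q^{-1}(v-b)$ and $H=\nabla^{2}\mathscr{W}_{\mu}(u)$, the hypothesis gives $\sigma_{\min}(\mathscr{W}_{\mu})I \preceq H \preceq \sigma_{\max}(\mathscr{W}_{\mu})I$. For any $x\in\mathbb{R}^{n}$, writing $w=Q^{-1}x$ I would use
\begin{equation*}
x^{\top}Q^{-\top}HQ^{-1}x \;=\; w^{\top}Hw,
\end{equation*}
which, combined with $\|w\|^{2}\in[\sigma_{\min}^{2}(Q^{-1})\|x\|^{2},\sigma_{\max}^{2}(Q^{-1})\|x\|^{2}]$ and the identities $\sigma_{\min}(Q^{-1})=1/\sigma_{\max}(Q)$ and $\sigma_{\max}(Q^{-1})=1/\sigma_{\min}(Q)$, yields
\begin{equation*}
\frac{\sigma_{\min}(\mathscr{W}_{\mu})}{\sigma_{\max}^{2}(Q)}\,\|x\|^{2} \;\leq\; x^{\top}\,Q^{-\top}HQ^{-1}\,x \;\leq\; \frac{\sigma_{\max}(\mathscr{W}_{\mu})}{\sigma_{\min}^{2}(Q)}\,\|x\|^{2}.
\end{equation*}
Since $x$ and the base point $v$ are arbitrary, this is precisely the claimed two-sided Hessian inequality for $\mathscr{W}_{T\#\mu}$, and in particular positivity of the lower bound confirms that $T\#\mu$ is strongly log-concave.

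The argument is short and elementary; the only point that requires minor care is the spectral bookkeeping when $Q$ is not assumed symmetric, which I handle by interpreting $\sigma_{\min}(Q),\sigma_{\max}(Q)$ as singular value bounds and conjugating through $Q^{-1}$ and $Q^{-\top}$ as above (if $Q$ is symmetric positive definite the inequalities become immediate). I do not anticipate a serious obstacle, since the sandwich $Q^{-\top}HQ^{-1}$ is handled term by term without needing commutativity of $H$ and $Q$.
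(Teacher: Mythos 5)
Your proposal is correct and follows essentially the same route as the paper's proof: change of variables to identify $\mathscr{W}_{T\#\mu}(v)=\mathscr{W}_{\mu}(Q^{-1}(v-b))$ up to an additive constant, chain rule to get $\nabla^2\mathscr{W}_{T\#\mu}=Q^{-\top}\,\nabla^2\mathscr{W}_{\mu}\,Q^{-1}$, and then the two-sided spectral sandwich. The only cosmetic difference is that the paper's hypothesis $\sigma_{\min}(Q)I\preceq Q\preceq\sigma_{\max}(Q)I$ already treats $Q$ as symmetric positive definite, so your extra care about non-symmetric $Q$ via singular values is a harmless (and slightly more general) refinement.
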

\begin{proof}
Since $Q$ is positive definite, it is also invertible. By change of variable formula, we get
\begin{align*}
(T\#\mu)(u)= \frac{\mu(Q^{-1}(u-b))}{|\textnormal{det}(Q)|}\propto\exp\big(- \mathscr{W}_{T\# \mu}(u)\big),
\end{align*}
with potential
\begin{align*}
\mathscr{W}_{T\# \mu}(u) = \mathscr{W}_{\mu}(Q^{-1}(u-b))-\log|\det (Q)|.
\end{align*}
Differentiating twice,
\begin{align*}
\nabla^2 \mathscr{W}_{T\# \mu}(u)
= (Q^{-1})^{T}\Big[\nabla^2 \mathscr{W}_{ \mu}(Q^{-1}(u-b)\Big]Q^{-1}\, ,
\end{align*}
and consequently
\begin{align*}
\frac{\sigma_{\min}(\mathscr{W}_{\mu})}{\sigma_{\max}^2(Q)}I\preceq \nabla^2 \mathscr{W}_{T\#\mu} \preceq \frac{\sigma_{\max}(\mathscr{W}_{\mu})}{\sigma_{\min}^2(Q)}\, .
\end{align*}
\end{proof}
Having presented the necessary preliminary results, we can now dive into the calculation behind the statement of~\Cref{prop: log concavity smoothness result}.
\begin{proof}[Proof of \Cref{prop: log concavity smoothness result}]
From the filtering update equations in \eqref{eq: Filtering eqs}, we have:
    \begin{align*}
    \widehat \eta_\U^1(u)=\mathcal{A}\widehat \pi_{0}=\mathcal{A} \pi_{0}&\propto \int_{\U} \exp(- a (u|u')-\mathscr{W}_{\pi_0}(u'))\d u' \coloneqq \exp(-\mathscr{W}_{\widehat \eta_\U^1}(u))
    \end{align*}
Recalling the notation introduced in~\eqref{eq: update functions}-\eqref{eq: recursive relations gamma}-\eqref{eq: Hessian defined for measures}, by \Cref{thm: hessian bounds} (on the joint pair $x=(u,u')$) and the Hessian boundedness assumptions of (i)-(iii), the lower and upper bounds on the Hessian of $\mathscr{W}_{\widehat \eta_\U^1}$ follow:
\begin{align*}
    \gamma_1 I=m(\sigma_{\min}(\mathscr{W}_{\pi_0}))I\preceq \nabla_u^2 \mathscr{W}_{\widehat \eta_\U^1}\preceq
    M(\sigma_{\max}(\mathscr{W}_{\pi_0}))I=\Gamma_1 I.
\end{align*}
Next, we recall that
\begin{align*}
\widehat \pi_1 = \nabla_u\widehat \phi_1(Y_1,\cdot)\#\eta_\U^1\, ,
\end{align*}
and, by~\Cref{lem: affine map log concavity}, we have that $\widehat \pi_1(u) =\exp(-\mathscr{W}_{\wh \pi_1}(u))$ with the Hessian bound
\begin{align*}
\frac{\gamma_1}{\sigma_{\max}^2(Q)} I\preceq \nabla_u^2 \mathscr{W}_{\widehat \pi_1}\preceq \frac{\Gamma_1}{\sigma_{\min}^2(Q)} I\, .
\end{align*}
Consequently, again by \Cref{thm: hessian bounds},
\begin{align*}
\gamma_2 I=m\left(\frac{\gamma_1}{\sigma_{\max}^2(Q)}\right)I\preceq \nabla_u^2 \mathscr{W}_{\widehat \eta_\U^2}\preceq M\left(\frac{\Gamma_1}{\sigma_{\min}^2(Q)}\right) I=\Gamma_2 I.
\end{align*}
Repeating this procedure in time yields the first string of inequalities in~\eqref{eq: Hessian bounds for measures of interest}:
\begin{align*}
\gamma_t I=m\left(\frac{\gamma_{t-1}}{\sigma_{\max}^2(Q)}\right)I\preceq \nabla_u^2 \mathscr{W}_{\widehat \eta_\U^{\tau}}\preceq M\left(\frac{\Gamma_{t-1}}{\sigma_{\min}^2(Q)}\right) I=\Gamma_t I.
\end{align*}
Next, we have that
\begin{align*}
\widehat \nu^t(y,u)&= \exp(-h(y\mid u))\widehat \eta_{\U}^t (u)
= \int_{\U} \exp(- h(y \mid u)- a (u|u')-\mathscr{W}_{\wh \pi_{t-1}}(u'))\d u' 
\\
&\coloneqq \exp(-\mathscr{W}_{\wh \nu^t}(y,u))\, .
\end{align*}
By a further application of~\Cref{thm: hessian bounds} (this time on the joint block $x=((y,u),u')$) and the Hessian boundedness assumptions of (i)-(iii) one also obtains the second recursive relation of \eqref{eq: Hessian bounds for measures of interest}:
\begin{align*}
\left(\theta_{\min}+\gamma_t\right) I \preceq \nabla^2 \mathscr{W}_{\widehat \nu^t}(y,u)\preceq \left(\theta_{\max}+\Gamma_\tau\right) I \, .
\end{align*}
In order to get the just derived Hessian bounds to remain bounded uniformly in time, the additional condition~\eqref{eq: conditions for fixed point} ensures that $|m'(\cdot/\sigma_{\max}^2(Q))|, |M'(\cdot/\sigma_{\min}^2(Q))|\leq 1$. As a consequence, one can calculate the fixed points of the contractions $m(\cdot/\sigma_{\max}^2(Q)),M(\cdot/\sigma_{\min}^2(Q))$ to bound
\begin{align}\label{eq: bounds uniform in time}
\begin{split}
\min\{\gamma^*,\sigma_{\min}(\mathscr{W}_{\pi_0})\} I&\preceq \mathscr{W}_{\widehat \eta_\U^t}\preceq \max\{\Gamma^*, \sigma_{\max}(\mathscr{W}_{\pi_0})\} I\, ,\qquad \forall \tau>0\, ,\\
\left(\theta_{\min}+\min\{\gamma^*,\sigma_{\min}(\mathscr{W}_{\pi_0})\} \right)I&\preceq \mathscr{W}_{\widehat \nu^t}\preceq \left(\theta_{\max}+\max\{\Gamma^*, \sigma_{\max}(\mathscr{W}_{\pi_0})\}\right)I\, ,
\end{split}
\end{align}
for any $\tau>0$, where
\begin{align*}
    \gamma^* &= \frac{\sigma_{\min}( a_u) - \sigma_{\max}^2(Q)\sigma_{\min}( a_{u'})}{2\sigma_{\max}^2(Q)}\\ 
    &+\frac{\sqrt{(\sigma_{\max}^2(Q)\sigma_{\min}( a_{u'}) - \sigma_{\min}( a_u))^2 
    + 4\sigma_{\max}^2(Q)(\sigma_{\min}( a_u)\sigma_{\min}( a_{u'}) - \sigma_{\max}^2( a_{uu'}))}}{2\sigma_{\max}^2(Q)},
\end{align*}
and
\begin{align*}
    \Gamma^* &= \frac{\sigma_{\max}( a_u) - \sigma_{\min}^2(Q)\sigma_{\max}( a_{u'})}{2\sigma_{\min}^2(Q)}\\ 
    &+\frac{\sqrt{(\sigma_{\min}^2(Q)\sigma_{\max}(a_{u'}) - \sigma_{\max}(a_u))^2 
    + 4\sigma_{\min}^2(Q)(\sigma_{\max}(a_u)\sigma_{\max}(a_{u'}) - \sigma_{\min}^2(a_{uu'}))}}{2\sigma_{\min}^2(Q)}\, ,
\end{align*}
are the solutions of $m(\gamma^*)=\gamma^*$ and $M(\Gamma^*)=\Gamma^*$.
\end{proof}

\section{Regularity of optimal transport maps}
In this section, we briefly recall a few important results on the regularity of optimal transport maps used in this paper. We first recall a result for log-concave OT transport.
\begin{theorem}[\cite{caffarelli2000monotonicity}]
\label{thm: caffarelli contraction thrm}
    If the reference and target measure are in the forms $\eta = \exp(-F) \d u$ and $\nu = \exp(-G) \d u$ with $\nabla^{2} F \preceq \beta_{F} I$, $\nabla^2 G \succeq \alpha_{G}I \succ 0$. Then the Brenier map $T^\dagger$ pushing forward $\eta$ to $\nu$
    has the form $T^\dagger  = \nabla \phi^{\dagger}$ where 
     $\nabla^2 \phi^\dagger \preceq \sqrt{\beta_{F}/\alpha_{G}} I$.
\end{theorem}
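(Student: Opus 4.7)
The plan is to follow Caffarelli's classical maximum-principle argument, adapted to this two-sided log-concavity setting. By Brenier's theorem, the hypotheses yield a unique convex $\phi^\dagger$ (up to constants) with $\nabla\phi^\dagger \# \eta = \nu$, solving the Monge--Amp\`ere equation $\det(\nabla^2 \phi^\dagger) = \eta/(\nu \circ \nabla\phi^\dagger)$. Taking logarithms gives
\begin{equation*}
\log\det(\nabla^2 \phi^\dagger(u)) = G(\nabla\phi^\dagger(u)) - F(u).
\end{equation*}
To make the subsequent calculus rigorous, I would first reduce to smooth $F, G$ on bounded strictly convex domains with densities bounded above and below; Caffarelli's interior regularity theorem then ensures $\phi^\dagger \in C^\infty$, and the bound for the original measures follows by approximation using stability of Brenier maps under weak convergence of the marginals.

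In the smooth setting, write $A(u) := \nabla^2 \phi^\dagger(u)$. Differentiating the log Monge--Amp\`ere equation in a unit direction $v$ gives $\mathrm{tr}(A^{-1} \partial_v A) = \langle \nabla G(\nabla\phi^\dagger), A v \rangle - \partial_v F$, and a second differentiation produces
\begin{equation*}
\mathrm{tr}(A^{-1} \partial_v^2 A) - \mathrm{tr}\bigl((A^{-1}\partial_v A)^2\bigr) = \langle \nabla^2 G(\nabla\phi^\dagger) Av, Av\rangle + \langle \nabla G(\nabla\phi^\dagger), \partial_v^2 \nabla\phi^\dagger \rangle - \partial_v^2 F.
\end{equation*}
Let $\lambda(u) := \lambda_{\max}(A(u))$ and suppose it is maximized at an interior point $u^*$ with unit eigenvector $v$. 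At $u^*$ one has $\partial_v \lambda = 0$ and $\partial_v^2 \lambda \leq 0$; combining this with the identity above, using $\nabla^2 G \succeq \alpha_G I$ to bound $\langle \nabla^2 G \cdot Av, Av \rangle \geq \alpha_G \lambda^2$, $\nabla^2 F \preceq \beta_F I$ to bound $\partial_v^2 F \leq \beta_F$, and exploiting the non-positivity of the $-\mathrm{tr}((A^{-1}\partial_v A)^2)$ term (concavity of $\log\det$), the cross term involving $\nabla G$ cancels against the first-derivative identity and one is left with the scalar inequality $\alpha_G \lambda(u^*)^2 \leq \beta_F$. This yields $\lambda \leq \sqrt{\beta_F/\alpha_G}$ on the whole domain, i.e.\ $\nabla^2 \phi^\dagger \preceq \sqrt{\beta_F/\alpha_G}\, I$.

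The main obstacle is justifying that the maximum of $\lambda$ is actually attained in the interior and that the local calculation can be carried out there. On bounded domains the max may occur on the boundary, requiring a separate boundary estimate; on unbounded domains $\lambda$ could in principle blow up at infinity. I would address both simultaneously with an approximation scheme, mollifying $F, G$ and truncating to an exhausting sequence of bounded strictly convex domains with boundary data engineered so that $\lambda$ is controlled on $\partial\Omega_k$ by a quantity converging to $\sqrt{\beta_F/\alpha_G}$ (for instance by adding a quadratic penalisation that preserves the relevant Hessian bounds and forces the transport near the boundary to be close to affine). Passing to the weak limit and using stability of Brenier maps then transfers the interior bound to the original $\phi^\dagger$. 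The delicate point is verifying the precise algebraic cancellations in the maximum-principle step so that the sharp constant $\sqrt{\beta_F/\alpha_G}$, and not a loose multiple, emerges.
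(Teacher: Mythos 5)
The paper does not prove this result; it is recalled in the appendix and attributed to \cite{caffarelli2000monotonicity} without argument, so there is no in-paper proof to compare against. Your sketch is essentially Caffarelli's original maximum-principle argument on the log Monge--Amp\`ere equation, and the overall route is the right one. Two imprecisions in the pointwise step are worth correcting. First, at the interior maximizer $u^*$ of $\lambda(u) := \lambda_{\max}(\nabla^2\phi^\dagger(u))$ with unit eigenvector $v$, what is actually needed is not merely $\partial_v^2\lambda(u^*)\leq 0$ but the full matrix bound $\nabla^2\bigl(\partial_{vv}\phi^\dagger\bigr)(u^*)\preceq 0$, which holds because $\partial_{vv}\phi^\dagger(u)\leq \lambda(u)\leq\lambda(u^*)=\partial_{vv}\phi^\dagger(u^*)$ shows $u\mapsto\partial_{vv}\phi^\dagger(u)$ is maximized at $u^*$; this gives $\mathrm{tr}\bigl(A^{-1}\nabla^2(\partial_{vv}\phi^\dagger)\bigr)\leq 0$ since $A^{-1}\succ 0$, which is the inequality your LHS requires (via $\partial_v^2 A_{ij}=\partial_{ij}(\partial_{vv}\phi^\dagger)$). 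Second, the cross term $\langle\nabla G(\nabla\phi^\dagger),\partial_v^2\nabla\phi^\dagger\rangle$ does not ``cancel against the first-derivative identity'' --- it simply vanishes at $u^*$ because $\partial_v^2\nabla\phi^\dagger=\nabla(\partial_{vv}\phi^\dagger)$ is zero there by the first-order condition for the maximum. With these two points in place the scalar inequality $\alpha_G\lambda(u^*)^2\leq\beta_F$ drops out as you say. You correctly flag the attainment-of-maximum and regularization steps as the genuinely delicate part of the theorem on unbounded domains; that is indeed where the technical content of Caffarelli's paper lies.
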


Thanks to the particular relation between the inverse of an optimal transport map and its associated potential's convex conjugate, the theorem just presented can be extended to the following corollary.
\begin{corollary}\label{thm: caffarelli contraction thrm plus convexity}
   Consider the above setting with the stronger 
   assumptions that $ \alpha_F I \preceq \nabla^2 F \preceq \beta_F I$
 and $\alpha_G I \preceq \nabla^2 G \preceq \beta_G I$.  
 Then the Brenier potential $\phi^\dagger$ satisfies 
 $\sqrt{\alpha_F / \beta_G} \preceq \nabla^2 \phi^\dagger \preceq \sqrt{\beta_F / \alpha_G} I$. 
\end{corollary}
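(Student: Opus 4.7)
The plan is to obtain the upper and lower bounds separately: the upper bound is a direct application of the previous Caffarelli theorem (\Cref{thm: caffarelli contraction thrm}), while the lower bound follows from applying the same theorem to the \emph{inverse} Brenier map and then inverting the resulting Hessian inequality via convex duality.

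For the upper bound, I simply observe that the hypotheses of \Cref{thm: caffarelli contraction thrm} are met with $\beta_F$ and $\alpha_G$ as in the statement, yielding $\nabla^2 \phi^\dagger \preceq \sqrt{\beta_F/\alpha_G}\, I$ immediately. The work is entirely in the lower bound.

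For the lower bound, the key observation is that, under the assumptions of the corollary, the Brenier map from $\nu$ back to $\eta$ exists and is given by $\nabla \phi^{\dagger *}$, where $\phi^{\dagger *}$ is the Legendre conjugate of $\phi^\dagger$. This follows because $\phi^\dagger$ is strictly convex (being the Brenier potential between two absolutely continuous measures with Lebesgue densities) and $\nabla \phi^{\dagger *} = (\nabla \phi^\dagger)^{-1}$ pointwise $\nu$-almost everywhere. Now swap the roles of reference and target: the source is $\nu = \exp(-G)\,\mathrm{d}u$ with $\nabla^2 G \preceq \beta_G I$ and the target is $\eta = \exp(-F)\,\mathrm{d}u$ with $\nabla^2 F \succeq \alpha_F I$. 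Applying \Cref{thm: caffarelli contraction thrm} in this reversed setup yields
\begin{equation*}
\nabla^2 \phi^{\dagger *} \preceq \sqrt{\beta_G/\alpha_F}\, I\,.
\end{equation*}

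To convert this into a lower bound on $\nabla^2 \phi^\dagger$, I use the standard Legendre duality identity $\nabla^2 \phi^{\dagger *}(\nabla \phi^\dagger(u)) = (\nabla^2 \phi^\dagger(u))^{-1}$, valid wherever $\phi^\dagger$ is twice differentiable with positive definite Hessian; this regularity is guaranteed almost everywhere by Caffarelli's regularity theory under the log-concavity assumptions in place. Inverting the matrix inequality $(\nabla^2 \phi^\dagger(u))^{-1} \preceq \sqrt{\beta_G/\alpha_F}\, I$ (using monotonicity of the map $A \mapsto A^{-1}$ on positive definite matrices) yields $\nabla^2 \phi^\dagger(u) \succeq \sqrt{\alpha_F/\beta_G}\, I$, which is the desired lower bound. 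Combining the two bounds completes the proof.

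The one point requiring care is the justification that $\nabla \phi^{\dagger *}$ is indeed the Brenier map from $\nu$ to $\eta$ and that the pointwise Hessian duality holds on a set of full measure; both are standard consequences of Brenier's and Caffarelli's theorems in the strongly log-concave setting, so I do not expect any genuine technical obstacle beyond invoking these results correctly.
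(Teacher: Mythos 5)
Your proof is correct and follows essentially the same route as the paper: apply \Cref{thm: caffarelli contraction thrm} to the reverse transport $\nabla\phi^{\dagger*}$ to get a smoothness bound on the conjugate, then translate that into a strong-convexity (lower Hessian) bound on $\phi^\dagger$. The only cosmetic difference is that you pass through the pointwise identity $\nabla^2\phi^{\dagger*}(\nabla\phi^\dagger(u)) = (\nabla^2\phi^\dagger(u))^{-1}$ and invert the matrix inequality, whereas the paper invokes the global duality that a $\beta$-smooth convex function has a $1/\beta$-strongly convex conjugate; both are standard and equivalent here.
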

\begin{proof}
The Hessian upper bound is already given by~\Cref{thm: caffarelli contraction thrm}, so we just need to prove the lower bound. Recall that the inverse of the optimal transport map can be expressed as
\begin{align*}
(T^{\dagger})^{-1} = \left(\nabla \phi^\dagger\right)^{-1} = \nabla \left(\phi^{\dagger*}\right)\, ,
\end{align*}
 and it is the optimal transport map achieving the reverse transport from $\nu$ to $\eta$. 
 Then, applying~\Cref{thm: caffarelli contraction thrm} one gets
 \begin{align*}
 \nabla^2 \phi^{\dagger*} \preceq \sqrt{\beta_{G}/\alpha_{F}} I\, .
 \end{align*}
 Moreover, since the convex conjugate of an $\alpha$-strongly convex map is $\beta$-smooth with $\beta=1/\alpha$, we also get that 
 \begin{align*}
\sqrt{\alpha_F/\beta_G} I \preceq \nabla^2 \phi^{\dagger}\, ,
 \end{align*}
 concluding the lower bound.
\end{proof}
\cref{thm: caffarelli contraction thrm} can also be extended to the conditional setting considered in this paper. A direct application of the Caffarelli regularity result in~\cite{caffarelli1992regularity} on the conditional measures implies the following result.
\begin{proposition}
\label{prop:BT-regularity}
Suppose that $\eta$ and $\nu$ are supported on a bounded, convex set
$\Omega\subseteq\Y\times\U$ and admit strictly positive densities in
$C^{k}(\Omega)$.
Define
\begin{align*}
&\Omega_\Y:=\left\{y \mid \exists u\in\U \ : \ (y,u)\in\Omega\right\}\, ,\\
&\Omega_{\U\mid y}:=\left\{u \mid (y,u)\in\Omega\right\}\, .
\end{align*}
Assume further that there exists a version of the conditional
densities $\eta(\cdot\mid y)$ and $\nu(\cdot\mid y)$ that lies in
$C^{k}(\Omega_{\U\mid y})$ for every $y\in\Omega_{Y}$.  
Then the conditional Brenier map $T_{\lowerrighttriangle}^\dag$ exists with component
\[
  T_{\U}(y,\cdot)\in C^{k+1}(\Omega_{\U\mid y})
  \quad\text{for all }y\in\Omega_{Y}.
\]
\end{proposition}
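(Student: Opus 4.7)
The plan is to reduce the statement to a fiberwise application of Caffarelli's classical regularity theorem~\cite{caffarelli1992regularity}. The key observation, already encoded in Lemma~\ref{lem: triangular-maps-can-condition} and Proposition~\ref{prop: solvability of monge + conditional brenier}, is that the $\U$-component $T_\U(y,\cdot)$ of the conditional Brenier map is precisely the quadratic-cost Brenier map from $\eta(\cdot\mid y)$ to $\nu(\cdot\mid y)$ as measures on the fiber $\Omega_{\U\mid y}$. So the regularity question decouples across fibers, and it suffices to check that, for each fixed $y\in\Omega_\Y$, the pair $(\eta(\cdot\mid y),\nu(\cdot\mid y))$ satisfies the hypotheses of Caffarelli's theorem on $\Omega_{\U\mid y}$.

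First I would check the geometric hypotheses on the fibers. Since $\Omega\subseteq\Y\times\U$ is bounded and convex, each slice $\Omega_{\U\mid y}=\{u:(y,u)\in\Omega\}$ is a bounded convex subset of $\U$ (as the intersection of $\Omega$ with the affine subspace $\{y\}\times\U$, projected to $\U$), and it is nonempty for every $y\in\Omega_\Y$ by definition. Next I would check the analytic hypotheses on the conditional densities. Strict positivity of the joint densities on $\Omega$ forces the marginals $\eta_\Y,\nu_\Y$ to be strictly positive on $\Omega_\Y$, and combined with the $C^k$ regularity of the joint densities this yields strictly positive $C^k$ conditional densities on $\Omega_{\U\mid y}$ for every $y\in\Omega_\Y$ (the assumed $C^k$ version of the conditionals in the statement sidesteps any measurability subtleties from disintegration).

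With the hypotheses verified fiberwise, I would invoke Caffarelli's $C^{k,\alpha}$/$C^{k+1}$ regularity theorem for Brenier maps between strictly positive $C^k$ densities on bounded convex domains: the unique Brenier map from $\eta(\cdot\mid y)$ to $\nu(\cdot\mid y)$ is of gradient form $\nabla_u\phi^\dagger(y,\cdot)$ with $\phi^\dagger(y,\cdot)\in C^{k+2}(\Omega_{\U\mid y})$, and in particular $T_\U(y,\cdot)=\nabla_u\phi^\dagger(y,\cdot)\in C^{k+1}(\Omega_{\U\mid y})$. Uniqueness of the Brenier map on each fiber, together with Lemma~\ref{lem: triangular-maps-can-condition}, identifies this fiberwise map with the $\U$-component of the conditional Brenier map, completing the proof.

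The main obstacle I anticipate is not in applying Caffarelli's theorem itself but in being careful that ``a version of the conditional densities lying in $C^{k}(\Omega_{\U\mid y})$ for every $y$'' is enough to legitimately apply a pointwise-in-$y$ regularity result: one must select a genuinely $y$-by-$y$ regular representative of the disintegration, rather than a joint object defined only up to $\nu_\Y$-null sets. This is precisely what the stated hypothesis guarantees, so no further argument is needed, but it is the subtle point that makes the reduction to Caffarelli clean.
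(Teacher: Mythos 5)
The paper states this proposition as a direct citation of \cite{BaptistaBloem-ReddyHosseini:ConditionalSampling}, Cor.~1, and gives no proof of its own; your fiberwise reduction to Caffarelli's regularity theorem \cite{caffarelli1992regularity} is precisely the argument of the cited corollary, and each step (slices of a bounded convex set are bounded and convex, strict positivity and $C^k$ regularity of the joint densities pass to the conditionals, uniqueness identifies the fiberwise Brenier map with the $\U$-component via \Cref{lem: triangular-maps-can-condition} and \Cref{prop: solvability of monge + conditional brenier}) is correct. The one point worth flagging, though it is inherited from the cited result rather than introduced by you, is that Caffarelli's theorem as you invoke it gives interior regularity on each fiber; regularity up to $\partial\Omega_{\U\mid y}$ would need an additional uniform-convexity hypothesis on the slices, and you should be explicit about which version you intend.
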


\section{Summary of OTF algorithms}\label{sm:otf_alg}

\editstart

We summarized the steps of the OTF algorithm with and without the EnKF-based reference as described in \Cref{Sec:Numerical experiments} in~\Cref{alg:otf}.

\begin{algorithm}[htp]
\small
\caption{Optimal transport filter (OTF)}
\begin{algorithmic}%
\STATE \textbf{Input:}
\editstart
Initial particles $\{{x}_{i}\}_{i=1}^N\sim \pi^0$, observation signal $\{Y_{t}\}_{t=1}^{t_f}$, noise generators for $\widetilde V_t, W_t$,
dynamic and observation models $a,h$, learning rates for $\psi,T$ or $\widetilde{T}$, regularization parameters $\lambda_\psi ,\lambda_T$. 
\STATE \textbf{Initialize:} initialize weights for neural nets $\psi$ and $T$ or $\widetilde{T}$. Create  random permutations $\{\sigma_i^t\}_{t,i=1,1}^{t_f,N}$. And set $u^{1}_i = a(x_i,\widetilde V_0^i), \quad y_i^1= h(u_i^1,W_1^i),\quad v^{1}_i = u^{1}_{\sigma_i^1}, \quad \forall i=1,\dots ,N$
\editend

\begin{minipage}[t]{0.42\linewidth}
\STATE \textbf{Without EnKF-based reference}
\FOR{$t = 1$ to $t_f$}
\STATE \textbf{Optimization step:}\\
Update $\theta_T,\theta_\psi$ according to~\eqref{eq:empirical-optimization}

\STATE \textbf{Particle update:}  \\
$u^{t+1}_i = a(\cdot,\widetilde V_{t+1}^i)\circ T(Y_{t},u_{i}^{t}),$\\
$\quad y_i^{t+1}= h(u_i^{t+1},W_{t+1}^i),\\v^{t+1}_i = u^{t+1}_{\sigma_i^{t+1}},\quad \forall i=1,\dots ,N$

\ENDFOR
\end{minipage}\hfill
\begin{minipage}[t]{0.55\linewidth}
\STATE \textbf{With EnKF-based reference}
\FOR{$t = 1$ to $t_f$}

\STATE \textbf{EnKF update:}
\STATE Compute EnKF gain $\widehat K^t$ using $\{u^{t}_i,{y}^{t}_{i}\}_{i=1}^N$
\STATE $v^{t}_i = u^{t}_{\sigma_i^t},\quad \overline y^t_i = h(v^t_i,W_{t}^i),\\  
w_i^t = v_i^t +\widehat K^t (y^t_i - \overline y^t_i),\quad
\forall i=1,\dots ,N$

\STATE \textbf{Optimization step:}\\
Update $\theta_\psi,\theta_{\widetilde{T}},$ according to~\eqref{eq:empirical-optimization} and~\eqref{eq:OT_EnKF_layer_loss}
\STATE \textbf{Particle update:}  \\
$w_i^{t,Y_{t}} = u_{i}^{t} + \widehat K^{t} (Y_{t} - y^{t}_i),$\\
$T(Y_{t},u_{i}^{t}) = w_i^{t,Y_{t}}  + \widetilde{T}(Y_{t},w_i^{t,Y_{t}}),$\\
$u^{t+1}_i = a(\cdot,\widetilde V_{t+1}^i)\circ T(Y_{t},u_{i}^{t}),\\
y_i^{t+1}= h(u_i^{t+1},W_{t+1}^i),\quad
\forall i=1,\dots ,N$

\ENDFOR
\end{minipage}

\end{algorithmic}
\label{alg:otf}
\end{algorithm}

\editend

\section{ of Posteriors in the bimodal static example}\label{sm: static bimodal example}
\editstart
In figure \Cref{sm:fig:marginal_densities} we compare the 
marginal densities produced by OTF with the ground truth 
posterior produced by SIR with a large number of particles. 
Going from 20 to 30 dimensions we see a noticable shift 
in the ability of OTF to capture the multi-modal structure 
of the poster leading to the drop in accuracy observed in 
\Cref{fig:bimodal_vs_dim_time_particles}. We note that 
this deterioration happens for a fixed network and training data
size and additional training data and larger size of the model can overcome this however; highlighting that the lack of 
error is due to the accuracy of the model and the fact that 
the ground truth posterior is highly irregular.

\begin{figure}[htp]
         \centering
     \begin{subfigure}[b]{0.65\textwidth}
         \centering
         \includegraphics[width=1\linewidth]{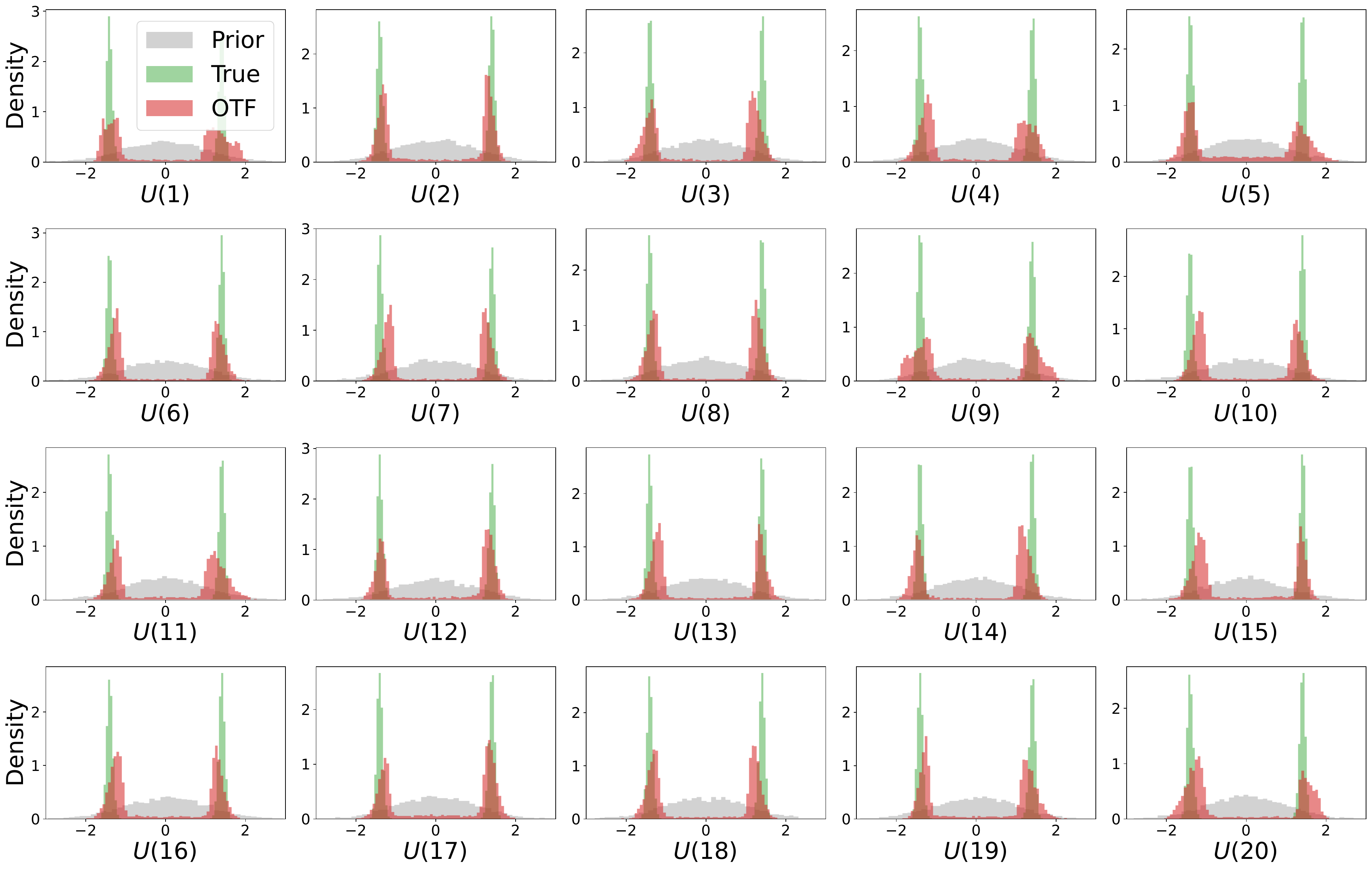}
         \caption{ $\dim = 20$.}
     \end{subfigure}
     \hfill
     \begin{subfigure}[b]{0.65\textwidth}
         \centering
         \includegraphics[width=1\linewidth]{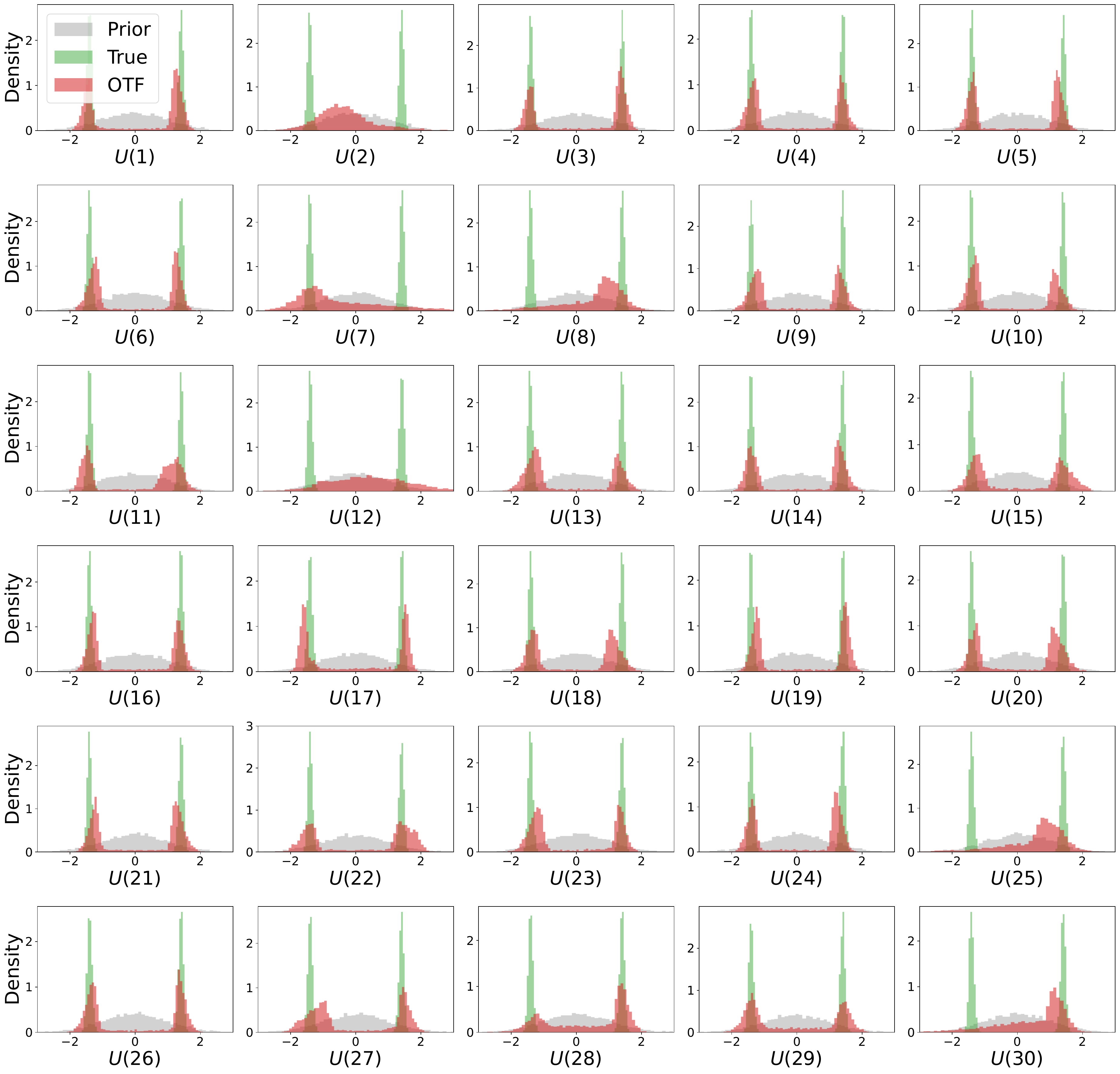}
         \caption{ $\dim = 30$.}
     \end{subfigure}
    \caption{
    \editstart
    Marginal densities of the OTF method for the bimodal static example of \Cref{sec: static bimodal example} for different problem dimensions. The OTF results capture the posterior modes accurately up to 20 dimensions and begin to break down around 30 dimensions.
    \editend
    }
    \label{sm:fig:marginal_densities}
\end{figure}
\editend

\section{Lorenz models}\label{sm:Lorenz-models}
\editstart

We briefly summarize the Lorenz 63 and 96 models used in \Cref{sec: L63,sec:L96}.
For Lorenz 63 we considered $U_t \in \Re^3$ satisfying the ODE \cite[Ex.~2.6]{law2015data}  
\begin{align*}
    \dot U_t (1) & =   \sigma ( U_t(2) - U_t(1) ) \\ 
    \dot U_t(2) & =  U_t(1) (\rho - U_t(3)) - U_t(2) \\ 
    \dot U_t(3) & =  U_t(1) U_t(2) - \beta U_t(3),
\end{align*}
with given initial conditions. We then consider the discrete time dynamical system 
\begin{equation*}
    U_{t+1} = a_{\textnormal{L63}}(U_t)
\end{equation*}
obtained by a forward Euler discretization of the ODE with step size $\Delta t$. 
In our experiments we used the typical parameter values $\sigma = 10, \rho= 28$, and $\beta = 8/3$. 

For Lorenz 96 we considered $U_t \in \Re^n$ satisfying the ODE system  \cite[Ex.~2.7]{law2015data}
\begin{equation*}
   \dot U_t(k) =   U_{t}(k-1) \big( U_t(k+1) - U_t(k-2)\big) - U_t(k) + F 
\end{equation*}
for $k= 1, \dots, n$ and subject to the periodic conditions 
$U_t(0) = U_t(n)$, $U_t(n+1) = U_1, U_{t}(-1) = U_{t}(n-1)$ at all times. 
We then obtain the discrete time dynamical system 
\begin{equation*}
    U_{t+1} = a_{\textnormal{L96}}(U_t)
\end{equation*}
by approximating the ODE with a Runge-Kutta scheme (RK4).
We used the typical parameter value $F = 10$.

\editend

\end{document}